\documentclass[12pt,oneside,pb-diagram]{article}
\newcommand{\Path}{}

\usepackage{ \Path Paper_style_2}
\usepackage{ \Path Keywords_WB}
\usepackage{ \Path Category}
\usepackage{ \Path Environments}
\usepackage{ \Path Paper_keywords}
\usepackage{tikz-cd}
\usepackage{mathtools}
\usepackage{string-diagrams}
\usepackage{\Path tikzcd2}
\usepackage{pb-diagram}
\usepackage{Paper_keywords}
\DeclareMathOperator{\sps}{StPrsrv}
\DeclareMathOperator{\graph}{graph}
\newcommand{\join}{\mathfrak{j}}

\newtcbtheorem{Erg}{Interpretation in ergodic theory}{
	enhanced,
	sharp corners,
	attach boxed title to top left={
		yshifttext=-1mm
	},
	colback=ChhaiA,
	colframe=ChhaiB!75!ChhaiC,
	fonttitle=\bfseries,
	boxed title style={
		sharp corners,
		size=small,
		colback=ChhaiB!75!black,
		colframe=ChhaiC!75!black,
	} 
}{Erg}
\title{Joinings in Markov categories}

\begin{document}
\author{ Suddhasattwa Das \footnotemark[1], Tomoharu Suda \footnotemark[2] \footnotemark[3]}
\footnotetext[1]{Department of Mathematics and Statistics, Texas Tech University, USA}
\footnotetext[2]{Department of Applied Mathematics, Tokyo University of Science, Japan}
\footnotetext[3]{RIKEN Center for Sustainable Resource Science, RIKEN, Japan}
\date{\today}
\maketitle
\begin{abstract}
	Dynamical systems theory primarily concerns the study of different forms of invariance, such as invariant sets, densities, measures, and observables. Ergodic systems are a special class of dynamical systems which are measure theoretically irreducible. In spite of this specialized property they dominate most discussions on ergodic theory because of the ergodic decomposition theorem. The viewpoint that any dynamical system is a composite of multiple ergodic components enables us to partition the face space into the basins of the different measures. The coexistence and mutual connections between these various coexisting subsystems are illuminated very effectively using the language of Category theory (CT). Some recent advancements have shown how the essence of measure theoretic dynamical systems can be captured through the formalism of Markov categories. This formalism captures the essential features of invariance and ergodicity through the language of limits and colimits.  This article continues that formalism by studying the concept of joins using the language of spans and push-outs. A classical result is re-proven which establishes the connection between joins, ergodicity and mixing. In the process, the categorical language is refined to capture the notion of sub sigma-algebras and  their invariance.
\end{abstract}

\begin{keywords} Semi-conjugacy, ergodic, join, functor, pushout, span, Markov \end{keywords}

\begin{AMS}	18D25, 18A40, 37M99, 18F60, 37M22, 18A32, 18A25, 37M10 \end{AMS}
\section{Introduction} \label{sec:intro}

There has been an increased interest \cite[e.g.]{ChoJacobs2019disintegr, fritz2020Stoch, FritzLiang2023gs} in re-axiomatizating probability theory in the graphical language provided by string diagrams of monoidal categories. The motivation is that it allows traditional measure-theoretic probability to be a particular instance, while allowing many of the usual reasoning techniques of probability and statistics without being bound to the set-theoretic details of functions and sigma-algebras. The resulting analysis is known as \emph{categorical probability}, and is mostly done in the setting of Markov categories. Categorical probability is an example of a synthetic theory, as opposed to analytic. The tools of reasoning and definitions are structural properties, rather than properties emerging from pointwise or formulaic definitions.

Parallely a similar effort is underway in dynamical systems theory \cite[e.g.]{mrozek1990leray, RobbinSalamon1992, VagnerSpivakLerman2015, Spivak2015steady, DasSuda2024recon, DasSuda2025enrich} where different aspects of a dynamical system are chosen to be the primitive notion on which every other notion is built. There has been two distinct routes taken to this project. Works such as those based on uncountable measure theory \cite{JamneshanTao2022uncount, Jamneshan2023uncount, JamneshanTao2024uncount} and connecting sets \cite[e.g.]{mrozek1990leray, RobbinSalamon1992} continue to adhere to the classical approach of maps and spaces, but organize them in categories and functors. The other route, such as \cite{Suda2022Poincare, MossPerrone2022ergdc, Suda2023dynamical, DasSuda2025enrich} place the analysis entirely in an abstract category. Concrete notions related to dynamics, such as invariance, orbits and semi-group actions are derived from strictly category theoretic axioms.

This article continues the categorical study of ergodicity initiated in \cite{MossPerrone2022ergdc}. Ergodicity essentially means that the system is irreducible. This irreducibility can be stated equivalently in terms of subsets or measurements. The categorical description replaces sets and measurements with cones and co-cones. The categorical or structural properties that enables these notions to play out, is that of a \emph{Markov category}.  The category $\StochCat$ of Markov kernels is the primary example of a Markov category. Several results of probability theory have recently been reproven in this way, for example, the Kolmogorov and Hewitt–Savage zero-one laws \cite{FritzRischel2020inf} and the de Finetti theorem \cite{FritzEtAl2021definetti}. 

The main feature of Markov categories is that they simultaneously uphold both regular compositions of maps, and convex combinations of probability measures, as the composition rule innate to the category. The most important outcome of this feature is that the fundamental notions of disintegration of measures, and ergodic decomposition are ingrained naturally within the Markov category. Moreover, the usual tools of reasoning in probability, such as almost-sure equality and determinism also emerge as a consequence of the Markov structure. In the instance of $\StochCat$, these notions take their usual meaning from classical probability.

Our focus will be on the idea of joinings \cite[e.g.]{glasner2000entropy, weiss1985strictly, LemanczykEtAl1993gaussian, Rue2006join}  in ergodic theory. joinings are a powerful tool used to study the relationships between different dynamical systems. Joinings are a precise way to quantify how much two dynamical systems overlap or interact. Formally, given two measure-preserving dynamical systems $(X, \Sigma_X, \mu_X, \Phi_X^t)$ and $(Y, \Sigma_Y, \mu_Y, \Phi_Y^t)$ , a joining of these two systems is a probability measure $\lambda$ on the product space $X \times Y$ which is invariant under the product flow $\Phi_X^t \times \Phi_Y^t$, and whose marginal distributions on $(X, \Sigma_X)$ and $(Y, \Sigma_Y)$ are $\mu_X$ and $\mu_Y$ respectively. Joinings are thus the dynamics analog of optimal transport. 

Two dynamical systems may have an abundance of joinings, or just one. The simplest joining is the product measure $\mu_X \times \mu_Y$. This represents the case where the two systems co-exist completely independent of each other in the product space. The greatest insight that the theory of joinings provide is the idea of \emph{disjointness}. Two systems are said to be disjoint if the only joining between them is their product join. This is a unique way of characterizing mutual differences. The difference appears as a lack of overlap. When the two systems $X$ and $Y$ are the same, their joinings are called \emph{self-joins} . The study of self-joins reveal various internal structures of the system, such as the presence of symmetries and mixing properties. 

\paragraph{Outline} We begin by reviewing the formalism of Markov categories in Section \ref{sec:formal}. Next we introduce the notion of state preservation in the context of dynamical systems, in Section \ref{sec:sts_prsrv}. The idea of "joins" is introduced next in Section \ref{sec:join}. Equipped with these notions, we present in Section \ref{sec:graphical} how the graphical calculus of Markov categories can be used to synthesize some core concepts of ergodic theory. Section \ref{sec:join_compose} presents a topic of independent interest -- the join diagrams may be composed, which enables Markov categories to be a category enriched over metric spaces. Section \ref{sec:erg} Section \ref{sec:sequence} Section \ref{sec:disjoint} Section \ref{sec:disjoint}


\section{Categorical formulation} \label{sec:formal}

We now begin by reviewing the axioms of a Markov category. The most basic property is :

\begin{Assumption} \label{A:closed}
	There is a closed monoidal category $\calX$.
\end{Assumption}

Assumption \ref{A:closed} is satisfied by some of the most commonly used categories :

\begin{Erg}{}{} 
	The prime example of a monoidal category is $\SetCat$ in which objects are sets up to a pre-fixed cardinality $\kappa$, and morphisms are maps between them. The Cartesian product of sets is a natural monoidal product, which also coincides with their categorical product.
\end{Erg}

\begin{Erg}{}{} 
	The category $\Topo$ in which objects are topological spaces and morphisms are continuous maps is a monoidal category, with the monoidal product being the usual Cartesian product of topological spaces.
\end{Erg}

\begin{Erg}{}{} 
	The category $\MeasCat$ in which objects are measurable spaces and morphisms are measurable maps is a monoidal category, with the monoidal product being the usual Cartesian product of topological spaces. Given two measurable spaces $(X, \Sigma_X)$ and $(Y, \Sigma_Y)$ sigma-algebra assigned to $X\times Y$ is the minimal sigma algebra generated by rectangular set $A\times B$ where $A$ is drawn from $\Sigma_X$ and $B$ from $\Sigma_Y$.
\end{Erg}

At this point we recall the notion of Cartesian categories, which are endowed with a natural monoidal product.

\begin{definition} [Cartesian categories]
	A category equipped with finite categorical products is called a Cartesian category.
\end{definition}

\paragraph{States} The main focus of this article is on states. The Markovian viewpoint allows us to interpret a state as both a particular choice of points from the set of points represented by the space, or as a distribution.

\begin{definition} [States / points]
	Given any monoidal category $\calC$, a \textit{state} or \textit{point} of an object $X$ of $\calC$ is a morphism $x : 1\to X$ from the monoidal unit $1$.
\end{definition}

\begin{Erg}{}{} 
	Categories such as $\SetCat$, $\MeasCat$ and $\Topo$ are "set-like" or more precisely, \textit{topologically concrete} \cite[see]{DasSuda2024recon}. This means that a state on a space $X$ is just some choice of a point $x$ on it. This is because in such categories, the collection of morphisms $\Hom(1;X)$ is in bijection with the set underlying the object $X$. In $\StochCat$, morphisms $\mu :1 \to \Omega$ are in bijection with probability measures on the probability space $\Omega$.
\end{Erg}

\begin{lemma} \label{lem:pcm_cm}
	The pointed version of a monoidal category is also a monoidal category. In the original category is closed, then so is its pointed version.
\end{lemma}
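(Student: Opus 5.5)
The plan is to first fix what ``pointed version'' means and then transport the structure of $\calX$ along the forgetful functor. Let $\calX_*$ be the category whose objects are pairs $(X,x)$ with $x:1\to X$ a state. A morphism $f:(X,x)\to(Y,y)$ is a morphism $f:X\to Y$ of $\calX$ with $f\circ x=y$. Composition and identities are inherited from $\calX$, since $g\circ f\circ x = g\circ y = z$.

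\textbf{Monoidal structure.} I would define
\[ (X,x)\otimes(Y,y) := \bigl(X\otimes Y,\ (x\otimes y)\circ\lambda_1^{-1}\bigr), \]
where $\lambda_1:1\otimes 1\to 1$ is the unitor at the unit (recall $\lambda_1=\rho_1$). The unit object is $(1,\mathrm{id}_1)$. On morphisms, $f\otimes g$ is again point preserving by functoriality of $\otimes$:
\[ (f\otimes g)(x\otimes y)\lambda_1^{-1}=(fx\otimes gy)\lambda_1^{-1}. \]
It then remains to check that the associator, the left and right unitors, and (if present) the symmetry of $\calX$ are point preserving, so that they lift to $\calX_*$. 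For the unitors this is naturality of $\lambda$ applied to $x$, together with $\lambda_1=\rho_1$. For the associator it is naturality of $\alpha$ combined with the coherence identity relating $\alpha_{1,1,1}$ to the unitors of $1$, which follows from Mac~Lane coherence. The pentagon and triangle identities hold automatically, because they hold in $\calX$ and the forgetful functor $\calX_*\to\calX$ is faithful.

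\textbf{Closedness.} This is the step I expect to be the main obstacle. The naive choice of internal hom, $([Y,Z],\ \text{some point})$, does not work. A point preserving $f:X\otimes Y\to Z$ curries to a map $X\to[Y,Z]$ that sends $x$ to the curry of $f(x\otimes -)$. That map is only constrained at $y$, so it does not land in a single fixed point of $[Y,Z]$. I would therefore define the pointed internal hom $[(Y,y),(Z,z)]_*$ as the equalizer
\[ [Y,Z]_* \rightrightarrows Z \]
of two maps $[Y,Z]\to Z$. The first is evaluation at $y$, namely $\mathrm{ev}\circ(\mathrm{id}\otimes y)$ precomposed with the unitor. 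The second is the constant map $z\circ\,!$, where $!:[Y,Z]\to 1$ is the discard.

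This construction uses two extra pieces of structure. First, a discard map to $1$, which exists when the unit is terminal, as in the semicartesian / Markov setting that the paper works in. Second, equalizers, which exist in the motivating examples $\SetCat$, $\Topo$ and $\MeasCat$. I would state these as the standing hypotheses under which ``closed'' is meant, since I do not see how to avoid them.

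The designated point of $[Y,Z]_*$ is the curry of $z\circ\,!_{1\otimes Y}$. This map factors through the equalizer because evaluating it at $y$ gives $z$. Finally I would verify the adjunction bijection
\[ \calX_*\bigl((X,x)\otimes(Y,y),(Z,z)\bigr)\cong\calX_*\bigl((X,x),[(Y,y),(Z,z)]_*\bigr). \]
It is obtained by restricting the currying bijection of $\calX$ and checking two things. A map $f$ preserves $x\otimes y$ exactly when its curry $\tilde f$ factors through the equalizer, since $\tilde f$ evaluated at $y$ gives $f(-\otimes y)$ and this must equal $z\circ\,!$ at $x$. Moreover, the factored map sends $x$ to the designated point. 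Naturality in $(X,x)$ is inherited from $\calX$.
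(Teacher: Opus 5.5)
Your tensor $(X\otimes Y,(x\otimes y)\lambda_1^{-1})$ with unit $(1,\mathrm{id}_1)$ does make $\calX_*$ monoidal, so the first sentence of the lemma is fine. The closedness part fails for this tensor, and the step that breaks is your final adjunction check.

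The curry $\tilde f:X\to[Y,Z]$ factors through your equalizer iff $\mathrm{ev}_y\circ\tilde f=z\circ !_X$ as maps on all of $X$, i.e.\ $f\circ(X\otimes y)=z\circ !$ (unitors suppressed). The factored map sends $x$ to the designated point iff $f\circ(x\otimes Y)=z\circ !$. Point preservation, $f\circ(x\otimes y)=z$, is only these two conditions evaluated at a single point, and it implies neither. You noticed exactly this mismatch (``only constrained at $y$''), but you tightened the hom side instead of the domain side.

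A concrete failure in pointed sets: take $(Y,y)=(1,\mathrm{id}_1)$. Your equalizer $[(1,\mathrm{id}_1),(Z,z)]_*$ is then the singleton $\{z\}$, so the right-hand side of your bijection contains a single map. The left-hand side is $\calX_*((X,x),(Z,z))$, which already contains $\mathrm{id}_Z$ and the constant map at $z$ when $(X,x)=(Z,z)$ and $Z$ has at least two points.

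No other choice of internal hom can repair this. Once $1$ is terminal, which you need for the discard, $I=(1,\mathrm{id}_1)$ is a zero object of $\calX_*$. A closed monoidal category whose unit is initial satisfies $\calX_*(A,B)\cong\calX_*(I\otimes A,B)\cong\calX_*(I,[A,B])=\{\ast\}$, so it is trivial.

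The missing idea is the change of tensor that the paper makes. The monoidal product on the pointed category is the smash product $(X,x)\wedge(Y,y)$. It is obtained from $X\otimes Y$ by a pushout that collapses the wedge $X\vee Y$ (the images of $X\otimes y$ and $x\otimes Y$) to the base point. Its unit is $1\sqcup 1$, pointed at one summand.

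Pointed maps $X\wedge Y\to Z$ are exactly the maps $f:X\otimes Y\to Z$ satisfying both collapse conditions above. So with this tensor, your equalizer $[Y,Z]_*$ and your currying check go through verbatim, and that is precisely the paper's argument.

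This route needs pushouts and coproducts in $\calX$ in addition to equalizers. Associativity of $\wedge$ uses that $X\otimes -$ preserves these colimits, which closedness of $\calX$ guarantees.
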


Lemma \ref{lem:pcm_cm} is proved in Section \ref{sec:proof:pcm_cm}.
In category theory, when we "point" a category , we are usually looking at the category of objects under the terminal object, denoted $1_{\Context}$. If $\Context$ is already a closed monoidal category (like the category of sets, ), the pointed version $\Comma{1_{\Context}}{\Context}$ inherits a closed monoidal structure. The latter structure is usually a bit different from the original. he natural monoidal product in $\Comma{1}{\Context}$ is the smash \textit{product} $\wedge$. For two pointed objects $(X,_x0)$ and $(Y, y_0)$, the smash product is the pullback $(X,_x0) \wedge (Y, y_0)$
\begin{equation}
	\begin{tikzcd}
		(X,_x0) \wedge (Y, y_0) \arrow[r, dotted] \arrow[d, dotted] & X \otimes \{y_0\} \arrow[d, hook] \\
		\cup \{x_0\} \otimes Y \arrow[r, hook] & X \otimes Y
	\end{tikzcd}
\end{equation}

\paragraph{Internal hom-objects} Assumption \ref{A:closed} enables many essential constructs of dynamical systems theory to be realized as objects of $\Context$. Pick an element $\Omega$ from $\Context$. An important object is
\begin{equation} \label{eqn:def:Endo}
	\Endo (\Omega) := \IntHom{\Omega}{\Omega} ,
\end{equation}
the {endomorphism} object associated to domain $\Omega$. It allows the collection of self maps of $\Omega$ to be interpreted as an object in $\Context$. Another important consequence is the creation of an object 
\begin{equation} \label{eqn:def:Path:1}
	\PathF(\Omega) := \IntHom{\TimeB}{\Omega} ,
\end{equation}
which we interpret as the \emph{path-space} of $\Time$-indexed paths in domain $\Omega$. It is the collection of all possible \emph{orbit}-s on the domain $\Omega$, not limited to those generated by the dynamics of $F$. Due to Assumption \ref{A:closed}, this collection is also of the same type / category as objects in $\Context$. It is alternatively known as the space of  \emph{sample-paths} and is a fundamental object of study in the theory of stochastic processes \cite[e.g.]{Doob1953book}.

\begin{definition} [Markov category]
	\cite[Def 2.1]{fritz2020Stoch} A symmetric monoidal category $\calC$ is said to be a Markov category if for every object $x\in \calC$, there are the following morphisms
	\[ \copyF_x : x \to x\otimes x , \quad \deleteF_x : x \to 1_{\calC} , \]
	satisfying the following coherence conditions :
\end{definition}

\begin{equation} \label{eqn:def:MarkovCat:1}
	\begin{tikzcd}
		& & x \otimes x \otimes x \arrow[d, "\cong"] \\
		x\otimes x \arrow[bend left = 20]{urr}{ \Id_x \otimes \copyF_x } \arrow[bend right = 20]{drr}[swap]{ \copyF_x \otimes \Id_x } & x \arrow[l, "\copyF_x"] \arrow[r, dotted] & x \otimes x \otimes x \\
		& & x \otimes x \otimes x \arrow[u, "\cong"']
	\end{tikzcd} ; \quad 
	\begin{tikzcd}
		x \arrow[dashed, drr, "="] \arrow{d}[swap]{ \copyF_x } \arrow{rr}{ \copyF_x } & & x \otimes x \arrow{d}{ \Id_x \otimes \deleteF_x } \\
		x\otimes x \arrow{rr}[swap]{ \deleteF_x \otimes \Id_x } & & x
	\end{tikzcd} ; \quad 
\end{equation}
\begin{equation} \label{eqn:def:MarkovCat:2}
	\begin{tikzcd}
		1_{\calC} & 1_{\calC} \otimes 1_{\calC} \arrow{l}[swap]{ \cong } \\
		x \otimes y \arrow{u}{ \deleteF_{x\otimes y} } \arrow{ru}[swap]{ \deleteF_x \otimes \deleteF_y } 
	\end{tikzcd} ; \quad 
	\begin{tikzcd}
		x \arrow{d}[swap]{ \copyF_{x} } \arrow{r}{ \copyF_x } & x \otimes x \arrow[dl, "\text{swap}_{x,x}"] \\
		x \otimes x
	\end{tikzcd}
\end{equation}
and any morphism $f$ satisfies -- 
\begin{equation} \label{eqn:def:MarkovCat:3}
	\begin{tikzcd}
		x\otimes y \arrow{d}[swap]{ \paran{ \copyF_{x} } \otimes \paran{ \copyF_{y} } } \arrow{dr}{ \copyF_{x\otimes y} } & \\
		x\otimes x \otimes y \otimes y & x\otimes x \otimes y \otimes y \arrow{l}{ \text{symm} }[swap]{\cong}
	\end{tikzcd} ; \quad 
	\begin{tikzcd}
		1_C \\
		x \arrow{u}{ \deleteF_x } \arrow{r}{f} & y \arrow{ul}[swap]{ \deleteF_y }
	\end{tikzcd}
\end{equation}

\begin{Erg}{}{} 
	Every category equipped with finite categorical products and a terminal object are also inherently Markov categories. This means that the categories $\SetCat$, $\Topo$ and $\MeasCat$ previously discussed are also Markov categories.
\end{Erg}

The deletion morphisms can be used to create a natural notion of projections from monoidal products :
\begin{equation} \label{eqn:def:projection}
	\begin{tikzcd} [column sep = large]
		B& A \otimes B \arrow[dr, "A\otimes \deleteF_B"'] \arrow[dashed, r, Shobuj, "\pi_1"] \arrow[dl, "\deleteF_A\otimes B"] \arrow[dashed, l, Shobuj, "\pi_2"'] & A \\
		1\otimes B \arrow[u, "\cong"] & & A\otimes 1 \arrow[u, "\cong"']
	\end{tikzcd}
\end{equation}
One can similarly have coordinate-wise projections for any finite product.

\begin{Erg}{}{} 
	The most important example of a non-Cartesian Markov category  is $\StochCat$. Here the objects are measurable spaces, and morphisms are Markov kernels. More precisely, given two measurable spaces $(X, \Sigma_X)$ and $(Y, \Sigma_Y)$ a Markov kernel from the former to the latter is a map $f : X\to \Prob(Y; \Sigma_Y)$ where $\Prob(Y; \Sigma_Y)$ is the collection of $\Sigma_Y$-measurable probability measures on $Y$. The map must satisfy 
	\[\forall A\in \Sigma_Y, \; X \mapsto f(\cdot)(A) \mbox{ is } \Sigma_X- \mbox{measurable} .\]
	The composition rule is given as follows
	\[\begin{tikzcd}
		(X, \Sigma_X) \arrow[dr, dashed, Shobuj, "g\circ f"'] \arrow[r, "f"] & (Y, \Sigma_Y) \arrow[d, "g"] \\ 
		& (Z, \Sigma_Z)
	\end{tikzcd} \,:=\,
	\begin{array}{c} (g\circ f)(x)(C) \\ = \\ \int_Y g(y)(C) df(x)(y) \end{array} ,\quad 
	\left\{\begin{array}{c} \forall x\in X\\ \forall C\in \Sigma_Z \end{array}\right.\]
	The composition rule for Markov kernels is thus a convolution of kernels.
\end{Erg}

\begin{Erg}{}{} 
	Any measurable map $f : (X, \Sigma_X) \to (Y, \Sigma_Y)$ between measurable spaces is also a Markov kernel, assigning to each point $x$ in $X$ the point-measure $\delta_{f(x)}$. Note how the convolution of such point-mass kernels correspond exactly to the usual composition of maps. This indicates that an easy and important instance of Assumption \ref{A:incl} is when $\calX = \MeasCat$, $\calY = \StochCat$ and the functor $\iota$ is just the inclusion.
\end{Erg}

\begin{Erg}{}{} 
	The notion of a pushforward of a measure is encoded within the composition rules of $\StochCat$. A measure is a morphism $\mu : 1\to X$, and a deterministic measurable map $f:X\to Y$ is a also a morphism in $\StochCat$. Their convolution $f\circ \mu$ can be easily verified to be the pushforward measure
	\[(f_*\mu)(A) := \mu \paran{ f^{-1}(A) } . \]
\end{Erg}

\begin{Erg}{}{} 
	In any Cartesian category $\calC$ the copy morphism is the \emph{diagonal morphism} which originates naturally from the product structure. Let an object $X$ of $\calC$ be fixed. Then the copy morphism is the unique dotted morphism shown below which enables the commutation :
	\[\begin{tikzcd}
		&& X \arrow[drr, "\Id"] \arrow[dll, "\Id"'] \arrow[dotted, Shobuj] \\
		X && X \times X \arrow[rr, "\proj_2"'] \arrow[ll, "\proj_1"] && X
	\end{tikzcd}\]
	The copy morphism $\copyF_X$ in this case is also denoted as $\langle \Id_X, \Id_X \rangle$.
\end{Erg}

\section{State preservation} \label{sec:sts_prsrv}

A dynamical system can be succinctly described as follows :

\begin{definition} [Dynamical system]
	A dynamical system with context $\calC$ and time structure $\Time$ is a functor $\Phi : \Time \to \calC$. The image under $\Phi$ of the unique object of $\Time$ is called the \emph{domain} of the dynamics.
\end{definition}

Almost all theoretical considerations of a dynamical system involves some notion of invariance or the other. Invariance takes a particularly simple and graphical format in a categorical description.

\begin{definition} [Invariant object] \label{def:invar:1}
	An invariant object of the dynamics $\Phi : \Time \to \calC$ is a cone above the functor $\Phi$. This means an arrow $h:A\to \Omega$ such that the following commutations hold :
	\[
	\begin{tikzcd}
		&& A \arrow[dll, "h"'] \arrow[drr, "h"] \\
		\Omega \arrow[rrrr, "\Phi^t"] && && \Omega
	\end{tikzcd} , \quad \forall t\in \Time.
	\]
	The $\calC$-object $A$ here is called the \emph{apex} of this cone.
\end{definition}

\begin{definition} [Invariant state] \label{def:invar:2}
	An invariant state of the dynamics $\Phi : \Time \to \calC$ is an invariant object whose apex is the unit object $1$. Equivalently this it is cone above the functor $\Phi$ with apex $1$. This means an arrow $h:1\to \Omega$ such that the following commutations hold :
	\[
	\begin{tikzcd}
		&& 1 \arrow[dll, "h"'] \arrow[drr, "h"] \\
		\Omega \arrow[rrrr, "\Phi^t"] && && \Omega
	\end{tikzcd} , \quad \forall t\in \Time.
	\]
\end{definition}

\begin{definition} [Invariant measurement] \label{def:invar:3}
	An invariant measurement of the dynamics $\Phi : \Time \to \calC$ is a co-cone below the functor $\Phi$. This means an arrow $h:\Omega\to A$ such that the following commutations hold :
	\[
	\begin{tikzcd}
		\Omega \arrow[drr, "h"'] \arrow[rrrr, "\Phi^t"] && && \Omega \arrow[dll, "h"] \\
		&& A
	\end{tikzcd} , \quad \forall t\in \Time.
	\]
	The $\calC$-object $A$ here is called the \emph{bottom} of this cone.
\end{definition}

\begin{Erg}{}{} 
	A dynamical system in $\MeasCat$ is simply a choice of a domain $\Omega$, along with a collection of measurable maps $\SetDef{\Phi^t : \Omega \to \Omega}{t\in \Time}$. An invariant object $A$ as in Definition \ref{def:invar:1} is some measurable space $A$ along with a measurable map $h : A\to \Omega$ which remains invariant under the action of $\Phi$. Note that this means that the sub-object of $\Omega$ created by the image $h(A)$ is an invariant, measurable subset of $\Omega$. Invariant states and observations carry a similar interpretation.
\end{Erg}

\paragraph{State preservation} Although the nature of a dynamical system to preserve a given state, or conversely, for a state to be invariant under a given dynamical system, are both aspects of invariance. However we place this in a slightly more elaborate context.

\begin{Assumption} \label{A:incl}
	There is a Markov category $\calY$, and a monoidal functor $\iota : \calX \to \calY$.
\end{Assumption}

Assumption \ref{A:incl} allows us to start with a dynamical system in the category $\calX$, transfer it to $\calY$ via the functor $\iota$, and then look for invariance in this new category $\calY$. Suppose that the category $\calY$ has a richer structure than $\calX$. This would mean that $\calY$ offers a broader context than $\calX$. Then Assumption \ref{A:incl} provides us the option to consider dynamical systems which are structurally limited to the category $\calX$ but look for invariance in the broader context of $\calY$. We formalize this via a category $\sps$ as follows : 

\begin{definition} [Multiple context state preservation] \label{def:sps}
	Given Assumptions  and \ref{A:incl}, the category $\sps$ of state preserving systems is the comma category shown below :
	\[
	\sps := \sps(\iota) := 
	\left[ \begin{tikzcd}
		\star \arrow[bend right=10]{drr}[swap]{ \Delta_{\calT, \calY}(1) } && && \Functor{\Time}{\calX} \arrow[d, "\subset", hook] \\
		&& \Functor{\Time}{\calY} && \Functor{\Time}{\calX} \arrow[ll, "\iota\circ"]
	\end{tikzcd} \right]
	\]
	This means that the objects are as follows :
	\[ ob(\sps) = \SetDef{(\mu, \Phi)}{\begin{array}{c} \mbox{Functor } \Phi : \Time \to \calX \,, \\
			\mbox{invariant state } \mu \mbox{ of } \iota\circ \Phi \mbox{ in } \calY 
	\end{array}} \]
	and a morphism $h$ between two objects $(\mu, \Phi)$ and $(\mu', \Phi')$ satisfies the conditions given below :
	\[\begin{tikzcd} (\mu, \Phi) \arrow[d, "h"'] \\  (\mu', \Phi')\end{tikzcd}
	\in \sps \imply
	\begin{tikzcd} \iota \Omega \arrow[d, "h"] \\ \iota \Omega' \end{tikzcd} \,:\,
	\begin{tikzcd}
		1 \arrow[r, "\mu"] \arrow[dr, "\mu'"'] & \iota \Omega \arrow[d, "h"] \\ & \iota \Omega'
	\end{tikzcd} ;\,
	\begin{tikzcd}
		\iota \Omega \arrow[d, "h"'] \arrow[rr, "\iota \circ \Phi^t"] && \iota \Omega \arrow[d, "h"] \\
		\iota \Omega' \arrow[rr, "\iota \circ \Phi'^t"'] && \iota \Omega'
	\end{tikzcd}
	\]
\end{definition}

Observe that in the defining conditions, the morphisms all reside in $\calY$, and involve the images $\iota \Omega$, $\iota \Omega'$ of the $\calX$-objects $\Omega$, $\Omega'$.

\begin{Erg}{}{} 
	The prime instance of Assumptions  and \ref{A:incl} which motivates our categorification is when $\calX=\MeasCat$ and $\calY=\StochCat$. In this case, the category $\sps$ can be represented as a triple $(\Omega, \mu, \Phi)$ where $\Omega$ is a measurable space, $\mu$ a probability measure on $\Omega$ and $\Phi$ is a measurable dynamical system on $\Omega$ which preserves $\mu$. A morphism $h$ as in Definition \ref{def:sps} is a measurable semi-conjugacy $h : \Omega \to \Omega'$ between the dynamics $\Phi$ and $\Phi'$, which also pushes the invariant measure $\mu$ forward into $\mu'$.
\end{Erg}

Since the category $\sps$ is created as a comma category, it naturally encodes within it the information about the underlying dynamical system, choice of invariant state, and the domain. This is captured in the following commutation diagram :
\begin{equation} \label{eqn:90dl8}
	\begin{tikzcd}
		\sps \arrow[rr, "\Forget_2"] \arrow[d, "\Forget_3"'] && \Functor{\Time}{\calX} \arrow[dr, "\dom"] \\
		\Comma{1}{\calY} \arrow[rr, "\Forget_2"] && \calY & \calX \arrow[l, "\iota"]
	\end{tikzcd}
\end{equation}
The morphisms $\Forget_2$ and $\Forget_3$ emerging from $\sps$ formalize the fact that state-preserving systems have to ingredients -- an dynamical systems object from $\Functor{\Time}{\calX}$ and an arrow $\calY$ from $1$, which is an object of the slice category $\Comma{1}{\calY}$. The commutation in Diagram \eqref{eqn:90dl8} states that the co-domain of this arrow, and the state-space of the dynamics must be the same.

Our definition of the category $\sps$ of state-preserving dynamics involves the following choices in the given order -- firstly a dynamical system $\Phi$ in context-$\calX$ with domain $\Omega$; and secondly a state $\mu : 1 \to \iota \Omega$ which remains invariant under $\Phi$. This definition thus first specifies the dynamics and then a state invariant under the dynamics. One could have alternatively first specified a domain and a state, and then look for dynamics on that domain which preserve that state. We shall see that these two definitions are categorically equivalent. We need to establish certain useful rules of graphical calculus for Markov categories.

\paragraph{Copied products} One of the utilities of the $\copyF$ feature of Markov categories is that any finite collection of morphisms with a common domain may be bunched together to produce a joint-morphism :
\[\left\{ \begin{tikzcd} X \arrow[d, "f_n"'] \\ Y_n \end{tikzcd} \:\, n=1,\ldots N\right\}
\imply 
\begin{tikzcd}
	X \arrow{d}[swap]{ \copyF^{(N)} } \arrow[drrr, dashed, Shobuj, "\otimes_{n=1}^{N, \copyF} f_n"] \\
	\otimes_{n=1}^{N} X \arrow[rrr, "\otimes_{n=1}^{N} f_n"'] &&& \otimes_{n=1}^{N} Y_n
\end{tikzcd}\]
Note that the morphism $\otimes_{n=1}^{N} f_n$ is a parallel grouping of the morphisms, and has domain $\otimes_{n=1}^{N} X$ which is the $N$-fold tensor product of $X$. On the other hand the morphism $\otimes_{n=1}^{N, \copyF} f_n$ has domain $X$. It represents a unified morphism in which each of the individual $f_n$-morphisms remain embedded. Recall the notion of projections from \eqref{eqn:def:projection}. It follows from the Markov identities \eqref{eqn:def:MarkovCat:1} that coordinate wise projection of a copied product recovers the individual components 
\begin{equation} \label{eqn:d4d5z}
	\left\{ \begin{tikzcd} X \arrow[d, "f_n"'] \\ Y_n \end{tikzcd} \:\, n=1,\ldots N\right\}
	\imply 
	\begin{tikzcd}
		X \arrow{d}[swap]{ \copyF^{(N)} } \arrow[drrr, dashed, Shobuj, "\otimes_{n=1}^{N, \copyF} f_n"] \arrow[rrr, "f_i"] &&& Y_i \\
		\otimes_{n=1}^{N} X \arrow[rrr, "\otimes_{n=1}^{N} f_n"'] &&& \otimes_{n=1}^{N} Y_n \arrow[u, "\pi_i"']
	\end{tikzcd} , \; \forall i\in 1,\ldots,N.
\end{equation}

\begin{Erg}{}{} 
	In any Cartesian category such as $\SetCat$, $\Topo$ and $\MeasCat$, the copied product is simply the categorical product of the morphisms. 
\end{Erg}

\begin{Erg}{}{} 
	The copied product $\otimes_{n=1}^{N, \copyF} f_n$ in $\StochCat$ represent the joint distribution on the domain $\otimes_{n=1}^{N} Y_n$ induced by the Markov kernels $f_n$. More importantly, the kernels $f_n$ are present independently within the morphism $\otimes_{n=1}^{N} f_n$ while they are jointly distributed within $\otimes_{n=1}^{N, \copyF} f_n$.
\end{Erg}

\begin{definition} [Specially pointed objects] \label{def:sp_pt_ob}
	The comma category 
	\[ \Comma{1}{\iota} := \left[\begin{tikzcd}
		\star \arrow[drr, bend right=10, "1"'] && && \calX \arrow[dll, bend left=10, "\iota"] \\
		&& \calY
	\end{tikzcd}\right] \]
	is called the category of \emph{specially pointed objects}. The objects are pairs of the form $(x, X)$ where $X$ is some $\calX$-object and $X : 1\to \iota x$ is a $\calY$ morphism. $X$ is interpreted as a space, and $x$ as a state within $X$ interpreted in the context of $\calY$. A morphism from an object $(x, X)$ to an object $(x', X')$ is some $\calX$-morphism $\phi:X\to X'$ such that $x' = \iota \phi \circ x$.
\end{definition}

In any category, Markov or not, the objects are interpreted as spaces on domains to host a dynamical system. The idea of pointed categories allows us to treat a pair $(\mu, \Omega)$ jointly as an object or domain that may host a dynamical system. One of the most important realizations for us is the fact that such pointed categories are also Markovian :

\begin{theorem} [Pointed version of Markov category is Markov] \label{thm:ptMarkov_Markv}
	The pointed version of a Markov category is a Markov category. The copy morphism is given by the copied product :
	\begin{equation} \label{eqn:ptMarkov_Markv:1}
		\begin{tikzcd} 1 \arrow[d, "x"'] \\ X \end{tikzcd} 
		\begin{tikzcd} {} \arrow[rr, "\copyF", mapsto] && {} \end{tikzcd} 
		\begin{tikzcd}
			1 \arrow[r, "x"] \arrow[dr, dashed, Shobuj, "\copyF_x"'] & X \arrow[d, "\copyF_X"] \\
			& X \otimes X
		\end{tikzcd}
	\end{equation}
	The deletion morphism is given by
	\begin{equation} \label{eqn:ptMarkov_Markv:2}
		\begin{tikzcd} 1 \arrow[d, "x"'] \\ X \end{tikzcd} 
		\begin{tikzcd} {} \arrow[rr, "\text{delete}", mapsto] && {} \end{tikzcd} 
		\begin{tikzcd}
			1 \arrow[r, "x"] \arrow[dr, dashed, Shobuj, "\cong"'] & X \arrow[d, "\deleteF_X"] \\
			& 1
		\end{tikzcd}
	\end{equation}
	Moreover, this Markov structure is inherited by the slice category $\Comma{1}{\iota}$ of specially pointed objects (Definition \ref{def:sp_pt_ob}).
\end{theorem}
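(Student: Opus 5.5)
The plan is to build the Markov structure on the pointed category $\Comma{1}{\calC}$ (objects $(x,X)$ with $x:1\to X$, morphisms $\phi:X\to X'$ with $\phi\circ x = x'$) by transporting everything from $\calC$. The underlying symmetric monoidal structure will be the naive one, not the smash product: the tensor of $(x,X)$ and $(y,Y)$ is $(x\otimes y, X\otimes Y)$, using $1\cong 1\otimes 1$, with unit $(\Id_1,1)$. Associators, unitors and symmetries are those of $\calC$. To see they are morphisms in the comma category, I will check that they carry $(x\otimes y)\otimes z$ to $x\otimes(y\otimes z)$ and so on. This is exactly naturality of the coherence isomorphisms in $\calC$, combined with the coherence isomorphisms $1\otimes1\cong1$. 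Functoriality of $\otimes$ on morphisms is immediate, since $(\phi\otimes\psi)\circ(x\otimes y)=(\phi x)\otimes(\psi y)$.

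Next I check that the proposed copy \eqref{eqn:ptMarkov_Markv:1} and delete \eqref{eqn:ptMarkov_Markv:2} are morphisms of the pointed category.

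\textbf{Delete.} The map $\deleteF_X:(x,X)\to(\Id_1,1)$ requires $\deleteF_X\circ x=\Id_1$. This holds by the naturality of delete in \eqref{eqn:def:MarkovCat:3} applied to $f=x$, together with $\deleteF_1=\Id_1$.

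\textbf{Copy.} This is the step I expect to be the main obstacle. The morphism $\copyF_X$ goes from $(x,X)$ to the pointed object with state $\copyF_X\circ x$, which is the copied product of \eqref{eqn:ptMarkov_Markv:1}. The tensor of $(x,X)$ with itself, however, carries the state $x\otimes x$. In a genuine Markov category like $\StochCat$ these differ unless $x$ is deterministic. I will handle this in one of two ways.
\begin{itemize}
\item \textbf{Restrict to deterministic points.} Restrict attention so that $\copyF_X\circ x=(x\otimes x)\circ\copyF_1$. This is automatic in the Cartesian case, and in the case $\calX$ Cartesian with points taken in $\calX$.
\item \textbf{Redefine the codomain.} Declare the codomain of copy to be $(\copyF_X\circ x, X\otimes X)$, as the statement's formula suggests, and verify the axioms with that convention.
\end{itemize}
I will first try the deterministic reading, since it makes $(\copyF_X, \deleteF_X)$ legitimate morphisms into the monoidal product.

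Once copy and delete are morphisms, the commutative comonoid axioms \eqref{eqn:def:MarkovCat:1}, \eqref{eqn:def:MarkovCat:2} and the compatibility with $\otimes$ in \eqref{eqn:def:MarkovCat:3} need no further work. They are equalities of underlying $\calC$-morphisms, and those already hold in $\calC$. Equality of morphisms in a comma category is equality of the underlying arrows, so each axiom is inherited verbatim. The same remark gives naturality of delete for arbitrary comma morphisms.

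Finally, for $\Comma{1}{\iota}$ I apply the same construction, with objects $(x,X)$ where $X\in\calX$ and $x:1\to\iota X$ lies in $\calY$. The tensor will be $(x\otimes y, X\otimes Y)$, using that $\iota$ is monoidal (Assumption \ref{A:incl}) to identify $\iota X\otimes\iota Y\cong\iota(X\otimes Y)$. The copy and delete maps will be those of $\calX$, or equivalently their images under $\iota$ if $\iota$ preserves the Markov structure; I will need to assume or verify that $\iota$ sends $\copyF$ and $\deleteF$ to $\copyF$ and $\deleteF$. The axioms then follow again by faithfulness of the forgetful functor to $\calX$ on hom-sets.
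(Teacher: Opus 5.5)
The obstacle you isolate at the copy step is real, and it is fatal to the statement as written. Neither of your two remedies proves it.

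With your tensor $(x,X)\otimes(y,Y)=(x\otimes y,X\otimes Y)$, $\copyF_X$ is a morphism $(x,X)\to(x,X)\otimes(x,X)$ exactly when $\copyF_X\circ x=x\otimes x$, i.e.\ when $x$ is deterministic. This tensor is the natural choice, and its unit $(\Id_1,1)$ is where the stated deletion lands.

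Your second option, declaring the codomain to be $(\copyF_X\circ x,X\otimes X)$, cannot be carried out. In a Markov category the codomain of copy must be the tensor square. No lift of $\otimes$ to pointed objects (same underlying objects and morphisms) can give $(x,X)\otimes(x,X)$ the state $\copyF_X\circ x$. In $\StochCat$ take $X=\{0,1\}$, $x$ the fair coin and $\sigma$ the swap. Then $\sigma\circ x=x$, so $\sigma$ is an automorphism of $(x,X)$, and bifunctoriality would force $(\Id_X\otimes\sigma)\circ\copyF_X\circ x=\copyF_X\circ x$. But the left side is $\frac{1}{2}(\delta_{(0,1)}+\delta_{(1,0)})$ and the right side is $\frac{1}{2}(\delta_{(0,0)}+\delta_{(1,1)})$.

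Your first option is sound. With it, the rest of your argument (structural isomorphisms and deletion are pointed morphisms, and the axioms transfer along the faithful forgetful functor) does show that the full subcategory of objects with deterministic base point is Markov. But that is a strictly weaker statement. It also discards exactly the objects the paper cares about in the ``moreover'' clause: for $\calX=\MeasCat$, $\calY=\StochCat$, the identity $\iota(\copyF_\Omega)\circ\mu=\mu\otimes\mu$ holds only for zero--one measures $\mu$.

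The paper's own proof does not close this gap either. It takes the monoidal structure from Lemma \ref{lem:pcm_cm}, i.e.\ the smash product. There the terminal pointed object is absorbing rather than the unit (already for pointed sets $X\wedge 1\cong 1$), so that structure is not compatible with the stated deletion. The only property of copy the paper checks is $\pi_i\circ\copyF_X\circ x=x$. That shows $\copyF_X\circ x$ is a coupling of $x$ with itself, not that it equals $x\otimes x$.

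So your route (naive tensor, transfer of axioms by faithfulness) differs from the paper's, and your diagnosis of where it breaks is more accurate than the paper's argument. But as a proof of the theorem as stated, your proposal necessarily stops at the copy step. What it can honestly deliver is the deterministic-point version.
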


Since Lemma \ref{lem:pcm_cm} already establishes the monoidal properties of the pointed version, one only needs to verify the rules \eqref{eqn:def:MarkovCat:1}--\eqref{eqn:def:MarkovCat:3} of copy and deletion are  satisfied. Theorem \ref{thm:ptMarkov_Markv} is proved in Section \ref{sec:proof:ptMarkov_Markv}.

\begin{theorem} [Equivalence of cones and state preservation] \label{thm:StPrsrv}
	The category $\sps$ has two equivalent definitions :
	\begin{enumerate} [(i)]
		\item the category $\Functor{\Time}{\Comma{1}{\iota}}$ of dynamical systems on specially pointed objects; 
		\item $\calY$-cones with apex $1$ on dynamical systems in $\calX$, as presented in Definition \ref{def:sps}.
	\end{enumerate}
\end{theorem}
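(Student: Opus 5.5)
The plan is to build a pair of functors $F : \Functor{\Time}{\Comma{1}{\iota}} \to \sps$ and $G : \sps \to \Functor{\Time}{\Comma{1}{\iota}}$ and check that they are mutually inverse, up to equality, on objects and on morphisms. Both categories are built from the same raw data: an $\calX$-object $\Omega$, a family of $\calX$-endomorphisms $\Phi^t$, and a $\calY$-state $\mu : 1 \to \iota\Omega$. So the real content is showing that the two ways of imposing conditions on this data coincide.

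\textbf{The functor $F$.} A functor $\Psi : \Time \to \Comma{1}{\iota}$ sends the unique object of $\Time$ to a specially pointed object $(\mu, \Omega)$ as in Definition \ref{def:sp_pt_ob}. It sends each $t\in\Time$ to a morphism of $\Comma{1}{\iota}$, namely an $\calX$-morphism $\Phi^t : \Omega \to \Omega$ with $\iota\Phi^t \circ \mu = \mu$.
\begin{itemize}
\item Composing with the forgetful functor $\Comma{1}{\iota} \to \calX$ gives a dynamical system $\Phi : \Time \to \calX$. Functoriality is inherited because composition in $\Comma{1}{\iota}$ is composition in $\calX$.
\item The equalities $\iota\Phi^t\circ\mu = \mu$ for all $t$ say exactly that $\mu$ is a cone with apex $1$ over $\iota\circ\Phi$, as in Definition \ref{def:invar:2}.
\end{itemize}
Hence $F(\Psi) := (\mu, \Phi)$ is an object of $\sps$.

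\textbf{The functor $G$.} Conversely, an object $(\mu,\Phi)$ of $\sps$ supplies a pointed object $(\mu,\Omega)$. The cone condition guarantees that each $\Phi^t$ is a morphism of $\Comma{1}{\iota}$. Functoriality of $\Phi$ then gives a functor $G(\mu,\Phi) : \Time \to \Comma{1}{\iota}$. On objects, $F$ and $G$ are visibly inverse.

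\textbf{Morphisms.} A morphism in $\Functor{\Time}{\Comma{1}{\iota}}$ is a natural transformation with a single component, a morphism $h : (\mu,\Omega) \to (\mu',\Omega')$ of $\Comma{1}{\iota}$.
\begin{itemize}
\item Being a morphism of $\Comma{1}{\iota}$ is the triangle $\iota h \circ \mu = \mu'$.
\item Naturality is the square $h\circ\Phi^t = \Phi'^t\circ h$ in $\calX$. Applying $\iota$ turns it into the square of Definition \ref{def:sps}.
\end{itemize}
These are precisely the two conditions defining a morphism of $\sps$, so $F$ and $G$ act as identity on hom-data and functoriality is immediate.

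\textbf{Main obstacle.} The delicate point is a mismatch in where morphisms live. Definition \ref{def:sps} phrases a morphism $h$ as a map $\iota\Omega\to\iota\Omega'$ in $\calY$, whereas in $\Comma{1}{\iota}$ it is an $\calX$-morphism. I would resolve this in one of two ways:
\begin{itemize}
\item read the comma-category definition of $\sps$ (whose morphisms come from $\Functor{\Time}{\calX}$) as requiring $h = \iota h_0$ for an $\calX$-morphism $h_0$, so the commuting square holds already in $\calX$; or
\item assume $\iota$ is faithful, so the $\calY$-square descends to the $\calX$-square.
\end{itemize}
With the first reading, faithfulness is not even needed. The equivalence is then in fact an isomorphism of categories, and no Markov structure from Theorem \ref{thm:ptMarkov_Markv} is used beyond the definitions.
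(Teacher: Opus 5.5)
Your proposal is correct and is essentially the paper's argument: a functor $\Time \to \Comma{1}{\iota}$ unpacks into a domain $\Omega$, a state $\mu : 1 \to \iota\Omega$ and $\calX$-endomorphisms $\Phi^t$ with $\iota\Phi^t \circ \mu = \mu$, which is precisely a cone with apex $1$ over $\iota \circ \Phi$, and conversely. You go further than the paper, which argues only on objects and says the converse ``follows from the same diagram'', by also matching morphisms to get an isomorphism of categories; reading $h$ as an $\calX$-morphism, as the comma-category definition does, is the right choice, since admitting arbitrary $\calY$-morphisms (e.g.\ non-deterministic kernels in $\StochCat$) would give $\sps$ morphisms not coming from $\Functor{\Time}{\Comma{1}{\iota}}$.
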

Theorem \ref{thm:StPrsrv} is proved in Section \ref{sec:proof:StPrsrv}.

\section{Joins} \label{sec:join}

We can at last define a join in the context of state-preserving dynamics. A join is basically a diagram commonly known as a \emph{span} \cite{dawson2004span, dawson2004universal, heindel2011being} in category theory. Given an ordered pair of objects $x,y$ of a category $\calC$, a span is any diagram of the form
\[\begin{tikzcd}
	& z \arrow[dr] \arrow[dl] \\
	x & & z
\end{tikzcd}\]
involving two morphisms to $x,y$ from a common third object $z$. When the category is $\sps$, such a diagram can be meaningfully expressed as shown on the left below :
\[\begin{tikzcd} [scale cd = 0.9, column sep = small]
	& (\Omega, \mu, \Phi) \arrow[dr, "h_2"] \arrow[dl, "h_1"'] \\
	(\Omega_1, \mu_1, \Phi_1) & & (\Omega_2, \mu_2, \Phi_2)
\end{tikzcd}
\begin{tikzcd}{} \arrow[rr, mapsto, " \Forget_2"] && {}\end{tikzcd}
\begin{tikzcd}
	& (\Omega, \Phi) \arrow[dr, "\Forget_2(h_2)"] \arrow[dl, "\Forget_2(h_1)"'] \\
	(\Omega_1, \Phi_1) & & (\Omega_2, \Phi_2)
\end{tikzcd}\]
The diagram on the left exists in $\calY$. Applying the forgetful functor $\Forget_2$ from \eqref{eqn:90dl8} we get the $\calX$-diagram on the right. This span will be called a {join} of the two state-preserving systems $(\mu_1, \Phi_1)$ and $(\mu_2, \Phi_2)$ if the apex of the diagram on the right is the tensor product of the two constituent systems. In other words the following two conditions are satisfied.
\begin{equation} \label{eqn:def:ddp8z}
	\begin{tikzcd} [scale cd = 0.9, column sep = small]
		& (\Omega, \mu, \Phi) \arrow[dr, "h_2"] \arrow[dl, "h_1"'] \\
		(\Omega_1, \mu_1, \Phi_1) & & (\Omega_2, \mu_2, \Phi_2)
	\end{tikzcd} ;\;
	\begin{tikzcd} [scale cd = 0.9]
		& (\iota \Omega, \iota \Phi) \arrow[dd, "\cong"] \arrow[dr, "\Forget_2(h_2)"] \arrow[dl, "\Forget_2(h_1)"'] \\
		(\iota \Omega_1, \iota \Phi_1) & & (\iota \Omega_2, \iota \Phi_2) \\
		& \paran{ \iota \Omega_1 \otimes \iota \Omega_2, \iota \Phi_1 \otimes \iota \Phi_2 } \arrow[ur, "\pi_2"'] \arrow[ul, "\pi_1"]
	\end{tikzcd}
\end{equation}

\begin{Erg}{}{} 
	In ergodic theory, a joinings is simply a probability measures on the product space $\Omega_1 \otimes \Omega_2$ which is invariant under the product dynamics and whose two marginals are $\mu_1$ and $\mu_2$ respectively. Ergodic theory is about the spectral or functional properties of invariant measures, and the collection of joinings between two measure preserving systems encodes both the independence and structural alignment between them. 
\end{Erg}

By analyzing the space of invariant couplings between two measure-preserving transformations, joining theory allows ergodic theorists to define fundamental dynamical properties: two systems are disjoint if their only joining is the independent product measure, signaling that they share no non-trivial factors or structural similarities. Beyond establishing spectral and factor-independent boundaries, joinings provide the foundational language for the Furstenberg–Zimmer structure theory (decomposing systems into rigid and weakly mixing components) and play a vital role in proving non-conventional ergodic theorems, solving isomorphism problems, and uncovering rigidity phenomena in structural dynamics. 

\paragraph{Construction of joinings} Any two $\sps$-objects have a trivial joining, which is the monoidal product of the two states. In the context of ergodic theory, they play the role of independent coupling of two dynamical systems. Of more interest are the non-trivial joinings that may exist. We shall review a few diagrammatic means of constructing joinings.

\begin{definition} [Graphs]
	Given any morphism $f$ as shown below on the left :
	\begin{equation}
		\begin{tikzcd}x \arrow[d, "f"'] \\ y \end{tikzcd} \imply 
		\begin{tikzcd}
			x \arrow[dr, Shobuj, dashed, "\graph(f)"'] \arrow[rrr, "\copyF_x"] &&& x\otimes x \arrow[dll, "x\otimes f"] \\
			& x\otimes y
		\end{tikzcd}
	\end{equation}
	its graph is the composite morphism shown above on the right, created with the aid of the $\copyF$ morphism.
\end{definition}

\begin{definition} [Graph-joinings] \label{def:graph_join}
	\cite[Eqn 1]{LemanczykEtAl1993gaussian}
	\begin{equation} \label{eqn:def:graph_join:1}
		\begin{tikzcd} [column sep = large]
			& && \iota \Omega_2\\
			1 \arrow[urrr, bend left=20, "h\mu_1 = \mu_2", dashed] \arrow[drrr, bend right=20, "\mu_1"', dashed] \arrow[r, "\mu_1"] & \iota \Omega_1  \arrow[bend left=5, urr, "h", dashed] \arrow[drr, bend right=5, "="', dashed] \arrow[rr, "\graph(h)"] && \iota \Omega_1 \otimes \iota \Omega_2 \arrow[d, "\pi_1"] \arrow[u, "\pi_2"'] \\
			& && \iota \Omega_1
		\end{tikzcd}
	\end{equation}
\end{definition}

Diagrammatically, this join $\lambda_h$ is given by
\begin{center}
	\begin{tikzpicture}
		\node[box=1/0/1/0] (n) at (1,-1) {1_X};
		\node[box=1/0/1/0] (h) at (3,-1) {h};
		\node[dot] (cpy) at (2,-2) {};
		\node[box=1/0/1/0] (mu) at (2,-3) {\mu};
		\node[dot] (i) at (2,-4) {};
		\wires{
			n = {south = cpy.north},
			h = {south = cpy.north},
			cpy = {south = mu.north},
			mu= {south = i.north},
		}{n.north, h.north}
	\end{tikzpicture}
\end{center} 

A graph joining is an example of the application of graphical calculus, in which a simple diagram, namely a morphism $h$ in $\sps$ is used to create a more intricate diagram in $\calY$. To establish this diagram to be a joining, we need a further property called \emph{determinism}.

\paragraph{Determinism} In a Markov category, a morphism $f: X \to Y$ (which generally represents a stochastic or probabilistic process) is interpreted as deterministic if it perfectly preserves the "copy" operation. If $f$ contains any randomness (like flipping a coin), running it independently on two identical inputs will generally yield different results. Therefore, the only way the two sides can be equal is if the process $f$ has no internal randomness at all—meaning it evaluates to the exact same output every single time. Formally this means :

\begin{definition} [Deterministic]
	A morphism $x\xrightarrow{f} y$ is said to be deterministic if the following commutations hold :
	\begin{equation} \label{eqn:def:deterministic}
		\begin{tikzcd}
			x \arrow{d}[swap]{f} \arrow{rr}{ \copyF_x } & & x\otimes x \arrow{d}{ f\otimes f } \\
			y \arrow{rr}{ \copyF_y } & & y\otimes y
		\end{tikzcd}
	\end{equation}
\end{definition}

This essentially means that the transition under $f$ after copying, is equivalent to copying after the transition of $f$. We therefore assume : 

\begin{Assumption} [Determinism] \label{A:det_base}
	The images of morphisms under $\iota$ from Assumption \ref{A:incl} are deterministic in $\calY$.
\end{Assumption}

\begin{Erg}{}{}
	In $\StochCat$ where determinism carries its usual meaning, the diagram \eqref{eqn:def:deterministic} asserts that evaluating a deterministic process once and duplicating its output yields the exact same outcome as duplicating the input first and applying the process twice independently on both copies. Stochastic morphisms, i.e. proper Markov kernels fail to commute with copy because running a probabilistic process twice produces independent random outcomes rather than identical duplicates, deterministic maps preserve information perfectly and respect non-linear copying. 
\end{Erg}

\begin{lemma} \label{lem:graph_join:1}
	Suppose Assumptions \ref{A:incl} and \ref{A:det_base} hold. Given an $\sps$-morphism $h : (\Omega_1, \mu_1, \Phi_1) \to (\Omega_2, \mu_2, \Phi_2)$, the state $\graph(h) \mu_1$ constructed in \eqref{eqn:def:graph_join:1} above creates a joint state for $\mu_1, \mu_2$.
\end{lemma}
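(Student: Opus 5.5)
We need to show that the two marginals of $\lambda_h := \graph(h)\circ\mu_1$ are $\mu_1$ and $\mu_2$, i.e. $\pi_1 \circ \lambda_h = \mu_1$ and $\pi_2 \circ \lambda_h = \mu_2$. These are the two dashed triangles in \eqref{eqn:def:graph_join:1}. Since both $\pi_i$ are built from $\deleteF$ and unitors as in \eqref{eqn:def:projection}, the plan is to compute $\pi_i \circ \graph(h)$ as a morphism out of $\iota\Omega_1$ first, and only afterwards precompose with $\mu_1$.

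For the first marginal, write $\graph(h) = (\Id \otimes h)\circ \copyF_{\iota\Omega_1}$. Then, up to the unitor,
\[ \pi_1 \circ \graph(h) = (\Id \otimes \deleteF_{\iota\Omega_2})\circ(\Id\otimes h)\circ \copyF = (\Id \otimes (\deleteF_{\iota\Omega_2}\circ h))\circ\copyF . \]
By naturality of deletion (the right diagram of \eqref{eqn:def:MarkovCat:3}), $\deleteF_{\iota\Omega_2}\circ h = \deleteF_{\iota\Omega_1}$. The counit law in \eqref{eqn:def:MarkovCat:1} then gives $(\Id\otimes\deleteF)\circ\copyF = \Id$. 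Hence $\pi_1\circ\graph(h) = \Id_{\iota\Omega_1}$, and so $\pi_1 \circ\lambda_h = \mu_1$.

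For the second marginal, the same manipulation gives
\[ \pi_2\circ\graph(h) = (\deleteF_{\iota\Omega_1}\otimes h)\circ\copyF = h\circ(\deleteF\otimes\Id)\circ\copyF = h , \]
using the interchange law of the monoidal product and the other counit law. Therefore $\pi_2\circ\lambda_h = h\circ\mu_1$, and this equals $\mu_2$ because $h$ is an $\sps$-morphism (the first condition in Definition \ref{def:sps}). Equivalently, by \eqref{eqn:d4d5z}, $\graph(h)$ is the copied product $\Id \otimes^{\copyF} h$, whose coordinate projections recover $\Id$ and $h$.

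Neither marginal computation uses Assumption \ref{A:det_base}; it uses only the Markov axioms. The only delicate point is bookkeeping: one must use the monoidality of $\iota$ from Assumption \ref{A:incl} to identify $\iota\Omega_1\otimes\iota\Omega_2$ with $\iota(\Omega_1\otimes\Omega_2)$, and one must handle the unitor isomorphisms inside the $\pi_i$ coherently.

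Determinism becomes essential if "joint state" is read in the sense of a join \eqref{eqn:def:ddp8z}, which also requires $\lambda_h$ to be invariant under $\iota\Phi_1^t\otimes\iota\Phi_2^t$. I expect this to be the main obstacle. The plan for it is the following chain:
\[ (\Phi_1^t\otimes\Phi_2^t)\circ\graph(h) = (\Phi_1^t\otimes h\Phi_1^t)\circ\copyF = (\Id\otimes h)\circ(\Phi_1^t\otimes\Phi_1^t)\circ\copyF = \graph(h)\circ\Phi_1^t . \]
The first equality uses the semi-conjugacy square $\Phi_2^t h = h\Phi_1^t$. The last equality is exactly where determinism of $\iota\Phi_1^t$ (Assumption \ref{A:det_base}) is needed, to commute it past $\copyF$ via \eqref{eqn:def:deterministic}. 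Finally, precomposing with $\mu_1$ and using the invariance $\Phi_1^t\mu_1=\mu_1$ yields
\[ (\Phi_1^t\otimes\Phi_2^t)\circ\lambda_h = \lambda_h . \]
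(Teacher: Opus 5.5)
Your proposal is correct. Your invariance chain (semi-conjugacy $\iota\Phi_2^t \circ h = h \circ \iota\Phi_1^t$, interchange, determinism of $\iota\Phi_1^t$ to pass it through $\copyF$, then invariance of $\mu_1$) is exactly the paper's proof. The paper treats that invariance as the whole content of the lemma and leaves the marginal identities implicit in the dashed arrows of \eqref{eqn:def:graph_join:1}, so your explicit marginal computation via naturality of $\deleteF$ and the counit laws is a correct supplement.
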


Lemma \ref{lem:graph_join:1} is proved in Section \ref{sec:proof:graph_join:1}. 

\begin{Erg}{}{} 
	In ergodic theory, the graph joining induced by a semiconjugacy (or factor map) $h: (\Omega_1, \mu_1, \Phi_1^t) \to (\Omega_2, \mu_2, \Phi_2^t)$ is the unique invariant probability measure $\mu_h$ on the product space $\Omega_1 \times \Omega_2$ supported entirely on the graph of $h$, explicitly defined by $\mu_h(A \times B) = \mu_1(A \cap h^{-1}(B))$. By concentrating all probability mass on the subset $\{(x, h(x)) \mid x \in \Omega_1\}$, the graph joining acts as a maximally rigid, deterministic coupling where the trajectory in the factor space is entirely subordinate to the state of the extension—a stark contrast to the independent product measure that defines disjointness. Analytically, these deterministic joinings manifest as extreme points in the weakly compact, convex simplex of all joinings between the two systems, serving as the foundational building blocks for defining relatively independent joinings over common factors.
\end{Erg}

A special case of graph joining are obtained when the morphism $\phi$ shown above is just an iteration of the flow $\Phi^T$ :

\begin{definition} [Off-diagonal joinings] \label{def:off_diag_join}
	\cite[Sec 6.1]{glasner2015join} The join obtained by replacing $(\Omega_1, \mu_1, \Phi_1)$, $(\Omega_2, \mu_2, \Phi_2)$ with $(\Omega, \mu, \Phi)$ in \eqref{eqn:def:graph_join:1} and $h$ with $\Phi^T$ and :
	\begin{equation} \label{eqn:def:off_diag_join}
		\begin{tikzcd} [column sep = large]
			& && \iota \Omega\\
			1 \arrow[urrr, bend left=20, "\Phi^T\mu = \mu", dashed] \arrow[drrr, bend right=20, "\mu_1"', dashed] \arrow[r, "\mu"] & \iota \Omega  \arrow[bend left=5, urr, "\Phi^T", dashed] \arrow[drr, bend right=5, "="', dashed] \arrow[rr, "\graph(\Phi^T)"] && \iota \Omega \otimes \iota \Omega \arrow[d, "\pi_1"] \arrow[u, "\pi_2"'] \\
			& && \iota \Omega_1
		\end{tikzcd}
	\end{equation}
	is called the \textit{off-diagonal joining} at time $T$.
\end{definition}

Off-diagonal joinings are obviously self-joinings.

\paragraph{Almost sure equality}  This essential notion was well adapted within the framework of Markov categories by Cho and Jacobs \cite[Def 5.1]{ChoJacobs2019disintegr}. In a Markov category, the concept of "almost everywhere" (or "almost surely") is always defined relative to a specific prior state (a probability distribution) and is formalized by comparing joint distributions. Because Markov categories generally lack underlying sets, you cannot define "almost everywhere" by looking at sets of measure zero. Instead, you look at whether differences between two processes are "visible" to a given prior.

\begin{definition} [Almost sure equality]
	Suppose $\calC$ is a Markov category. Given a morphism $p : \Theta \to X$, two morphisms $f,g : X \to Y$ are said to be $p$-almost surely equal if 
	\[ \graph(f) \circ p = \graph(g) \circ p. \]
\end{definition}

Thus the morphisms may not be equal, but when considered jointly with their domain, they are equalized by the state $p$. This can be restated through the following commutation diagram :
\begin{equation} \label{eqn:def:almost_sure}
	\begin{array}{c} \graph(f) \circ p  \\ = \\ \graph(g) \circ p \end{array}
	\;\Leftrightarrow\;
	\begin{tikzcd}
		& X\otimes Y \\
		X\otimes Y \arrow[dr, "\cong"'] \arrow[ur, "\cong"] & \Theta \arrow[l, dotted] \arrow[r, "p"] & X \arrow[ul, bend right=5, "\graph(f)"'] \arrow[dl, bend left=5, "\graph(g)"] \arrow[rr, "\copyF_X"] && X \otimes X \arrow[ulll, bend right=15, "X\otimes f"'] \arrow[dlll, bend left=15, "X\otimes g"] \\
		& X\otimes Y
	\end{tikzcd}
\end{equation}

\begin{Erg}{}{} 
	In the context of $\StochCat$, diagram \eqref{eqn:def:almost_sure} implies that for any measurable sets $A\subset X$ and $B\subset Y$, and every point $\theta\in \Theta$,
	\[ p\paran{ \theta, A \cap f^{-1}(B) } = p\paran{ \theta, A \cap g^{-1}(B) } . \]
\end{Erg}

\paragraph{Disintegration of measures} In a Markov category, disintegration is the formal, categorical abstraction of conditional probability. It describes how a joint process can be factored into a marginal process and a conditional process. The notion of disintegration to be used is an abstraction of the rule $P(X,Y) = P(Y|X) P(X)$.

\begin{definition} [Disintegration of a morphism]
	Suppose there is a composable sequence of morphisms $p : A \to X$ and $f : X \to Y$. Then a disintegration of $f$ along $p$ is a factorization of the morphism $\graph(f) \circ p$ via $f \circ p$. More precisely it is a morphism $s : Y \otimes A \to X$ which satisfies the commutation shown below on the right.
	%
	\begin{equation} \label{eqn:def:disinteg:1}
		\forall \begin{tikzcd} A \arrow[d, "p"'] \\ X  \arrow[d, "f"'] \\ Y\end{tikzcd} \;:\; \exists
		\begin{tikzcd} A \otimes Y \arrow[d, Shobuj, "s"'] \\ X \end{tikzcd}
		\;:\;
		\begin{tikzcd} [column sep = small]
			A \arrow[d, "\graph(f \circ p)"'] \arrow[r, "p"] & X \arrow[rr, "\graph(f)"] & & X \otimes Y \\
			A\otimes Y \arrow[rr, "A \otimes \copyF_Y"'] && A \otimes Y \otimes Y \arrow[ur, "s \otimes Y"']
		\end{tikzcd}
	\end{equation}
\end{definition}

When the object $A=1$, so that $p$ is just a state in $X$, \eqref{eqn:def:disinteg:1} has a more specific and significant meaning : 

\begin{definition} [Disintegration of a morphism along a state]
	Suppose there is a composable sequence of morphisms $p : 1 \to X$ and $f : X \to Y$. Then a disintegration of $f$ along $p$ is a factorization of the morphism $\graph(f) \circ p$ via $\graph(f \circ p)$. More precisely it is a morphism $(f^{-1}|p) : Y\to X$ which satisfies the commutation shown below on the right.
	\begin{equation} \label{eqn:def:disinteg:2}
		\forall \begin{tikzcd} 1 \arrow[d, "p"'] \\ X  \arrow[d, "f"'] \\ Y\end{tikzcd} \;:\; \exists
		\begin{tikzcd} Y \arrow[Shobuj, d, "(f^{-1}|p)"] \\ X \end{tikzcd}
		\;:\;
		\begin{tikzcd} 
			& X \arrow[rr, "\graph(f)"] & & X \otimes Y  \\
			1 \arrow[dr, "f\circ p"'] \arrow[ur, "p"] \\
			& Y \arrow[uurr, dashed, "\graph^T(f^{-1}|p)"] \arrow[rr, "\copyF_Y"'] && Y\otimes Y \arrow[uu, "(f^{-1}|p) \otimes Y"']
		\end{tikzcd}
	\end{equation}
\end{definition}

Thus, a disintegration of a morphism $f$ along a state $p$ of its domain means the following isomorphism :
\begin{equation} \label{eqn:def:disinteg:3}
	\SqBrack{ \graph^T(f^{-1}|p) \circ f } \circ p = \SqBrack{ \graph(f) } \circ p .
\end{equation}
Thus from the perspective of $p$, $\graph^T(f^{-1}|p) \circ f$ is indistinguishable from $\graph(f)$. In fact there is an inversion lurking within this relation. We make a note here that

\begin{lemma} [Disintegration as inversion] \label{lem:disinteg_invrsn}
	Suppose Assumption \ref{A:det_base} holds, then disintegration along a deterministic map is inversion almost surely. More precisely let $p : 1\to X$ be a state and $f:X\to Y$ is deterministic. Then the disintegration $(f^{-1}|p) : Y\to X$ is the unique morphism such that
	\begin{equation} \label{eqn:disinteg_invrsn}
		f\circ ( f^{-1}|p ) = \Id_Y, \quad fp-\mbox{almost surely} .
	\end{equation}
\end{lemma}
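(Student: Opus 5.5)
The plan is to read the defining commutation \eqref{eqn:def:disinteg:2} of $s := (f^{-1}|p)$ as an equation between two states of $X \otimes Y$. Then I post-compose both sides with $f \otimes \Id_Y$ and use determinism of $f$ to collapse the right-hand side. Concretely, \eqref{eqn:def:disinteg:2} (equivalently \eqref{eqn:def:disinteg:3}) says
\[ (s \otimes \Id_Y) \circ \copyF_Y \circ (f \circ p) \;=\; (\Id_X \otimes f) \circ \copyF_X \circ p . \]

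First I apply $f \otimes \Id_Y$ to both sides. On the right this gives $(f \otimes f) \circ \copyF_X \circ p$. Since $f$ is deterministic (this is where Assumption \ref{A:det_base} enters, e.g.\ when $f = \iota \phi$), the square \eqref{eqn:def:deterministic} rewrites it as $\copyF_Y \circ f \circ p$. On the left, functoriality of $\otimes$ gives $\big((f \circ s) \otimes \Id_Y\big) \circ \copyF_Y \circ (f \circ p)$. Up to the symmetry isomorphism, this is exactly $\graph(f \circ s) \circ (f \circ p)$. Next I rewrite $\copyF_Y \circ f \circ p$ as $\graph(\Id_Y) \circ (f \circ p)$, using $\graph(\Id_Y) = \copyF_Y$. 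To match the orientation of the two legs, I use the commutativity axiom $\text{swap} \circ \copyF = \copyF$ from \eqref{eqn:def:MarkovCat:2}, together with naturality of the swap. The result is
\[ \graph(f \circ s) \circ (f p) = \graph(\Id_Y) \circ (f p), \]
which is precisely \eqref{eqn:disinteg_invrsn} in the sense of the definition of almost sure equality \eqref{eqn:def:almost_sure}.

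For the uniqueness clause, the intended meaning has to be uniqueness up to $fp$-almost sure equality, as is standard for disintegrations following Cho--Jacobs. The plan is to show that any two morphisms $s, s'$ satisfying \eqref{eqn:def:disinteg:2} coincide $fp$-almost surely. Both equations share the same right-hand side $\graph(f) \circ p$, so $\graph^T(s) \circ fp = \graph^T(s') \circ fp$. Applying the swap turns this into $\graph(s) \circ fp = \graph(s') \circ fp$, which is the definition of $s = s'$ $fp$-a.s.

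I expect the main obstacle to be pinning down this uniqueness statement. The inversion property \eqref{eqn:disinteg_invrsn} alone does not determine $s$, since any almost-sure right inverse of $f$ satisfies it. So the uniqueness must be asserted within the class of disintegrations, or equivalently for morphisms satisfying \eqref{eqn:disinteg_invrsn} and also reproducing the joint state $\graph(f) \circ p$. I would state this precisely before giving the argument above. The remaining bookkeeping with symmetries and the associator is routine string-diagram manipulation.
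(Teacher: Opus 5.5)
The paper states this lemma without proof; there is no corresponding appendix section. It is only invoked later, in the appendix item on disintegration of joint states, to obtain $\pi_2\circ(\pi_2^{-1}|\lambda)=\Id_{X_2}$ $\mu_2$-almost surely. So there is no argument to compare yours against, and yours is correct.

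\emph{Inversion.} Post-composing \eqref{eqn:def:disinteg:2} with $f\otimes\Id_Y$ and collapsing $(f\otimes f)\circ\copyF_X$ to $\copyF_Y\circ f$ by determinism handles the two sides. Then $\text{swap}\circ\copyF_Y=\copyF_Y$ together with naturality of the swap yields exactly $\graph(f\circ s)\circ fp=\graph(\Id_Y)\circ fp$.

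\emph{Uniqueness up to a.s.\ equality.} Two disintegrations share the right-hand side $\graph(f)\circ p$. Hence their graphs agree against $fp$, which is the correct uniqueness argument.

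Your suspicion about the uniqueness clause is justified, and it can be made concrete. Take $Y=1$ and $f=\deleteF_X$, which is deterministic because $1$ is terminal.
\begin{itemize}
\item Then $fp=\Id_1$, and almost-sure equality with respect to $\Id_1$ is plain equality.
\item Every state $s:1\to X$ satisfies $f\circ s=\Id_1$.
\item But \eqref{eqn:def:disinteg:2} reduces, via the counit law, to $s=p$.
\end{itemize}
So the clause as literally written fails as soon as $X$ has two distinct states. The right statement is the one you prove: uniqueness up to $fp$-a.s.\ equality among disintegrations. The a.s.\ inversion is then a consequence of the disintegration equation, not an extra condition to impose.

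The literal clause does hold when $f$ has a left inverse $r$. Almost-sure equality is stable under post-composition, so post-composing $f\circ s=\Id_Y$ ($fp$-a.s.) with $r$ gives $s=r$ a.s. This covers the isomorphism case in Lemma \ref{lem:fx93}. The clause fails for projections such as $\pi_2$, but there the paper only needs the inversion half, which your argument establishes.
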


We shall find such immense use of the commutation \eqref{eqn:def:disinteg:2}, so we assume

\begin{Assumption} \label{A:disinteg}
	The category $\calY$ from Assumptions \ref{A:incl} allows disintegrations of morphisms along states.
\end{Assumption}

\begin{Erg}{}{} 
	In the context of $\StochCat$, disintegration corresponds to the usual notion of disintegration of measures along the fibres of a measurable transformation. Given a measure $p$ and a measurable transformation $f:X\to Y$, for $f_*p$-almost every $y\in Y$, there is a probability measure $p_y$ supported on the inverse image $f^{-1}(y)$ defined uniquely by this weak characterization :
	\[ \int_{x\in X} \phi dp = \int_{y\in Y} \int_{x \in f^{-1}(y)} \phi(x) dp_y(x) , \quad \forall \phi \in L^\infty(p) . \]
	The correspondence 
	\[ y \mapsto \delta_{y} \otimes p_y \]
	is precisely the Markov kernel $(f^{-1}|p) : Y \to X\otimes Y$.
\end{Erg}

\begin{lemma} \label{lem:subsig:1}
	Suppose Assumptions \ref{A:incl} and \ref{A:disinteg} hold. Consider a morphism $h$ in $\Comma{1}{\iota}$ as shown below on the left : 
	\begin{equation} \label{eqn:lem:subsig:1}
		\begin{tikzcd}\paran{ \Omega_X, \mu_X } \arrow[d, "h"'] \\ \paran{ \Omega_Y, \mu_Y }     \end{tikzcd}
		\;\Rightarrow\;
		\begin{tikzcd} [column sep = small]
			1 \arrow[dr, "\mu_X"] \arrow[ddr, bend right=10, "\lambda"'] \\ 
			& \Omega_X \arrow[d, "\graph(h)"] \\ 
			& \Omega_X \otimes \Omega_Y
		\end{tikzcd}
		\;\Rightarrow\;
		\exists \begin{tikzcd} \Omega_Y \arrow[d, "\gamma"'] \\\Omega_X \end{tikzcd}
		\;:\;
		\lambda-\mbox{a.e.} \,:\,
		\begin{tikzcd} [column sep = small]
			& \Omega_X \otimes \Omega_Y \arrow[dl, "\pi_X"'] \arrow[dr, "\pi_Y"] \\
			\Omega_X && \Omega_Y \arrow[ll, dotted, "\gamma"']
		\end{tikzcd}
	\end{equation}
	Let $\lambda$ be the resulting  graph join, as shown above in the middle. Then there is a $\calY$-morphism $\gamma : \Omega_X \to \Omega_Y$ such that the left-most commutation holds $\lambda$.a.e.. 
\end{lemma}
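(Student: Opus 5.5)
The natural choice is $\gamma := (h^{-1}|\mu_X)$, the disintegration of $h$ along $\mu_X$ provided by Assumption \ref{A:disinteg}. Since $h\circ\mu_X = \mu_Y$, this is a $\calY$-morphism $\Omega_Y\to\Omega_X$, which is the direction drawn in \eqref{eqn:lem:subsig:1}. The claim to be proved is then that $\gamma\circ\pi_Y = \pi_X$ holds $\lambda$-almost surely, where $\lambda = \graph(h)\circ\mu_X$. In the paper's language, this means
\[ \graph(\gamma\circ\pi_Y)\circ\lambda = \graph(\pi_X)\circ\lambda \]
as states on $(\Omega_X\otimes\Omega_Y)\otimes\Omega_X$.

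The key steps, in order, are as follows.

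\textbf{Step 1: rewrite $\lambda$ using \eqref{eqn:def:disinteg:3}.} That identity gives $\lambda = \graph^T(\gamma)\circ\mu_Y$. Concretely, $\lambda$ is obtained by copying a $\mu_Y$-distributed output and feeding one copy through $\gamma$.

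\textbf{Step 2: evaluate both sides on this presentation.}
\begin{enumerate}[(i)]
\item For the right-hand side, $\pi_X$ is built from deletion, hence deterministic. So $\graph(\pi_X)\circ\lambda$ is the state obtained from $\mu_Y$ by copying, applying $\gamma$ to one copy, and then copying that $\gamma$-output once more.
\item For the left-hand side, $\pi_Y\circ\graph^T(\gamma)$ is the identity on the copied $Y$-wire, by the counit law \eqref{eqn:def:MarkovCat:1}. So $\graph(\gamma\circ\pi_Y)\circ\lambda$ is the state obtained from $\mu_Y$ by copying three times, with two independent applications of $\gamma$ on two of the copies.
\end{enumerate}

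\textbf{Step 3: equate the two states.} This amounts to showing that $\gamma$ is $\mu_Y$-almost surely deterministic, i.e.
\[ \copyF_X\circ\gamma = (\gamma\otimes\gamma)\circ\copyF_Y, \quad \mu_Y\mbox{-a.s.} \]
This is the main obstacle. My plan for it is the following.
\begin{enumerate}[(a)]
\item Rewrite the ``apply $\gamma$ twice'' state back through \eqref{eqn:def:disinteg:3}. This uses the determinism of $h$ (Assumption \ref{A:det_base}) in the form $\copyF_Y h = (h\otimes h)\copyF_X$, together with Lemma \ref{lem:disinteg_invrsn}, which says $h\circ\gamma = \Id_Y$ holds $\mu_Y$-a.s.
\item Using these facts, both candidate states can be transported to $\mu_X$-states built only from $\copyF_X$, $h$ and identities.
\item Both then reduce to $(\copyF_X\otimes h)\circ\copyF_X\circ\mu_X$, up to symmetry isomorphisms.
\end{enumerate}

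\textbf{Fallback for Step 3.} If the direct route stalls, I would instead verify the defining condition of almost-sure equality componentwise. First project away one $X$-factor and check agreement of the marginals, which is immediate from Step 1. The remaining joint component would then be handled by applying the uniqueness clause of Lemma \ref{lem:disinteg_invrsn} to the deterministic map $\pi_Y$ along $\lambda$. This would identify $\gamma$ with $(\pi_Y^{-1}|\lambda)$ followed by $\pi_X$, and that composite is $\lambda$-a.s. equal to $\pi_X$ essentially by construction of the disintegration.
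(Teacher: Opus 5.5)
\textbf{There is a genuine gap at Step 3, and it cannot be closed.} Your Steps 1 and 2 are correct. They correctly reduce the claim to the $\mu_Y$-almost-sure determinism of $\gamma=(h^{-1}|\mu_X)$. That property is false in general. In measure-theoretic terms, $\gamma(y)$ is the conditional measure of $\mu_X$ on the fibre $h^{-1}(y)$, and it is almost surely a point mass only when $h$ is essentially injective.

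Your sub-steps (a)--(c) hide a swap of sides. Transport the ``apply $\gamma$ twice'' state to $\mu_X$, using $\lambda=\graph(h)\circ\mu_X$ and determinism of $h$. This gives
\[ \graph(\gamma\circ\pi_Y)\circ\lambda=\bigl(\Id_{\Omega_X}\otimes h\otimes(\gamma\circ h)\bigr)\circ\copyF^{(3)}_{\Omega_X}\circ\mu_X , \]
whereas $\graph(\pi_X)\circ\lambda=(\Id_{\Omega_X}\otimes h\otimes\Id_{\Omega_X})\circ\copyF^{(3)}_{\Omega_X}\circ\mu_X$. Delete the $\Omega_Y$-leg. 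Equality then forces $\graph(\gamma\circ h)\circ\mu_X=\copyF_{\Omega_X}\circ\mu_X$, i.e.\ $\gamma\circ h=\Id_{\Omega_X}$ $\mu_X$-a.s. Lemma~\ref{lem:disinteg_invrsn} only gives the other-sided identity $h\circ\gamma=\Id_{\Omega_Y}$ $\mu_Y$-a.s.

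Your fallback makes the same swap. The disintegration $(\pi_Y^{-1}|\lambda)$ is an almost-sure \emph{right} inverse of $\pi_Y$, not a left inverse. So ``$\pi_X\circ(\pi_Y^{-1}|\lambda)\circ\pi_Y=\pi_X$ $\lambda$-a.s.\ by construction'' is just the original claim again. The marginal on the two $\Omega_X$-legs is exactly the one that fails to match.

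\textbf{The statement itself is false as drawn.} With $\gamma$ pointing $\Omega_Y\to\Omega_X$, no choice of $\gamma$ works. The paper's own proof of this lemma is empty, so it offers no way around this. Take $\calY$ to be finite sets with stochastic matrices, where all assumptions hold. Let $\Omega_X=\{0,1\}$ with $\mu_X$ a fair coin, $\Omega_Y=1$, and $h$ the unique map. Then $\lambda\cong\mu_X$, $\pi_X\cong\Id$ and $\pi_Y\cong\deleteF_{\Omega_X}$. The required identity becomes $\mu_X\otimes\gamma=\copyF_{\Omega_X}\circ\mu_X$. Deleting the first factor forces $\gamma=\mu_X$. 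But then the left side gives mass $1/4$ to $(0,1)$ and the right side gives none.

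\textbf{What can be salvaged.} Your Step 1 does prove $\lambda=\graph^T(\gamma)\circ\mu_Y$, and hence $\gamma\circ\mu_Y=\mu_X$. So the joining is recovered from its $\Omega_Y$-marginal through $\gamma$, which is what the intertwining-operator interpretation in the text actually uses. If instead $\gamma$ points $\Omega_X\to\Omega_Y$, as the lemma's prose says, then $\gamma=\iota h$ gives $\gamma\circ\pi_X=\pi_Y$ $\lambda$-a.s.\ directly from determinism of $\iota h$ (Assumption~\ref{A:det_base}). Either way, the lemma has to be restated before it can be proved.
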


Lemma \ref{lem:subsig:1} is proved in Section \ref{sec:proof:subsig:1}.

\begin{Erg}{}{} 
	Lemma \ref{lem:subsig:1} has a particularly important implication in the context of $\StochCat$, called an \emph{intertwining} Markov operator \cite[Sec 1.3]{LemanczykEtAl1993gaussian}. The morphism $\gamma$ from \eqref{eqn:lem:subsig:1} is the Markov kernel given by
	\[ \gamma : \Omega_Y \to \Prob(\Omega_X), \quad \gamma(y) := \gamma|_{y}. \]
	This results in an operator $\Phi_{Y\to X}^{\lambda} : L^2(\mu_X) \to L^2(\mu_Y)$
	\[ \paran{ \Phi_{Y\to X}^{\lambda} \phi }(y) := \int_{x\in \Omega_X} \phi(x) d\gamma(y)(x) = \int_{\Omega_X} \phi(x) d (\lambda|_y)(x), \]
	which is also a \emph{Markov} operator, meaning it preserves positivity and maps $1$ to $1$. This Markov operator may be defined equivalently by its dual action :
	\[\begin{split}
		\bracketBig{ \phi \otimes 1_{\Omega_X} , 1_{\Omega_Y} \otimes \psi }_{L^2(\lambda)} &= \int_{\Omega_X \times \Omega_Y} \phi(x) \overline{ \psi(y) } d\lambda(x,y) = \int_{\Omega_Y} \SqBrack{ \int_{\Omega_X} \phi(x) d (\lambda|_y)(x) } \overline{ \psi(y) } d\mu_Y(y) \\
		& = \int_{\Omega_Y} \paran{ \Phi_{Y\to X}^{\lambda} \phi }(y) \overline{ \psi(y) } d\mu_Y(y) \\
		&= \bracketBig{ \Phi_{Y\to X}^{\lambda} \phi, \psi }_{L^2(\mu_Y)}.
	\end{split}\]
\end{Erg}

\begin{lemma} [Non-trivial joinings] \label{lem:dij03}
	Consider the following diagram
	\begin{equation} \label{eqn:dij03}
		\begin{tikzcd}
			& 1 \arrow[d, "\lambda"] &  \\
			& X\otimes Y \arrow[ld, "\pi_X"'] \arrow[rd, "\pi_Y"] &  \\
			X \arrow[rr, dotted, "\gamma"] & & Y
		\end{tikzcd}
		\imply
		\begin{tikzcd}
			& 1 \arrow[d, "\lambda"] &  \\
			& X\otimes Y \arrow[ld, "\pi_X"'] \arrow[rd, "\pi_Y"] &  \\
			X \arrow[rr, "\gamma", dotted] &  & Y \arrow[ld, "\psi"] \\
			& Z & 
		\end{tikzcd}
	\end{equation}
	and the morphism $\gamma$ guaranteed by Lemma \ref{lem:subsig:1}.
	Then the state $\lambda$ is non-trivial, i.e., is not the monoidal product of its $X$ and $Y$ marginals iff the morphism $\gamma$ does not factor through $1$. Moreover, there exists a unique (up to isomorphism) deterministic morphism $\psi : Y \to Z$ as shown on the right, such that for any morphism $\psi' : Y \to Z'$  such that $\psi' \circ \gamma$ is non-constant, $\psi'$ must factor through $\psi$.
\end{lemma}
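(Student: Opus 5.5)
The plan is to treat the two assertions of Lemma \ref{lem:dij03} separately. Throughout, $\gamma$ is the conditional morphism supplied by Lemma \ref{lem:subsig:1}, via the disintegration of Assumption \ref{A:disinteg}, so that $\lambda = \graph^T(\gamma)\circ \mu_Y$ up to the symmetry of $\otimes$, where $\mu_Y = \pi_Y\lambda$ and $\mu_X = \pi_X \lambda$. I will not worry about the direction of $\gamma$: the diagram draws it $X\to Y$ while Lemma \ref{lem:subsig:1} gives $\Omega_Y\to\Omega_X$, and the argument is symmetric.

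\textbf{First assertion.} If $\gamma$ factors through $1$, say $\gamma = \nu\circ \deleteF$, then applying $\pi_X$ to the disintegration identity \eqref{eqn:def:disinteg:2} forces $\nu = \gamma\circ\mu_Y = \mu_X$. Substituting back and using naturality of $\deleteF$ from \eqref{eqn:def:MarkovCat:3}, the copy in $\graph^T(\gamma)\circ\mu_Y$ becomes $\mu_X\otimes \mu_Y$, so $\lambda$ is trivial. Conversely, if $\lambda=\mu_X\otimes\mu_Y$, then the constant morphism $\mu_X\circ\deleteF$ is itself a disintegration. By the uniqueness of disintegrations up to almost sure equality, as in Lemma \ref{lem:disinteg_invrsn}, $\gamma$ equals $\mu_X\circ\deleteF$ $\mu_Y$-almost surely. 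I will read ``factors through $1$'' in this almost-sure sense, and I expect to state that convention explicitly.

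\textbf{Second assertion.} The plan is to build $\psi$ as a coequalizer-type universal object: the largest deterministic quotient of $Y$ that is still seen by $\gamma$. I would consider the class of deterministic $\psi':Y\to Z'$ with $\psi'\circ\gamma$ non-constant, ordered by factorization, and take $\psi$ to be the colimit (or pushout, in the paper's language of spans) over the relation identifying two morphisms when they agree $\gamma$-almost surely. The intended instance is $\StochCat$. There $Z$ is $Y$ equipped with the sub-sigma-algebra generated by the measurable functions $y\mapsto \gamma(y)(A)$, i.e. the factor determining the conditional law, and $\psi$ is the identity map onto this coarser space. Determinism of $\psi$ then follows from Assumption \ref{A:det_base}, since $\psi$ lies in the image of $\iota$. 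Uniqueness up to isomorphism is automatic once $\psi$ is characterised by a universal property.

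\textbf{Main obstacle.} The hard part is existence of this universal $\psi$ at the abstract level. A general Markov category need not have the required colimits of deterministic morphisms, and the stated property (every $\psi'$ with $\psi'\circ\gamma$ non-constant factors through $\psi$) is not obviously closed under the construction. I expect to need an additional assumption, for example that $\calX$ has coequalizers preserved by $\iota$, or else to argue concretely in $\StochCat$ via sub-sigma-algebras as above. The step I would attempt first is to show that the family of such $\psi'$ is directed under common refinement, using copied products $\otimes^{\copyF}$ of two such morphisms. That would let $\psi$ be realised as the copied product of the generating family.
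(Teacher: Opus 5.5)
Your argument for the first assertion is sound once ``factors through $1$'' is read $\mu_Y$-almost surely. That reading is forced: a version of the disintegration of a product state can differ from $\mu_X\circ\deleteF_Y$ on a null set. Two small corrections. The step $(\mu_X\circ\deleteF_Y\otimes\Id_Y)\circ\copyF_Y\circ\mu_Y=\mu_X\otimes\mu_Y$ is the counit law in \eqref{eqn:def:MarkovCat:1}, not naturality of $\deleteF$. And a.s.-uniqueness of disintegrations is immediate from the definition of almost sure equality, since two disintegrations $s,s'$ satisfy $\graph^T(s)\circ\mu_Y=\lambda=\graph^T(s')\circ\mu_Y$. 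Lemma \ref{lem:disinteg_invrsn}, which concerns deterministic maps, is not the right citation.

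\textbf{The gap is in the second assertion, in the property you set out to realise.}

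\emph{The literal property is degenerate.} For $\psi'\circ\gamma$ to make sense, $\gamma$ must go $X\to Y$ as drawn. When $\lambda$ is non-trivial, $\psi'=\Id_Y$ is then admissible by the first assertion. So the stated property forces $\Id_Y=\chi\circ\psi$ (at least $\mu_Y$-a.s.) for some $\chi$. Hence $\psi$ is a split monomorphism and cannot be a proper coarsening of $Y$. Read literally, the clause is proved in one line by $\psi=\Id_Y$. For trivial $\lambda$ it is vacuous and every deterministic $\psi$ qualifies, including $Y\to 1$, so ``unique up to isomorphism'' fails. The property only asks for some factorization, so it is weak initiality; your remark that uniqueness is automatic from a universal property does not apply.

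\emph{Your construction cannot avoid this.} Admissible maps are closed under refinement: if $\psi'=\chi\circ\psi''$ and $\psi'\circ\gamma$ is non-constant, so is $\psi''\circ\gamma$. So the ``largest quotient still seen by $\gamma$'', or any common refinement of the admissible family by copied products, is just $Y$. A colimit over the family is instead a common coarsening, which factors through each $\psi'$ rather than conversely.

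\emph{Your concrete candidate fails the stated property.} The $\StochCat$ candidate, the sub-$\sigma$-algebra generated by $y\mapsto\gamma(y)(A)$, uses $\gamma$ in the opposite direction $Y\to X$. The symmetry you invoke holds only for the first assertion. Take $Y=Y_1\otimes Y_2$ with $X$ dependent on $Y_1$, and $Y_2$ non-degenerate and independent of $(X,Y_1)$. Then that factor only sees $Y_1$, and the admissible map $\Id_Y$ does not factor through it.

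\textbf{What is missing is a correct universal property, to be fixed before any existence argument.} The factor you describe is the right object for the classical result behind this lemma (Corollary \ref{cor:rel_ind}), but it is characterised differently, in one of two equivalent ways.
\begin{enumerate}
\item It is the coarsest deterministic $\psi$ through which the conditional $Y\to X$ factors $\mu_Y$-a.s. Equivalently, $X$ and $Y$ are conditionally independent given $\psi(Y)$ under $\lambda$. Then $\psi$ factors through every such $\psi'$, the reverse of the stated direction.
\item It is the common factor of $Y$ and the relatively independent infinite self-join $\otimes^{|S|}X$ of Section \ref{sec:sequence}. There $\psi'$ factors through $\psi$ exactly when $\psi'\circ\pi_Y$ is a.s.\ a function of the $\otimes^{|S|}X$-coordinates.
\end{enumerate}
Existence in that form would have to come from something like Assumption \ref{A:pushout} combined with the Hewitt--Savage argument (Lemma \ref{lem:HewSav}, Lemma \ref{lem:f4jd93}). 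That argument identifies this common factor with the one generated by the conditional law; a directed colimit of copied products will not. The paper gives no proof of this lemma, so it does not supply a repaired formulation either.
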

\begin{equation}
	\begin{tikzcd}
		&  & 1 \arrow[d, "\lambda"] \arrow[lld, "\lambda_{\infty}"'] &  &  \\
		\otimes^{\Time} X \otimes Y \arrow[dd, "\pi_{\otimes^{\Time} X}"'] \arrow[rr, "\pi_t"'] &  & X \otimes Y \arrow[ld, "\pi_X"'] \arrow[rd, "\pi_Y"] &  &  \\
		& X \arrow[rr, "\gamma", dotted] &  & Y \arrow[r, "\psi"] & Z \\
		\otimes^{\Time} X \arrow[rrru, "\gamma_{\infty}"', dotted] \arrow[ru, "\pi_t"] &  &  &  & 
	\end{tikzcd}
\end{equation}
\begin{equation}
	\begin{tikzcd}
		&  &  & \otimes^{\Time} X \otimes Y \arrow[rdd, "\pi_Y", bend left] \arrow[llddd, "\pi_{\otimes^{\Time} X}"'] &  \\
		1 \arrow[rrr, "\lambda"] \arrow[rrru, "\lambda_{\Time}"] &  &  & X \otimes Y \arrow[rd, "\pi_Y"] \arrow[ld, "\pi_X"'] \arrow[u, "\iota_t \otimes Y"'] &  \\
		&  & X \arrow[rr, "\gamma"] \arrow[ld, "\iota_t"] &  & Y \\
		& \otimes^{\Time} X \arrow[rrru, "\gamma_{\infty}"'] &  &  & 
	\end{tikzcd}
\end{equation}
\[\begin{tikzcd}
	&  &  & \otimes^{\Time} X \otimes Y \arrow[rdd, "\pi_Y", bend left] \arrow[llddd, "\pi_{\otimes^{\Time} X}"'] &  \\
	1 \arrow[rrr, "\lambda"] \arrow[rrru, "\lambda_{\Time}"] \arrow[dd, "{\mu_{X, \Time}}"'] \arrow[rdd, "{\mu_{X, \Time}}"] &  &  & X \otimes Y \arrow[rd, "\pi_Y"] \arrow[ld, "\pi_X"'] \arrow[u, "\iota_t \otimes Y"'] &  \\
	&  & X \arrow[rr, "\gamma"] \arrow[ld, "\iota_t"] &  & Y \\
	\otimes^{\Time} X \arrow[r, "\shift"] & \otimes^{\Time} X \arrow[rrru, "\gamma_{\Time}"'] &  &  & 
\end{tikzcd}\]
\[\begin{tikzcd}
	&  &  & \otimes^{\Time} X \otimes Y \arrow[rdd, "\pi_Y", bend left] \arrow[llddd, "\pi_{\otimes^{\Time} X}"'] &  \\
	1 \arrow[rrr, "\lambda"] \arrow[rrru, "\lambda_{\Time}"] &  &  & X \otimes Y \arrow[rd, "\pi_Y"] \arrow[ld, "\pi_X"'] \arrow[u, "\iota_t \otimes Y"'] &  \\
	&  & X \arrow[rr, "\gamma"] \arrow[ld, "\iota_t"] &  & Y \arrow[dr, "\psi", Holud ]\\
	& \otimes^{\Time} X \arrow[rrru, "\gamma_{\infty}"'] \arrow[dashed, rrrr, Holud] &  &  & & Z
\end{tikzcd}\]

\begin{lemma} \label{lem:sps_mrphsm_invert}
	Suppose Assumptions \ref{A:incl} and \ref{A:disinteg} hold. Then a morphism between state-reserving dynamics can be inverted almost everywhere . More precisely, given an $\sps$ morphism as shown on the left :
	\begin{equation} \label{eqn:sps_mrphsm_invert}
		\begin{tikzcd} (\Omega_X, \mu_X, \Phi_X) \arrow[d, "h"'] \\ (\Omega_Y, \mu_Y, \Phi_Y) \end{tikzcd}
		\imply
		\begin{tikzcd} \iota \Phi_Y \arrow[d, Rightarrow, "(h^{-1}|\mu_X)"'] \\ \iota \Phi_X \end{tikzcd}
		\;\Leftrightarrow\;
		\forall t \,:\,
		\mu_Y-\mbox{a.s.} \,:\,
		\begin{tikzcd}
			\iota \Phi_Y \arrow[d, "(h^{-1}|\mu_X)"'] \arrow[r, "\iota \Phi_Y^t"] & \iota \Phi_Y \arrow[d, "(h^{-1}|\mu_X)"] \\
			\iota \Phi_X \arrow[r, "\iota \Phi_X^t"'] & \iota \Phi_X
		\end{tikzcd}
	\end{equation}
	the disintegration of $h$ along $\mu_Y$ serves as a natural transformation between the two dynamics. Equivalently, it implies the commutations $\mu_Y$-almost everywhere as shown on the right.
\end{lemma}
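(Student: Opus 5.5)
The plan is to prove the almost-sure intertwining relation directly from the defining equation of the disintegration \eqref{eqn:def:disinteg:2}, using uniqueness of disintegrations up to almost-sure equality. Write $\gamma := (h^{-1}|\mu_X) : \iota\Omega_Y \to \iota\Omega_X$, and recall that $\mu_Y = \iota h \circ \mu_X$ because $h$ is an $\sps$-morphism. We must show $\gamma \circ \iota\Phi_Y^t = \iota\Phi_X^t \circ \gamma$ $\mu_Y$-almost surely, for each $t$. By the definition of almost sure equality this is the identity
\[ \graph\paran{ \gamma \circ \iota\Phi_Y^t } \circ \mu_Y = \graph\paran{ \iota\Phi_X^t \circ \gamma } \circ \mu_Y . \]

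The first step is to show that both $\gamma_1 := \gamma\circ \iota\Phi^t_Y$ and $\gamma_2 := \iota\Phi^t_X \circ \gamma$ are disintegrations of one and the same morphism, namely $\iota h$ along the state $\mu_X$, but with the indices rearranged. The cleanest route is to compute the joint state $\graph^T(\gamma_i)\circ\mu_Y$ on $\iota\Omega_X\otimes\iota\Omega_Y$ and show that it equals $(\iota\Phi_X^t\otimes \iota\Phi_Y^t)$ applied to $\graph(\iota h)\circ \mu_X$ after an appropriate twist.

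For $\gamma_2$, apply $\iota\Phi^t_X \otimes \Id$ to \eqref{eqn:def:disinteg:3}. The right side becomes $(\iota\Phi_X^t\otimes\Id)\graph(\iota h)\mu_X$. The joint state $\graph(\iota h)\mu_X$ is invariant under $\iota\Phi^t_X\otimes\iota\Phi^t_Y$: $\iota h$ and $\iota\Phi^t_X$ are deterministic by Assumption \ref{A:det_base}, so they commute with $\copyF$, $h$ intertwines the dynamics, and $\mu_X$ is invariant. For $\gamma_1$, precompose with $\mu_Y = \iota\Phi_Y^t\mu_Y$ and use that $\iota\Phi_Y^t$ is deterministic, so that $\copyF\circ\iota\Phi^t_Y = (\iota\Phi^t_Y\otimes\iota\Phi^t_Y)\copyF$. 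Both computations should then produce the same state $\rho_t := (\iota\Phi^t_X\otimes \Id)\graph(\iota h)\mu_X$, expressed as $(\gamma_i\otimes\Id)\copyF_Y\mu_Y$ up to relabeling the second factor through the invariance of $\graph(\iota h)\mu_X$.

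The main obstacle is this bookkeeping. The second tensor factor carries $\iota\Phi_Y^t y$ in one computation and $y$ in the other, so I expect to compare $(\gamma_i\otimes\Id)\copyF_Y\mu_Y$ only after applying $\Id\otimes\iota\Phi^t_Y$ on both sides. Equality of these joint states is exactly the equation $\graph(\gamma_1)\mu_Y=\graph(\gamma_2)\mu_Y$ after swapping factors. If the direct route stalls, I would instead invoke Lemma \ref{lem:disinteg_invrsn}: $\gamma$ is the $\mu_Y$-a.s. unique inverse of $\iota h$, and both $\gamma_1,\gamma_2$ satisfy $\iota h\circ\gamma_i = \iota\Phi^t_Y$ a.s. together with the same joint distribution with their input, which forces a.s. equality.

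Finally, naturality as a transformation $\iota\Phi_Y \Rightarrow \iota\Phi_X$ is just these squares holding for every $t\in\Time$, understood in the quotient by $\mu_Y$-a.s. equality. This gives the stated equivalence.
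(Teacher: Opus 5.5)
Your plan has a genuine gap. The identity your computations can actually deliver is the \emph{twisted} one. By determinism of $\iota\Phi^t_Y$, invariance of $\mu_Y$ and of $\graph(\iota h)\circ\mu_X$, and \eqref{eqn:jsp3z}, both $(\gamma_1\otimes\iota\Phi^t_Y)\circ\copyF\circ\mu_Y$ and $(\gamma_2\otimes\iota\Phi^t_Y)\circ\copyF\circ\mu_Y$ equal $\graph(\iota h)\circ\mu_X$. Their common value is not $\rho_t$: the claim $(\gamma_1\otimes\Id)\circ\copyF\circ\mu_Y=\rho_t$ is already equivalent to the lemma.

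The gap is your next sentence, that equality of these twisted states ``is exactly'' $\graph(\gamma_1)\circ\mu_Y=\graph(\gamma_2)\circ\mu_Y$. Post-composing with $\Id\otimes\iota\Phi^t_Y$ is not injective on joint states. You may cancel it only if $\iota\Phi^t_Y$ has a left inverse $\psi$ with $\psi\circ\iota\Phi^t_Y=\Id$ $\mu_Y$-a.s., and nothing in the hypotheses provides one. The fallback does not close the gap either. The condition $\iota h\circ\gamma_i=\iota\Phi^t_Y$ a.s.\ does not determine $\gamma_i$: any kernel concentrated on the right fibres satisfies it, and the uniqueness clause of Lemma~\ref{lem:disinteg_invrsn} fails in that form. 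Meanwhile ``the same joint distribution with their input'' is literally the identity to be proved, so that route is circular.

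This cannot be patched without an extra hypothesis, because for non-invertible time the statement is false. Work in the prime instance $\calX=\MeasCat$, $\calY=\StochCat$ on standard Borel spaces, where disintegrations exist. Let $\Time=\mathbb{N}$ and $\Omega_X=\Omega_Y=\{0,1\}^{\mathbb{N}}$, with the fair Bernoulli measure $\mu$ and the one-sided shift $\sigma$, and take $h=\sigma$, which is an $\sps$-morphism. Then $\gamma(y)=\tfrac12(\delta_{0y}+\delta_{1y})$, i.e.\ $\gamma$ prepends an independent fair bit $b$. Hence $\iota\sigma\circ\gamma=\Id$, whereas $\gamma\circ\iota\sigma$ sends $y$ to the law of $b\,\sigma y$. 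So $\graph(\iota\sigma\circ\gamma)\circ\mu$ and $\graph(\gamma\circ\iota\sigma)\circ\mu$ are the laws of $(y,y)$ and $(y,b\,\sigma y)$, which differ: the leading bits of the two components agree almost surely in the first and only with probability $\tfrac12$ in the second.

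Yet $(\gamma_1\otimes\iota\sigma)\circ\copyF\circ\mu=(\gamma_2\otimes\iota\sigma)\circ\copyF\circ\mu$, both being the law of $(b\,w,w)$ with $w\sim\mu$. Also both $\gamma_i$ satisfy $\iota h\circ\gamma_i=\iota\sigma$. So each of your two steps fails exactly on this example. The paper's own proof stops right after writing down the same target identity and citing \eqref{eqn:jsp3z}, so it does not get past this point either.

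What your argument does prove is the invertible case. If each $\iota\Phi^t_Y$ has a $\mu_Y$-a.s.\ left inverse (e.g.\ $\Time$ is a group, using $\iota\Phi^{-t}_Y$), apply $\Id\otimes\iota\Phi^{-t}_Y$ to the twisted identity to cancel the twist. This also uses Assumption~\ref{A:det_base}, which the statement does not list.
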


Lemma \ref{lem:sps_mrphsm_invert} is proved in Section \ref{sec:proof:sps_mrphsm_invert}. It relies on the observation that \eqref{eqn:def:disinteg:3} yields
\begin{equation} \label{eqn:jsp3z}
	\graph^T(h^{-1}|\mu_X) \circ \mu_Y = \graph^T(h^{-1}|\mu_X) \circ h \circ \mu_X = \graph(h) \circ \mu_X.
\end{equation}

\begin{Erg}{}{} 
	In the context of $\StochCat$, Lemma \ref{lem:sps_mrphsm_invert} essentially states that a measure preserving change of variables can be inverted, after which it becomes a Markov kernel. The commutation in \eqref{eqn:sps_mrphsm_invert} essentially means that each fibre $h^{-1}(y)$ of the semi-conjugacy is invariant under the flow $\Phi^t$. Moreover, the conditional measures along these fibres are preserved under the flow, namely :
	\[ (\Phi_X^t)_* \paran{ \mu_X \mid f^{-1}(y) } = \mu_X \mid f^{-1}( \Phi_Y^t y ) , \quad \forall t\in \Time, \, \mu_Y-a.e. \, y\in T. \]
\end{Erg}
\section{Abstract probability spaces}

An abstract probability space within a Markov category is realized not by an explicit measure space $(X, \Sigma, \mu)$, but structurally as a point state morphism $p: 1 \to X$, where $1$ is the monoidal unit object representing the trivial scalar space. By abstracting the measure-theoretic foundations into the axiomatic properties of semi-cartesian monoidal categories, this formulation models probability measures purely through their operational behavior, namely by defining states as morphisms and using the deletion morphism to satisfy the normalization condition. The study of abstract probability spaces \cite{stein2025random} \cite{EnsarguetPerrone2023cat} \cite{BraithwaiteEtAl2023compos} \cite{KamiyaWelliaveetil2021bayes} within the framework of Markov categories has emerged as one of the most powerful structural approaches to probability theory, statistics, and information theory. By stripping away the point-set topological and measure-theoretic baggage, this approach distills stochastic behavior down to its pure compositional essence. Traditional measure-theoretic probability is highly expressive but often obscures the underlying algebra of information flow. Markov categories shift the focus from the objects (spaces and measurable sets) to the morphisms (stochastic processes, channels, and kernels) and how they compose.

\begin{Assumption} \label{A:causal}
	$\calY$ is causal.
\end{Assumption}

\begin{definition}
	\cite[Def 13.8]{fritz2020Stoch}  Given a Markov category $\calC$ having conditionals and the property of causality, one can form a category $\text{PS}(\calC)$ in which objects are pointed objects $1 \xrightarrow{\mu} X$, to be represented as $(X,\mu)$. A morphism from $(X,\mu)$ to another object $(X',\mu')$ is a $\calC$-morphism $f:X\to X'$ such that $f\circ \mu = \mu'$ $\mu$-a.s.. The composition of two morphisms is the usual composition inherited from $\calC$.
\end{definition}

The framework seamlessly unifies discrete probability, continuous measure-theoretic probability, especially when $\calC = \StochCat$.  Abstracting probability spaces allows one to bypass certain classical measure-theoretic pathologies like the non-existence of regular conditional probabilities in non-standard spaces. If a construction works in a well-behaved Markov category, the structure of the probability spaces allows easy extensions. At this point we make a basic observation :

\begin{lemma}
	\cite[Def 13.8]{fritz2020Stoch}  If a Markov category $\calC$ is causal and has conditionals, then
	\begin{enumerate} [(i)]
		\item $\text{PS}(\calC)$ inherits the symmetric monoidal structure from $\calC$.
		\item $\text{PS}(\calC)$ is created from an equivalence relation on the pointed category $\Comma{\iota}{\calC}$. There is a functor 
		\[ \Comma{\iota}{\calC} \to \text{PS}(\calC) , \]
		which is bijective on objects and surjective on hom-sets.
	\end{enumerate}
\end{lemma}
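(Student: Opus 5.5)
The plan is to follow Fritz's construction of $\text{PS}(\calC)$ and check each claim in turn. We first make the equivalence relation in (ii) explicit. On each hom-set $\Comma{\iota}{\calC}\big((X,\mu),(X',\mu')\big)$ of the pointed category, declare $f\sim g$ iff $f=g$ $\mu$-almost surely, i.e. $\graph(f)\circ\mu=\graph(g)\circ\mu$ in the sense of \eqref{eqn:def:almost_sure}.

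\textbf{Step 1: $\sim$ is a congruence.} Reflexivity, symmetry and transitivity are immediate, because almost sure equality is an equality of the states $\graph(\cdot)\circ\mu$. Compatibility with composition needs two checks.
\begin{itemize}
\item Post-composition: if $f\sim g$ and $k$ is any morphism, then $\graph(k\circ f)=(X\otimes k)\circ\graph(f)$. Hence $\graph(k f)\mu=(X\otimes k)\graph(f)\mu=(X\otimes k)\graph(g)\mu=\graph(kg)\mu$.
\item Pre-composition: if $f\sim g$ as maps $(X',\mu')\to(X'',\mu'')$ and $k:(X,\mu)\to(X',\mu')$ satisfies $k\mu=\mu'$, we must show $fk=gk$ $\mu$-a.s. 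Here we use conditionals. Disintegrate the joint state $(X\otimes k)\copyF_X\mu$ along its second marginal $k\mu=\mu'$. This writes the joint as $\mu'$ followed by a conditional $X'\to X$ and a copy. The statement then reduces to $f=g$ $\mu'$-a.s., tensored with the conditional. Causality is invoked exactly to move the almost sure equality through this extra copy.
\end{itemize}
This pre-composition check is the step I expect to be the main obstacle. It is the only place where both hypotheses, conditionals and causality, are genuinely used, and it mirrors Fritz's proof that almost sure equality is stable under composition in causal categories.

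\textbf{Step 2: the quotient is $\text{PS}(\calC)$.} With the congruence in hand, $\text{PS}(\calC)$ is the quotient category $\Comma{\iota}{\calC}/\!\sim$. The quotient functor is the identity on objects, hence bijective on objects, and it sends each morphism to its class, hence is surjective on hom-sets. This proves (ii).

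\textbf{Step 3: the monoidal structure, proving (i).} We transport the monoidal structure of the pointed category from Lemma \ref{lem:pcm_cm} and Theorem \ref{thm:ptMarkov_Markv}, with $(X,\mu)\otimes(Y,\nu)=(X\otimes Y,\mu\otimes\nu)$. Associators, unitors and symmetries come from $\calC$; they are deterministic isomorphisms, so their classes remain isomorphisms. It remains to show that $\otimes$ respects $\sim$: if $f\sim f'$ $\mu$-a.s. and $g\sim g'$ $\nu$-a.s., then $f\otimes g\sim f'\otimes g'$ $(\mu\otimes\nu)$-a.s.
\begin{itemize}
\item Write $\graph(f\otimes g)$ as $\graph(f)\otimes\graph(g)$ composed with a symmetry, using \eqref{eqn:def:MarkovCat:3}.
\item Then $\graph(f\otimes g)(\mu\otimes\nu)=\mathrm{symm}\circ(\graph(f)\mu\otimes\graph(g)\nu)$, and this is unchanged when $f,g$ are replaced by $f',g'$.
\end{itemize}
The coherence axioms hold in the quotient because they already hold upstairs. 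Finally, the quotient functor is strict monoidal by construction, which completes (i).
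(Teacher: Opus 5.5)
Your proposal is correct. The paper gives no proof of this lemma beyond the pointer to Fritz, so the comparison worth making is at the one nontrivial step: stability of $\mu$-a.s.\ equality under pre-composition. There your write-up misattributes the work.

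Your disintegration argument closes that step by itself. If $(X\otimes k)\circ\copyF_X\circ\mu=(s\otimes X')\circ\copyF_{X'}\circ\mu'$ with $\mu'=k\mu$, then
\[ \graph(fk)\circ\mu=(X\otimes f)\circ(X\otimes k)\circ\copyF_X\circ\mu=(s\otimes X'')\circ\graph(f)\circ\mu' , \]
and likewise for $g$. So $f=g$ $\mu'$-a.s.\ immediately gives $fk=gk$ $\mu$-a.s. There is no leftover copy, and causality is never used.

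The other natural route, which is what causality is designed for, uses causality and no conditionals. Apply the causality axiom to $\mu$, $k$ and the pair $f,g$; its hypothesis is exactly $f=g$ $k\mu$-a.s. Deleting the middle $X'$-output in its conclusion yields $\graph(fk)\circ\mu=\graph(gk)\circ\mu$.

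So either hypothesis suffices on its own, contrary to your remark that both are genuinely needed there. Indeed, Fritz proves that conditionals imply causality, so ``causal and has conditionals'' is redundant. Your version is a direct computation once Bayesian inverses exist. The causality version shows that $\text{PS}(\calC)$ is a category even in causal Markov categories without conditionals.

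In Step 3, do not lean on Lemma~\ref{lem:pcm_cm} or Theorem~\ref{thm:ptMarkov_Markv}:
\begin{itemize}
\item The discussion around Lemma~\ref{lem:pcm_cm} concerns the smash product, which is not the tensor you need.
\item The copy maps of Theorem~\ref{thm:ptMarkov_Markv} are not morphisms $(X,\mu)\to(X\otimes X,\mu\otimes\mu)$ unless $\mu$ is deterministic. This is why $\text{PS}(\calC)$ is only symmetric monoidal, not Markov.
\end{itemize}
The coslice tensor $(X\otimes Y,\mu\otimes\nu)$ with unit $(1,\Id_1)$ that you actually write down can be verified directly. The structure isomorphisms of $\calC$ preserve the tensored states by naturality and coherence. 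Your computation with $\graph(f\otimes g)$ then shows that $\otimes$ respects almost sure equality in any Markov category.
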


\begin{Assumption} \label{A:pushout}
	The category $\calY$ has pushouts.
\end{Assumption}

Given a category $J$, let $1 \star J$ denote the category in which there is an additional object added to $J$ that resides as an initial object.

\begin{lemma} [Pointed versions of limits and colimits] \label{lem:jd9ps3}
	Let $\calC$ be a category with a terminal object $1$.
	\begin{enumerate} [(i)]
		\item If $\calC$ has all limits of pattern $J$, then so does its pointed version $\Comma{1}{\calC}$.
		\item If $\calC$ has all colimits of pattern $1\star J$, then $\Comma{1}{\calC}$ has all colimits of pattern $J$.
		\item If $\calC$ has all colimits of pattern $J$ for some connected category $J$, then so does $\Comma{1}{\calC}$.
	\end{enumerate}
\end{lemma}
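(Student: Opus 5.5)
The proof rests on the standard description of the coslice category $\Comma{1}{\calC}$. An object is a pair $(X,x)$ with $x:1\to X$. A morphism $(X,x)\to(X',x')$ is a $\calC$-morphism $f$ with $f\circ x = x'$. Write $U:\Comma{1}{\calC}\to\calC$ for the forgetful functor. Each of the three parts will be obtained by building the required (co)limit in $\calC$ and equipping it with a canonical point.

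\textbf{Part (i).} Given a diagram $D:J\to\Comma{1}{\calC}$, form $L=\lim UD$ in $\calC$, with projections $p_j:L\to UD(j)$. The points $x_j:1\to UD(j)$ form a cone over $UD$, because every transition morphism of $D$ preserves points. The universal property therefore gives a unique $\ell:1\to L$ with $p_j\ell=x_j$. This makes $(L,\ell)$ an object and each $p_j$ a pointed morphism. Now take any pointed cone $(A,a)\to D$. Its underlying $\calC$-cone factors uniquely through $L$ via some $u$. Then $u\circ a$ and $\ell$ have the same composites with every $p_j$, so uniqueness forces $u\circ a=\ell$. This is the standard fact that $U$ creates limits. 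Note that the terminal object $1$ of $\calC$ is not even needed here.

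\textbf{Part (ii).} Extend $D$ to a diagram $\hat D:1\star J\to\calC$. It sends the new initial object to $1$, and the unique arrow from it to $j$ to $x_j$. Functoriality holds because every morphism of $D$ preserves points. Let $C=\operatorname{colim}\hat D$, and point it by the coprojection $c_\star:1\to C$ at the initial object. The coprojections $c_j$ are then pointed, since $c_j x_j=c_\star$.

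Next, a pointed cocone $(B,b)$ under $D$ is the same thing as a cocone under $\hat D$ with vertex $B$, where $b$ is the leg at $\star$. Pointedness of each leg is exactly the compatibility condition along the arrows $\star\to j$. Under this correspondence, the unique mediating $\calC$-morphism $C\to B$ is automatically pointed. It is also unique among pointed maps. So $(C,c_\star)$ is the colimit of $D$ in $\Comma{1}{\calC}$.

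\textbf{Part (iii).} For $J$ connected, take $C=\operatorname{colim} UD$ with coprojections $c_j$. Define the candidate point as $c_j x_j$. The step I expect to be the main obstacle is showing that this point does not depend on $j$; this is exactly where connectedness enters.

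For an arrow $f:j\to k$ in $J$ we have
\[ c_k x_k = c_k\, D(f)\, x_j = c_j x_j .\]
Since any two objects of a connected category are joined by a zigzag of arrows, all the composites $c_jx_j$ coincide. Connectedness also makes $J$ nonempty, so at least one $j$ exists and the point is well defined.

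The universal property then follows as in part (i). Given a pointed cocone $(B,b)$, take the mediating map $u:C\to B$ in $\calC$. It satisfies $u c_j x_j=b_jx_j=b$, so $u$ is pointed, and uniqueness is inherited from $\calC$. Thus $U$ creates connected colimits.

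Finally, I would remark why (ii) is needed in general. For a non-connected $J$, such as the discrete two-object category, the points $c_jx_j$ need not agree. The pointed colimit must then identify them, and this identification is precisely what the extra initial object in $1\star J$ encodes; for instance, coproducts in $\Comma{1}{\calC}$ are pushouts over $1$.
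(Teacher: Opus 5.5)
Your proof is correct. For (i) and for the connected case (iii) it is the paper's argument. In (i) the basepoints form a cone with vertex $1$, which induces the point of the limit, and pointedness of the mediating map comes from uniqueness after composing with the projections. In (iii) the same zigzag computation shows that the composites $c_j x_j$ all coincide; the paper places this argument under the label ``Claim (ii)''.

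You go beyond the paper on item (ii) of the statement, which the paper dismisses with ``follows similarly''. Your extension $\hat D : 1\star J \to \calC$ sends the new initial object to $1$ and the arrows out of it to the basepoints $x_j$. Together with the observation that pointed cocones under $D$ are exactly ordinary cocones under $\hat D$, this gives a complete, self-contained proof of (ii).

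You also finish the connected case properly. You check that the mediating morphism is pointed, via $u c_j x_j = b_j x_j = b$, and you note that connectedness forces $J \neq \emptyset$. The paper stops once the basepoint of the colimit has been constructed and never verifies the universal property in $\Comma{1}{\calC}$.

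For context, the two colimit statements can each be derived from the other:
\begin{itemize}
\item (ii) follows from (iii). The category $1\star J$ is always connected, and $(1,\mathrm{id}_1)$ is initial in $\Comma{1}{\calC}$. So applying (iii) to $\hat D$, viewed as a diagram in $\Comma{1}{\calC}$, gives (ii). This is presumably what the paper's ``similarly'' gestures at.
\item (iii) follows from (ii). For connected $J$ the inclusion $J \hookrightarrow 1\star J$ is final, so $J$-colimits in $\calC$ already supply the $1\star J$-colimits your part (ii) needs.
\end{itemize}
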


Lemma \ref{lem:jd9ps3} is proved in Section \ref{sec:proof:jd9ps3}. Lemma \ref{lem:jd9ps3} is meant to be used in conjunction with this elementary lemma from Category theory :

\begin{lemma} [Functor category has same (co)-limits] \label{lem:dpo3j}
	If a category $\calC$ has limits (or colimits) of a certain shape, then the functor category $\Functor{J}{\cal}$ also has limits (or colimits) of that exact same shape.
\end{lemma}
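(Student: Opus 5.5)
The plan is to compute (co)limits in the functor category $\Functor{J}{\calC}$ objectwise, i.e. pointwise in $\calC$. Let $K$ be the shape category in question and $D : K \to \Functor{J}{\calC}$ a diagram. By currying, $D$ is the same thing as a functor $\tilde D : K\times J \to \calC$, and for each object $j$ of $J$ we get a $K$-shaped diagram $D_j := \tilde D(-,j) : K\to \calC$. I will treat the limit case; the colimit case is the formal dual, obtained by applying the argument to $\calC^{op}$ and $J^{op}$, since $\Functor{J}{\calC}^{op} \cong \Functor{J^{op}}{\calC^{op}}$.

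\textbf{Step 1: define the candidate limit pointwise.} For each $j\in J$, choose a limit cone $(L(j), \{p_{k,j} : L(j)\to D_j(k)\}_k)$ of $D_j$ in $\calC$; it exists by hypothesis. For a morphism $u : j\to j'$ in $J$, the composites $\tilde D(k,u)\circ p_{k,j}$ form a cone over $D_{j'}$, because $\tilde D$ is a bifunctor, so naturality in $k$ commutes with $u$. By the universal property this cone induces a unique $L(u) : L(j)\to L(j')$ with $p_{k,j'}\circ L(u) = \tilde D(k,u)\circ p_{k,j}$. Uniqueness then gives $L(\Id_j)=\Id$ and $L(u'u)=L(u')L(u)$, so $L$ is a functor $J\to\calC$, and the defining equation says each $p_k : L \Rightarrow D(k)$ is a natural transformation. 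The $p_k$ are compatible with the morphisms of $K$ because this holds componentwise. Hence $(L,\{p_k\})$ is a cone over $D$ in $\Functor{J}{\calC}$.

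\textbf{Step 2: verify the universal property.} Take any cone $(M,\{q_k : M\Rightarrow D(k)\})$. For each $j$, the components $q_{k,j}$ form a cone over $D_j$, so there is a unique $\theta_j : M(j)\to L(j)$ with $p_{k,j}\theta_j = q_{k,j}$. To see that $\theta$ is natural, fix $u : j\to j'$. Both $L(u)\theta_j$ and $\theta_{j'}M(u)$ are factorizations of the cone $\{\tilde D(k,u) q_{k,j} = q_{k,j'}M(u)\}$ through the limit $L(j')$; this is checked by composing with $p_{k,j'}$ and using naturality of $q_k$. Uniqueness of the factorization then forces the two maps to be equal. Any other factorization $\theta'$ of the cone through $L$ must satisfy the same componentwise equations, so $\theta' = \theta$.

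\textbf{Main obstacle.} The only delicate point is Step 1: making the choice of limits functorial in $j$. This requires invoking uniqueness in the universal property, not merely existence, and the same uniqueness argument recurs for the naturality of $\theta$ in Step 2. Everything else is bookkeeping. In the paper this lemma will be applied with $J=\Time$ and $\calC=\Comma{1}{\iota}$ or $\Comma{1}{\calY}$, so that Lemma \ref{lem:jd9ps3} together with Assumption \ref{A:pushout} transfers pushouts (a connected shape) to dynamical systems of pointed objects.
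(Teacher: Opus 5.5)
Your proof is correct. It is the standard objectwise construction. You use the uniqueness clause of the universal property properly, both to make $j\mapsto L(j)$ functorial and to obtain naturality and uniqueness of the mediating map. The colimit case does follow from the duality $\Functor{J}{\calC}^{op}\cong\Functor{J^{op}}{\calC^{op}}$, with the shape replaced by its opposite.

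The paper states this lemma as an elementary fact and gives no proof, so there is no competing route to compare. Your construction actually shows slightly more than the statement: these (co)limits are computed pointwise, and so they are preserved by evaluation at the unique object of $\Time$. That stronger form is what is needed when the lemma is combined with Lemma \ref{lem:jd9ps3} in Theorem \ref{thm:PSC}.
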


We next present an important consequence of Lemma \ref{lem:jd9ps3} and Lemma \ref{lem:dpo3j}.

\begin{theorem} \label{thm:PSC}
	Suppose Assumptions \ref{A:closed}, \ref{A:incl}, and \ref{A:pushout} hold. Then 
	\begin{enumerate} [(i)]
		\item the categories $\Functor{\Time}{\calY}$ and $\sps$ have pushouts.
		\item The projection of pushouts lead to pushout diagrams in $\Comma{1}{\iota}$.
		\item Suppose that Assumption \ref{A:causal} holds.Then  $\text{PS}(\calC)$ has pushouts.
	\end{enumerate}
\end{theorem}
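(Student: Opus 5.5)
The plan is to derive all three parts by stacking Lemma \ref{lem:jd9ps3} and Lemma \ref{lem:dpo3j}, together with the comma-category description of $\sps$ from Theorem \ref{thm:StPrsrv}. Throughout, the pushout shape is $J = \{\bullet \leftarrow \bullet \rightarrow \bullet\}$, which is connected. So part (iii) of Lemma \ref{lem:jd9ps3} applies directly and we never need the shape $1\star J$.

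For part (i), the statement for $\Functor{\Time}{\calY}$ follows at once from Assumption \ref{A:pushout} and Lemma \ref{lem:dpo3j}: pushouts are computed pointwise on the single object of $\Time$, and the induced maps $\Phi^t$ on the pushout object come from its universal property.

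For $\sps$, I will use description (i) of Theorem \ref{thm:StPrsrv}, namely $\sps \simeq \Functor{\Time}{\Comma{1}{\iota}}$. By Lemma \ref{lem:dpo3j} it then suffices to show that $\Comma{1}{\iota}$ has pushouts. Here the pushout of a span $(X_1,x_1) \leftarrow (X_0,x_0) \rightarrow (X_2,x_2)$ is formed as follows:
\begin{itemize}
\item take the pushout $P$ of the underlying span;
\item equip $P$ with the state $j_1 \circ x_1 = j_2 \circ x_2$.
\end{itemize}
These two states agree because both equal the composite $1 \to X_0 \to P$. This is exactly the argument behind Lemma \ref{lem:jd9ps3}(iii): a cocone under the span is determined by its legs, and connectedness forces the apex point to be compatible.

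The step I expect to be the main obstacle is the mixed nature of $\Comma{1}{\iota}$. Objects live in $\calX$, states live in $\calY$, and Assumption \ref{A:pushout} only gives pushouts in $\calY$. I would try first to read the pushout as taken in $\calX$ and transported via $\iota$, which requires assuming $\iota$ preserves it. If that is unavailable, I would fall back on working with the full image of $\iota$ and $\Functor{\Time}{\calY}$-valued dynamics. In that reading the pushout in $\calY$ of $\iota$-images carries the induced dynamics from part (i), and the invariant state $j_1\mu_1$ is invariant because $j_1$ intertwines the dynamics.

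For part (ii), I plan to show that the forgetful functor $\Forget_3$ from \eqref{eqn:90dl8} sends the pushouts of part (i) to pushouts in $\Comma{1}{\iota}$. Pushouts in a functor category are pointwise, so evaluating at the unique object of $\Time$ preserves them. Composing with the equivalence of Theorem \ref{thm:StPrsrv} gives exactly the projection in question.

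For part (iii), under Assumption \ref{A:causal}, $\text{PS}(\calY)$ is the quotient of the pointed category by $\mu$-a.s. equality of morphisms. The quotient functor is bijective on objects and full. I will take the pushout constructed in the pointed category and check that it remains a pushout after the quotient, which needs two things:
\begin{itemize}
\item \emph{Existence.} Any cocone in $\text{PS}$ lifts to a cocone of representatives by fullness. The commuting conditions need only hold almost surely, so causality is used to replace representatives by ones that compose correctly almost surely.
\item \emph{Uniqueness.} Two mediating maps agreeing with the legs almost surely are themselves equal almost surely with respect to the pushout state. I would argue this from the pushout state being $j_1\mu_1$ together with causality.
\end{itemize}
The uniqueness claim is the delicate part of (iii).
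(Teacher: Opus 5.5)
You follow the route the paper indicates: its proof of Theorem~\ref{thm:PSC} is empty, and the text only cites Lemmas~\ref{lem:jd9ps3} and~\ref{lem:dpo3j}. Your treatment of $\Functor{\Time}{\calY}$ is fine, and so is the pointwise argument for (ii) once $\Comma{1}{\iota}$ has pushouts.

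\textbf{The gap in (i).} The gap is exactly the obstacle you flag, and neither of your remedies closes it. In $\Comma{1}{\iota}$ and in $\sps$ the apex and legs must be an $\calX$-object and $\calX$-morphisms. So the pushout of the underlying span has to be formed in $\calX$, whereas Assumption~\ref{A:pushout} only concerns $\calY$.

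Your first remedy needs pushouts in $\calX$, which are not assumed. The preservation condition on $\iota$ that you add is unnecessary: put the state $\iota j_1\circ x_1=\iota j_2\circ x_2$ on an $\calX$-pushout $P$. Then the $\calX$-mediator $u$ of any cocone $(q,Q,g_1,g_2)$ automatically satisfies $\iota u\circ\iota j_1\circ x_1=\iota g_1\circ x_1=q$.

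Your fallback changes the category. A $\calY$-pushout of $\iota$-images need not be of the form $\iota\Omega$, and the induced $\bar\Phi^t$ need not be of the form $\iota\Phi^t$. So you would obtain pushouts in $\sps(\Id_{\calY})$, not in $\sps(\iota)$.

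The stated hypotheses cannot close the gap either. Take $\calY=\SetCat$, which is a Markov category with pushouts. Take $\calX$ to be the full subcategory of sets of cardinality $\neq 2$; it is closed under products and exponentials, hence Cartesian closed. Let $\iota$ be the inclusion. In $\Comma{1}{\iota}$ consider the span $(1,\ast)\leftarrow(3,\ast)\xrightarrow{f}(3,\ast)$, where $f$ has a $2$-point image.
\begin{itemize}
\item Its cocones into $(Q,q)$ are in bijection with the points of $Q$.
\item A candidate apex $(C,c)$ admits $|Q|^{|C|-1}$ maps into $(Q,q)$.
\item Taking $|Q|=3$ forces $|C|=2$, which is excluded, so this pushout does not exist.
\end{itemize}
With $\Time$ trivial, or with $\Time=\mathbb{Z}$ and trivial dynamics by an orbit count, $\sps$ lacks it as well. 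Part (i) therefore needs the extra hypothesis that $\calX$ has pushouts; with that hypothesis your argument goes through.

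\textbf{The gap in (iii).} Part (iii) fails as sketched. The universal property of a $\calY$-pushout applies only to cocones with $g_1f_1=g_2f_2$ exactly. A $\text{PS}$-cocone satisfies this only $\mu_0$-a.s., and causality gives no way to choose representatives that commute exactly.

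For example, work in $\mathbf{FinStoch}$, which is causal and has conditionals. Let $X_0=\{a,b,c\}$ with $\mu_0=\frac{1}{2}(\delta_a+\delta_c)$, let $f_1\colon a,b\mapsto p,\ c\mapsto r$, and let $f_2\colon a\mapsto s,\ b,c\mapsto t$.
\begin{itemize}
\item Every exact cocone under $f_1,f_2$ is constant, so their pushout is a point. This particular pushout exists, which is all your argument uses.
\item Yet $g_1\colon p\mapsto 0,\ r\mapsto 1$ and $g_2\colon s\mapsto 0,\ t\mapsto 1$ form a $\text{PS}$-cocone into $(\{0,1\},\frac{1}{2}(\delta_0+\delta_1))$, since they disagree only at the null point $b$.
\item This cocone cannot factor through a point.
\end{itemize}

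Uniqueness has a similar problem. From $uj_i=u'j_i$ $\mu_i$-a.s.\ you can conclude $u=u'$ $j_1\mu_1$-a.s.\ when $j_1$ is deterministic, but $\calY$-pushout coprojections need not be deterministic. So (iii) needs a construction that builds in the almost-sure identifications, rather than taking the pushout of representatives.
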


Theorem \ref{thm:PSC} is proved in Section \ref{sec:proof:PSC}. The most useful tool provided by Theorem \ref{thm:PSC} is the ability to construct pushouts. Recall that a push out is the colimit of a wedge-shaped diagram. In the forthcoming sections, such wedge shaped diagrams will be related to joins, and their resulting pushouts to common factors. As a result we shall see how the most fundamental theorem from join theory originates from the categorical structure inherent in measure spaces.

\begin{Erg}{}{} 
	\cite[Thm 6.6]{glasner2015join} Let $\mathbf{X}$ and $\mathbf{Y}$ be two $\sps$-objects and $\lambda \in J(\mathbf{X}, \mathbf{Y})$, then$$\mathcal{A} = \{A \in \mathcal{X} : \exists B \in \mathcal{Y}, \ \lambda((A \times Y) \Delta (X \times B)) = 0\}$$and$$\mathcal{B} = \{B \in \mathcal{Y} : \exists A \in \mathcal{X}, \ \lambda((A \times Y) \Delta (X \times B)) = 0\}$$are $\Gamma$-invariant sub-$\sigma$-algebras. We have$$\mathcal{A} \times Y = \mathcal{X} \times Y \cap X \times \mathcal{Y} = X \times \mathcal{B} \pmod \lambda,$$and the corresponding factors of $\mathbf{X}$ and $\mathbf{Y}$ are isomorphic.Using the above isomorphism, we can identify the algebras $\mathcal{A} = \mathcal{B} = \mathcal{Z}_\lambda$, and consider $\mathcal{Z}$ as a common factor. 
\end{Erg}
\section{Graphical calculus with joinings} \label{sec:graphical}

\paragraph{Joinings from v-diagrams} Consider the $\sps$ diagram :
\begin{equation} \label{eqn:vee_diag}
	\begin{tikzcd} [column sep = small]
		\paran{ \Omega_1, \mu_1, \Phi_1 } \arrow[dr, "h_1"'] && \paran{ \Omega_2, \mu_2, \Phi_2 } \arrow[dl, "h_2"] \\
		& \paran{ \Omega_3, \mu_3, \Phi_3 }
	\end{tikzcd}
\end{equation}
This will be called a v-diagram. The systems $\paran{ \Omega_1, \mu_1, \Phi_1 }$ and $\paran{ \Omega_2, \mu_2, \Phi_2 }$ will be called the arms of the diagram, and the system $\paran{ \Omega_3, \mu_3, \Phi_3 }$ the apex of the diagram. We get a special state of $\Omega_3$ shown below as the composition of the two dotted, green arrows.
\begin{equation} \label{eqn:def:disinteg_join}
	\begin{tikzcd}
		\Omega_1 && \Omega_3 \arrow[d, dotted, Shobuj] \arrow[rr, "(h_2^{-1}|\mu_2)"] \arrow[ll, "(h_1^{-1}|\mu_1)"'] && \Omega_2 \\
		&& \Omega_1 \otimes \Omega_2 \arrow[urr, "\pi_2"', bend right=10] \arrow[ull, "\pi_1", bend left=10] \\
		&& && 1 \arrow[uull, pos=0.2, "\mu_3"', dotted, Shobuj] \arrow[dashed, Shobuj]{ull}{\join(\mu_1, \mu_2 \mid h_1, h_2)}
	\end{tikzcd}
\end{equation}
The lower dotted, green arrow is the copied product :
\[ \copyF \paran{ (h_1^{-1}|\mu_1) , (h_2^{-1}|\mu_2) } = \SqBrack{ (h_1^{-1}|\mu_1) \otimes (h_2^{-1}|\mu_2) } \circ \copyF_{\Omega_3} : \Omega_3 \to \Omega_1 \otimes \Omega_2\]
We call this state 
\[ \join(\mu_1, \mu_2 \mid h_1, h_2) := \copyF \paran{ (h_1^{-1}|\mu_1) , (h_2^{-1}|\mu_2) } \circ \mu_3 : 1 \to  \Omega_1 \otimes \Omega_2\]
a join of $\mu_1, \mu_2$ disintegrated over the common factor $\mu_3$.

\begin{lemma} \label{lem:disinteg_join}
	The disintegrated join constructed in \eqref{eqn:def:disinteg_join} is invariant under $\iota \Phi_1 \otimes \iota \Phi_2$ and thus leads to a join of $\paran{ \Omega_1, \mu_1, \Phi_1 }$ and $\paran{ \Omega_2, \mu_2, \Phi_2 }$.
\end{lemma}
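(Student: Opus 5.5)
We must show two things: the marginals of $\join(\mu_1,\mu_2\mid h_1,h_2)$ are $\mu_1$ and $\mu_2$, and the state is invariant under $\iota\Phi_1^t\otimes\iota\Phi_2^t$ for every $t\in\Time$. We abbreviate $\gamma_i := (h_i^{-1}|\mu_i) : \Omega_3 \to \Omega_i$ and $\join := \SqBrack{\gamma_1\otimes\gamma_2}\circ\copyF_{\Omega_3}\circ\mu_3$.

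\textbf{Marginals.} By \eqref{eqn:d4d5z}, $\pi_i\circ\join = \gamma_i\circ\mu_3$, so it suffices to show $\gamma_i\circ\mu_3=\mu_i$. Since $h_i$ is an $\sps$-morphism, $\mu_3 = h_i\circ\mu_i$. We then use \eqref{eqn:jsp3z}, namely $\graph^T(\gamma_i)\circ\mu_3 = \graph(h_i)\circ\mu_i$, and apply the projection onto the $\Omega_i$ factor to both sides. Because $\deleteF$ is natural \eqref{eqn:def:MarkovCat:3}, the left side becomes $\gamma_i\circ\mu_3$ and the right side becomes $\mu_i$. This is routine.

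\textbf{Invariance.} We compute
\[
(\iota\Phi_1^t\otimes\iota\Phi_2^t)\circ\join = \SqBrack{(\iota\Phi_1^t\circ\gamma_1)\otimes(\iota\Phi_2^t\circ\gamma_2)}\circ\copyF_{\Omega_3}\circ\mu_3 .
\]
Lemma \ref{lem:sps_mrphsm_invert} gives $\iota\Phi_i^t\circ\gamma_i = \gamma_i\circ\iota\Phi_3^t$, but only $\mu_3$-almost surely. This is the main obstacle: we must pass an almost-sure equality through a copied product, where each factor holds only almost surely.

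To handle it, we use the standard fact (Cho--Jacobs) that $f=g$ $p$-a.s. means $\graph(f)\circ p=\graph(g)\circ p$. This lets us replace $f$ by $g$ inside any expression of the form $F\circ(\Omega_3\otimes f)\circ\copyF_{\Omega_3}\circ p$, where $F:\Omega_3\otimes Y\to W$ is arbitrary. We apply it twice, once per factor. In the first step we take $F=(\iota\Phi_2^t\circ\gamma_2)\otimes W$ after a swap, which replaces $\iota\Phi_1^t\gamma_1$ by $\gamma_1\iota\Phi_3^t$. The second step treats the other factor symmetrically. Coassociativity and commutativity of $\copyF$ \eqref{eqn:def:MarkovCat:1}, \eqref{eqn:def:MarkovCat:2} justify rewriting $\copyF$ as $(\Omega_3\otimes\copyF)\circ\copyF$ at each step. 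This yields
\[
\SqBrack{\gamma_1\otimes\gamma_2}\circ(\iota\Phi_3^t\otimes\iota\Phi_3^t)\circ\copyF_{\Omega_3}\circ\mu_3 .
\]

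Next, $\Phi_3^t$ is a morphism of $\calX$, so $\iota\Phi_3^t$ is deterministic by Assumption \ref{A:det_base}. Hence $(\iota\Phi_3^t\otimes\iota\Phi_3^t)\circ\copyF = \copyF\circ\iota\Phi_3^t$ by \eqref{eqn:def:deterministic}. Finally, $\iota\Phi_3^t\circ\mu_3=\mu_3$ by invariance of $\mu_3$ (Definition \ref{def:invar:2}), which returns $\join$.

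Combining invariance with the marginal computation, the span given by the two projections is a join in the sense of \eqref{eqn:def:ddp8z}. Note that Assumption \ref{A:disinteg} is needed so that the $\gamma_i$ exist.
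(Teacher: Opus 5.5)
Your derivation is correct as a reduction, and it follows the route the paper intends. The paper says the lemma ``follows from Lemma \ref{lem:sps_mrphsm_invert}''. Its written proof only rewrites the join as $\graph(\gamma_2\circ h_1)\circ\mu_1$ via \eqref{eqn:dfd6h} and then breaks off; that rewriting would need the equivariance for $\gamma_2$ alone, transported along $h_1$, whereas you use it for both factors over $\mu_3$. Your marginal computation, the two-step transport of the $\mu_3$-a.s.\ equalities through the copied product, and the final use of determinism of $\iota\Phi_3^t$ and invariance of $\mu_3$ are all fine.

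The genuine problem is the step you treat as a black box. The paper never proves Lemma \ref{lem:sps_mrphsm_invert}, and it is false when the $\Phi^t$ are not invertible. So is the statement you are proving, in the generality the paper allows (monoid $\Time$). Work in $\StochCat$ on standard Borel spaces, with $\Time=\mathbb{N}$ and $\Omega=\{0,1\}^{\mathbb{N}}$. Equip $\Omega$ with the Bernoulli$(\tfrac12,\tfrac12)$ measure $\mu$ and the one-sided shift $\sigma$. Take the v-diagram with $h_1=h_2=\sigma$; this is a legitimate $\sps$-morphism. The disintegration is $\gamma(z)=\tfrac12(\delta_{0z}+\delta_{1z})$, where $az$ prepends the symbol $a$. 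Then $\sigma\circ\gamma(z)=\delta_z$, while $\gamma\circ\sigma(z)=\tfrac12(\delta_{0\sigma z}+\delta_{1\sigma z})$, so the claimed a.s.\ equivariance fails at every $z$. Correspondingly, $\join(\mu,\mu\mid\sigma,\sigma)$ is the law of $(\epsilon z,\epsilon' z)$, with $z\sim\mu$ and $\epsilon,\epsilon'$ independent fair bits. Its image under $\sigma\otimes\sigma$ is the diagonal law of $(z,z)$. These are different states: the first coordinates agree with probability $\tfrac12$ under the join and with probability $1$ under its image.

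Your argument therefore needs an invertibility hypothesis, for instance $\Time$ a group, or at least each $\iota\Phi_3^t$ a deterministic isomorphism. Under that hypothesis you should prove the equivariance rather than cite it, and it takes a few lines. Write $\Psi=\iota\Phi_3^t$. Determinism of $\Psi$ and $\Psi\circ\mu_3=\mu_3$ give $\graph(\gamma_i\circ\Psi)\circ\mu_3=(\Psi^{-1}\otimes\gamma_i)\circ\copyF\circ\mu_3$. The following facts then reduce both $\graph(\gamma_i\circ\Psi)\circ\mu_3$ and $\graph(\iota\Phi_i^t\circ\gamma_i)\circ\mu_3$ to $(h_i\otimes\iota\Phi_i^t)\circ\copyF\circ\mu_i$: \eqref{eqn:jsp3z}, symmetry of $\copyF$, invariance of $\mu_i$, determinism of $\iota\Phi_i^t$, and $h_i\circ\iota\Phi_i^t=\Psi\circ h_i$. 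With this in place, the rest of your proof goes through unchanged.
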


Lemma \ref{lem:disinteg_join} is proved in Section \ref{sec:proof:disinteg_join}. It follows from Lemma \ref{lem:sps_mrphsm_invert}. We shall denote this join as $\join(\mu_1, \mu_2 \mid h_1, h_2)$. An immediate consequence is

\begin{lemma} \label{lem:dij0s}
	\cite[Thm 3.5]{Rue2006join} If two $\sps$-objects $\mathbf{X}$ and $\mathbf{Y}$ have a non-trivial common factor, then they have a non-trivial join too.
\end{lemma}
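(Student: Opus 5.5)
The plan is to apply Lemma \ref{lem:disinteg_join} to the v-diagram furnished by the common factor, and then check that the resulting join differs from the product join. A non-trivial common factor of $\mathbf{X}=(\Omega_1,\mu_1,\Phi_1)$ and $\mathbf{Y}=(\Omega_2,\mu_2,\Phi_2)$ means a v-diagram \eqref{eqn:vee_diag} with $\sps$-morphisms $h_1:\mathbf{X}\to\mathbf{Z}$ and $h_2:\mathbf{Y}\to\mathbf{Z}$, where $\mathbf{Z}=(\Omega_3,\mu_3,\Phi_3)$. Non-triviality means that $\mu_3$ is not a deterministic point-like state, i.e.\ $\copyF_{\Omega_3}\circ\mu_3\neq\mu_3\otimes\mu_3$. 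Lemma \ref{lem:disinteg_join} then gives the invariant state $\lambda:=\join(\mu_1,\mu_2\mid h_1,h_2)$ on $\iota\Omega_1\otimes\iota\Omega_2$.

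First I verify the marginals. By \eqref{eqn:d4d5z}, $\pi_1\circ\lambda=(h_1^{-1}|\mu_1)\circ\mu_3$. Using \eqref{eqn:jsp3z} and then projecting onto the first factor, this equals $\pi_1\circ\graph(h_1)\circ\mu_1=\mu_1$. The same argument gives $\pi_2\circ\lambda=\mu_2$. So $\lambda$ is a join, and it remains only to show $\lambda\neq\mu_1\otimes\mu_2$.

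To do this, I push $\lambda$ forward along $\iota h_1\otimes\iota h_2$. By Lemma \ref{lem:disinteg_invrsn}, $h_i\circ(h_i^{-1}|\mu_i)=\Id_{\Omega_3}$ holds $\mu_3$-almost surely. Since $\copyF_{\Omega_3}\circ\mu_3=\graph(\Id)\circ\mu_3$, almost-sure equality can be applied on each leg of the copied product, one leg at a time. This yields
\[
(\iota h_1\otimes \iota h_2)\circ\lambda=\copyF_{\Omega_3}\circ\mu_3 .
\]
On the other hand, applying the same map to the product join gives $(h_1\mu_1)\otimes(h_2\mu_2)=\mu_3\otimes\mu_3$. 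Non-triviality of the factor says exactly that these two states differ, so $\lambda\neq\mu_1\otimes\mu_2$.

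The main obstacle is the almost-sure substitution inside the copied product. The definition of $\mu_3$-a.s.\ equality only controls $\graph(f)\circ\mu_3$, so I must show that $(f\otimes g)\circ\copyF\circ\mu_3$ is unchanged when $f$ is replaced by an a.s.-equal $f'$. My approach is to write $(f\otimes g)\circ\copyF=(\Id\otimes g)\circ\mathrm{swap}\circ\graph(f)$, which holds up to symmetry using coassociativity and commutativity of $\copyF$ from \eqref{eqn:def:MarkovCat:1}--\eqref{eqn:def:MarkovCat:2}. The substitution then reduces directly to the definition, and I repeat the argument for the second leg.

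A secondary issue is making precise what "non-trivial factor" means in this abstract setting. I will take the characterization above, non-deterministic $\mu_3$, as the definition. This matches the $\StochCat$ reading, where a factor is non-trivial exactly when its measure is not a Dirac mass.
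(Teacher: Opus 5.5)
Your proposal is correct and follows the paper's intended route. The paper presents Lemma~\ref{lem:dij0s} as an immediate consequence of Lemma~\ref{lem:disinteg_join}, i.e.\ one takes the join $\join(\mu_1,\mu_2\mid h_1,h_2)$ over the common factor. Your computation $(\iota h_1\otimes\iota h_2)\circ\join(\mu_1,\mu_2\mid h_1,h_2)=\copyF_{\Omega_3}\circ\mu_3\neq\mu_3\otimes\mu_3$, with non-triviality read as non-determinism of $\mu_3$, supplies exactly the non-triviality check that the paper leaves unwritten.

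There is one caveat, inherited from the paper rather than yours: the invariance you import from Lemma~\ref{lem:disinteg_join} (via Lemma~\ref{lem:sps_mrphsm_invert}) needs the $\Phi^t$ to be invertible. For example, take the one-sided Bernoulli shift $\sigma$ as a factor of itself through $h=\sigma$. Then $\join(\mu,\mu\mid\sigma,\sigma)$ is the law of $((a,z),(b,z))$ with $a,b$ independent fair bits, and $\sigma\otimes\sigma$ sends it to the diagonal measure, so it is not invariant.
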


Lemma \ref{lem:dij0s} is proved in Section \ref{sec:proof:dij0s}.
Rudolph \cite{Rudolph1979join} showed that the converse to this conclusion is not true.

\begin{definition} [Factor join]
	In the particular instance of \eqref{eqn:def:disinteg_join} in which $\paran{ \Omega_1, \mu_1, \Phi_1 } = \paran{ \Omega_2, \mu_2, \Phi_2 }$ and $h_1=h_2$, the join is called a factor join. Thus given any $\sps$ morphism $\paran{ \Omega_1, \mu_1, \Phi_1 } \xrightarrow{h} \paran{ \Omega_3, \mu_3, \Phi_3 }$ we denote
	\[\join( \mu_1 \mid h ) := \join(\mu_1, \mu_1 \mid h, h) . \]
\end{definition}

\begin{lemma} \label{lem:Thm4:a}
	\cite[Thm 4]{LemanczykEtAl1993gaussian} Suppose two pointed objects $(X, \mu)$ and $(Y, \nu)$ from $\Comma{1}{\calY}$ are not disjoint. Then a joint state $\lambda$ is not a product of states iff the composite 
	\[\begin{tikzcd}
		X \arrow[rrr, "(\lambda| \pi_X^{-1})"] &&& X \otimes Y \arrow[r, "\pi_Y"] & Y
	\end{tikzcd}\]
	is not constant almost everywhere.
\end{lemma}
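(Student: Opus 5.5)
The plan is to reduce the statement to the factorization of a joint state through its $X$-marginal, and then to the defining property of disintegration. Write $\gamma := \pi_Y \circ (\lambda|\pi_X^{-1}) : X \to Y$, where $(\lambda|\pi_X^{-1})$ is the disintegration of $\pi_X$ along $\lambda$ guaranteed by Assumption \ref{A:disinteg}. The first step is to check that, since $\pi_X\lambda = \mu$, the disintegration identity \eqref{eqn:def:disinteg:2} (with $f = \pi_X$, $p = \lambda$) gives
\[ \lambda = \paran{ X \otimes \gamma } \circ \copyF_X \circ \mu = \graph(\gamma)\circ \mu , \]
after composing both sides with $\pi_X \otimes \pi_Y$ and using \eqref{eqn:d4d5z}. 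This is the same mechanism as in Lemma \ref{lem:subsig:1}. The identity says that every joint state is the graph-state of its conditional $\gamma$ over the $X$-marginal, and it will be the engine for both directions.

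For the direction ``$\gamma$ constant a.e.\ $\Rightarrow$ $\lambda$ is a product'', suppose $\gamma = \nu' \circ \deleteF_X$ holds $\mu$-almost surely for some state $\nu'$. Almost-sure equality means exactly that $\graph(\gamma)\circ\mu = \graph(\nu'\circ\deleteF_X)\circ\mu$. The right side equals $\mu\otimes\nu'$ by the counit axiom in \eqref{eqn:def:MarkovCat:1}. Marginalizing to $Y$ shows $\nu' = \pi_Y\lambda = \nu$, so $\lambda = \mu\otimes\nu$.

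For the converse, suppose $\lambda = \mu\otimes\nu$. Then $\graph(\gamma)\circ\mu = \mu \otimes \nu = \graph(\nu\circ\deleteF_X)\circ\mu$, which is precisely the statement that $\gamma = \nu\circ\deleteF_X$ holds $\mu$-a.s. Taking contrapositives of the two directions gives the claimed equivalence. The hypothesis that $(X,\mu)$ and $(Y,\nu)$ are not disjoint plays no role in the equivalence itself; it only guarantees that a non-product $\lambda$ exists, so the statement is not vacuous. I would remark this rather than use it.

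I expect the main obstacle to be the first step: producing $\lambda = \graph(\gamma)\circ\mu$ cleanly from \eqref{eqn:def:disinteg:2}. The disintegration there is stated with codomain $X\otimes Y$ viewed through $\graph^T$, so the projections and the transpose/swap must be tracked carefully, using that $\pi_X$ is deterministic (Assumption \ref{A:det_base}) via Lemma \ref{lem:disinteg_invrsn}. A second subtlety is to interpret ``constant almost everywhere'' as ``$\mu$-a.s.\ equal to a morphism factoring through $\deleteF_X$''. With that interpretation the two directions are the short string-diagram computations above.
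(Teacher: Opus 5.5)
Your argument is correct. The paper gives no proof of this lemma. It only attributes it to Theorem~4 of Lema\'nczyk et al., and the interpretation box after the statement recasts it analytically: some test function on $Y$ has non-constant conditional expectation given $\Sigma_X\times Y$. So your proof is an independent route, not a reconstruction.

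The analytic version gives an explicit witness $g$. That is the form in which the statement is used later (Lemma \ref{lem:lix02} and the permutation-invariance step).

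Your version replaces test functions by two ingredients: the single identity $\lambda=\graph(\gamma)\circ\mu$, and the paper's graph-based definition of almost-sure equality. This has several advantages:
\begin{itemize}
\item It needs only the existence of a conditional, and it holds in any Markov category.
\item It reads ``constant'' as ``$\mu$-a.s.\ equal to $\nu'\circ\deleteF_X$'', which matches the ``does not factor through $1$'' formulation of Lemma \ref{lem:dij03}.
\item It shows that the non-disjointness hypothesis plays no role.
\end{itemize}
In $\StochCat$ the two readings coincide, because the paper's a.s.\ equality is tested on rectangles $A\times B$, one set $B$ at a time. Your restriction to $\mu\otimes\nu$ is also legitimate: a product $\alpha\otimes\beta$ with marginals $\mu,\nu$ must have $\alpha=\mu$ and $\beta=\nu$ by naturality of $\deleteF$.

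Two clean-ups to your first step.
\begin{itemize}
\item The identity \eqref{eqn:def:disinteg:2} with $f=\pi_X$, $p=\lambda$ is an equality of states on $(X\otimes Y)\otimes X$. The map to apply is therefore $\pi_Y\otimes\Id_X$ (discard the first $X$), not $\pi_X\otimes\pi_Y$. The left side becomes $\mathrm{swap}\circ\lambda$ and the right side becomes $(\gamma\otimes\Id_X)\circ\copyF_X\circ\mu$. Then $\mathrm{swap}\circ\copyF_X=\copyF_X$ gives $\lambda=\graph(\gamma)\circ\mu$.
\item This uses only the counit law and cocommutativity of copy. You can drop the appeal to determinism of $\pi_X$ and to Lemma \ref{lem:disinteg_invrsn}, which the paper does not prove. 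Assumption \ref{A:det_base} would not supply that determinism anyway, since it concerns only images of $\iota$.
\end{itemize}
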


\begin{Erg}{}{}
	In the Markov category $\StochCat$, a morphism $h:X\to Y$ is to be interpreted as a Markov transition kernel.
\end{Erg}

\begin{Erg}{}{} 
	In the context of $\MeasCat$, Lemma \ref{lem:Thm4:a} provides an intuitive interpretation of non-disjointness. If $\lambda$ is a non-trivial join for probability spaces $(X, \mu)$ and $(Y, \nu)$, then Lemma \ref{lem:Thm4:a} is equivalent to saying that there is a $\phi \in L^2(Y,\nu)$ such that the conditional expectation $\mathbb{E}_{\lambda} \paran{g \mid \Sigma_X \times Y }$ is non-constant.
\end{Erg}

\paragraph{Projection of a joining}  

\begin{lemma} \label{lem:vee_from_join}
	Given the v-diagram \eqref{eqn:vee_diag} and a join $\lambda$ of the two systems
	\[\begin{tikzcd} [column sep = small, scale cd = 0.9]
		& \paran{ \Omega_1 \otimes \Omega_2, \lambda, \Phi_1 \otimes \Phi_2 }\arrow[dr, "\pi_2"]  \arrow[dl, "\pi_1"'] \\
		\paran{ \Omega_1, \mu_1, \Phi_1 } && \paran{ \Omega_2, \mu_2, \Phi_2 } 
	\end{tikzcd} , \]
	the projection 
	\[ (h_1 \otimes h_2) \circ \lambda , \]
	of the join $\lambda$, leads to a self-joining of the third system $\paran{\Omega_3, \mu_3, \Phi_3}$.
\end{lemma}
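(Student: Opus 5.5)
We must show that $\lambda_3 := (h_1\otimes h_2)\circ\lambda$ is a state of $\iota\Omega_3\otimes\iota\Omega_3$ with two properties: it is invariant under $\iota\Phi_3^t\otimes\iota\Phi_3^t$ for every $t$, and both of its marginals equal $\mu_3$. Together these say exactly that the span $(\Omega_3\otimes\Omega_3,\lambda_3,\Phi_3\otimes\Phi_3)\rightrightarrows(\Omega_3,\mu_3,\Phi_3)$ given by $\pi_1,\pi_2$ is a join in the sense of \eqref{eqn:def:ddp8z}, i.e.\ a self-joining. The argument is a short diagram chase using only functoriality of $\otimes$, the $\sps$-morphism conditions for $h_1,h_2$, and the naturality of deletion.

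\textbf{Invariance.} Since $\lambda$ is a join, it is an invariant state of $\iota\Phi_1\otimes\iota\Phi_2$. Each $h_i$ is an $\sps$-morphism, so $h_i\circ\iota\Phi_i^t=\iota\Phi_3^t\circ h_i$ in $\calY$. By bifunctoriality of the monoidal product,
\[(\iota\Phi_3^t\otimes\iota\Phi_3^t)\circ(h_1\otimes h_2)=(h_1\otimes h_2)\circ(\iota\Phi_1^t\otimes\iota\Phi_2^t).\]
Precomposing with $\lambda$ and using invariance of $\lambda$ then gives $(\iota\Phi_3^t\otimes\iota\Phi_3^t)\circ\lambda_3=\lambda_3$ for all $t$. (If $\iota$ is only lax monoidal, I would first identify $\iota\Phi_1\otimes\iota\Phi_2$ with $\iota(\Phi_1\otimes\Phi_2)$ through the monoidal coherence of $\iota$, as already done implicitly in \eqref{eqn:def:ddp8z}.)

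\textbf{Marginals.} The projection is $\pi_1=(\Id\otimes\deleteF)$ followed by the unitor, as in \eqref{eqn:def:projection}. By bifunctoriality, $\pi_1\circ(h_1\otimes h_2)=h_1\circ\pi_1$ up to the unitor, since $\deleteF_{\Omega_3}\circ h_2=\deleteF_{\Omega_2}$ by the naturality of deletion in \eqref{eqn:def:MarkovCat:3}. Hence
\[\pi_1\circ\lambda_3=h_1\circ\pi_1\circ\lambda=h_1\circ\mu_1=\mu_3,\]
where the last equality is the state-preservation condition on $h_1$. The identical argument with $h_2$ gives $\pi_2\circ\lambda_3=\mu_3$.

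\textbf{Conclusion and main obstacle.} Both required conditions hold, so $\lambda_3$ is a self-joining of $(\Omega_3,\mu_3,\Phi_3)$. The most delicate step is the marginal computation: it needs the naturality of deletion for arbitrary (possibly non-deterministic) morphisms. This is available because every Markov category is semicartesian. Assumption~\ref{A:det_base} is not needed here; it would matter only for copy, which never appears in the argument. Beyond that, the only bookkeeping is tracking the unitors and the monoidal coherence of $\iota$.
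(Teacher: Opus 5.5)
Your proof is correct and follows the same route as the paper. Invariance comes from bifunctoriality of $\otimes$, the intertwining relations $h_i\circ\iota\Phi_i^t=\iota\Phi_3^t\circ h_i$, and invariance of $\lambda$; the marginals come from $\pi_i\circ(h_1\otimes h_2)=h_i\circ\pi_i$ followed by $h_i\circ\mu_i=\mu_3$. Your marginal step is cleaner than the paper's, which cites the copied-product identity \eqref{eqn:d4d5z} and ends at $\mu_i$, whereas you correctly invoke naturality of deletion and the state-preservation condition on $h_i$.
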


Lemma \ref{lem:vee_from_join} is proved in Section \ref{sec:proof:vee_from_join}.  Lemma \ref{lem:vee_from_join} shows that given a v-diagram, a join of the arms lead to a self-joining of the apex system. This prompts the following sub-class of joins.

\begin{definition} [Joining over a common factor] \label{def:join_over_common}
	Given the v-diagram \eqref{eqn:vee_diag}, the collection
	\[ J(\mu_1, \mu_2 \mid h_1, h_2 \mid \mu_3) := \braces{\begin{array}{c}
			\mbox{join } \lambda \mbox{ of } \mu_1, \mu_2 \mbox{ s.t. } \\
			(h_1 \otimes h_2) \circ \lambda \mbox{ is a } \\
			\mbox{the self-join } \\ \graph(\mu_3)
	\end{array}} \]
	is called the set of joinings of $\paran{ \Omega_1, \mu_1, \Phi_1 }$ and $\paran{ \Omega_2, \mu_2, \Phi_2 }$ over a common factor $\mu_3$.
\end{definition}

\begin{lemma} \label{lem:a5ukdf}
	If the common factor $\paran{ \Omega_3, \mu_3, \Phi_3 }$ is the terminal object, then the set $J(\mu_1, \mu_2 \mid h_1, h_2 \mid \mu_3) $ only contains the product join.
\end{lemma}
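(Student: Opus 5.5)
The plan is to make the terminal common factor completely explicit and then show that the defining condition of $J(\mu_1,\mu_2\mid h_1,h_2\mid \mu_3)$ forces $\lambda=\mu_1\otimes\mu_2$. The argument has three steps: identify the terminal object, read off what the defining condition says, and show that the product join belongs to $J$ and is its only member.

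\textbf{The terminal object of $\sps$.} First I identify the terminal object of $\sps$ as $(1,\Id_1,\Phi_3)$, where $\Phi_3^t=\Id_1$ for all $t$. Every $\sps$-object has a unique morphism to it, namely $h=\deleteF$. The state condition $\deleteF\circ\mu=\Id_1$ holds because of the naturality of $\deleteF$ in \eqref{eqn:def:MarkovCat:3}. So in the v-diagram \eqref{eqn:vee_diag} we are forced to take $h_1=\deleteF_{\Omega_1}$, $h_2=\deleteF_{\Omega_2}$ and $\mu_3=\Id_1$.

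\textbf{What the defining condition says.} Next I unpack the condition in Definition \ref{def:join_over_common}. Since $1\otimes 1\cong 1$, the projection $(h_1\otimes h_2)\circ\lambda$ is automatically the unique state of the apex. The marginal part of the condition therefore carries no information. The substantive content of ``$(h_1\otimes h_2)\circ\lambda$ is the self-join $\graph(\mu_3)$'' is that $\lambda$ is the join determined by the common factor. This is the one in which the two arms are coupled only through $\mu_3$, namely the disintegrated join of \eqref{eqn:def:disinteg_join}:
\[ \lambda=\join(\mu_1,\mu_2\mid h_1,h_2)=\SqBrack{(h_1^{-1}|\mu_1)\otimes(h_2^{-1}|\mu_2)}\circ\copyF_{\Omega_3}\circ\mu_3 . \]
I will take this as the working meaning of membership in $J$. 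I would flag explicitly in the write-up that this reading is needed, since otherwise the condition would be vacuous for the terminal factor.

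\textbf{Computing the disintegrations.} The key computation is the disintegration of $\deleteF_{\Omega_i}$ along $\mu_i$. Apply \eqref{eqn:def:disinteg:2} with $Y=1$, $f=\deleteF$ and $p=\mu_i$. Then $\graph(\deleteF)\circ\mu_i\cong\mu_i$, and $\copyF_1\cong\Id_1$. The equation therefore reduces to $(\deleteF^{-1}|\mu_i)\circ\Id_1=\mu_i$, which gives $(\deleteF_{\Omega_i}^{-1}|\mu_i)=\mu_i$. Uniqueness comes from Lemma \ref{lem:disinteg_invrsn}, because $\deleteF$ is deterministic and ``$\mu_3$-almost surely'' on $1$ is genuine equality. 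Substituting into the formula above, and using $\copyF_1\cong\Id_1$ (coherence \eqref{eqn:def:MarkovCat:2}), gives $\lambda=(\mu_1\otimes\mu_2)\circ\Id_1=\mu_1\otimes\mu_2$.

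\textbf{The product join belongs to $J$.} Conversely, I check that $\mu_1\otimes\mu_2$ is indeed a join. Invariance under $\iota\Phi_1\otimes\iota\Phi_2$ follows from the functoriality of $\otimes$ and the invariance of each $\mu_i$. The marginals are correct because $\pi_1\circ(\mu_1\otimes\mu_2)=\mu_1\otimes(\deleteF\circ\mu_2)=\mu_1$. Alternatively, this follows directly from Lemma \ref{lem:disinteg_join}. So $J$ consists of exactly the product join.

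\textbf{Main obstacle.} I expect the main difficulty to be the interpretive step: justifying that membership in $J$ must be read as ``$\lambda$ equals the factor-coupled join'' rather than only as a marginal condition. I would support this reading by Lemma \ref{lem:vee_from_join}, which shows that every join already projects to some self-joining of the apex, so the definition can only be distinguishing if it imposes this relative-independence form. The remaining steps are routine Markov-category coherence manipulations.
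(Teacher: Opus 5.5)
The paper's own proof of this lemma is empty, so your proposal has to stand on its own. As written, it proves a different statement from the one stated.

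\textbf{The gap.} In your second step you replace the defining condition of $J(\mu_1,\mu_2\mid h_1,h_2\mid\mu_3)$ by ``$\lambda=\join(\mu_1,\mu_2\mid h_1,h_2)$''. Definition \ref{def:join_over_common} constrains $\lambda$ only through $(h_1\otimes h_2)\circ\lambda$, which must equal a prescribed self-join of the apex (classically the diagonal one, $\copyF_{\iota\Omega_3}\circ\mu_3$). When the factor is terminal, $(h_1\otimes h_2)\circ\lambda\cong\deleteF_{\iota\Omega_1\otimes\iota\Omega_2}\circ\lambda=\Id_1$ for \emph{every} $\lambda$. The prescribed self-join is also a state of $1\otimes 1\cong 1$, hence $\Id_1$. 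So $J$ is the set of all joins of $\mu_1,\mu_2$, and read this way the lemma says every pair of $\sps$-objects is disjoint.

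\textbf{The literal statement is false.} Take any $\sps$-object whose invariant state $\mu$ is not deterministic, e.g.\ the swap on a two-point space with the uniform measure in $\StochCat$. Its diagonal self-join $\copyF_{\iota\Omega}\circ\mu$ is the graph join of $\Id$, so it is a join by Lemma \ref{lem:graph_join:1}, and it lies in $J$. It differs from $\mu\otimes\mu$, because equality of the two is precisely determinism of $\mu$.

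\textbf{Your reading is not justified.} Lemma \ref{lem:vee_from_join} does not support it. The definition asks for a \emph{specific} self-join of the apex, not just some self-join. That is a real constraint whenever $\mu_3$ is non-deterministic, and it is vacuous exactly for the trivial factor. Your reading would also give $J=\{\join(\mu_1,\mu_2\mid h_1,h_2)\}$ for every v-diagram, by Lemma \ref{lem:disinteg_join}. Then the relative disjointness of Section \ref{sec:disjoint} would hold for every pair over every common factor, so this cannot be the intended meaning.

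\textbf{What you do prove.} Your computation correctly establishes the presumably intended content: over the terminal factor, the relatively independent join is the product join, $\join(\mu_1,\mu_2\mid\deleteF,\deleteF)=\mu_1\otimes\mu_2$. Since $J$ is then all joins, disjointness over the terminal factor reduces to ordinary disjointness. State and prove that version, and flag the literal statement as false rather than redefining $J$ to rescue it.

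\textbf{A small fix in the computation.} Drop the appeal to Lemma \ref{lem:disinteg_invrsn} for uniqueness. With $Y=1$ its condition $\deleteF\circ s=\Id_1$ holds for every state $s$, so it singles out nothing. You do not need it: \eqref{eqn:def:disinteg:2} with $Y=1$, together with the counit law and $\copyF_1\cong\Id_1$, gives directly $\mu_i\cong(\deleteF_{\Omega_i}^{-1}|\mu_i)$.
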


Lemma \ref{lem:a5ukdf} is proved in Section \ref{sec:a5ukdf}.


\paragraph{Projection of joins}

\[\begin{tikzcd} [column sep = small]
	& \paran{ \Omega_3, \mu_3, \Phi_3 } \arrow[dr, "g_2"] \arrow[dl, "g_1"'] \\
	\paran{ \Omega_1, \mu_1, \Phi_1 } && \paran{ \Omega_2, \mu_2, \Phi_2 }
\end{tikzcd}
\begin{tikzcd} {} \arrow[rr, mapsto, "\Forget_2"] && {} \end{tikzcd} 
\begin{tikzcd} [column sep = small]
	& \paran{ \Omega_3, \Phi_3 } \arrow[dr, "\Forget_2(g_2)"] \arrow[dl, "\Forget_2(g_1)"'] \\
	\paran{ \Omega_1, \Phi_1 } && \paran{ \Omega_2, \Phi_2 }
\end{tikzcd}\]
\[\begin{tikzcd} [column sep = small]
	& \paran{ \Omega_3, \Phi_3 } \arrow[dr, "\Forget_2(g_2)"] \arrow[dl, "\Forget_2(g_1)"'] \\
	\paran{ \Omega_1, \Phi_1 } && \paran{ \Omega_2, \Phi_2 }
\end{tikzcd}
\begin{tikzcd} {} \arrow[rr, mapsto, "\mbox{pushout}"] && {} \end{tikzcd} 
\begin{tikzcd} [column sep = small]
	& \paran{ \Omega_3, \Phi_3 } \arrow[dr, "\Forget_2(g_2)"] \arrow[dl, "\Forget_2(g_1)"'] \\
	\paran{ \Omega_1, \Phi_1 } \arrow[dr, "f_1"'] && \paran{ \Omega_2, \Phi_2 } \arrow[dl, "f_2"] \\
	& \paran{ \Omega_4, \Phi_4 } 
\end{tikzcd}\]

\paragraph{Pushout of joins}

\begin{theorem} \label{thm:join_project}
	Take two $\sps$-objects $\paran{ \Omega_1, \mu_1, \Phi_1 }$ and $\paran{ \Omega_2, \mu_2, \Phi_2 }$, and a join $\paran{ \Omega_3, \mu_3, \Phi_3 }$. Then 
	\begin{enumerate} [(i)]
		\item The pushout of the join $\paran{ \Omega_3, \mu_3, \Phi_3 }$ is a common factor $\paran{ \Omega_4, \mu_4, \Phi_4 }$ of the former two objects.
		\item $\paran{ \Omega_3, \mu_3, \Phi_3 }$ coincides with the join $\join(\mu_1, \mu_2 \mid f_1, f_2 \mid \mu_4)$ of $\paran{ \Omega_1, \mu_1, \Phi_1 }$ and $\paran{ \Omega_2, \mu_2, \Phi_2 }$ over $\mu_4$.
		\item If $\paran{ \Omega_3, \mu_3, \Phi_3 }$ is the product join iff $\paran{ \Omega_4, \mu_4, \Phi_4 }$ is the trivial terminal dynamics.
	\end{enumerate}
\end{theorem}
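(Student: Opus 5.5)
The plan is to work entirely with the span $g_1 : (\Omega_3,\mu_3,\Phi_3) \to (\Omega_1,\mu_1,\Phi_1)$, $g_2 : (\Omega_3,\mu_3,\Phi_3) \to (\Omega_2,\mu_2,\Phi_2)$ in $\sps$ that presents the join. Here $\Omega_3 \cong \Omega_1\otimes\Omega_2$ and $g_i = \pi_i$, as in \eqref{eqn:def:ddp8z}.

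\textbf{Part (i).} Theorem \ref{thm:PSC}(i) guarantees that this span has a pushout in $\sps$. Write it as $f_1 : (\Omega_1,\mu_1,\Phi_1)\to(\Omega_4,\mu_4,\Phi_4)$ and $f_2 : (\Omega_2,\mu_2,\Phi_2)\to(\Omega_4,\mu_4,\Phi_4)$. Both $f_i$ are $\sps$-morphisms, so $f_1\mu_1 = \mu_4 = f_2\mu_2$ and each $f_i$ intertwines $\Phi_i$ with $\Phi_4$. This is exactly the statement that $(\Omega_4,\mu_4,\Phi_4)$ is a common factor. By Theorem \ref{thm:PSC}(ii), the same square projects to a pushout in $\Comma{1}{\iota}$, so the state $\mu_4$ is the pushed-forward state $f_1\pi_1\mu_3 = f_2\pi_2\mu_3$. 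Part (i) is therefore essentially formal.

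\textbf{Part (ii).} This is the substantive step. We now have a v-diagram \eqref{eqn:vee_diag} with arms $\mu_1,\mu_2$ and apex $\mu_4$. Lemma \ref{lem:disinteg_join} gives the invariant join $\join(\mu_1,\mu_2\mid f_1,f_2) = \copyF\big((f_1^{-1}|\mu_1),(f_2^{-1}|\mu_2)\big)\circ\mu_4$, and the claim is that $\mu_3$ equals it. I would proceed in three steps.

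\begin{enumerate}[(a)]
\item Commutativity of the pushout square gives $f_1\pi_1 = f_2\pi_2$, at least $\mu_3$-almost surely. By Lemma \ref{lem:vee_from_join}, $(f_1\otimes f_2)\mu_3$ is then the diagonal self-join $\graph(\Id)\mu_4$, so $\mu_3 \in J(\mu_1,\mu_2\mid f_1,f_2\mid\mu_4)$.
\item Disintegrate $\mu_3$ along the common deterministic map $f_1\pi_1 : \Omega_1\otimes\Omega_2\to\Omega_4$ using Assumption \ref{A:disinteg}. Then use Lemma \ref{lem:disinteg_invrsn} to identify the marginal conditionals with $(f_i^{-1}|\mu_i)$.
\item Show that $\pi_1$ and $\pi_2$ are conditionally independent given $\Omega_4$ under $\mu_3$. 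This is where the universal property enters. If there were residual dependence, Lemma \ref{lem:subsig:1} and Lemma \ref{lem:dij03} would produce a nonconstant intertwining morphism $\gamma$ relative to $\Omega_4$, with a deterministic factor $\psi$ through which both arms factor. By universality of the pushout, that factor would have to factor through $\Omega_4$, which is a contradiction. This mirrors the classical statement \cite[Thm 6.6]{glasner2015join}, where $\mathcal{A}=\mathcal{B}=\mathcal{Z}_\lambda$ is the common factor.
\end{enumerate}

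\textbf{Part (iii).} If $\mu_3 = \mu_1\otimes\mu_2$, then $\gamma$ factors through $1$ by Lemma \ref{lem:dij03}. Hence the only common deterministic factor is trivial, and the pushout is the terminal object. Conversely, if $\Omega_4 = 1$, then part (ii) says $\mu_3 = \join(\mu_1,\mu_2\mid !,!)$. The disintegrations along $!$ are the constant states $\mu_i$, so the copied product gives $\mu_1\otimes\mu_2$. Lemma \ref{lem:a5ukdf} gives the same conclusion.

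\textbf{Main obstacle.} I expect step (c) of part (ii) to be the hardest. A categorical pushout only captures \emph{deterministic} identifications, but the claimed equality of states is almost-sure. Bridging the two requires care: the pushout must be taken in $\text{PS}(\calY)$ (Theorem \ref{thm:PSC}(iii)), so that a.s.-equal morphisms are identified. I would first try to prove the conditional independence by showing that the conditional $\gamma$ from Lemma \ref{lem:subsig:1} factors through $f_2$, and then invoke the uniqueness clause of Lemma \ref{lem:dij03}.
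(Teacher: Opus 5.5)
The paper gives no proof of this theorem. It is only invoked, unproved, in the sketch of Theorem \ref{thm:9dj8zl}. So your proposal has to be judged on its own merits.

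\textbf{What works.} Given Theorem \ref{thm:PSC}, part (i), your step (a), and the implication ``product join $\Rightarrow$ trivial pushout'' in (iii) are fine.

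\textbf{Where it fails.} The gap is step (c), and it is fatal. Residual conditional dependence of the two coordinates over $\Omega_4$ is a \emph{stochastic} relation. A pushout only registers identifications made by cocone legs, whether strict or a.s.
\begin{itemize}
\item Lemma \ref{lem:dij03} gives a deterministic $\psi$ out of $Y$ only. The $X$-side reaches $Z$ only through the kernel $\psi\circ\gamma$, which is not an $\sps$-morphism and in general is not a.s.\ equal to one. So you never produce a cocone over $\pi_1,\pi_2$, the universal property is never engaged, and no contradiction follows.
\item Lemma \ref{lem:subsig:1} is stated only for graph joins, so it does not even supply $\gamma$ for a general $\mu_3$.
\item Moving to $\text{PS}(\calY)$ does not help. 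A cocone there still needs $f_1\pi_1=f_2\pi_2$ $\mu_3$-a.s. That detects only the sets $\mu_3$ identifies (the $\mathcal{Z}_\lambda$ of \cite[Thm 6.6]{glasner2015join}), not the full dependence.
\end{itemize}

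\textbf{The statement itself is false.} Step (c) cannot be patched: (ii) and the converse half of (iii) fail in the motivating instance $\calX=\MeasCat$, $\calY=\StochCat$.
\begin{itemize}
\item Let $\Omega_1=\Omega_2=\{0,1\}$ with uniform $\mu_i$ and $\Phi_i$ the swap. Let $\mu_3$ give mass $3/8$ to $(0,0),(1,1)$ and $1/8$ to $(0,1),(1,0)$. This is a swap-invariant join with uniform marginals, and it is not $\mu_1\otimes\mu_2$.
\item Because $\mu_3$ charges every point, any cocone has $f_1(x)=f_2(y)$ for all $x,y$. This holds for deterministic or stochastic legs and for strict or a.s.\ equality. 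Hence every cocone is constant, $\Omega_4=1$, and $\join(\mu_1,\mu_2\mid !,!)=\mu_1\otimes\mu_2\neq\mu_3$.
\item The conditional of one coordinate given the other is non-constant ($3/4$ versus $1/4$). So your planned factorization of $\gamma$ through $f_2$ is impossible.
\item With the strict pushout in $\MeasCat$, $\Omega_4$ is a point for \emph{every} join, since the pushout of two product projections identifies every $x$ with every $y$.
\item For ergodic joins, Rudolph's example (cited after Lemma \ref{lem:dij0s}) gives non-disjoint systems with no non-trivial common factor. That directly contradicts (i) together with (iii).
\end{itemize}

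\textbf{What survives.} Only $\mu_3\in J(\mu_1,\mu_2\mid f_1,f_2\mid\mu_4)$ holds, not equality with the relatively independent join. The correct replacement for the converse of (iii) is Corollary \ref{cor:rel_ind}. To turn the stochastic $\gamma$ into a deterministic common factor, replace $X$ by $\otimes^{|S|}X$ with the state $\tilde{\mu}^{|S|}_{Y\to X}$ and apply Hewitt--Savage (Lemma \ref{lem:f4jd93}, Theorem \ref{thm:push_pull}). The resulting factor is shared by $Y$ and a countable self-join of $X$, not by $X$ itself.
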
 

\begin{lemma} [Dynamics extend to colimits of domains] \label{lem:dyn_colim}
	Suppose $\psi : J\to \calC$ is a diagram in a category $\calC$ with a colimit $X$. Suppose that there is a functor $J$-diagram in the category of dynamical systems in $\calC$ whose domains realize the diagram $\psi$. In other words the following commutation diagram exists :
	\[\begin{tikzcd} [column sep = large]
		J \arrow[r, "F"] \arrow[dr, dashed, "\psi"'] & \Functor{\Time}{\calC} \arrow[d, "\dom"] \\
		& \calC
	\end{tikzcd}\]
	Then there is a unique dynamical system $\bar{\Phi}$ on $X$ such that the colimiting cone $\eta : \psi \Rightarrow X$ serves as a co-cone for $F$.
\end{lemma}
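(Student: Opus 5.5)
The plan is to build $\bar{\Phi}$ one time-index at a time from the universal property of the colimit, and then check functoriality using the uniqueness half of that same property. Write $\psi(j) = \dom F(j) =: \Omega_j$ and $F(j)^t =: \Phi_j^t : \Omega_j \to \Omega_j$ for $t\in\Time$. Let the colimiting cone have components $\eta_j : \Omega_j \to X$.

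\emph{Step 1: each $\Phi^t$ induces a cocone.} Fix $t\in \Time$ and consider the family $\eta_j \circ \Phi_j^t : \Omega_j \to X$. For a morphism $u : j\to k$ in $J$, $F(u)$ is a morphism of dynamical systems, i.e. a natural transformation. Its domain component is $\psi(u)$, and it satisfies $\psi(u)\circ \Phi_j^t = \Phi_k^t \circ \psi(u)$. Hence
\[ \eta_k \circ \Phi_k^t \circ \psi(u) = \eta_k \circ \psi(u) \circ \Phi_j^t = \eta_j \circ \Phi_j^t , \]
using $\eta_k\circ\psi(u) = \eta_j$. So $\{\eta_j\circ\Phi_j^t\}_j$ is a cocone over $\psi$ with bottom $X$. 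By the universal property of $X$ there is a unique $\bar{\Phi}^t : X\to X$ with $\bar{\Phi}^t \circ \eta_j = \eta_j \circ \Phi_j^t$ for all $j$. This equation says exactly that each $\eta_j$ intertwines $F(j)$ with $\bar\Phi$, so $\eta$ is a cocone for $F$ in $\Functor{\Time}{\calC}$ once $\bar\Phi$ is known to be a functor.

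\emph{Step 2: functoriality.} Both $\bar\Phi^{e}$ and $\Id_X$ satisfy $g\circ\eta_j = \eta_j$ for all $j$, since $\Phi_j^{e} = \Id$. Uniqueness of the induced map from the cocone $\eta$ itself then gives $\bar\Phi^{e}=\Id_X$. Similarly, $\bar\Phi^{s}\circ\bar\Phi^{t}\circ\eta_j = \eta_j\circ\Phi_j^s\circ\Phi_j^t = \eta_j\circ\Phi_j^{s t}$, and so does $\bar\Phi^{st}\circ\eta_j$. Uniqueness forces $\bar\Phi^s\circ\bar\Phi^t = \bar\Phi^{st}$. Thus $\bar\Phi : \Time\to\calC$ is a functor with domain $X$.

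\emph{Step 3: uniqueness.} Suppose $\bar\Phi'$ is another dynamical system on $X$ for which $\eta$ is a cocone of $F$. Then $\bar\Phi'^t\circ\eta_j = \eta_j\circ\Phi_j^t$ for every $j$, so $\bar\Phi'^t$ is a factorization of the same cocone as $\bar\Phi^t$. The uniqueness clause of the colimit gives $\bar\Phi'^t=\bar\Phi^t$ for each $t$.

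The main obstacle is mostly bookkeeping. One must read ``$\eta$ serves as a cocone for $F$'' correctly: it requires each $\eta_j$ to be a morphism $F(j)\to(X,\bar\Phi)$ in $\Functor{\Time}{\calC}$, and the cocone condition on $J$-morphisms then reduces to the one already holding in $\calC$. One must also check that Step 1 used only the naturality of $F(u)$. If $\Time$ has more structure than a monoid, for instance several objects or a topology, the same argument runs componentwise on each morphism of $\Time$, with uniqueness again giving functoriality.
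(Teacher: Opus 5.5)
Your proposal is correct and follows the paper's proof. The paper likewise fixes $t$, uses the naturality square $\psi(h)\circ F(j)^t = F(j')^t\circ\psi(h)$ to see that $\{\eta_j\circ F(j)^t\}_j$ is a cocone over $\psi$, and obtains $\bar{\Phi}^t$ from the universal property of $X$; your Steps 2 and 3 (the identity and composition laws for $\bar{\Phi}$ via uniqueness of induced maps, and uniqueness of $\bar{\Phi}$ itself) supply details the paper leaves implicit in its ``$\exists !$'' arrow.
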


Lemma \ref{lem:dyn_colim} is proved in Section \ref{sec:proof:dyn_colim}.

\section{Composition of joinings} \label{sec:join_compose}

Joinings can be composed. In classical probability theory, and optimal transport, this process is formally justified by the \textit{gluing Lemma}, which relies heavily on the disintegration property. Given a setup as shown below
\[\begin{tikzcd}
	& (X_1 \otimes X_2, \lambda_{1,2} ) \arrow[dl, "\pi^{(1,2)}_1"'] \arrow[dr, "\pi^{(1,2)}_2"] & & (X_2 \otimes X_3, \lambda_{2,3} ) \arrow[dl, "\pi^{(2,3)}_2"'] \arrow[dr, "\pi^{(2,3)}_3"] \\
	(X_1, \mu_1) & & (X_2, \mu_2) & & (X_3, \mu_3)
\end{tikzcd}\]
we can compose or glue these joins to get a joining of $(X_1, \mu_1) $ and $(X_3, \mu_3)$. For that, we first use disintegration of measures to obtain the morphisms  $\left( \pi^{(1,2)-1}_2 | \mu_2 \right)$ and $\left( \pi^{(2,3)-1}_2 | \mu_2 \right)$ as shown below :
\[ \exists \begin{tikzcd} X_2 \arrow[d, "\left( \pi^{(1,2)-1}_2 | \mu_2 \right)"] \\ X_1 \otimes X_2 \end{tikzcd} 
; \quad 
\exists \begin{tikzcd} X_2 \arrow[d, "\left( \pi^{(2,3)-1}_2 | \mu_2 \right)"] \\ X_2 \otimes X_3 \end{tikzcd}\]
The marginals of these disintegrations, obtained by composing with the marginal projection, may be interpreted as a categorical analog of conditional expectation :
\[ \begin{tikzcd} 
	X_2 \arrow[d, "\left( \pi^{(1,2)-1}_2 | \mu_2 \right)"'] \arrow[dashed, Shobuj]{rr}{\mathbb{E}(X_1|X_2, \lambda_{1,2})} && X_1 \\ 
	X_1 \otimes X_2 \arrow[urr, "\pi^{(1,2)}_1"'] 
\end{tikzcd} 
; \quad 
\begin{tikzcd} 
	X_2 \arrow[d, "\left( \pi^{(2,3)-1}_2 | \mu_2 \right)"'] \arrow[dashed, Shobuj]{rr}{\mathbb{E}(X_3|X_2, \lambda_{2,3})} && X_3 \\ 
	X_2 \otimes X_3 \arrow[urr, "\pi^{(2,3)}_3"']
\end{tikzcd}\]
Finally, we collect these conditional expectations by taking their monoidal product, to obtain a jont state on $\Omega_1 \otimes \Omega_3$ :
\begin{equation} \label{eqn:def:join_compose}
	\begin{tikzcd}
		X_2 \arrow[rr, "\copyF_{X_2}^{(2)}"] && X_2 \otimes X_2 \arrow{d}{ \mathbb{E}(X_1|X_2, \lambda_{1,2}) \otimes \mathbb{E}(X_3|X_2, \lambda_{2,3}) } \\
		1 \arrow[u, "\mu_2"] \arrow[Shobuj, dashed, rr, "\lambda_{1,2} \circ \lambda_{2,3}"'] && X_1 \otimes X_3
	\end{tikzcd}
\end{equation}
This state $\lambda_{1,2} \circ \lambda_{2,3}$ can be shown to be a join of $\mu_1$ and $\mu_3$, by way of the following commutation :
\begin{equation} \label{eqn:join_compose:1}
	\begin{tikzcd} [column sep = large]
		&& X_1 \\
		1 \arrow[Shobuj, dashed, rr, "\lambda_{1,2} \circ \lambda_{2,3}"'] \arrow[dashed, bend left=10, urr, "\mu_1"] \arrow[dashed, bend right=10, drr, "\mu_3"'] && X_1 \otimes X_3 \arrow[d, "\pi_3"] \arrow[u, "\pi_1"'] \\
		&& X_3
	\end{tikzcd}
\end{equation}
The identity \eqref{eqn:join_compose:1} is proved in Section \ref{sec:proof:join_compose:1}. 
Thus we have obtained a category theoretic version of the \emph{Gluing lemma} of optimal transport theory \cite[e.g.]{Villani2009opt}, \cite[Lem 7.6]{villani2021topics}. This mechanism is central to proving the triangle inequality for Wasserstein metrics and establishing the foundational topology for spaces of measures.

\begin{theorem} [Compositionality of joinings] \label{thm:joining_compose}
	The composition rule \eqref{eqn:def:join_compose} for joinings is associative.
\end{theorem}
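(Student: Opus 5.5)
The plan is to prove associativity by showing that both bracketings of a triple composite equal one canonical ``chain'' state. Fix joinings $\lambda_{1,2}, \lambda_{2,3}, \lambda_{3,4}$ of the consecutive pairs among $(X_1,\mu_1),\ldots,(X_4,\mu_4)$. Abbreviate $E_{1|2} := \mathbb{E}(X_1|X_2,\lambda_{1,2})$, $E_{3|2} := \mathbb{E}(X_3|X_2,\lambda_{2,3})$, $E_{2|3} := \mathbb{E}(X_2|X_3,\lambda_{2,3})$ and $E_{4|3} := \mathbb{E}(X_4|X_3,\lambda_{3,4})$, each obtained by disintegration under Assumption \ref{A:disinteg}. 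The chain state $\Lambda$ on $X_1\otimes X_2\otimes X_3\otimes X_4$ is obtained by sampling $X_2$ from $\mu_2$, copying it, drawing $X_1$ via $E_{1|2}$ and $X_3$ via $E_{3|2}$, then copying $X_3$ and drawing $X_4$ via $E_{4|3}$. I will show that $(\lambda_{1,2}\circ\lambda_{2,3})\circ\lambda_{3,4}$ and $\lambda_{1,2}\circ(\lambda_{2,3}\circ\lambda_{3,4})$ both equal the $X_1\otimes X_4$ marginal of $\Lambda$. All the rewriting will be done in the string-diagram calculus, using the Markov identities \eqref{eqn:def:MarkovCat:1}--\eqref{eqn:def:MarkovCat:3} and the projection property \eqref{eqn:d4d5z}.

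\textbf{Step 1 (well-definedness).} First I check that \eqref{eqn:def:join_compose} does not depend on which disintegrations are chosen. Two versions of $E_{1|2}$ are $\mu_2$-almost surely equal, because each satisfies \eqref{eqn:def:disinteg:2}. Precomposing the copied product with $\copyF\circ\mu_2$ only ever evaluates these kernels against $\mu_2$, so it sees only their almost-sure class. This is the uniqueness half of Lemma \ref{lem:disinteg_invrsn}. I also need that composites of such kernels are well defined up to almost-sure equality. For this I use that $E_{2|3}\circ\mu_3=\mu_2$, which follows from \eqref{eqn:jsp3z} together with \eqref{eqn:join_compose:1}. This identity lets a $\mu_2$-a.s.\ equality of kernels out of $X_2$ be pulled back along $E_{2|3}$ to a $\mu_3$-a.s.\ equality.

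\textbf{Step 2 (conditional of a composite).} The key lemma is that, for $\lambda_{1,3}:=\lambda_{1,2}\circ\lambda_{2,3}$,
\[ \mathbb{E}(X_1|X_3,\lambda_{1,3}) = E_{1|2}\circ E_{2|3} \quad \mu_3\mbox{-almost surely.} \]
To prove it, I first rewrite $\lambda_{2,3}$ in two ways. Disintegrating along $\pi_2$ gives $\lambda_{2,3} = (X_2\otimes E_{3|2})\circ\copyF_{X_2}\circ\mu_2$. Disintegrating along $\pi_3$ gives $\lambda_{2,3} = (E_{2|3}\otimes X_3)\circ\copyF_{X_3}\circ\mu_3$. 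This is a Bayes inversion, and both forms come from \eqref{eqn:def:disinteg:2}. Next I substitute these into \eqref{eqn:def:join_compose}. Applying $E_{1|2}$ to the $X_2$ leg and marginalizing gives
\[ \lambda_{1,3} = \big((E_{1|2}\circ E_{2|3})\otimes X_3\big)\circ \copyF_{X_3}\circ\mu_3 . \]
This is exactly the defining commutation \eqref{eqn:def:disinteg:2} of a disintegration of $\lambda_{1,3}$ along $\pi_3$. The almost-sure uniqueness from Step 1 then yields the claim. By the mirror-image argument, $\mathbb{E}(X_4|X_2,\lambda_{2,4}) = E_{4|3}\circ E_{3|2}$ holds $\mu_2$-a.s., where $\lambda_{2,4}:=\lambda_{2,3}\circ\lambda_{3,4}$.

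\textbf{Step 3 (both bracketings equal the chain marginal).} By Step 2, the left bracketing equals $\big((E_{1|2}E_{2|3})\otimes E_{4|3}\big)\circ\copyF_{X_3}\circ\mu_3$. The right bracketing equals $\big(E_{1|2}\otimes (E_{4|3}E_{3|2})\big)\circ\copyF_{X_2}\circ\mu_2$. In the right-hand expression I use coassociativity of $\copyF$ to pull out the intermediate $X_3$ wire. It then becomes the $X_1\otimes X_4$ marginal of $\big(E_{1|2}\otimes((X_3\otimes E_{4|3})\circ\copyF_{X_3}\circ E_{3|2})\big)\circ\copyF_{X_2}\circ\mu_2$. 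Its $X_2\otimes X_3$ part is the first form of $\lambda_{2,3}$ from Step 2; I replace it by the second form. This converts the diagram into the left-hand expression. Both bracketings are therefore the same marginal of $\Lambda$, which proves associativity.

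I expect the main obstacle to be Step 2, and specifically the almost-sure bookkeeping. Substituting one form of $\lambda_{2,3}$ for the other inside a larger diagram is legitimate only if every kernel attached to the $X_2$ or $X_3$ wires is evaluated against the correct marginal. I would first try to isolate a general lemma: if $f=g$ holds $p$-a.s., then $k\circ\graph(f)\circ p = k\circ\graph(g)\circ p$ for any $k$. This is immediate from the definition of almost-sure equality. I would then check that each rewrite above is an instance of that lemma. If the ambient $\calY$ lacks enough positivity for the uniqueness in Step 1, I would fall back on Assumption \ref{A:causal}, since causality is exactly what makes almost-sure equality compatible with composition.
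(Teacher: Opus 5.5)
Your proof is correct, and it contains more than the paper's own justification. The paper's only support for Theorem~\ref{thm:joining_compose} is the paragraph after it. That paragraph argues, in measure-theoretic language, that changing the kernels on null sets does not change the composite, which is essentially your Step~1. It then simply asserts associativity, without giving any identity that relates the two bracketings.

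Your Step~2 supplies the idea that is actually needed. Using both disintegrations
\[ \lambda_{2,3}=(X_2\otimes E_{3|2})\circ\copyF_{X_2}\circ\mu_2=(E_{2|3}\otimes X_3)\circ\copyF_{X_3}\circ\mu_3 , \]
you show that $E_{1|2}\circ E_{2|3}$ is a conditional of $\lambda_{1,2}\circ\lambda_{2,3}$ given $X_3$. This is a Chapman--Kolmogorov rule for conditionals. Because it is pure string-diagram manipulation, it proves associativity in any $\calY$ satisfying Assumption~\ref{A:disinteg}, not only in $\StochCat$. The paper's remark, by contrast, only establishes well-definedness on almost-sure classes.

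Step~3 can be shortened. After Step~2, factor out $E_{1|2}\otimes E_{4|3}$:
\begin{itemize}
\item the right bracketing is $(E_{1|2}\otimes E_{4|3})\circ(X_2\otimes E_{3|2})\circ\copyF_{X_2}\circ\mu_2$;
\item the left bracketing is $(E_{1|2}\otimes E_{4|3})\circ(E_{2|3}\otimes X_3)\circ\copyF_{X_3}\circ\mu_3$.
\end{itemize}
Both are therefore literally $(E_{1|2}\otimes E_{4|3})\circ\lambda_{2,3}$. The four-fold chain state and the extra $X_3$ wire are unnecessary.

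Two of your precautions are also superfluous.
\begin{itemize}
\item Almost-sure uniqueness of a disintegration follows immediately from the definition of almost-sure equality. No positivity, causality, or Lemma~\ref{lem:disinteg_invrsn} is involved.
\item The claim about pulling almost-sure equalities back along $E_{2|3}$ is never used. Each substitution in Step~2 is an exact equality of states, valid for whichever versions of the kernels were chosen.
\end{itemize}
That pullback claim is true under Assumption~\ref{A:disinteg}, but its justification has to go through a conditional, for example
\[ ((f\circ E_{2|3})\otimes X_3)\circ\copyF_{X_3}\circ\mu_3=(X_1\otimes E_{3|2})\circ(f\otimes X_2)\circ\copyF_{X_2}\circ\mu_2 . \]
The marginal identity $E_{2|3}\circ\mu_3=\mu_2$ by itself does not give it.
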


The Disintegration Theorem only guarantees that the families of conditional measures (the Markov kernels) are unique $\mu$-almost everywhere. This means that $K_{12}(x_1, \cdot)$ could theoretically be altered on a set of $\mu_1$-measure zero without changing the static joining $\lambda_{12}$. However, this does not break the associativity of the joinings themselves. Because the final step of composition always requires integrating the kernels back against the base marginals (e.g., $\mu_1$), any alterations made on a set of measure zero are integrated out. Therefore, while the intermediate kernels are only defined up to an almost-everywhere equivalence class, the final composed joining—the actual measure on $X_1 \times X_4$—is strictly unique and strictly associative. 

\paragraph{Enrichment} Perrone \cite{Perrone2024entropy} presented an interesting proposition of interpreting the category $\StochCat$ to be enriched over the category of \emph{divergence spaces}. Our observation that the set of joinings are compositional in nature allow for enrichment over more structured categories that can hold more information about the joinings. Here are some examples
\begin{enumerate}
	\item For any two probability spaces $(X, \mu)$ and $(Y, \nu)$, the set of all joinings is a convex set: if $\lambda_1, \lambda_2 \in \join(\mu, \nu)$, then any convex combination $t\lambda_1 + (1-t)\lambda_2$ (for $t \in [0, 1]$) is also a valid joining. Let $\ConvexCat$ be the category of convex spaces whose morphisms are affine maps that preserve convex combinations. To be enriched over $\ConvexCat$, the composition operation must be bilinear (i.e., it must preserve convex combinations). Since composing joinings reduces to integrating Markov kernels, and integration is a linear operator, the composition distributes perfectly over convex combinations:
	\[ (t\lambda_{12} + (1-t)\lambda_{12}') \circ \lambda_{23} = t(\lambda_{12} \circ \lambda_{23}) + (1-t)(\lambda_{12}' \circ \lambda_{23}) .\]
	Thus, probability spaces as objects and joinings as morphisms form a category enriched over $\ConvexCat$.
	\item If the underlying measure spaces are standard Borel spaces (e.g., Polish spaces equipped with Borel measures), the set of joinings $\join(\mu, \nu)$ naturally carries the topology of weak convergence (or weak-* topology). By Prokhorov's Theorem, $\Pi(\mu, \nu)$ is a weakly compact, Hausdorff topological space.Furthermore, the Gluing Lemma composition is continuous with respect to this weak topology. Therefore, you can view this structure as a category enriched over $\CompHaus$ (Compact Hausdorff spaces). This topological enrichment is crucial when proving the existence of optimal transport plans via compactness arguments.
	\item By a similar reasoning, $\StochCat$ may be enriched over $\MetricCat$, the category of metric spaces and non-expansive maps.
\end{enumerate}

Enriching a category replaces the bare sets of morphisms (hom-sets) with objects from a structured background category, allowing the morphisms themselves to carry geometric, topological, or algebraic data. This shift fundamentally expands what categorical frameworks can compute and model. A Lawvere metric category, created by enrichment over the poset $([0, \infty], \geq, +)$is the exact categorical foundation of optimal transport and Wasserstein metrics. Enrichment over $\Topo$ or $\CompHaus$ equips the space of morphisms with a topology, thus opening the possibility of analyzing convergence and sequential limits within a categorical setting. Enriching over $\AbelCat$ (Abelian groups) or $\VectCat$ (vector spaces) yields additive and linear categories. This allows morphisms to be added, subtracted, and scaled—the strict prerequisite for defining kernels, cokernels, and chain complexes. 

\section{Ergodicity} \label{sec:erg}

\begin{definition} [Ergodicity]
	\cite[Def 3.8]{MossPerrone2022ergdc} An invariant state $p:1\to \iota \Omega$ of a dynamical system $\Phi:\Time \to \calX$ is said to be ergodic if for every invariant deterministic observable $c:\iota \Omega \to \calR$, the composite $c\circ p : 1\to \calR$ is deterministic.
\end{definition}

Note that ergodicity is not a property of the dynamics $\Phi$ itself but of an invariant state $p$ of the dynamics. Broadly, it means that any invariant observable $c$ of the dynamical system must be constant. The notion of a "constant" itself is hard to capture in any other manner in a Markov category.

\begin{definition} [Identity system]
	An identity system in category $\calC$ is a trivial functor $F : \star \to \calC$ from the trivial 1-object category $\star$. Such systems correspond bijectively to the objects of $\calC$. If $A$ denotes the one and only object in the image of $F$, then such a system may be interpreted as the dynamics $\Id_A : A \to A$.
\end{definition}

Note that any semigroup such as $\calT$ has a unique homomorphism into $\star$. This can be used to pullback any identity system as follows :
\[\begin{tikzcd}
	\calC \arrow[r, "\cong"] & \Functor{\star}{\calC} \arrow[rr, "!_{\Time\to\star} \circ", hook] && \Functor{\Time}{\calC}
\end{tikzcd} .\]
Thus any identity system may be artificially interpreted as a system with time structure $\Time$. Now note that if the category $\calC$ has a terminal element $1$, then any dynamical system bears a homomorphism with an identity system. Let us take a general dynamics $\Phi:\Time\to\calC$ with domain $\Omega$, and aa stationary system $\Id : \Time\to \calC$ with domain $A$. Fix a state /element $a:1\to A$. Then we have commutations
\[\begin{tikzcd} [column sep = large]
	A \arrow[r, "!"] \arrow[d, "\Phi^t"'] & 1 \arrow[d, "\Id"] \arrow[r, "a"] & A \arrow[d, "\Id"] \\
	A \arrow[r, "!"'] & 1 \arrow[r, "a"'] & A 
\end{tikzcd} , \quad \forall t\in \Time.\]
The second column is the identity system with domain $1$. The third column is the identity system on $A$. 

\begin{definition} [Irreducible dynamics]
	A dynamical system $\Phi:\Time\to\calC$ with domain $\Omega$ is said to be irreducible if it has no non-trivial morphism to an identity system.
\end{definition}

\begin{lemma} \label{lem:d90did}
	If there is an $\sps$-morphism from an identity system to an $\sps$ object $X$, then $X$ must be an identity system itself.
\end{lemma}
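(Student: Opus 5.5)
The plan is to read the statement up to $\mu$-almost sure equality, which is the natural notion of equality for $\sps$-objects. I do not expect the literal, pointwise statement to hold without an extra hypothesis. For instance, a fixed point of any system gives an $\sps$-morphism from the one-point identity system. So the target is: if $h : (A, a, \Id_A) \to (\Omega, \mu, \Phi)$ is an $\sps$-morphism, then $\iota\Phi^t = \Id_{\iota\Omega}$ $\mu$-almost surely for every $t \in \Time$. Equivalently, $(\Omega,\mu,\Phi)$ is isomorphic to an identity system in $\text{PS}(\calY)$.

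First I unpack Definition \ref{def:sps} for this morphism. The state condition gives $\mu = \iota h \circ a$. The naturality square, with the source dynamics being $\Id_A$, gives
\[ \iota\Phi^t \circ \iota h = \iota h \circ \Id = \iota h \quad \text{for all } t. \]
So $\iota h$ is a cone over $\iota\Phi$ in the sense of Definition \ref{def:invar:1}: the image of $h$ is pointwise fixed by the dynamics.

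Second, I verify the almost sure identity $\graph(\iota\Phi^t)\circ\mu = \graph(\Id)\circ\mu = \copyF_{\iota\Omega}\circ\mu$. By Assumption \ref{A:det_base}, $\iota h$ is deterministic, so $\copyF_{\iota\Omega}\circ\iota h = (\iota h\otimes\iota h)\circ\copyF_{\iota A}$. Using this, the chain is
\[ \graph(\iota\Phi^t)\circ\mu = (\Id\otimes\iota\Phi^t)\circ\copyF\circ\iota h\circ a = (\iota h\otimes \iota\Phi^t\iota h)\circ\copyF_{\iota A}\circ a = (\iota h\otimes\iota h)\circ\copyF_{\iota A}\circ a , \]
and the last expression equals $\copyF\circ\iota h\circ a = \copyF\circ\mu$. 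By the definition of almost sure equality, this says $\iota\Phi^t = \Id$ $\mu$-a.s. for every $t$. Hence, in $\text{PS}(\calY)$, where morphisms are identified up to $\mu$-a.s. equality, $\Phi$ is the identity system on $(\Omega,\mu)$.

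The main obstacle is upgrading from almost sure to strict equality $\Phi^t = \Id_\Omega$. This requires $\iota h$ to be an epimorphism, or at least an epimorphism after precomposing with states, so that $\iota\Phi^t\circ\iota h = \iota h$ can be cancelled. If the statement is intended strictly, I would add the hypothesis that $h$ is epi. The argument then finishes immediately from the cone identity in the first step, since both $\iota\Phi^t$ and $\Id$ agree after composing with the epimorphism $\iota h$. I would then note that this hypothesis is unnecessary in the almost sure form proved above.
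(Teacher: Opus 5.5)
The paper states this lemma without proof; it is cited only once, in the unfinished ``(iii) implies (ii)'' step of Theorem~\ref{thm:9dj8zl}. There is therefore no argument of the paper's to compare with. Judged on its own, your proposal is correct.

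\textbf{The literal statement is false.} Your diagnosis is right. Take a fixed point $x_0 : 1 \to \Omega$ of $\Phi$. Then $x_0$ is an $\sps$-morphism $(1, \Id_1, \Id) \to (\Omega, \iota x_0, \Phi)$, while $\Phi$ itself can be non-trivial, e.g.\ fix $0$ and swap $1,2$ on $\{0,1,2\}$. So the claim can only hold up to $\mu$-almost sure equality. The target state must be the Dirac state $\iota x_0$; your phrase ``a fixed point of any system'' glosses over this.

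\textbf{Your almost-sure argument is correct.} The computation $\graph(\iota\Phi^t)\circ\mu = \copyF_{\iota\Omega}\circ\mu$ is right. The epimorphism hypothesis does yield the strict version. Under the comma-category definition the naturality square $\Phi^t \circ h = h$ already holds in $\calX$, so it is enough that $h$ be epi in $\calX$.

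\textbf{Determinism of $\iota h$ is essential.} This depends on reading an $\sps$-morphism as an $\calX$-morphism, i.e.\ a natural transformation in $\Functor{\Time}{\calX}$, as the comma-category definition requires. The diagram in Definition~\ref{def:sps} could suggest instead an arbitrary $\calY$-arrow $\iota\Omega \to \iota\Omega'$. Under that reading the state $\mu$ itself would be a morphism $(1,\Id_1,\Id) \to (\Omega,\mu,\Phi)$ for every $\sps$-object. Even the almost-sure version would then fail, e.g.\ for an irrational rotation with Lebesgue measure. Your appeal to Assumption~\ref{A:det_base} is therefore the crux of the argument, and it is justified precisely by the comma-category reading.
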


\begin{definition} [Ergodic extensions]
	Let $\calC$ be any category, and consider two dynamical systems $\Phi, \Psi : \Time \to \calC$. A homomorphism of dynamical systems $\pi : \Phi \Rightarrow \Psi$ is said to be an ergodic extension if any $\Phi$-invariant observable can be factored through a $\Psi$-invariant observable. In other words, if $\Phi, \Psi : \Time \to \calC$ are two dynamical systems with domains $\Omega, \Lambda$ respectively, and there is a semi-conjugacy $\pi$ as shown below on the left :
	\begin{equation} \label{eqn:def:erg_extn}
		\begin{tikzcd} [row sep = large]
			\Omega \arrow[dd, "\pi"'] \arrow[rr, "\Phi^t"] & & \Omega \arrow[dd, "\pi"] \\
			\\
			\Lambda \arrow[rr, "\Psi^t"'] & & \Lambda
		\end{tikzcd}
		\begin{array}{c} \mbox{is an} \\ \mbox{ergodic} \\ \mbox{extension} \end{array}
		\;\Leftrightarrow\;
		\begin{tikzcd} [row sep = large]
			\Omega \arrow[dr, "\alpha"] \arrow[dd, "\pi"'] \arrow[rr, "\Phi^t"] & & \Omega \arrow[dd, "\pi"] \arrow[dl, "\alpha"'] \\
			& Y\\
			\Lambda \arrow[rr, "\Psi^t"'] & & \Lambda
		\end{tikzcd} 
		\,\Rightarrow\,
		\exists \begin{tikzcd} \Lambda \arrow[d, "\beta"] \\ Y \end{tikzcd} \,:\,
		\begin{tikzcd} [row sep = large]
			\Omega \arrow[dr, "\alpha"] \arrow[dd, "\pi"'] \arrow[rr, "\Phi^t"] & & \Omega \arrow[dd, "\pi"] \arrow[dl, "\alpha"'] \\
			& Y\\
			\Lambda \arrow[ur, "\beta"] \arrow[rr, "\Psi^t"'] & & \Lambda \arrow[ul, "\beta"']
		\end{tikzcd} 
	\end{equation}
	then any $\Phi$-invariant observable $\alpha$ as shown in the middle, must factor through a $\Psi$-invariant observable $\beta$ as shown on the right.
\end{definition} 

\begin{lemma} [Ergodicity of extensions of identity] \label{lem:0dj9xc}
	\cite[Lem 6.24]{glasner2015join} Take a morphism $\phi : \paran{\Omega, \mu, \Phi} \to (Z, \eta, \Id_Z)$ to an identity system. Then $\pi$ is ergodic iff for any morphism of the form $\pi : (X, \nu, \Id_X) \to (Z, \eta, \Id_Z)$, $\paran{\Omega, \mu, \Phi}$ and $(X, \nu, \Id_X)$ over their common factor $(Z, \eta, \Id_Z)$.
\end{lemma}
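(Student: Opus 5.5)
The statement is garbled as printed. I read it as the classical \cite[Lem 6.24]{glasner2015join}: the morphism $\phi : \paran{\Omega,\mu,\Phi}\to (Z,\eta,\Id_Z)$ is an ergodic extension iff, for every $\sps$-morphism $\pi : (X,\nu,\Id_X)\to (Z,\eta,\Id_Z)$ from an identity system, the only joining in $J(\mu,\nu \mid \phi,\pi \mid \eta)$ is the relatively independent join $\join(\mu,\nu\mid \phi,\pi)$ of \eqref{eqn:def:disinteg_join}. That is, $\paran{\Omega,\mu,\Phi}$ and $(X,\nu,\Id_X)$ are disjoint over $Z$. I would prove the two implications separately, using disintegration (Assumption \ref{A:disinteg}) and determinism (Assumption \ref{A:det_base}) throughout.

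\emph{Forward direction.} Assume $\phi$ is an ergodic extension and let $\lambda$ be a joining over $Z$. Disintegrate $\lambda$ along the projection $\pi_\Omega$ and set
\[ \gamma := \pi_X\circ(\pi_\Omega^{-1}|\lambda) : \Omega\to X, \]
exactly as in Lemma \ref{lem:subsig:1}. Since the second factor carries the identity dynamics, Lemma \ref{lem:sps_mrphsm_invert}, applied to the $\sps$-morphism $\pi_\Omega$, gives $\gamma\circ\iota\Phi^t=\gamma$ $\mu$-almost surely. So $\gamma$ is a $\Phi$-invariant observable, in the sense of the co-cone diagram \eqref{eqn:def:erg_extn}, taking values in $X$ and with $Y=X$.

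The ergodic-extension hypothesis then yields $\beta : Z\to X$ with $\gamma=\beta\circ\phi$ almost surely. Rewriting $\lambda=\graph^T(\ldots)$ as in \eqref{eqn:jsp3z} gives $\lambda=(\Omega\otimes\beta\phi)\circ\copyF_\Omega\circ\mu$. The join condition $(\phi\otimes\pi)\lambda=\graph\,\eta$ then identifies $\beta$ with $(\pi^{-1}|\nu)$ $\eta$-almost surely, by the uniqueness part of Lemma \ref{lem:disinteg_invrsn}. Substituting this back shows $\lambda$ equals the copied product $\copyF\paran{(\phi^{-1}|\mu),(\pi^{-1}|\nu)}\circ\eta$, which is $\join(\mu,\nu\mid\phi,\pi)$. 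The manipulation to perform here is to move the disintegrations through $\copyF_Z$, which is legitimate because $\phi$ is deterministic.

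\emph{Converse.} Assume relative disjointness and let $\alpha:\Omega\to Y$ be a $\Phi$-invariant deterministic observable. Build an identity system $X:=Z\otimes Y$ with state $\nu:=\copyF(\phi,\alpha)\circ\mu$ and identity dynamics, and let $\pi:=\pi_Z$. This is a legitimate $\sps$-object and morphism because both $\phi$ and $\alpha$ are invariant. The graph join
\[ \lambda := \graph(\copyF(\phi,\alpha))\circ\mu \]
is invariant by Lemma \ref{lem:graph_join:1} and lies over $Z$. By hypothesis $\lambda=\join(\mu,\nu\mid\phi,\pi)$. Composing with $\pi_Y\otimes\pi_Y$ (the two copies of $Y$, one read off $\Omega$ through $\alpha$ and one from the $X$-coordinate) gives two descriptions of the same state. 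One is a graph, i.e. a copy of a deterministic quantity. The other is conditionally independent given $Z$. Equating them forces $\alpha$ to be $\mu$-almost surely deterministic given $\phi$, which produces $\beta:=\pi_Y\circ(\pi^{-1}|\nu):Z\to Y$ with $\alpha=\beta\phi$ almost surely.

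The main obstacle is this last step. It requires the abstract fact that a conditionally independent self-coupling which is also a diagonal (copied) coupling is deterministic given the condition. I would first try to obtain it from the definition of almost sure equality \eqref{eqn:def:almost_sure}, combined with Lemma \ref{lem:disinteg_invrsn}. The analogous step in the forward direction, ensuring $\beta$ is well defined only almost surely, is handled by the same lemma.
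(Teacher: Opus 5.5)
The paper's proof of this lemma is empty, so there is no argument to compare routes with. Judged on its own, your reading of the statement and your overall architecture are the classical ones, and the outline is sound in $\StochCat$ over standard Borel spaces, with factorizations read mod $\mu$. At the level of the paper's axioms, though, one step in each direction does not go through as written.

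\emph{Converse.} Put $\beta:=\pi_Y\circ(\pi_Z^{-1}|\nu)$. The step you flag is a genuine gap, and the tools you name will not close it. Composing with $\alpha\otimes\pi_Y$ discards both $\Omega$ and $Z$. What remains is only an equality of states on $Y\otimes Y$:
\[ ((\alpha\circ(\phi^{-1}|\mu))\otimes\beta)\circ\copyF_Z\circ\eta=\copyF_Y\circ\alpha\circ\mu . \]
Classically you would finish by evaluating on $B\times B^c$ and using nonnegativity of measures. Nothing in the graphical identities you plan to use corresponds to that step.

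Lemma \ref{lem:disinteg_invrsn} cannot supply it either, because its uniqueness claim is false as stated. Every morphism supported on the fibres satisfies $f\circ s=\Id_Y$; uniqueness needs the full identity \eqref{eqn:def:disinteg:3}. The same objection applies where you invoke that lemma in the forward direction: there the marginal condition on $\nu$ must enter as well.

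The step becomes immediate if you keep the $\Omega$-coordinate, i.e.\ apply $\Id_\Omega\otimes\pi_Y$ to both descriptions of $\lambda$. By \eqref{eqn:d4d5z}, the graph side becomes $\graph(\alpha)\circ\mu$. Using \eqref{eqn:def:disinteg:3} with $\eta=\phi\circ\mu$, the relatively independent side becomes
\[ ((\phi^{-1}|\mu)\otimes\beta)\circ\copyF_Z\circ\eta=(\Omega\otimes\beta)\circ\graph(\phi)\circ\mu=\graph(\beta\circ\phi)\circ\mu . \]
Hence $\alpha=\beta\circ\phi$ $\mu$-a.s., directly by the definition of almost sure equality. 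This argument needs neither determinism nor strict invariance of $\alpha$. The invariance computation of Lemma \ref{lem:graph_join:1} still goes through when $\alpha\circ\iota\Phi^t=\alpha$ holds only $\mu$-a.s.

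\emph{Forward direction.} Here $\gamma=\pi_X\circ(\pi_\Omega^{-1}|\lambda)$ is a genuinely stochastic morphism and is at best $\mu$-a.s.\ invariant. The condition \eqref{eqn:def:erg_extn} only concerns strictly invariant observables, and, by analogy with the paper's ergodicity definition, deterministic ones. So the hypothesis does not apply to $\gamma$.

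Your converse, meanwhile, only factors deterministic, strictly invariant $\alpha$, so as written the two directions are about different notions. You should fix the notion as: every $\mu$-a.s.\ invariant $\calY$-morphism out of $\iota\Omega$ factors $\mu$-a.s.\ through $\phi$. The corrected converse above proves exactly this.

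The a.s.\ invariance of $\gamma$ also cannot be taken from Lemma \ref{lem:sps_mrphsm_invert}, which fails for non-invertible dynamics. For a counterexample, take the one-sided Bernoulli$(\tfrac12,\tfrac12)$ shift on $\{0,1\}^{\mathbb{N}}$ with $h=\sigma$. Then $(h^{-1}|\mu)(y)=\tfrac12\delta_{0y}+\tfrac12\delta_{1y}$, where $0y$ prepends a $0$. Its $\sigma$-image is $\delta_y$, not $(h^{-1}|\mu)(\sigma y)$.

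In your special case the conclusion is true, but it needs its own proof. Invariance of $\lambda=\graph(\gamma)\circ\mu$ under $\iota\Phi^t\otimes\Id_X$ says exactly that $\gamma\circ(\iota\Phi^t)^\dagger=\gamma$ $\mu$-a.s., for the Bayesian inverse. Upgrading this to $\gamma\circ\iota\Phi^t=\gamma$ is Lemma \ref{lem:inv_Bayes}, which requires $\gamma$ to be a.s.\ deterministic. In $\StochCat$ you can instead use that the Koopman operator $U$ is an isometry, so $U^*G=G$ forces $UG=G$ for $G=\int g\,d\gamma(\cdot)$. In the abstract setting this step needs an additional argument.
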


Lemma \ref{lem:0dj9xc} is proved in Section \ref{sec:proof:0dj9xc}. Lemma \ref{lem:0dj9xc} will be key in establishing equivalent characterizations of ergodicity later in Theorem \ref{thm:9dj8zl}.

\section{Sequence spaces} \label{sec:sequence}

Take objects $(X, \mu_X)$ and $(Y, \mu_Y)$ in the slice category $\Comma{1}{\calY}$, and a diagram as shown below  :
\begin{equation} \label{eqn:join_Meas}
	\begin{tikzcd} [column sep = small, scale cd = 0.7]
		& ( X \otimes Y, \lambda ) \arrow[dl, ""] \arrow[dr, "\pi_Y"] \arrow[dl, "\pi_X"'] \\
		(X, \mu_X) && (Y, \mu_Y)
	\end{tikzcd}
\end{equation}
Then the rule of disintegration \eqref{eqn:def:disinteg:2} applid to the diagram \eqref{eqn:join_Meas}, there is a morphism $(\pi_Y^{-1}|\lambda) : Y \to X \otimes Y$ such that the following commutation holds :
\[\begin{tikzcd} Y \arrow[ddd, "(\pi_Y^{-1}|\lambda)", {sloped}] \\ \\ \\ X \otimes Y \end{tikzcd}
\,:\,
\begin{tikzcd} [scale cd = 0.7]
	X \otimes Y \arrow[rr, "\graph(\pi_Y)"] & & X \otimes Y \otimes Y \arrow[d, "\swap", "\cong"'] &  \\
	1 \arrow[d, "\pi_Y^{-1} \circ \lambda"'] \arrow[u, "\lambda"] & & Y \otimes X \otimes Y\\
	Y \arrow[rr, "\copyF"'] && Y \otimes Y \arrow[u, "Y\otimes (\pi_Y^{-1}|\lambda)"']
\end{tikzcd}\]

\begin{definition} [Conditional of marginals]
	Given the diagram \eqref{eqn:join_Meas}, the disintegration $(\pi_Y^{-1}|\lambda)$ leads to the composite morphism which we call the conditional of the marginal $X$ w.r.t the marginal $Y$, shown below as a dashed green arrow :
	\begin{equation} \label{eqn:mrgnl_cndtnl}
		\begin{tikzcd}
			Y \arrow[drr, "(\pi_Y^{-1}|\lambda)"'] \arrow[dashed, rrrr, Shobuj, "\lambda_{Y|X}"] && && X \\
			&& X \otimes Y \arrow[urr, "\pi_X"]
		\end{tikzcd}
	\end{equation}
\end{definition}

This
The following observation will be of use to us

\begin{lemma} \label{lem:2fi03}
	The disintegration $(\pi_Y^{-1}|\lambda)$ is the copied product of the conditional $\lambda_{Y|X}$ \eqref{eqn:mrgnl_cndtnl} and the identity on $Y$. In other words, the following commutation holds :
	\begin{equation} \label{eqn:disinteg_cndtnl}
		\begin{tikzcd}
			Y \arrow[d, "\copyF"'] \arrow[rr, "(\pi_Y^{-1}|\lambda)"] && X \otimes Y \\
			Y \otimes Y \arrow[urr, "\lambda_{Y|X} \otimes Y"']
		\end{tikzcd}
	\end{equation}
\end{lemma}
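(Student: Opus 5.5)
The plan is to prove the commutation \eqref{eqn:disinteg_cndtnl} \emph{$\mu_Y$-almost surely}, where $\mu_Y=\pi_Y\circ\lambda$. This is the only sense in which it can hold, because $(\pi_Y^{-1}|\lambda)$ is itself only determined $\mu_Y$-a.s. Once the a.s. equality is established, one may replace $(\pi_Y^{-1}|\lambda)$ by the representative $(\lambda_{Y|X}\otimes Y)\circ\copyF_Y$, which is again a disintegration, and then \eqref{eqn:disinteg_cndtnl} holds strictly. Write $k:=(\pi_Y^{-1}|\lambda)$ and $k':=(\lambda_{Y|X}\otimes Y)\circ\copyF_Y$. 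By the definition of almost sure equality \eqref{eqn:def:almost_sure}, the goal is
\[ \graph(k)\circ\mu_Y=\graph(k')\circ\mu_Y \quad\text{as states on } Y\otimes X\otimes Y. \]

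\textbf{Step 1: the left side.} The defining commutation of the disintegration (the square preceding the definition of $\lambda_{Y|X}$) says exactly that $(Y\otimes k)\circ\copyF_Y\circ\mu_Y$ equals $\swap\circ\graph(\pi_Y)\circ\lambda$. So $\graph(k)\circ\mu_Y$ is the state obtained from $\lambda$ by copying its $Y$-wire and moving one copy to the front. Since $\pi_Y$ is a projection built from $\deleteF$, this copy can be taken directly on the $Y$-output of $\lambda$.

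\textbf{Step 2: recover $\lambda$ from $\lambda_{Y|X}$.} Post-compose the identity of Step 1 with $Y\otimes\pi_X\otimes\deleteF_Y$. On the right side, the Markov counit law \eqref{eqn:def:MarkovCat:1} removes the extra copy. On the left side, the map $\pi_X\circ k$ appears, which is $\lambda_{Y|X}$ by \eqref{eqn:mrgnl_cndtnl}. This yields
\[ (Y\otimes\lambda_{Y|X})\circ\copyF_Y\circ\mu_Y=\swap\circ\lambda , \]
that is, $\lambda=(\lambda_{Y|X}\otimes Y)\circ\copyF_Y\circ\mu_Y$.

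\textbf{Step 3: the right side.} Compute $\graph(k')\circ\mu_Y$. Expanding $k'$ gives a threefold copy of $Y$ under $\mu_Y$, with $\lambda_{Y|X}$ applied to the middle wire. By coassociativity and commutativity of $\copyF$ (\eqref{eqn:def:MarkovCat:1}, \eqref{eqn:def:MarkovCat:2}), this equals: first form $(\lambda_{Y|X}\otimes Y)\circ\copyF_Y\circ\mu_Y$, then copy its $Y$-wire and move one copy to the front. By Step 2 the inner state is $\lambda$. Hence $\graph(k')\circ\mu_Y$ is $\lambda$ with its $Y$-wire copied to the front, which is the expression found in Step 1. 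This gives the almost sure equality.

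\textbf{Main obstacle.} The main obstacle is bookkeeping rather than substance. One must track the $\swap$ and the ordering of tensor factors consistently between the disintegration square and \eqref{eqn:def:almost_sure}. One must also justify the passage in Step 1 from "copy after $\pi_Y$" to "copy the $Y$-output of $\lambda$". This uses that $\pi_Y=\deleteF_X\otimes Y$ is deterministic and the compatibility \eqref{eqn:def:MarkovCat:3} of $\copyF_{X\otimes Y}$ with $\copyF_X\otimes\copyF_Y$. I would carry out Steps 1 and 3 in string diagrams, where each of these moves is a single wire slide. Uniqueness of disintegrations up to $\mu_Y$-a.s. equality then justifies identifying $k$ with $k'$.
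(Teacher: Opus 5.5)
The paper states Lemma~\ref{lem:2fi03} without any proof, so there is no argument of the authors' to compare with. Your proposal supplies one, and it is correct.

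Your reading of the statement as a $\mu_Y$-almost-sure identity is the right one, because as a strict commutation it is false in general. In $\StochCat$ take $X=Y=\{0,1\}$ and $\lambda=\delta_{(0,0)}$, so $\mu_Y=\delta_0$. The disintegration equation forces $k(0)=\delta_{(0,0)}$ but leaves $k(1)$ unconstrained. Choosing $k(1)=\delta_{(1,0)}$ gives $\lambda_{Y|X}(1)=\delta_1$, hence $k'(1)=\delta_{(1,1)}\neq k(1)$.

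As you note, Step~3 shows that $\graph(k')\circ\mu_Y$ is exactly the state prescribed by the disintegration equation. So $k'$ is itself a disintegration of $\pi_Y$ along $\lambda$. The diagram \eqref{eqn:disinteg_cndtnl} then commutes strictly once $(\pi_Y^{-1}|\lambda)$ is chosen to be $k'$, which is presumably what the authors intend.

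Two minor corrections:

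\textbf{Step~2.} The morphism to post-compose with is $Y\otimes\pi_X=Y\otimes X\otimes\deleteF_Y$. The expression $Y\otimes\pi_X\otimes\deleteF_Y$ as you wrote it has the wrong domain (four tensor factors).

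\textbf{Step~1.} Determinism of $\pi_Y$ is not needed. The identity $\graph(\pi_Y)\circ\lambda=(X\otimes\copyF_Y)\circ\lambda$ follows from the compatibility of $\copyF_{X\otimes Y}$ with $\copyF_X\otimes\copyF_Y$ in \eqref{eqn:def:MarkovCat:3}, together with the counit law on $X$.

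With these fixes, your argument uses only the Markov axioms and the existence of the disintegration. It therefore holds in any Markov category where that disintegration exists.
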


\paragraph{Infinite powers} Fix a set $S$ for the rest of this chapter. Then one has a poset created from the collection of subsets of $S$ :
\[ 2^{S} := \text{ poset of subsets of } S .\]

\begin{definition} [Co-filtration] \label{def:cofltr}
	An $S$-indexed co-filtration is an member of the following functor category :
	\[ \CoFilter(\calC; S) := \Functor{ \paran{2^S}^{op} }{ \calC }. \]
\end{definition}

\begin{definition} [Power induced co-filtration] \label{def:power_cofilter}
	For any object $\Omega$ of $\calC$, the  $S$-indexed co-filtration created by powers of $\Omega$ is the functor 
	\[ \Pow_{\Omega, S} \in \CoFilter(\calC; S), \, 
	\begin{tikzcd} A \arrow[d, "\subset"'] \\ A' \end{tikzcd} \,\mapsto\, 
	\begin{tikzcd} \otimes_{|A'|} \Omega \arrow[d, "\pi_{A'\to A}"'] \\ \otimes_{|A|} \Omega \end{tikzcd}\]
\end{definition}

\begin{definition} [Co-filtered limit]
	An object $X$ of $\calY$ is said to have a cofiltered limit of order $S$ for some indexing set $S$, if $S$-indexed co-filtration $\Pow_{X, S}$ created by powers of $\Omega$ has a limit. This limit is denoted as $\otimes^{|S|} X$.
\end{definition}

\begin{definition} [Shift spaces] \label{def:shift}
	Suppose an object $X$ of $\calY$ has a cofiltered limit indexed by $\Time$. Then there is a natural dynamics with domain $\otimes^{|\Time|} X$ given by 
	\begin{equation} \label{eqn:def:shift}
		\begin{tikzcd}
			\otimes^{|J+t|} X & & & & \otimes^{|J|} X \arrow[llll, "+t"'] \\
			& \otimes^{|\Time|} X \arrow[ld, "\pi_{J'+t}"] \arrow[lu, "\pi_{J+t}"'] & & \otimes^{|\Time|} X \arrow[ll, "\shift^t"] \arrow[rd, "\pi_{J'}"'] \arrow[ru, "\pi_J"]  & \\
			\otimes^{|J'+t|} X \arrow[uu, "\pi_{J'+t \to J+t}"] & & & & \otimes^{|J'|} X \arrow[uu, "\pi_{J'\to J}"'] \arrow[llll, "+t"]
		\end{tikzcd} ,
		\quad \forall J\subseteq J' \subset_{fin} \Hom(\Time).
	\end{equation}
	This dynamics is called a \emph{shift} or a \emph{shift-flow}.
\end{definition} 

\begin{theorem} [Shift-space dynamics] \label{thm:shift}
	Suppose an object $X$ of $\calY$ has a cofiltered limit indexed by $\Time$. Then the shift space from Definition \ref{def:shift} is a proper dynamical system. Take any morphism $\mu : 1\to X$. Then 
	\begin{enumerate} [(i)]
		\item there is a unique morphism $\otimes^{|\Time|} \mu : 1 \to \otimes^{|\Time|} X$ such that
		\[\begin{tikzcd}
			& & & \otimes^{|J|} X \\
			1 \arrow[rrru, "\otimes^{|J|} \mu", bend left] \arrow[rrrd, "\otimes^{|J'|} \mu"', bend right] \arrow[rr, "\otimes^{|\Time|} \mu", dotted] & & \otimes^{|\Time|} X \arrow[rd, "\pi_{J'}"'] \arrow[ru, "\pi_J"] & \\
			& & & \otimes^{|J'|} X \arrow[uu, "\pi_{J'\to J}"']
		\end{tikzcd},
		\quad \forall J\subseteq J' \subset_{fin} \Hom(\Time).\]
		\item This morphism $\otimes^{|\Time|} \mu$ is invariant under the shift.
	\end{enumerate}
\end{theorem}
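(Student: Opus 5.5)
The plan is to use nothing but the universal property of the limit $\otimes^{|\Time|} X$ of the co-filtration $\Pow_{X,\Time}$, applied once to build the shift maps and the product state, and then again to prove equalities. An equality of two morphisms into $\otimes^{|\Time|} X$ will always be checked by composing with every projection $\pi_J$, $J \subset_{fin} \Time$, and invoking the uniqueness part of the universal property.

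\textbf{Step 1: the shift is a dynamical system.} For each $t$, the family $\{(+t)\circ \pi_{J+t}\}_J$ is a cone over $\Pow_{X,\Time}$. Naturality in $J\subseteq J'$ follows because translation by $t$ commutes with the restriction maps $\pi_{J'\to J}$, since both only relabel or forget tensor factors. Hence $\shift^t$ exists and is unique, which is exactly Diagram \eqref{eqn:def:shift}. To check functoriality, compose with $\pi_J$: both $\pi_J\circ \shift^{s}\circ\shift^{t}$ and $\pi_J\circ \shift^{s+t}$ reduce to $(+(s+t))\circ \pi_{J+s+t}$. Similarly $\pi_J\circ \shift^0=\pi_J$. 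Uniqueness then gives $\shift^s\shift^t=\shift^{s+t}$ and $\shift^0=\Id$, so $t\mapsto \shift^t$ defines a functor $\Time\to\calY$.

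\textbf{Step 2: part (i).} Consider the family of finite product states $\otimes^{|J|}\mu:1\to \otimes^{|J|}X$. It is a cone because $\pi_{J'\to J}\circ \otimes^{|J'|}\mu = \otimes^{|J|}\mu$. This follows from the definition of the projections \eqref{eqn:def:projection}: deleting a factor $X$ after $\mu$ gives $\deleteF_X\circ\mu = \Id_1$, which holds because $\deleteF$ is natural, as in \eqref{eqn:def:MarkovCat:3}. The universal property then yields the unique $\otimes^{|\Time|}\mu$.

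\textbf{Step 3: part (ii).} Compose $\shift^t\circ \otimes^{|\Time|}\mu$ with $\pi_J$. This gives $(+t)\circ \pi_{J+t}\circ \otimes^{|\Time|}\mu = (+t)\circ \otimes^{|J+t|}\mu$. Relabeling the factors of a tensor power of the same state returns $\otimes^{|J|}\mu$; this is where the symmetry coherence of the monoidal structure is needed. So both $\shift^t\circ\otimes^{|\Time|}\mu$ and $\otimes^{|\Time|}\mu$ are mediating morphisms for the same cone, and uniqueness forces them to be equal. Hence the state is invariant in the sense of Definition \ref{def:invar:2}.

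\textbf{Main obstacle.} The hard part is Step 1's naturality and the relabeling identity in Step 3, because the statement suppresses what "$+t$" means for a tensor power. I would first fix $J\mapsto J+t$ as a bijection of finite index sets. I would then define $+t:\otimes^{|J|}X\to\otimes^{|J+t|}X$ as the canonical symmetry isomorphism induced by this bijection, and lean on Mac Lane coherence to argue that it commutes with projections and with tensor powers of a single state. If $\Time$ is only a semigroup, so that $J+t$ need not cover all indices, the cone in Step 1 still works because we only ever use $\pi_{J+t}$, not a surjection onto all of $\Time$.
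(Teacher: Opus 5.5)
Your proposal is correct and is essentially the paper's proof. That proof likewise consists only of universal-property checks drawn as commutative diagrams: the semigroup law for $\shift^t$ via the projections $\pi_J,\pi_{J+t},\pi_{J+t+s}$, the cone $\{\otimes^{|J|}\mu\}_J$ producing $\otimes^{|\Time|}\mu$, and invariance checked coordinatewise and then concluded by uniqueness.

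One correction. Your cone $\pi_J\circ\shift^t=(+t)\circ\pi_{J+t}$ is not literally Diagram \eqref{eqn:def:shift}, which reads $\pi_{J+t}\circ\shift^t=(+t)\circ\pi_J$ with $+t:\otimes^{|J|}X\to\otimes^{|J+t|}X$. That is the typing you yourself adopt in your last paragraph, so in Step 1 you are really using $(+t)^{-1}$. For a group $\Time$ the two shifts are mutually inverse and nothing is lost. For a monoid such as $\mathbb{N}$, your orientation is in fact the one that determines $\shift^t$ on every coordinate.
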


Theorem \ref{thm:shift} is proved in Section \ref{sec:proof:shift}.

\begin{definition} [Infinite copied products] \label{def:inf_copy}
	Suppose an object $X$ of $\calY$ has a cofiltered limit indexed by a set $S$. Suppose there is a $\calY$-morphism $f:Y \to X$. Then there is a unique morphism
	\[ \copyF^{|S|}(f) : Y \to \otimes^{|S|} X , \]
	such that the following commutations holds 
	\[\begin{tikzcd}
		& & & \otimes^{|J|} X \\
		Y \arrow[rrru, "\copyF^{|J|}(f)", bend left] \arrow[rrrd, "\copyF^{|J'|}(f)"', bend right] \arrow[rr, "\copyF^{|S|}(f)", dotted] & & \otimes^{|\Time|} X \arrow[rd, "\pi_{J'}"'] \arrow[ru, "\pi_J"] & \\
		& & & \otimes^{|J'|} X \arrow[uu, "\pi_{J'\to J}"']
	\end{tikzcd},
	\quad \forall J\subseteq J' \subset_{fin} \Hom(\Time).\]
\end{definition}

\begin{definition} [Infinite copies of states]
	Suppose that an object $X$ of the Markov category $\calY$ has tensor products of order $|S|$ for some indexing set $S$. Recall the conditional morphism $\lambda_{Y|X}$ from \eqref{eqn:mrgnl_cndtnl}.
	\begin{equation} \label{eqn:def:mu_inf}
		\begin{tikzcd}
			1 \arrow[d, "\mu_Y"'] \arrow[rr, dashed, Shobuj, "\mu_{Y\to X}^{|S|}"] && \otimes^{|S|} X \\
			Y \arrow{urr}[swap]{ \copyF^{|S|}(\lambda_{Y|X}) }
		\end{tikzcd} , \quad 
		\begin{tikzcd}
			1 \arrow[d, "\mu_Y"'] \arrow[rr, dashed, Shobuj, "\tilde{\mu}_{Y\to X}^{|S|}"] && \paran{\otimes^{|S|} X} \otimes Y \\
			Y \arrow[rr, "\copyF_Y"'] && Y \otimes Y \arrow{u}[swap]{\copyF^{|S|}(\lambda_{Y|X}) \otimes Y }
		\end{tikzcd}
	\end{equation}
\end{definition}

In conclusion, starting with the join / span on the left, we end with the construction on the right :
\[ \begin{tikzcd} (X, \mu_X) \\ ( X \otimes Y, \lambda ) \arrow[d, "\pi_Y"'] \arrow[u, "\pi_X"] \\ (Y, \mu_Y) \end{tikzcd}
\imply
\begin{tikzcd} [column sep = large]
	& & 1 \arrow[d, "\mu_Y"] & & \\
	\otimes^{|S|} X & & Y \arrow[ll, pos=0.7, "\lambda_{Y\to X}^{|S|}"'] \arrow[lld, pos=0.7, "\tilde{\lambda}_{Y\to X}^{|S|}"'] \arrow[d, "="] \arrow[rrd, pos=0.7, "\lambda_{Y|X}"] \arrow[rr, pos=0.7, "\lambda_{Y\to X}"] & & X \\
	\left( \otimes^{|S|} X \right) \otimes Y \arrow[u, Holud, "\pi_1"] \arrow[rr, Holud, "\pi_2"'] & & Y & & X\otimes Y \arrow[ll, "\pi_Y"] \arrow[u, "\pi_X"']
\end{tikzcd}\]

\begin{definition} [Permutations]
	Suppose that an object $X$ of the Markov category $\calY$ has co-filtered limits of order $|S|$ for some indexing set $S$. The any bijection $\sigma : S \xrightarrow{\cong} S$ of $S$ leads to an induced automorphism, which we also denote as $\sigma$. Such as automorphism is called a permutation in the coordinates of $\otimes^{|S|} X$.
\end{definition}

\begin{lemma} [Permutation invariance] \label{lem:f4jd93}
	\cite[Lem 9]{LemanczykEtAl1993gaussian} Consider the pointed object $\paran{ \tilde{\mu}_{Y\to X}^{|S|}, \paran{\otimes^{|S|} X} \otimes Y }$ constructed as in \eqref{eqn:def:mu_inf}. If there is a morphism $g : \paran{\otimes^{|S|} X} \otimes Y \to \calR$ which remains invariant under all finite permutations in the coordinates of $\otimes^{|S|} X$, then $g$ factors through the deletion morphism $\pi_Y : \otimes^{|S|} X \otimes Y \to Y$.
\end{lemma}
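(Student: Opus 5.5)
The proof will be a categorical version of the Hewitt--Savage zero-one law, applied fibrewise over $Y$. The statement should be read almost surely with respect to $\tilde{\mu}_{Y\to X}^{|S|}$. The claim to prove is that $g$ agrees $\tilde{\mu}_{Y\to X}^{|S|}$-almost surely with $\bar g\circ \pi_Y$ for some $\bar g : Y \to \calR$.

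The first step is to express $\tilde{\mu}_{Y\to X}^{|S|}$ as a mixture over $Y$. By \eqref{eqn:def:mu_inf} and Lemma \ref{lem:2fi03}, it equals $\graph^T(k)\circ \mu_Y$ with $k := \copyF^{|S|}(\lambda_{Y|X})$. Here $\graph^T(k) = (k\otimes Y)\circ\copyF_Y$ is exactly the disintegration of $\pi_Y$ along $\tilde{\mu}_{Y\to X}^{|S|}$. By Definition \ref{def:inf_copy}, for every $y$ the kernel $k$ is the infinite copied product of $\lambda_{Y|X}$, so its finite-dimensional marginals are $\copyF^{|J|}(\lambda_{Y|X})$. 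In $\StochCat$ terms, $k(y)$ is the i.i.d. product $\lambda_{Y|X}(y)^{\otimes S}$.

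The second step is to show that $k$ is invariant under every finite permutation $\sigma$. Concretely, $\sigma\circ k = k$ holds because the copy morphisms are invariant under the swap by \eqref{eqn:def:MarkovCat:2}. This identity holds on each finite marginal $\pi_J$, and it passes to the cofiltered limit by the uniqueness in Definition \ref{def:inf_copy}. It follows that $(\sigma\otimes Y)\circ \tilde{\mu}_{Y\to X}^{|S|} = \tilde{\mu}_{Y\to X}^{|S|}$.

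The third step is the core: a zero-one law for the state $k(y)$, applied fibrewise. I will use the Hewitt--Savage zero-one law in Markov categories \cite{FritzRischel2020inf}. It says that a deterministic permutation-invariant observable of an infinite tensor power of a state (an exchangeable i.i.d. state) is deterministic, i.e.\ almost surely constant. To use it, I will pass to the parametrized version over $Y$. Two facts enable this: the parametrized Markov category of kernels out of $Y$ is again Markov, and the kernel $k$ is an infinite tensor power in that category. Applying the parametrized zero-one law to $g$ precomposed appropriately, i.e.\ to $g\circ (\Id\otimes \copyF_Y)$ viewed as a $Y$-indexed observable, gives $g\circ\graph^T(k) = \bar g$ $\mu_Y$-almost surely for some $\bar g : Y\to\calR$. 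Concretely, I take $\bar g := g\circ\graph^T(k)$. Combining this with $\tilde{\mu}_{Y\to X}^{|S|} = \graph^T(k)\circ\mu_Y$ yields $g = \bar g\circ \pi_Y$ almost surely, which is the desired factorization.

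I expect the main obstacle to be this third step. The zero-one law of \cite{FritzRischel2020inf} is stated for a fixed state and for deterministic observables. Two issues arise from this. First, I must either assume $g$ is deterministic, in line with the deterministic observables used in the ergodicity definition, or add that hypothesis. Second, I must justify that the law survives parametrization by $Y$. Here the hypothesis that $\calY$ admits conditionals (Assumption \ref{A:disinteg}) and causality (Assumption \ref{A:causal}) will be invoked: the almost-sure equalities obtained fibrewise glue, via $\graph^T$, into almost-sure equality with respect to $\tilde{\mu}_{Y\to X}^{|S|}$. The Kolmogorov-product assumption must also be checked in the parametrized setting; it should follow from the cofiltered limit defining $\otimes^{|S|}X$ being preserved under $-\otimes Y$.
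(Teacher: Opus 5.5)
Your proposal takes the same route as the paper: the paper's proof is the one-line remark that the lemma is a direct consequence of the abstract Hewitt--Savage law (Lemma~\ref{lem:HewSav}) applied to the permutation-invariant infinite copied product, and that is what your Steps 2--3 carry out. Your version states what that one line leaves implicit, and these additions are sound: $S$ is infinite, $g$ is deterministic, the conclusion holds $\tilde{\mu}_{Y\to X}^{|S|}$-almost surely, and the $Y$-dependence of $g$ is handled in the parametric category over $Y$, which inherits causality, and inherits Kolmogorov powers once the limit is preserved by $-\otimes Y$. The one step to make explicit is that passing from ``$g\circ\graph^T(k)$ is deterministic'' to ``$g=\bar g\circ\pi_Y$ almost surely'' uses positivity, which follows from causality.
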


Lemma \ref{lem:f4jd93} thus says that the following group action
\[ \begin{array}{cc}
	\mbox{finite}  \\
	\mbox{permutations}
\end{array} 
\begin{tikzcd}
	{} \arrow[rr, mapsto] && {}
\end{tikzcd}
\Endo \paran{ X^S \times Y }\]
has as colimit the projection morphism :
\[ \proj_Y : X^S \times Y \to Y. \]

\begin{lemma} \label{lem:lix02}
	Suppose $\lambda$ is a non-trivial joining of two $\sps$-objects $(X, \Sigma_X, \mu_X, \Phi_X^t)$ and $(Y, \Sigma_Y, \mu_Y, \Phi_Y^t)$. Then there is a morphism $g : Y\to $ such that $\mathbb{E}_{\lambda} \paran{ g | \pi_1}$ is not constant. \red{fill}
\end{lemma}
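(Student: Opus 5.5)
The plan is to argue by contraposition, in the concrete setting of the Erg boxes: the semantics is $\StochCat$ over standard Borel spaces, so that disintegrations exist (Assumption \ref{A:disinteg}). The target is a bounded measurable $g$ on $Y$, say $g\in L^\infty(\mu_Y)$ with values in $\calR$, such that the conditional expectation $\mathbb{E}_{\lambda}(g\circ \pi_2 \mid \pi_1)$ is not $\mu_X$-a.e. constant. Here $\pi_1$ is the projection onto $X$. Categorically, $\mathbb{E}_\lambda(g\circ\pi_2\mid\pi_1)$ is the morphism
\[ g\circ \pi_Y\circ (\pi_X^{-1}|\lambda) : X\to \calR, \]
obtained from the disintegration of $\pi_X$ along $\lambda$. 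Equivalently it is $g$ composed with the conditional of marginals $\lambda_{X|Y}$ of \eqref{eqn:mrgnl_cndtnl}, with the roles of $X$ and $Y$ swapped, and by Lemma \ref{lem:2fi03} this is legitimate.

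First, suppose that for every bounded $g$ the composite $g\circ\lambda_{X|Y}$ is $\mu_X$-a.s. constant. Since $\lambda$ has marginal $\mu_Y$, that constant must be $\int g\,d\mu_Y$. Next, take arbitrary bounded $f$ on $X$ and compute $\int f\otimes g\,d\lambda$ by disintegrating along $\pi_X$, which is the identity \eqref{eqn:def:disinteg:3} with $p=\lambda$ and the deterministic map $f=\pi_X$. This gives
\[ \int f\otimes g \,d\lambda=\int_X f(x)\,\big(g\circ\lambda_{X|Y}\big)(x)\,d\mu_X(x)=\int f\,d\mu_X\int g\,d\mu_Y . \]
Specializing to indicator functions $f=1_A$ and $g=1_B$ yields $\lambda(A\times B)=\mu_X(A)\mu_Y(B)$. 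By the $\pi$-$\lambda$ theorem, rectangles determine measures on $X\otimes Y$, so $\lambda=\mu_X\otimes\mu_Y$, i.e.\ $\lambda$ is the product join. This contradicts non-triviality, so some indicator $g=1_B$ already works.

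In the string-diagram language I would phrase the same step as follows. If $\lambda_{X|Y}$ equals the constant kernel $\mu_Y\circ\deleteF_X$ $\mu_X$-a.s., then plugging this into the disintegration diagram \eqref{eqn:def:disinteg:2} turns $\graph^T$ into $\mu_X\otimes\mu_Y$. This is exactly the ``factor through $1$'' criterion of Lemma \ref{lem:dij03} (and Lemma \ref{lem:Thm4:a}). So the abstract content is: non-constancy of the kernel $\lambda_{X|Y}$ is witnessed by some deterministic observable $g:Y\to\calR$.

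The main obstacle is precisely this last passage from a non-constant kernel to a single non-constant real observable. In general Markov categories one needs that deterministic morphisms into $\calR$ (or into a two-point object) jointly separate states, a kind of ``enough observables'' or representability hypothesis. Measurable indicators supply this in $\StochCat$, and it is why the proof drops to the concrete setting. Invariance of $\lambda$ under $\Phi_X\otimes\Phi_Y$ is not needed for the statement as given. Invariance would only matter if $g$ were required to be $\Phi_Y$-compatible, and that can be recovered afterwards via Lemma \ref{lem:sps_mrphsm_invert}.
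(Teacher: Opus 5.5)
The paper does not prove this lemma. It is a placeholder (marked \emph{fill}, with the codomain of $g$ left blank), and no proof of it appears in the appendices, so there is no argument of the authors to compare yours with.

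Read your way, the statement says: $g:Y\to\mathbb{R}$ is bounded, and $\mathbb{E}_\lambda(g\circ\pi_2\mid\pi_1)$ is not $\mu_X$-a.e.\ constant. This matches the gloss the paper attaches to Lemma~\ref{lem:Thm4:a}. Under that reading your proof is correct and complete. It is the standard argument: if every such conditional expectation were a.e.\ equal to its mean $\int g\,d\mu_Y$, then $\lambda(A\times B)=\mu_X(A)\mu_Y(B)$ on the $\pi$-system of rectangles, which forces $\lambda=\mu_X\otimes\mu_Y$. You are also right that invariance of $\lambda$ plays no role, and that an indicator $g=1_B$ suffices.

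A few small corrections.

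(1) In $\StochCat$ the composite $g\circ\pi_Y\circ(\pi_X^{-1}|\lambda)$ is the conditional \emph{law} of $g$, a kernel $X\to\mathbb{R}$. It is not the conditional expectation; that is its barycenter $x\mapsto\int g\,d\lambda_x$. Your computation correctly uses the barycenter. If ``constant'' referred to the kernel instead, the contrapositive hypothesis would be stronger and the lemma you obtain weaker. For $g=1_B$ the two notions coincide, so your final witness works under either reading.

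(2) The standard-Borel restriction is not needed. The identity $\int f\otimes g\,d\lambda=\int (f\circ\pi_1)\cdot\mathbb{E}_\lambda(g\circ\pi_2\mid\pi_1)\,d\lambda$ is just the defining property of conditional expectation (Radon--Nikodym), so the argument works on arbitrary probability spaces. Disintegration is only needed to express it through $(\pi_X^{-1}|\lambda)$. The appeal to Lemma~\ref{lem:2fi03} is also superfluous, since $\pi_Y\circ(\pi_X^{-1}|\lambda)$ is the swapped conditional by definition.

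(3) Drop the closing claim that $\Phi_Y$-compatibility of $g$ can be recovered via Lemma~\ref{lem:sps_mrphsm_invert}. That lemma says nothing of the sort. If \emph{compatible} means invariant, the claim is false: for ergodic $\mu_Y$ every invariant $g$ is a.e.\ constant, so its conditional expectation is constant.
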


\begin{lemma} \label{lem:Thm4:b}
	\cite[Thm 4]{LemanczykEtAl1993gaussian} Consider the pointed object $\paran{ \tilde{\mu}_{Y\to X}^{|S|}, \paran{\otimes^{|S|} X} \otimes Y }$ constructed as in \eqref{eqn:def:mu_inf}. Then the composite morphism
	\[\begin{tikzcd}
		X^\infty \arrow[rrr, "\left( \lambda^{\infty} | \pi_{X_{\infty}}^{-1} \right)"] &&& X^\infty \otimes Y \arrow[r, "\pi_Y"] & Y
	\end{tikzcd}\]
	is invariant under finite permutations in the coordinates of $X^\infty$.
\end{lemma}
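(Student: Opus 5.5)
We need to show that the composite $\pi_Y \circ (\lambda^\infty|\pi_{X^\infty}^{-1})$, i.e. the conditional of the $Y$-marginal given the $X^\infty$-marginal of $\tilde{\mu}^{|S|}_{Y\to X}$, is invariant under finite permutations $\sigma$ of the coordinates of $\otimes^{|S|} X$. The plan is to show first that the joint state $\tilde{\mu}^{|S|}_{Y\to X}$ itself is invariant under $\sigma\otimes Y$, and then to transfer this invariance to the disintegration using the almost-sure uniqueness of disintegrations.

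\textbf{Step 1: invariance of the joint state.} By \eqref{eqn:def:mu_inf}, $\tilde{\mu}^{|S|}_{Y\to X} = (\copyF^{|S|}(\lambda_{Y|X})\otimes Y)\circ \copyF_Y \circ \mu_Y$. So it suffices to prove
\[ \sigma\circ \copyF^{|S|}(\lambda_{Y|X}) = \copyF^{|S|}(\lambda_{Y|X}) . \]
By the uniqueness clause of Definition \ref{def:inf_copy}, we only need to check this after composing with each finite projection $\pi_J$. Since $\sigma$ is a finite permutation, $\pi_J\circ\sigma = \tau\circ\pi_{\sigma^{-1}(J)}$ for a finite symmetry isomorphism $\tau : \otimes^{|\sigma^{-1}J|}X \to \otimes^{|J|}X$. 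The finite copied product $\copyF^{|J|}(\lambda_{Y|X}) = (\otimes^{|J|}\lambda_{Y|X})\circ \copyF^{(|J|)}_Y$ is invariant under symmetries, by commutativity and associativity of $\copyF$ (\eqref{eqn:def:MarkovCat:1}, \eqref{eqn:def:MarkovCat:2}) and naturality of the symmetry. Hence $(\sigma\otimes Y)\circ\tilde{\mu}^{|S|}_{Y\to X} = \tilde{\mu}^{|S|}_{Y\to X}$. As a consequence, the $X^\infty$-marginal $\mu^{|S|}_{Y\to X}$ is $\sigma$-invariant as well.

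\textbf{Step 2: transfer to the disintegration.} Write $\kappa := \pi_Y\circ(\lambda^\infty|\pi_{X^\infty}^{-1})$. By Lemma \ref{lem:2fi03}, applied with the roles of the factors swapped, the disintegration is the copied product of $\kappa$ with $\Id_{X^\infty}$. This gives
\[ \tilde{\mu}^{|S|}_{Y\to X} = \graph(\kappa)\circ \mu^{|S|}_{Y\to X} . \]
Apply $\sigma\otimes Y$ to both sides. Since $\sigma$ is deterministic (it is an isomorphism built from structural maps), $(\sigma\otimes Y)\circ\graph(\kappa) = \graph(\kappa\circ\sigma^{-1})\circ\sigma$. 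Combining this with Step 1 and the $\sigma$-invariance of the marginal yields
\[ \graph(\kappa\circ\sigma^{-1})\circ\mu^{|S|}_{Y\to X} = \graph(\kappa)\circ\mu^{|S|}_{Y\to X} . \]
By the definition of almost sure equality, this says $\kappa\circ\sigma^{-1} = \kappa$ almost surely with respect to $\mu^{|S|}_{Y\to X}$, which is the asserted invariance.

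\textbf{Main obstacle.} The invariance obtained is only almost sure, because disintegrations are determined only up to $\mu^{|S|}_{Y\to X}$-null sets (the same phenomenon seen in Lemma \ref{lem:disinteg_invrsn}). The statement should therefore be read in that sense, which is the sense required later by Lemma \ref{lem:f4jd93}. If a strict version is wanted, I would try symmetrizing: replace $\kappa$ by a version defined through the permutation-invariant cofiltered limit, or note that each finite permutation changes $\kappa$ only on a null set. The latter still gives a well-defined equivalence class, and that class is invariant under all finite permutations.
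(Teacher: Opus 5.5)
The paper gives no proof of Lemma~\ref{lem:Thm4:b}. It is stated with the citation \cite[Thm 4]{LemanczykEtAl1993gaussian} and there is no corresponding appendix entry, so there is no in-paper argument to compare yours with. Your proof is correct, and it is the natural Markov-categorical transcription of the classical argument. The relatively independent joining $\tilde{\mu}^{|S|}_{Y\to X}$ is $(\sigma\otimes Y)$-invariant because finite copied products are symmetric under permutations of tensor factors. Invariance of a joint state then forces $\mu^{|S|}_{Y\to X}$-almost sure invariance of the conditional.

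Your almost-sure reading is a necessary correction of the statement, not a concession. Altering a disintegration on a $\mu^{|S|}_{Y\to X}$-null set that is not permutation invariant yields another valid disintegration that is not strictly invariant. So only the almost-sure class can be invariant, and no symmetrization step is needed.

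Two steps should be tightened.

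\textbf{Determinism of $\sigma$.} This does not follow from $\sigma$ being ``built from structural maps''. The paper's co-filtered limit is a bare limit in $\calY$, and isomorphisms in a general Markov category need not be deterministic. You also cannot compare $\copyF\circ\sigma$ with $(\sigma\otimes\sigma)\circ\copyF$ through projections unless $(\otimes^{|S|}X)\otimes(\otimes^{|S|}X)$ is itself a limit of the finite stages. The fix is to assume that $\otimes^{|S|}X$ is a Kolmogorov power in the sense of \cite{FritzRischel2020inf}: the limit is preserved by $-\otimes Z$ and the projections $\pi_J$ are deterministic. Then a morphism into $\otimes^{|S|}X$ is deterministic iff every $\pi_J\circ(\cdot)$ is. Since $\pi_J\circ\sigma=\tau\circ\pi_{\sigma^{-1}(J)}$ is deterministic, $\sigma$ is deterministic. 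This hypothesis is implicit in the paper anyway, because Lemma~\ref{lem:f4jd93} is derived from Lemma~\ref{lem:HewSav}, which requires Kolmogorov powers.

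\textbf{Lemma~\ref{lem:2fi03} is not needed.} That lemma is unproved in the paper, and as an exact equality it is false for some choices of disintegration. Instead, take \eqref{eqn:def:disinteg:2} with $f=\pi_{X^\infty}$ and $p=\tilde{\mu}^{|S|}_{Y\to X}$, delete the first $\otimes^{|S|}X$ output, and use counitality of $\copyF$. This gives $\tilde{\mu}^{|S|}_{Y\to X}=\graph(\kappa)\circ\mu^{|S|}_{Y\to X}$ directly, and that identity is all Step~2 uses.
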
 

Then $G$ is invariant under all permutations of coordinates in $X$ and hence by Lemma \ref{lem:f4jd93}, it factors through $\pi_2 : \paran{\otimes^{|S|} X} \otimes Y \to Y$. By Lemma \ref{lem:lix02} $G$ also factors through $\pi_1 : \paran{\otimes^{|S|} X} \otimes Y \to \otimes^{|S|} X$. Also $G$ is non-constant. This means that the pushout of $\pi_1$ and $\pi_2$ is non-trivial.

\begin{theorem} [Push-pull theorem] \label{thm:push_pull}
	The sub-diagram above highlighted in brown above has a pushout / colimit in the slice category $\Comma{1}{\iota}$
	\begin{equation} \label{eqn:thm:push_pull}
		\begin{tikzcd}
			&& \paran{ \mu_{Y\to X}^{|S|}, \otimes^{|S|} X } \arrow[dotted, drr] \\
			\paran{ \tilde{\mu}_{Y\to X}^{|S|}, \paran{\otimes^{|S|} X} \otimes Y } \arrow[urr, "\pi_1"] \arrow[drr, "\pi_2"'] && && \calZ \\
			&& \paran{\mu_Y, Y} \arrow[dotted, urr]
		\end{tikzcd}
	\end{equation}
	Moreover, the object $\paran{ \tilde{\mu}_{Y\to X}^{|S|}, \paran{\otimes^{|S|} X} \otimes Y }$ is the unique cone above the v-diagram created by the right half of this picture whose second component is $\paran{\otimes^{|S|} X} \otimes Y$.
\end{theorem}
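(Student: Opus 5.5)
The plan has two parts: establish existence of the pushout through the general machinery, then prove the uniqueness of the cone by disintegration.

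\textbf{Existence.} First I will check that $\pi_1$ and $\pi_2$ really are morphisms of $\Comma{1}{\iota}$, i.e.\ that they push $\tilde{\mu}_{Y\to X}^{|S|}$ forward to $\mu_{Y\to X}^{|S|}$ and $\mu_Y$ respectively.

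For $\pi_2$, I compose the defining diagram \eqref{eqn:def:mu_inf} with $\pi_2$. The $\otimes^{|S|}X$ leg becomes $\deleteF$, and by the deletion axioms \eqref{eqn:def:MarkovCat:1}, \eqref{eqn:def:MarkovCat:3} this leaves $\Id_Y\circ\mu_Y=\mu_Y$.

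For $\pi_1$, the same argument deletes the $Y$ factor and returns $\copyF^{|S|}(\lambda_{Y|X})\circ\mu_Y=\mu_{Y\to X}^{|S|}$, via the coordinatewise projection identity \eqref{eqn:d4d5z} applied to the copied product.

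Once the wedge lives in $\Comma{1}{\iota}$, existence follows by combining results already stated. A pushout is a colimit over the span shape, which is connected, so Lemma \ref{lem:jd9ps3}(iii) transfers pushouts from $\calY$ (Assumption \ref{A:pushout}) to the pointed category. Theorem \ref{thm:PSC}(ii) then says these pushouts project to pushout diagrams in $\Comma{1}{\iota}$. The state on $\calZ$ is forced: it is the composite of either dotted leg with the corresponding state. Commutativity of the pushout square makes the two composites agree.

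One subtlety is that the underlying objects here, $\otimes^{|S|}X$ and $Y$, are $\calY$-objects obtained from cofiltered limits rather than images $\iota$ of $\calX$-objects. I will either assume $\otimes^{|S|}X$ lies in the image of $\iota$, or run the argument in $\Comma{1}{\calY}$, where Lemma \ref{lem:jd9ps3} applies directly.

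\textbf{Uniqueness of the cone.} Suppose $(\nu,(\otimes^{|S|}X)\otimes Y)$ is a cone over the v-diagram formed by the two dotted arrows into $\calZ$, with legs $\pi_1,\pi_2$. I want to show $\nu=\tilde{\mu}_{Y\to X}^{|S|}$. The strategy is to disintegrate $\nu$ along $\pi_2$ using Assumption \ref{A:disinteg}. Lemma \ref{lem:2fi03} then writes $(\pi_2^{-1}|\nu)$ as a copied product $(\kappa\otimes Y)\circ\copyF_Y$ for a conditional $\kappa:Y\to\otimes^{|S|}X$. Since $\pi_2\nu=\mu_Y$, this gives $\nu=(\kappa\otimes Y)\circ\copyF_Y\circ\mu_Y$.

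It therefore suffices to show $\kappa=\copyF^{|S|}(\lambda_{Y|X})$ $\mu_Y$-almost surely. Because $\otimes^{|S|}X$ is a cofiltered limit, it is enough to check this after each finite projection $\pi_J$. That is where the cone condition has to be used: the requirement that $\nu$ be compatible with the pushout legs into $\calZ$ and with the marginals must pin down every finite-dimensional conditional as $\copyF^{|J|}(\lambda_{Y|X})$. The uniqueness clause of Definition \ref{def:inf_copy} then identifies $\kappa$.

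\textbf{Main obstacle.} The hard step is exactly this last identification. As stated, the cone condition only constrains the marginals of $\nu$, and marginals do not determine a joint state. So I expect to need the extra structure of the construction: the conditional independence of the coordinates given $Y$, encoded by the copied product, together with the permutation invariance of Lemma \ref{lem:f4jd93}, to rule out other couplings.

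My first attempt will be to restrict attention to cones whose disintegration over $Y$ is itself a copied product of a single kernel, i.e.\ cones built as in \eqref{eqn:def:mu_inf}. For those, the marginal condition $\pi_{\{s\}}\circ\kappa\circ\mu_Y$-compatibility forces $\kappa_s=\lambda_{Y|X}$ almost surely. If the general statement fails, I will state uniqueness within that class.
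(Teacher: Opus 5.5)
\textbf{The uniqueness clause is false as stated.} The step you flag as the main obstacle is not a missing idea. It is an unprovable claim, and the right conclusion is a counterexample.

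Your check that $\pi_1,\pi_2$ are pointed morphisms is correct. The existence half also goes through in $\Comma{1}{\calY}$, although Theorem \ref{thm:PSC}(ii) concerns projecting from $\sps$ and is not what you need. It helps to compute the pushout explicitly. Suppose $z_1\circ\pi_1=z_2\circ\pi_2$. Precomposing with $\Id\otimes\mu_Y$ gives $z_1=z_2\circ\mu_Y\circ\deleteF$, and symmetrically $z_2=z_1\circ\mu^{|S|}_{Y\to X}\circ\deleteF$. So every cocone factors uniquely through $1$, and in any Markov category $\calZ=(1,\Id_1)$, with no need for Assumption \ref{A:pushout}.

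Consequently, the commutation into $\calZ$ is an identity of underlying morphisms and imposes nothing on a state. A cone with apex object $\paran{\otimes^{|S|}X}\otimes Y$ and legs $\pi_1,\pi_2$ is just a state $\nu$ with $\pi_1\circ\nu=\mu^{|S|}_{Y\to X}$ and $\pi_2\circ\nu=\mu_Y$. The uniqueness clause therefore claims that $\tilde{\mu}^{|S|}_{Y\to X}$ is the only coupling of these two marginals, and that is false.

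The product state $\mu^{|S|}_{Y\to X}\otimes\mu_Y$ is always such a cone. It differs from $\tilde{\mu}^{|S|}_{Y\to X}$ whenever $\lambda$ is non-trivial. Indeed, the joint marginal of $\tilde{\mu}^{|S|}_{Y\to X}$ on one $X$-coordinate and $Y$ is $(\lambda_{Y|X}\otimes Y)\circ\copyF_Y\circ\mu_Y=\lambda$, by Lemma \ref{lem:2fi03} and the disintegration identity. The corresponding marginal of the product is $\mu_X\otimes\mu_Y$. The product state is also invariant under finite permutations of the $X$-coordinates, so neither conditional independence nor Lemma \ref{lem:f4jd93} can exclude it.

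The paper gives no proof of this theorem. The only nearby argument, just before the statement, tries to show via Lemma \ref{lem:f4jd93} that the pushout is non-trivial. The computation $\calZ\cong 1$ rules that out in the strict pointed category; a non-trivial common factor can only appear modulo almost-sure equality.

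\textbf{Your fallback does not give a true weaker statement either.} Consider a cone whose conditional over $Y$ is $\copyF^{|S|}(\kappa_0)$. The one-coordinate condition only gives $\kappa_0\circ\mu_Y=\mu_X$, which the constant kernel $\mu_X\circ\deleteF_Y$ already satisfies. Even the full condition $\copyF^{|S|}(\kappa_0)\circ\mu_Y=\copyF^{|S|}(\lambda_{Y|X})\circ\mu_Y$ determines only the law of the random measure $\kappa_0(y)$ under $\mu_Y$ (de Finetti uniqueness), not $\kappa_0$ itself.

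Take any deterministic $\sigma:Y\to Y$ with $\sigma\circ\mu_Y=\mu_Y$, and set $\kappa_0=\lambda_{Y|X}\circ\sigma$. Since $\copyF^{|S|}(\lambda_{Y|X}\circ\sigma)=\copyF^{|S|}(\lambda_{Y|X})\circ\sigma$, the $\otimes^{|S|}X$-marginal is unchanged. Concretely, let $X=Y=\{0,1\}$, let $\mu_Y$ be uniform, let $\lambda$ be the diagonal coupling and let $\sigma$ be the flip. Write $\bar 0,\bar 1$ for the constant sequences. This produces the cone $\frac{1}{2}(\delta_{(\bar 1,0)}+\delta_{(\bar 0,1)})$ alongside $\tilde{\mu}^{|S|}_{Y\to X}=\frac{1}{2}(\delta_{(\bar 0,0)}+\delta_{(\bar 1,1)})$. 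Both are of your restricted form and both satisfy the cone condition.

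So the only part of the theorem that survives is the existence of $\calZ$, and that part is trivial.
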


Lemma \ref{lem:Thm4:a}, Lemma \ref{lem:Thm4:b} 

\begin{lemma} [Relative independence theorem] \label{lem:iskz8}
	\cite[Thm 6.25]{glasner2015join} 
\end{lemma}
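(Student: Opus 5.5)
Since the statement above is only a citation, I take it to be the categorical form of \cite[Thm 6.25]{glasner2015join}. Fix a v-diagram \eqref{eqn:vee_diag} with common factor $\paran{\Omega_3,\mu_3,\Phi_3}$. The claim I would prove is that the disintegrated join $\join(\mu_1,\mu_2\mid h_1,h_2)$ of \eqref{eqn:def:disinteg_join} is characterized inside $J(\mu_1,\mu_2\mid h_1,h_2\mid\mu_3)$ of Definition \ref{def:join_over_common} by conditional independence. Concretely, it is the unique join $\lambda$ over $\mu_3$ whose conditional of marginals $\lambda_{\Omega_2|\Omega_1}$ from \eqref{eqn:mrgnl_cndtnl} factors $\mu_1$-almost surely through $h_1$. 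The factorization in question is $\lambda_{\Omega_2|\Omega_1} = (h_2^{-1}|\mu_2)\circ h_1$. In $\StochCat$ this says $\mathbb{E}_\lambda(g\mid\Sigma_1)=\mathbb{E}(g\mid h_1^{-1}\Sigma_3)$, which is the relative independence of \cite{glasner2015join}.

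\textbf{Existence.} First I would show that $\lambda:=\join(\mu_1,\mu_2\mid h_1,h_2)$ lies in $J(\mu_1,\mu_2\mid h_1,h_2\mid\mu_3)$. Invariance and the marginal conditions are already given by Lemma \ref{lem:disinteg_join}. For the projection condition, compose with $h_1\otimes h_2$. Lemma \ref{lem:disinteg_invrsn} gives $h_i\circ(h_i^{-1}|\mu_i)=\Id$ $\mu_3$-almost surely, and determinism of $h_i$ (Assumption \ref{A:det_base}) lets these maps pass through $\copyF$. Together these yield $(h_1\otimes h_2)\circ\lambda=\copyF_{\Omega_3}\circ\mu_3=\graph(\Id)\circ\mu_3$, which is the diagonal self-join of $\mu_3$.

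\textbf{Conditional form.} Next I would compute the conditional $\lambda_{\Omega_2|\Omega_1}$. The plan is to write $\lambda$ as $\SqBrack{\Omega_1\otimes\paran{(h_2^{-1}|\mu_2)\circ h_1}}\circ\graph(\Id)\circ\mu_1$. This uses the identity $\graph^T(h_1^{-1}|\mu_1)\circ\mu_3=\graph(h_1)\circ\mu_1$ from \eqref{eqn:jsp3z}, followed by a rearrangement of copies using coassociativity \eqref{eqn:def:MarkovCat:1}. Comparing with Lemma \ref{lem:2fi03}, and using uniqueness of disintegrations up to $\mu_1$-almost sure equality, identifies the conditional as claimed.

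\textbf{Uniqueness.} Conversely, let $\lambda'\in J(\mu_1,\mu_2\mid h_1,h_2\mid\mu_3)$ have conditional $\lambda'_{\Omega_2|\Omega_1}=(h_2^{-1}|\mu_2)\circ h_1$ $\mu_1$-almost surely. Lemma \ref{lem:2fi03} gives $\lambda'=\SqBrack{\Omega_1\otimes\lambda'_{\Omega_2|\Omega_1}}\circ\copyF\circ\mu_1$. Substituting the factorization and running the previous computation backwards recovers $\lambda$. The projection condition is what makes this substitution legitimate: it ensures $h_2$ agrees with $h_1$ along $\lambda'$, so that the $h_2$-disintegration may be applied over the $h_1$-coordinate.

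\textbf{Main obstacle.} I expect the hardest step to be the almost-sure bookkeeping. Equalities hold only $\mu_3$- or $\mu_1$-almost surely, and they must be transported through $h_1$ and through copy maps. I would first isolate one auxiliary fact: if $f=g$ $h\mu$-almost surely and $h$ is deterministic, then $f\circ h=g\circ h$ $\mu$-almost surely. This should follow directly from the definition \eqref{eqn:def:almost_sure} together with determinism of $h$. Once that fact is available, every step above is a finite string-diagram manipulation.
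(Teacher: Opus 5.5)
The lemma as printed has no statement, and the paper gives no proof, so there is nothing of the paper's to compare your argument with. Whether your reading matches the cited result of Glasner cannot be checked from the text.

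Taking your reading on its own terms, the uniqueness half is definitional. Any state $\lambda'$ with first marginal $\mu_1$ equals $\graph(\lambda'_{\Omega_2|\Omega_1})\circ\mu_1$ (up to the order of tensor factors). The phrase ``$K=K'$ $\mu_1$-almost surely'' means exactly $\graph(K)\circ\mu_1=\graph(K')\circ\mu_1$. So the projection condition plays no role there, contrary to your remark. Your conditional-form computation and the identity $(h_1\otimes h_2)\circ\lambda=\copyF_{\Omega_3}\circ\mu_3$ are correct.

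The genuine gap is invariance, which you take from Lemma~\ref{lem:disinteg_join}. That lemma's proof in the paper breaks off. Moreover, the lemma, like Lemma~\ref{lem:sps_mrphsm_invert} on which it rests, is false in the paper's generality, where $\Time$ is only a monoid. Here is a counterexample:
\begin{itemize}
\item Take $\Time=\mathbb{N}$, all three systems equal to $\{0,1\}^{\mathbb{N}}$ with the Bernoulli measure $\mu$ and the one-sided shift $\sigma$, and $h_1=h_2=\sigma$. This is a legitimate $\sps$-morphism, since $\sigma_*\mu=\mu$.
\item The disintegration $(\sigma^{-1}|\mu)$ sends $z$ to the uniform measure on $\{0z,1z\}$. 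So $\join(\mu,\mu\mid\sigma,\sigma)$ is the law of the pair $(az,bz)$ with $a,b$ independent fair bits.
\item This state gives the event $x_0\neq x_0'$ probability $\tfrac12$, while its image under $\sigma\otimes\sigma$ is the diagonal. Hence it is not invariant.
\item Worse, every element of $J(\mu,\mu\mid\sigma,\sigma\mid\mu)$ is carried by $\{\sigma x=\sigma x'\}$. Being invariant, it equals its own image under $\sigma\otimes\sigma$, which is the diagonal joining. The diagonal joining's conditional is $\Id$, not $(\sigma^{-1}|\mu)\circ\sigma$.
\end{itemize}
So your existence claim fails outright.

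The missing hypothesis is invertibility; Glasner works with group actions. With invertibility, invariance follows in a few steps:
\begin{enumerate}
\item Use determinism of $\iota\Phi_3^{-t}$, the equivariance $h_1\circ\iota\Phi_1^t=\iota\Phi_3^t\circ h_1$, and invariance of $\mu_1,\mu_3$. These show that $\iota\Phi_1^t\circ(h_1^{-1}|\mu_1)\circ\iota\Phi_3^{-t}$ satisfies the defining identity \eqref{eqn:def:disinteg:2} of a disintegration of $h_1$ along $\mu_1$.
\item By almost-sure uniqueness of disintegrations, followed by precomposition with $\iota\Phi_3^t$, you get $\iota\Phi_1^t\circ(h_1^{-1}|\mu_1)=(h_1^{-1}|\mu_1)\circ\iota\Phi_3^t$ $\mu_3$-almost surely. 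The same holds for $h_2$.
\item Substitute both identities into the copied product, then use determinism and invariance of $\iota\Phi_3^t$.
\end{enumerate}
This step, not the almost-sure bookkeeping, is the real content of your argument.

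Your auxiliary pull-back fact is true, but it comes from Assumption~\ref{A:disinteg} applied to the disintegration of $\mu$ along $h$. Determinism alone only identifies the joint law of $h$ and $f\circ h$.
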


\begin{corollary} \label{cor:rel_ind}
	\cite[Thm 3.6]{Rue2006join} \cite{GlasnerWeiss2003past} \cite[Thm 4]{LemanczykEtAl1993gaussian} If two $\sps$-objects $\mathbf{X}$ and $\mathbf{Y}$ have a non-trivial join, then $\mathbf{Y}$ has a common factor with some countable self-join of $\mathbf{X}$.
\end{corollary}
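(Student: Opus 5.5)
Starting from a non-trivial join $\lambda$ of $\mathbf{X}=(X,\mu_X,\Phi_X)$ and $\mathbf{Y}=(Y,\mu_Y,\Phi_Y)$, I would build the countable self-join explicitly and then extract the common factor as the pushout supplied by Theorem \ref{thm:push_pull}. Take $S=\mathbb{N}$. Disintegrate $\lambda$ along $\pi_Y$ to obtain the conditional $\lambda_{Y|X}:Y\to X$ of \eqref{eqn:mrgnl_cndtnl}. Then form the states $\mu^{|S|}_{Y\to X}$ on $\otimes^{|S|}X$ and $\tilde\mu^{|S|}_{Y\to X}$ on $(\otimes^{|S|}X)\otimes Y$ as in \eqref{eqn:def:mu_inf}.

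\emph{Step 1: $\mu^{|S|}_{Y\to X}$ is a countable self-join of $\mathbf{X}$.} For each $n$, the $n$-th marginal is $\pi_n\circ\copyF^{|S|}(\lambda_{Y|X})\circ\mu_Y=\lambda_{Y|X}\circ\mu_Y=\pi_X\circ\lambda=\mu_X$. This uses \eqref{eqn:d4d5z}, the defining cone of Definition \ref{def:inf_copy}, and Lemma \ref{lem:2fi03}. For invariance under the diagonal action $\otimes^{|S|}\Phi_X$, I would apply Lemma \ref{lem:sps_mrphsm_invert} to the $\sps$-morphism $\pi_Y:(X\otimes Y,\lambda,\Phi_X\otimes\Phi_Y)\to\mathbf{Y}$. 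This shows that $\lambda_{Y|X}$ intertwines $\Phi_Y$ and $\Phi_X$ $\mu_Y$-almost surely. Because copies of an almost-surely intertwining kernel still intertwine after copying, and because the limit $\otimes^{|S|}X$ detects equality on finite stages, invariance follows; the same argument gives invariance of $\tilde\mu^{|S|}_{Y\to X}$. The span $\pi_1,\pi_2$ out of $\big((\otimes^{|S|}X)\otimes Y,\tilde\mu^{|S|}_{Y\to X}\big)$ is then an $\sps$-span, i.e.\ a join of the self-join with $\mathbf{Y}$.

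\emph{Step 2: take the pushout.} By Theorem \ref{thm:PSC}(i),(ii) and Theorem \ref{thm:push_pull}, this span has a pushout $\calZ$ in $\sps$. Lemma \ref{lem:dyn_colim} equips $\calZ$ with dynamics, and its state is the pushforward of $\tilde\mu^{|S|}_{Y\to X}$. By Theorem \ref{thm:join_project}(i), $\calZ$ is a common factor of $(\otimes^{|S|}X,\mu^{|S|}_{Y\to X})$ and $\mathbf{Y}$. It remains to show $\calZ$ is non-trivial. By Theorem \ref{thm:join_project}(iii), this is equivalent to showing that the join $\tilde\mu^{|S|}_{Y\to X}$ is not the product join.

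\emph{Step 3 (main obstacle): non-triviality.} Since $\lambda$ is non-trivial, Lemma \ref{lem:lix02} (equivalently Lemma \ref{lem:Thm4:a}) gives an observable $g$ on $Y$ whose conditional expectation given $X$ is non-constant. I would pass to the infinite-product side and define $G=\pi_Y\circ(\lambda^\infty|\pi^{-1}_{X_\infty})$. By Lemma \ref{lem:Thm4:b}, $G$ is invariant under finite permutations, so by Lemma \ref{lem:f4jd93} it factors through $\pi_2$. By construction it also factors through $\pi_1$, and it is non-constant because its restriction to one coordinate recovers the non-constant conditional expectation. A non-constant observable that factors through both legs is a non-trivial cocone, so the pushout cannot be terminal, contradicting triviality.

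The delicate part is making the two factorizations and the non-constancy claim hold \emph{almost surely} with respect to $\tilde\mu^{|S|}_{Y\to X}$ rather than strictly, since $\sps$-pushouts are computed on the nose while Lemmas \ref{lem:f4jd93} and \ref{lem:lix02} are almost-sure statements. I would first try to run the argument in $\text{PS}(\calY)$, which has pushouts by Theorem \ref{thm:PSC}(iii) and identifies almost-surely equal morphisms. I would then lift the resulting factor back to $\sps$ via the bijective-on-objects functor $\Comma{\iota}{\calC}\to\text{PS}(\calC)$.
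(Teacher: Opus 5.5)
Your outline is the one the paper itself sketches: the paragraph after Lemma~\ref{lem:Thm4:b} together with Figure~\ref{fig:sf3s3}. Step~1 is fine, since the $\mu_Y$-a.s.\ intertwining of $\lambda_{Y|X}$ from Lemma~\ref{lem:sps_mrphsm_invert} survives copying, so $\mu^{|S|}_{Y\to X}$ is a countable self-join.

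There is, however, a genuine gap in Step~3, which is where the content of the corollary lives. Your $G=\pi_Y\circ(\lambda^\infty|\pi^{-1}_{X_\infty})$ is a Markov kernel $\otimes^{|S|}X\to Y$; classically it is the posterior law of $y$ given the whole sequence. It is genuinely random, even almost surely, whenever $y\mapsto\lambda_{Y|X}(y)$ is not essentially injective. This causes two problems:
\begin{itemize}
\item Lemma~\ref{lem:f4jd93} rests on the Hewitt--Savage law, Lemma~\ref{lem:HewSav}, which applies only to a \emph{deterministic} exchangeable statistic. So ``$G$ factors through $\pi_2$'' is unjustified.
\item Even if it held, an a.s.\ commuting pair of stochastic maps is not a cocone in $\sps$, whose legs must be $\calX$-morphisms. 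So no factor is produced.
\end{itemize}
The underlying tension is this. Non-constancy is easy for the stochastic $G$: marginalize onto one $X$-coordinate and you recover $\lambda$. But for deterministic exchangeable statistics, which are the only ones Hewitt--Savage controls, non-constancy is the whole difficulty, because no single coordinate is exchangeable.

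The missing ingredient is a law-of-large-numbers or de Finetti input. Classically, choose $f$ on $X$ with $y\mapsto\int f\,d\lambda_{Y|X}(y)$ non-constant; this is possible iff $\lambda\neq\mu_X\otimes\mu_Y$. Then $\frac1N\sum_{n\le N}f(x_n)\to\int f\,d\lambda_{Y|X}(y)$ holds $\tilde\mu^{|S|}_{Y\to X}$-a.s. Hence the factor of $\mathbf{Y}$ generated by $y\mapsto\lambda_{Y|X}(y)$, which is invariant by your Step~1, is a.s.\ measurable in the $\otimes^{|S|}X$-coordinates, and that is the common factor.

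Categorically you need something like representability. Replace $G$ by the deterministic $G^\sharp:\otimes^{|S|}X\to PY$ and apply Lemma~\ref{lem:HewSav} to $G^\sharp$. This gives $G^\sharp\circ\pi_1=\kappa^\sharp\circ\pi_2$ a.s., where $\kappa^\sharp:=G^\sharp\circ\copyF^{|S|}(\lambda_{Y|X})$ is deterministic and is non-constant iff $\lambda$ is not a product. Note also that the $g$ you take from Lemma~\ref{lem:lix02} is never used. The relevant observable lives on $X$ and is conditioned on $Y$.

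The issue you flag at the end is also more serious than ``delicate''. Cocones in $\sps$ commute on the nose. In the motivating case $\calX=\MeasCat$, $\calY=\StochCat$, an identity $f\circ\pi_1=g\circ\pi_2$ holding pointwise on a product forces $f$ and $g$ to be constant, whether computed in $\calX$ or in $\calY$. So the strict pushout in Step~2 is the one-point system whatever $\lambda$ is, and no argument can show it is non-trivial. What the corollary needs is the common factor modulo $\tilde\mu^{|S|}_{Y\to X}$, i.e.\ the algebra $\mathcal{A}$ of \cite[Thm 6.6]{glasner2015join}: an a.s.\ commuting pair of $\calX$-morphisms.

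Passing to $\text{PS}(\calY)$ does not settle this by itself. Its morphisms are a.s.-classes of arbitrary, possibly stochastic, $\calY$-morphisms. A representative need be neither in the image of $\iota$ nor a strict semi-conjugacy, so the ``lift back'' again requires the determinization above.

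Finally, drop the appeal to Theorem~\ref{thm:join_project}(iii). In the direction you invoke (non-product join $\Rightarrow$ non-trivial pushout), it would produce a common factor of $\mathbf{X}$ and $\mathbf{Y}$ from $\lambda$ itself. That is the converse of Lemma~\ref{lem:dij0s}, which is false \cite{Rudolph1979join}. A toy counterexample: $X=Y=\{0,1\}$ with identity dynamics and $\lambda(0,0)=\lambda(1,1)=0.3$, $\lambda(0,1)=\lambda(1,0)=0.2$. Here full support forces every a.s.\ identity $f\circ\pi_1=g\circ\pi_2$ to be constant. This failure is exactly why one must pass to a countable self-join of $\mathbf{X}$.
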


\begin{figure}[!t]
	\centering
	\begin{subfigure}{0.48\linewidth}
		\centering
		\begin{tikzpicture} \node[draw, inner sep=5pt, draw=ChhaiD, line width=2pt] (box){
				\begin{tikzcd} [scale cd = 0.7, column sep = small]
					& \paran{\iota \Omega_X \otimes \iota \Omega_Y, \mu_X \otimes \mu_Y, \iota \Phi_X \otimes \iota \Phi_Y} \arrow[dr, "\pi_Y"] \arrow[dl, "\pi_X"'] \\
					\paran{\iota \Omega_X, \mu_X, \iota \Phi_X} && \paran{\iota \Omega_Y, \mu_Y, \iota \Phi_Y}
				\end{tikzcd}
			}; \end{tikzpicture}
		\caption{Start with a join \eqref{eqn:def:ddp8z} of two $\sps$-object / dynamical systems.}
		\label{fig:sf3s3:1}
	\end{subfigure}
	\begin{subfigure}{0.48\linewidth}
		\centering
		\begin{tikzpicture} \node[draw, inner sep=5pt, draw=ChhaiD, line width=2pt] (box){
				\begin{tikzcd} [scale cd = 0.7, column sep = small]
					& \paran{\iota \Omega_X \otimes \iota \Omega_Y, \mu_X \otimes \mu_Y} \arrow[dr, "\pi_Y"] \arrow[dl, "\pi_X"'] \\
					\paran{\iota \Omega_X, \mu_X} && \paran{\iota \Omega_Y, \mu_Y}
				\end{tikzcd}
			}; \end{tikzpicture}
		\caption{Restrict this into the slice category $\Comma{1}{\iota}$}
		\label{fig:sf3s3:2}
	\end{subfigure}
	
	\begin{subfigure}{0.48\linewidth}
		\begin{tikzpicture} \node[draw, inner sep=5pt, draw=ChhaiD, line width=2pt] (box){
				\begin{tikzcd} [scale cd = 0.7, column sep = small]
					&& \paran{ \mu_{Y\to X}^{|S|}, \otimes^{|S|} X } \arrow[dotted, drr] \\
					\paran{ \tilde{\mu}_{Y\to X}^{|S|}, \paran{\otimes^{|S|} X} \otimes Y } \arrow[urr, "\pi_1"] \arrow[drr, "\pi_2"'] && && \calZ \\
					&& \paran{\mu_Y, Y} \arrow[dotted, urr]
				\end{tikzcd}
			}; \end{tikzpicture}
		\caption{Use infinite copied products (Def \ref{def:inf_copy}) to create the left half of the above diagram. Construct the pushout $\calZ$. By Theorem \ref{thm:push_pull} the object $\paran{ \tilde{\mu}_{Y\to X}^{|S|}, \paran{\otimes^{|S|} X} \otimes Y }$ is the unique cone above the v-diagram created by the right half of this picture whose second component is $\paran{\otimes^{|S|} X} \otimes Y$. }
		\label{fig:sf3s3:3}
	\end{subfigure}
	\begin{subfigure}{0.48\linewidth}
		\begin{tikzpicture} \node[draw, inner sep=5pt, draw=ChhaiD, line width=2pt] (box){
				\begin{tikzcd}
					a
				\end{tikzcd}
			}; \end{tikzpicture}
		\caption{Step}
		\label{fig:sf3s3:4}
	\end{subfigure}
	\caption{Application of Theorem \ref{thm:push_pull}}
	\label{fig:sf3s3}
\end{figure}

\section{Disjointness} \label{sec:disjoint}

The notion of disjointness was championed by Furstenberg and others \cite[e.g.]{Furstenberg1967, Furstenberg1977ergodic} in 

\begin{definition} [Disjointness]
	\cite{Furstenberg1967} Two $\sps$ objects are said to be disjoint if their only join is their product join. More generally, given a v-diagram as in \eqref{eqn:vee_diag}, the $\sps$-objects $(\Omega_1, \mu_1, \Phi_1)$ and $(\Omega_2, \mu_2, \Phi_2)$ are said to be disjoint over the common factor $(\Omega_3, \mu_3, \Phi_3)$ if the only element of $J(\mu_1, \mu_2 \mid h_1, h_2 \mid \mu_3)$ is $\join(\mu_1, \mu_2 \mid h_1, h_2 )$
\end{definition} 


\begin{lemma} [Product of disjoint systems is a categorical product] \label{lem:disjoint_lim}
	Two $\sps$ objects $(\Omega_1, \mu_1, \Phi_1)$ and $(\Omega_2, \mu_2, \Phi_2)$ are disjoint iff their product join is their categorical product in $\sps$.
\end{lemma}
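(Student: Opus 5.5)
The plan is to prove both directions by unwinding what the universal property of a product in $\sps$ says about spans, and then comparing it with the definition of a join \eqref{eqn:def:ddp8z}. Write $P := (\iota\Omega_1\otimes\iota\Omega_2,\ \mu_1\otimes\mu_2,\ \iota\Phi_1\otimes\iota\Phi_2)$ with legs $\pi_1,\pi_2$. By \eqref{eqn:def:projection} and the deletion axioms, these legs are $\sps$-morphisms: they are equivariant and push $\mu_1\otimes\mu_2$ to $\mu_i$. By Assumption \ref{A:det_base} they are deterministic. The claim to establish is that $P$ is terminal among spans over the pair $(\Omega_1,\mu_1,\Phi_1)$, $(\Omega_2,\mu_2,\Phi_2)$ exactly when every join is the product join.

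\emph{Product $\Rightarrow$ disjoint.} Take any join $\lambda$, viewed as the span $(\Omega_1\otimes\Omega_2,\lambda,\Phi_1\otimes\Phi_2)\xrightarrow{\pi_i}(\Omega_i,\mu_i,\Phi_i)$. The universal property of $P$ gives a unique $\sps$-morphism $u$ into $P$ with $\pi_i\circ u=\pi_i$. The underlying $\calX$-span is the product span, and $\calX$-morphisms into $\Omega_1\otimes\Omega_2$ commuting with the projections are determined by them; this uses that $\iota$ is monoidal and that the deterministic maps form a cartesian subcategory. Hence $u=\Id$. Since $u$ is an $\sps$-morphism, it preserves states, so $\lambda=u\circ\lambda=\mu_1\otimes\mu_2$.

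\emph{Disjoint $\Rightarrow$ product.} Given an arbitrary span $(\Omega,\mu,\Phi)\xrightarrow{h_i}(\Omega_i,\mu_i,\Phi_i)$, form the $\calX$-map $u:=\copyF(h_1,h_2):\Omega\to\Omega_1\otimes\Omega_2$, the copied product of the $h_i$. I would then check three things.
\begin{enumerate}
\item $\pi_i\circ u=h_i$. This is \eqref{eqn:d4d5z}.
\item $u$ is equivariant. Each $h_i$ is deterministic (Assumption \ref{A:det_base}), so copy commutes with $\Phi^t$, and $(\Phi_1^t\otimes\Phi_2^t)\circ u=u\circ\Phi^t$ follows.
\item The state $\lambda:=u\circ\mu$ is invariant under $\iota\Phi_1\otimes\iota\Phi_2$ by equivariance, and its marginals are $h_i\mu=\mu_i$ by the first point.
\end{enumerate}
So $\lambda$ is a join. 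Disjointness then forces $\lambda=\mu_1\otimes\mu_2$, which means $u$ is an $\sps$-morphism into $P$. Uniqueness of $u$ follows from the same determinism argument used in the first direction: any $v$ with $\pi_iv=h_i$ equals $\copyF(h_1,h_2)$.

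The main obstacle is the uniqueness/identification step: that a morphism into $\Omega_1\otimes\Omega_2$ is determined by its two projections. In a general Markov category this fails (conditional dependence is invisible to marginals), so the argument must use that $\sps$-morphisms come from $\calX$ and are deterministic under $\iota$. I would first try to prove the lemma that deterministic morphisms into a tensor product are copied products of their marginals, namely $v=(\pi_1v\otimes\pi_2v)\circ\copyF$. This follows from $\copyF\circ v=(v\otimes v)\circ\copyF$ by postcomposing with $\pi_1\otimes\pi_2$ and using the counit axiom \eqref{eqn:def:MarkovCat:1}. With that lemma in hand, both directions go through as described.
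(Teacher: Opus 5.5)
The paper states this lemma without proof, so there is no argument of the authors' to compare against. Your approach, unwinding the universal property of a product in $\sps$ against the definition of a join, is the natural one. It is correct in the intended setting $\calX=\MeasCat$, $\calY=\StochCat$, or more generally when $\calX$ is cartesian and $\iota$ sends its diagonals and projections to $\copyF$ and $\deleteF$. Under the paper's literal standing assumptions ($\calX$ merely closed monoidal, $\iota$ merely monoidal), two steps need that extra structure made explicit.

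\emph{Existence.} Both the legs $\pi_i$ and your mediating map $u=\copyF(h_1,h_2)$ must be $\calX$-morphisms to qualify as $\sps$-morphisms. However, $\copyF$ and $\deleteF$ live in $\calY$, so you need diagonals and projections in $\calX$, for instance $\calX$ cartesian.

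\emph{Uniqueness.} Your determinism lemma $v=(\pi_1 v\otimes\pi_2 v)\circ\copyF$ is correct in $\calY$. Its proof uses counitality together with the compatibility of $\copyF_{Y_1\otimes Y_2}$ with $\copyF_{Y_1}\otimes\copyF_{Y_2}$. But it only yields $\iota v=\iota u$.
\begin{itemize}
\item For the direction ``product $\Rightarrow$ disjoint'' this suffices, since all you need there is $\iota u\circ\lambda=\lambda$.
\item For uniqueness in ``disjoint $\Rightarrow$ product'' it does not suffice. Two $\sps$-morphisms are equal only if they are equal in $\calX$, which would require $\iota$ to be faithful. Faithfulness fails even for $\MeasCat\to\StochCat$: on a space with the trivial $\sigma$-algebra, all Dirac measures coincide.
\end{itemize}
The clean fix is to argue uniqueness directly in the cartesian category $\calX$, where maps into $\Omega_1\otimes\Omega_2$ are determined by their components. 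The $\calY$-level lemma is then unnecessary for uniqueness.

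A small slip: equivariance of $u$ uses determinism of $\iota\Phi^t$, to move $\Phi^t$ past $\copyF_{\iota\Omega}$, not determinism of the $h_i$. Assumption \ref{A:det_base} supplies this as well.
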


\begin{definition} [Arithmetic completeness]
	A subset of $\sps$-objects is said to be \textit{arithmetically complete} / has \textit{arithmetic completeness} if it is closed under taking factors and countable joins.
\end{definition}

\begin{Erg}{}{} 
	A measure-preserving dynamical system $(X, \mu, \Phi)$ is called a \textit{zero-entropy system} if its metric entropy / Kolmogorov-Sinai metric entropy $h_\mu(\Phi)$ is zero. Metric entropy quantifies the average rate at which a system generates complexity by the separation of trajectories over time. Therefore, a system with zero entropy lacks the exponential divergence and inherent unpredictability, often interpreted as the broad concept of "chaos". This makes zero-entropy systems highly predictable; their future states can be forecasted with sub-exponential error growth given a finite precision of the initial state. Classic examples include identity maps, periodic transformations, and irrational rotations on a circle. While they do not exhibit strong mixing properties, the study of zero-entropy systems is remarkably rich, relying heavily on spectral theory and algebraic invariants to classify structured, deterministic behaviors \cite[e.g.]{dooley2005approx}. They also form a familiy of dynamical systems which are  arithmetically complete \cite[e.g.]{glasner1995quasi, blanchard1993zero}. 
\end{Erg}

\begin{Erg}{}{} 
	A measure-preserving dynamical system is said to have a discrete spectrum (or pure point spectrum) if the Hilbert space of square-integrable functions over its state space is spanned entirely by the eigenfunctions of its associated Koopman operator. Such systems are fundamentally characterized by their regular, highly predictable, and rotation-like dynamics, lacking the strong mixing properties found in chaotic systems. The landmark Halmos-von Neumann representation theorem established that ergodic systems with a discrete spectrum are isomorphic to group translations on compact abelian groups, completely classifying them up to measure-theoretic isomorphism. Structurally, it has been shown that ergodic transformations with a discrete spectrum belong to the class of stacking transformations, meaning they can be explicitly constructed via cutting and stacking methods using single stacks \cite{DelJunco1976}. Modern investigations have expanded these classifications into topological dynamics, demonstrating that factors of totally ergodic systems with quasi-discrete spectra inherently possess zero entropy \cite{HaaseMoriakov2018}. Discrete-spectrum systems also form a family of dynamical systems which are  arithmetically complete \cite[e.g.]{Furstenberg1967}
\end{Erg}

\begin{Erg}{}{} 
	Weakly mixing systems
\end{Erg}

\begin{Erg}{}{} 
	In the context of ergodic theory and topological dynamics, an isometric system (or more broadly, an isometric extension) refers to a measure-preserving dynamical system where the transformation acts as an isometry along the fibers of the extension with respect to some compatible metric \cite{glasner1978exist}. Such systems are intrinsically rigid, exhibiting regular, predictable behavior and zero Kolmogorov-Sinai entropy along the fibers. These structured extensions are fundamental building blocks of dynamical systems; for instance, the rigorous classification of isometric extensions of multidimensional Bernoulli shifts helps mathematically isolate completely deterministic factors from purely mixing ones (Kammeyer, 1992). In the measure-theoretic framework, isometric extensions are closely associated with systems possessing relative discrete spectrums, which stand in sharp contrast to purely chaotic, weakly mixing extensions, though they can still support unique equilibrium measures when extending strongly chaotic Anosov systems (Spatzier and Visscher, 2016). Furthermore, the structural rigidity of isometric extensions is frequently analyzed using joining theory, where properties like quasi-simplicity dictate that any ergodic non-product self-joining of the system must be strictly isometric over its marginals (Danilenko, 2007). Such joining-based classifications also play a critical role in distinguishing positive-entropy, "dominant" systems from those exhibiting distal or isometric constraints (Austin et al., 2022).
\end{Erg}

\begin{Erg}{}{} 
	a Kolmogorov automorphism, or K-system, is a measure-preserving dynamical system characterized by having completely positive entropy (CPE) (Call and Park, 2020). This means that every non-trivial factor of the system has strictly positive Kolmogorov-Sinai entropy, lacking any deterministic or zero-entropy components. Formally, a system is a K-automorphism if it possesses a sub-$\sigma$-algebra—often called a K-algebra—that grows to generate the entire $\sigma$-algebra under the forward transformation and shrinks to the trivial $\sigma$-algebra under the inverse, meaning its associated Pinsker algebra is trivial. Because of this structure, K-systems sit relatively high in the ergodic hierarchy, being strictly stronger than strongly mixing systems and weaker than Bernoulli shifts. While K-systems are necessarily mixing of all degrees, the converse is not universally true, as there exist stationary Gaussian processes that are mixing of all degrees but fail to be Kolmogorov automorphisms (Dugdale, 1966). Nonetheless, the K-property is a hallmark of robust chaos and is frequently established in physics and smooth dynamics, such as proving that certain nonuniformly hyperbolic maps and billiard flows are not only K-systems but also Bernoulli \cite{ChernovHaskell1996}. The structural rigidity of these chaotic systems is frequently analyzed using joining theory; for instance, joinings can be used to prove that systems with completely positive entropy are fundamentally disjoint from all zero-entropy systems (Ryzhikov and Thouvenot, 2024). Furthermore, relatively independent joinings serve as essential tools to dissect characteristic factors and limit behaviors in extensions of systems possessing varying degrees of mixing and entropy (Ackelsberg, 2024).
	\cite{Thouvenot1995join}
\end{Erg}

\begin{theorem} [Equivalent characterization of ergodicity] \label{thm:9dj8zl}
	Let $\mu$ be an invariant state in $\calY$ of a dynamical system $\Phi : \Time \to \calX$ with domain $\Omega$. Then the following are equivalent
	\begin{enumerate} [(i)]
		\item $\mu$ is an ergodic state of $(\iota\Omega, \iota\Phi)$;
		\item the $\sps$-object $\paran{\Omega, \mu, \Phi}$ is irreducible;
		\item the $\sps$-object $\paran{\Omega, \mu, \Phi}$ is disjoint from every identity system;
		\item the terminal morphism $\paran{\Omega, \mu, \Phi} \xrightarrow{!} \paran{1, 1, \Id}$ is an ergodic extension.
	\end{enumerate}
\end{theorem}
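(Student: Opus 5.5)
I would prove the equivalences by a cycle that routes everything through the notion of an invariant deterministic observable, since that is the common currency of the four definitions. First I would fix the dictionary. An invariant deterministic observable $c:\iota\Omega\to\calR$ (a co-cone as in Definition \ref{def:invar:3}) is the same as an $\sps$-morphism from $(\Omega,\mu,\Phi)$ to the identity system on $\calR$ carrying the state $c\circ\mu$. Determinism of $c$ comes from Assumption \ref{A:det_base} when $c$ is the image of an $\calX$-map. Invariance of $c\circ\mu$ under $\Id$ is automatic. Conversely, any $\sps$-morphism to an identity system $(A,a,\Id_A)$ is exactly an invariant observable with $a=c\circ\mu$. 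Under this dictionary, ``non-trivial morphism to an identity system'' means a morphism whose pushed-forward state $c\circ\mu$ is not deterministic. Equivalently, the morphism does not factor $\mu$-a.s. through the terminal object $(1,1,\Id)$.

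With the dictionary in place, (i)$\Leftrightarrow$(ii) is a direct translation. (i)$\Leftrightarrow$(iv) is also immediate. An ergodic extension over $!$ asks that every invariant observable $\alpha$ factor through an observable $\beta:1\to Y$ of the trivial system. Such a $\beta$ is a point of $Y$, and $\alpha=\beta\circ!$ holds $\mu$-a.s. exactly when $\alpha\circ\mu$ is deterministic. One direction uses that points $1\to Y$ in the image of $\iota$ are deterministic. The other direction disintegrates: if $\alpha\circ\mu$ is deterministic, set $\beta:=\alpha\circ\mu$, and the a.s. equality $\graph(\alpha)\circ\mu=\graph(\beta\circ !)\circ\mu$ follows from determinism of $\alpha\circ\mu$ by the standard Markov-category argument (deterministic marginal implies product with the other marginal). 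The factorisation in \eqref{eqn:def:erg_extn} must be read $\mu$-almost surely; I would state this explicitly.

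The substantive step is (i)$\Leftrightarrow$(iii), and I would base it on Lemma \ref{lem:0dj9xc} together with Lemma \ref{lem:a5ukdf}. For (i)$\Rightarrow$(iii), take a joining $\lambda$ of $(\Omega,\mu,\Phi)$ with an identity system $(X,\nu,\Id_X)$. Form the conditional $\lambda_{X|\Omega}$ via Lemma \ref{lem:subsig:1} / \eqref{eqn:mrgnl_cndtnl}, that is, the disintegration along $\pi_\Omega$. Invariance of $\lambda$ under $\Phi\otimes\Id$ and the uniqueness in Lemma \ref{lem:disinteg_invrsn} make this kernel $\Phi$-invariant $\mu$-a.s.; this is the computation in Lemma \ref{lem:sps_mrphsm_invert}. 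Ergodicity then forces every deterministic invariant observable built from it to be constant. Lemma \ref{lem:dij03} converts this into $\lambda$ being the product join. For (iii)$\Rightarrow$(i), argue by contraposition. Given an invariant observable $c$ with $c\circ\mu$ non-deterministic, the graph joining $\graph(c)\circ\mu$ of Lemma \ref{lem:graph_join:1} joins $(\Omega,\mu,\Phi)$ with the identity system $(\calR,c\circ\mu,\Id)$. It is not the product: if it were, then $c$ would be independent of itself, making $c\circ\mu$ deterministic.

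The main obstacle is the step in (i)$\Rightarrow$(iii) that passes from ``the conditional kernel is $\mu$-a.s. invariant'' to ``there is a non-constant invariant \emph{deterministic} observable''. Ergodicity as defined only tests deterministic observables, while $\lambda_{X|\Omega}$ is stochastic. I would try first to use the factor supplied by Lemma \ref{lem:dij03}, namely the deterministic $\psi$ through which non-constant composites factor, or equivalently the pushout common factor of Theorem \ref{thm:join_project}(iii), to extract a deterministic invariant observable. Failing that, I would invoke Lemma \ref{lem:0dj9xc} with $Z=1$, which reduces disjointness from identity systems to ergodicity of $!$, i.e.\ to (iv), already shown equivalent to (i).
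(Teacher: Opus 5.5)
Your argument holds relative to the lemmas the paper states, and on the one substantive link it lands where the paper does. The paper's proof is only a skeleton. It leaves (i)$\Leftrightarrow$(ii) blank and never addresses (iv). It treats the link between (ii) and (iii) through pushouts of the joining span (Lemma \ref{lem:d90did} with Theorem \ref{thm:join_project}(iii)) and by citing Lemma \ref{lem:0dj9xc}.

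You instead make (i) the hub:
\begin{itemize}
\item (i)$\Leftrightarrow$(ii) is your dictionary.
\item (i)$\Leftrightarrow$(iv) rests on the fact that, for deterministic $c$, $\graph(c)\circ\mu=\mu\otimes(c\circ\mu)$ iff $c\circ\mu$ is deterministic.
\item (iii)$\Rightarrow$(i) comes directly from the graph joining of Lemma \ref{lem:graph_join:1}, with no pushouts.
\item (i)$\Rightarrow$(iii) is Lemma \ref{lem:0dj9xc} at $Z=1$ composed with (i)$\Rightarrow$(iv). That composition is exactly the step the paper omits when it invokes that lemma.
\end{itemize}
Reading \eqref{eqn:def:erg_extn} $\mu$-almost surely is necessary, not cosmetic. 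With strict factorization, (iv) fails for the Dirac state at one of two fixed points of an identity map, although (i)--(iii) hold. You should also state as a hypothesis that every deterministic invariant observable $\iota\Omega\to\calR$ has the form $\iota c$ with $\calR=\iota R$. Conditions (ii)--(iv) only see observables living over $\calX$, so without this the implications back to (i) do not follow.

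Two cautions on (i)$\Rightarrow$(iii). First, drop the pushout option. It needs Theorem \ref{thm:join_project}(iii) in the direction ``non-product joining $\Rightarrow$ non-trivial pushout'', and that direction is false in general. This is Rudolph's phenomenon, recalled after Lemma \ref{lem:dij0s}. Concretely, join two copies of $(\mathbb{R},N(0,1),\Id)$ by a Gaussian with correlation $\rho\in(0,1)$. This is a non-product joining, yet its common factor is trivial, since $\lambda$ is equivalent to the product measure. Likewise, Lemma \ref{lem:dij03} says nothing about invariance of $\psi$.

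Second, your disintegration line breaks earlier than you say. Let $k:\iota\Omega\to\iota X$ be the conditional. For non-invertible $\Phi^t$, invariance of $\lambda$ and uniqueness of $k$ only give, classically, $\mathbb{E}[k\mid(\Phi^t)^{-1}\Sigma]=k\circ\Phi^t$. An $L^2$ contraction argument is needed to get $k\circ\Phi^t=k$. If you disintegrate over the identity side instead, the conditional $k':\iota X\to\iota\Omega$ satisfies $\iota\Phi^t\circ k'=k'$ $\nu$-a.s.\ directly. You would then still need that an ergodic $\mu$ admits no decomposition $\mu=k'\circ\nu$ into a.s.-invariant states with $k'$ not a.s.\ constant. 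That extremality property is not among the paper's assumptions.

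Since Lemma \ref{lem:0dj9xc} at $Z=1$ is literally (iii)$\Leftrightarrow$(iv), the content of (i)$\Rightarrow$(iii) is imported rather than proved. This is exactly as in the paper, whose proof of that lemma is empty.

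Finally, identify disjointness over $(1,1,\Id)$ with disjointness directly, not via Lemma \ref{lem:a5ukdf}. The set $J(\mu,\nu\mid !,!\mid 1)$ is the set of all joinings, because the defining condition is vacuous over $1$, and $\join(\mu,\nu\mid !,!)=\mu\otimes\nu$. Lemma \ref{lem:a5ukdf}, as literally stated, would make every pair of systems disjoint.
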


\paragraph{(i) implies (ii)}

\paragraph{(ii) implies (i)}

\paragraph{(ii) implies (iii)}

\paragraph{(iii) implies (ii)} Suppose property (iii) holds. This means that $\paran{\Omega, \mu, \Phi}$ has no non-trivial morphism to an identity system. Then by Lemma \ref{lem:d90did} and Theorem \ref{thm:join_project}~(iii), 

\paragraph{(ii) implies (iii)} Lemma \ref{lem:0dj9xc}.

This completes the proof of Theorem \ref{thm:9dj8zl}. \qed

\begin{definition} [Birkhoff property]
	A dynamical system $(X, f, \mu)$ has \emph{Birkhoff property} if the following condition is satisfied: If a state $h$ with the marginal for $X$ being is invariant under $f \times 1_Y$, i.e., the diagrams
	
	\begin{center}
		\begin{tikzpicture}
			\node[box=1/0/1/0] (f) at (1,-1) {f^n};
			\node[box=1/0/1/0] (g) at (3,-1) {1_Y};
			\node[box=2/0/1/0] (h) at (2,-3) {h};
			\wires{
				f = {south = h.north.1},
				g = {south = h.north.2},
			}{f.north, g.north}
		\end{tikzpicture}
	\end{center}
	coincides for all $n \geq 0$, then these diagrams coincide with
	\begin{center}
		
		\begin{tikzpicture}
			\node[box=1/0/1/0] (mu) at (1,-1) {\mu};
			\node[box=1/0/1/0] (g) at (3,-1) {1_Y};
			\node[dot] (n) at (1,-2) {};
			\node[dot] (n2) at (1,-2.2) {};
			\node[box=2/0/1/0] (h) at (2,-3) {h};
			\wires{
				mu = {south = n.north},
				n2 = {south = h.north.1},
				g = {south = h.north.2},
			}{f.north, g.north}
		\end{tikzpicture}
	\end{center}
\end{definition}

\begin{theorem} \label{thm:fp9s}
	For a dynamical system $(X, f, \mu)$, where $f$ is deterministic, the following properties hold:
	\begin{enumerate}
		\item If there is only one joining between $(X,f,\mu)$ and $(Y, 1_Y, \nu)$ for all $\nu$, $(X,f,\mu)$ is ergodic.
		\item If $(X, f, \mu)$ satisfies Birkhoff property, there is only one joining between $(X,f,\mu)$ and $(Y, 1_Y, \nu)$ for all $\nu$.
	\end{enumerate}
\end{theorem}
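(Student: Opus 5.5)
Both parts will be argued directly in the string-diagram calculus. The key device is a joining built from an invariant deterministic observable, together with the almost-sure and deterministic manipulations already available from Assumptions \ref{A:det_base} and \ref{A:disinteg}.

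\textbf{Part 1 (unique joining implies ergodic).} I argue by contraposition through the observable. Let $c:\iota X\to \calR$ be an invariant deterministic observable, so $c\circ f=c$. Take $Y=\calR$ with the identity dynamics and $\nu:=c\circ\mu$. Define $\lambda:=\graph(c)\circ\mu : 1\to X\otimes \calR$.

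This $\lambda$ is a joining of $(X,f,\mu)$ and $(\calR,1_\calR,\nu)$:
\begin{itemize}
\item Its marginals are $\mu$ and $c\mu=\nu$, by \eqref{eqn:d4d5z}, exactly as in Lemma \ref{lem:graph_join:1}.
\item It is invariant under $f\otimes 1$, because $f$ is deterministic and $c\circ f=c$:
\[(f\otimes 1)\circ\graph(c)\circ\mu=(f\otimes c)\copyF\,\mu=(f\otimes cf)\copyF\,\mu=\graph(c)\circ f\circ\mu=\graph(c)\circ\mu .\]
\end{itemize}

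By hypothesis this is the only joining, so it equals the product joining $\mu\otimes\nu$. Projecting away the $X$-factor after applying $c\otimes 1$ gives
\[\copyF\circ c\circ\mu=(c\otimes 1)\graph(c)\mu=(c\otimes1)(\mu\otimes c\mu)=c\mu\otimes c\mu .\]
This says exactly that $c\circ\mu$ is deterministic. Since $c$ was an arbitrary invariant deterministic observable, $\mu$ is ergodic.

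The only care needed is that $\calR$ is a legitimate object hosting an identity system, and that ``for all $\nu$'' covers our choice $\nu=c\mu$. Both are immediate.

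\textbf{Part 2 (Birkhoff property implies unique joining).} Let $h$ be any joining of $(X,f,\mu)$ and $(Y,1_Y,\nu)$. Invariance under $f\times 1_Y$ gives $(f^n\otimes 1_Y)h=h$ for all $n\ge 0$; this is precisely the hypothesis of the Birkhoff property. Its conclusion is that $h$ coincides with the diagram in which the $X$-wire of $h$ is deleted and replaced by a fresh $\mu$. That diagram is $\mu\otimes(\pi_Y h)$, and the second marginal condition gives $\pi_Y h=\nu$, so $h=\mu\otimes\nu$. Hence the product is the only joining.

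The one point to check is that the Birkhoff property as defined quantifies over states $h$ whose $X$-marginal is $\mu$, which holds for any joining. No ergodic decomposition or disintegration is needed.

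\textbf{Main obstacle.} I expect the main difficulty to be Part 1, specifically the invariance computation. It relies on $f$ being deterministic to commute $f$ past $\copyF$; this is why that hypothesis appears in the statement, and I would make that step explicit. A secondary subtlety is the passage from $\graph(c)\mu=\mu\otimes c\mu$ to the determinism of $c\mu$. It uses that $c$ is deterministic, so that $(c\otimes 1)\graph(c)=\copyF\circ c$, and I would spell this out as a one-line string-diagram identity.
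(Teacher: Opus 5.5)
Your proposal is correct and follows the paper's proof essentially step for step. For Part 1, the paper likewise treats an invariant deterministic observable $c$ as a morphism onto an identity system with state $c\circ\mu$, uses uniqueness to identify the graph joining with $\mu\otimes(c\circ\mu)$, and then applies $c\otimes 1$ with the determinism of $c$ to show that copying $c\circ\mu$ yields $(c\circ\mu)\otimes(c\circ\mu)$; for Part 2, it feeds an arbitrary joining into the Birkhoff property and uses the $Y$-marginal to conclude that the joining equals $\mu\otimes\nu$, exactly as you do.
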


Theorem \ref{thm:fp9s} is proved in Section \ref{sec:proof:fp9s}.
\section{Applications} \label{sec:applic}

\paragraph{I. Classical ergodic theory}

\paragraph{II. Finite multi-sets} 

\begin{definition} [Finite Multi-sets]
	\cite{FritzKlinger2023d} The category $\mathrm{FinSetMult} $ is defined by
	\begin{itemize}
		\item Objects are finite sets
		\item Morphisms are set-valued maps
		\item Composition of $f: X \to Y$ and $g:Y \to Z$ is defined by
		\[
		g\circ f (x) := \bigcup_{y \in f(x)} g(y). 
		\]
	\end{itemize}
\end{definition}

This is a Markov category. Deterministic morphisms corresponds to single-valued maps.
\begin{lemma}
	In $\mathrm{FinSetMult}$, a state $\mu: I \to X$ corresponds to a subset $A = \mu(*)$ of $X$. For a morphism $f:X \to X$, a state $\mu: I \to X$ is invariant if and only if $A = \mu(*) \subset X$ satisfies
	\[
	A = \bigcup_{x \in A} f(x).
	\]
\end{lemma}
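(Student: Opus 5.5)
The statement has two directions, and I would prove both by unwinding the definitions in $\mathrm{FinSetMult}$.

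\emph{States as subsets.} The monoidal unit $I$ is the one-point set $\{*\}$. A morphism $\mu : I \to X$ is a set-valued map $\{*\}\to 2^X$, so it is determined by the single subset $A := \mu(*) \subseteq X$. Conversely, any subset $A$ defines such a map. This gives a bijection between states of $X$ and subsets of $X$.

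A caveat: if the convention of \cite{FritzKlinger2023d} requires morphisms to have nonempty values (so that deletion is natural, as in \eqref{eqn:def:MarkovCat:3}), then states correspond to nonempty subsets. Either way, the correspondence $\mu \leftrightarrow \mu(*)$ is a bijection onto the admissible subsets. This step is immediate from the definition of morphisms.

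\emph{Invariance as a fixed-point equation.} Next I would compute the composite $f\circ\mu$ using the composition rule. It gives
\[ (f\circ\mu)(*) = \bigcup_{x\in\mu(*)} f(x) = \bigcup_{x\in A} f(x). \]
By Definition \ref{def:invar:2}, specialised to the single endomorphism $f$ generating the dynamics, $\mu$ is invariant iff $f\circ\mu = \mu$ as morphisms $I\to X$. Two morphisms out of $\{*\}$ are equal iff they agree at $*$. So invariance holds iff $\bigcup_{x\in A} f(x) = A$.

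\emph{Extending to a time structure.} If the dynamics is indexed by a semigroup $\Time$ generated by $f$ (that is, $\Phi^n = f^n$), then invariance under all $\Phi^n$ follows from invariance under $f$ by induction, since $f^{n+1}\circ\mu = f^n\circ(f\circ\mu)$. Hence the single equation above characterises invariance.

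There is no real obstacle here. The only point needing care is the convention on empty values and on what the unit $I$ is, and I would fix that explicitly at the start.
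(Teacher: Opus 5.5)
Your argument is correct: states $I\to X$ are exactly the subsets $\mu(*)$ (nonempty under the Markov convention you flag), the composition rule gives $(f\circ\mu)(*)=\bigcup_{x\in A}f(x)$, and invariance under the semigroup generated by $f$ reduces to invariance under $f$ alone. The paper states this lemma without proof, treating it as an immediate unwinding of the definition of $\mathrm{FinSetMult}$, and your proposal writes out exactly that unwinding.
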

We note that a morphism $f:X \to X$ corresponds to a directed graph with vertex set $X$. Invariant states define subgraphs.

\begin{theorem}
	In $\mathrm{FinSetMult}$, a state $\mu: I \to X$ is ergodic with respect to a morphism $f:X \to X$ if and only if the corresponding graph is weakly connected.
\end{theorem}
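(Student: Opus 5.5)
The plan is to unwind every notion in the statement concretely in $\mathrm{FinSetMult}$ and reduce ergodicity to a statement about functions that are constant along edges. Three facts from the preceding lemma and remarks will be used.

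First, a state $\mu: I\to X$ is a subset $A=\mu(*)$, and it is invariant exactly when $A=\bigcup_{x\in A}f(x)$.

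Second, deterministic morphisms are single-valued maps. So an observable $c: X\to R$ is just a function, and $c\circ\mu$ is deterministic iff the set $c(A)$ is a singleton.

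Third, every morphism $f$ has nonempty values $f(x)\neq\emptyset$. This follows from naturality of $\deleteF$ in \eqref{eqn:def:MarkovCat:3}: $\deleteF\circ f$ must equal $\deleteF$, which forces each $f(x)$ to be nonempty.

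Next I compute what invariance of $c$ means, namely $c\circ f=c$. By the composition rule, $(c\circ f)(x)=\{c(y): y\in f(x)\}$. This equals $\{c(x)\}$ iff $c(y)=c(x)$ for every edge $x\to y$ of the graph of $f$, using $f(x)\neq\emptyset$. So invariant deterministic observables are exactly the functions that are constant on weakly connected components.

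Both directions then follow.

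\emph{Connected implies ergodic.} Suppose the graph is weakly connected. Any two points of $A$ are joined by an undirected path of edges, and $c$ is constant along each edge, so $c(A)$ is a singleton and $c\circ\mu$ is deterministic.

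\emph{Ergodic implies connected.} Suppose the graph is not weakly connected, and let $C$ be the weak component of some $a\in A$. Take $R=\{0,1\}$ and let $c$ be the indicator function of $C$. This $c$ is constant on weak components, hence invariant. If some point $a'\in A$ lies outside $C$, then $c(A)=\{0,1\}$, so $c\circ\mu$ is not deterministic and $\mu$ is not ergodic.

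The step I expect to be the real obstacle is pinning down what ``the corresponding graph'' means, together with whether invariance of $c$ is required on all of $X$ or only $\mu$-almost surely.

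With strict invariance on all of $X$, the argument above shows that ergodicity is equivalent to $A$ lying in a single weak component of the graph of $f$ on $X$. That is not the same as connectivity of the subgraph induced on $A$. For example, take $X=\{a,b,c\}$ with $a\to a$, $b\to b$, $c\to\{a,b\}$, and $A=\{a,b\}$. The induced subgraph on $A$ is disconnected, yet $A$ lies in one weak component of $X$, so $\mu$ is ergodic.

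I would therefore first try to read ``the corresponding graph'' as the subgraph determined by the invariant state, that is, the vertices $A$ with the edges of $f$ restricted to $A$. For this reading I would show that almost-sure invariance is the right notion. Here I would check that $c\circ f=c$ $\mu$-a.s., in the sense of \eqref{eqn:def:almost_sure}, means $\{(x,c(y)) : x\in A,\ y\in f(x)\}=\{(x,c(x)) : x\in A\}$. This is exactly constancy of $c$ along the edges inside $A$, and it uses $f(A)=A$ to see that these edges stay inside $A$. With this, the indicator-function argument runs verbatim on the induced subgraph: any function on $A$ constant on its components extends arbitrarily off $A$.

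If the paper intends strict invariance instead, I would state the theorem with the weak component of $X$ containing $A$, and cite the example above to explain why the two readings differ.
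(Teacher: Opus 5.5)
Your analysis is correct, and it uses the same two ingredients as the paper's proof. First, strictly invariant deterministic observables are the functions constant along edges. Second, a component-labelling observable detects disconnection. Your converse direction is the paper's argument verbatim.

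In the forward direction, however, the paper labels the weak components of the graph of $f$ on all of $X$; this is what makes the label strictly invariant. It obtains that the label is constant on $\mu(*)$, and then asserts that ``the graph with vertices $\mu(*)$ is weakly connected.'' That last inference is exactly what your example refutes. Take $X=\{a,b,c\}$ with $f(a)=\{a\}$, $f(b)=\{b\}$, $f(c)=\{a,b\}$ and $\mu(*)=\{a,b\}$. The state is invariant. Any strictly invariant deterministic $g$ satisfies $\{g(a),g(b)\}=\{g(c)\}$, so it is constant on $X$. Hence $\mu$ is ergodic in the paper's sense, while the subgraph on $\mu(*)$ has two components. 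So under the paper's definition of ergodicity, the statement in the reading its own proof uses is false; the gap is in the paper, not in your proposal.

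Your two corrected versions are the right ones. Under strict invariance, the true criterion is that $\mu(*)$ lies in one weak component of the graph on $X$. For deterministic $f$ this agrees with connectivity of the induced subgraph, since a finite functional graph has exactly one cycle per weak component. That is why the subsequent periodic-orbit lemma survives.

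Under $\mu$-almost-sure invariance in the sense of \eqref{eqn:def:almost_sure}, you compare $\graph(c\circ f)\circ\mu$ with $\graph(c)\circ\mu$. Together with $f(x)\subseteq\mu(*)$ for $x\in\mu(*)$, this recovers the induced-subgraph statement exactly.

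Two smaller points also favour your write-up. You justify $f(x)\neq\emptyset$ via naturality of the deletion morphism. The characterization ``invariant iff constant along edges'' needs this, and the paper's definition of $\mathrm{FinSetMult}$ omits it. Also, your codomain $\{0,1\}$ is an object of $\mathrm{FinSetMult}$, whereas the paper's $c:X\to\mathbb{N}$ is not.
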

\begin{proof}
	Let $\mu: I \to X$ be ergodic with respect to a morphism $f:X \to X$. We can define a deterministic morphism $c: X \to \mathbb{N}$ by assigning to which connected component $x \in X$ is contained. This is $f$-invariant by the construction. Thus, by ergodicity, $c\circ \mu$ is deterministic. This means $c$ takes the same value on $\mu(*)$. Therefore the graph with vertices $\mu(*)$ is weakly connected.
	
	Conversely, let the corresponding graph be weakly connected. If a deterministic morphism $c: X \to Y$ is $f$-invariant, we have $c(x) = c(y)$ for every edge $x \to y$ in the graph. By weak connectedness, $c$ assumes the same value on $\mu(*)$. This implies $c \circ \mu$ is deterministic.
\end{proof}

\begin{example}
	The following morphism $f:X \to X$ is ergodic on $X = \{1,2\}$:
	\[
	\begin{tikzcd}
		1 \arrow[loop, distance=2em, in=215, out=145] \arrow[r] & 2 \arrow[loop, distance=2em, in=35, out=325]
	\end{tikzcd}
	\]
	Let $B= \{(1,1),(2,1), (2,2)\} \subset X \times X$. Then 
	$B$ is invariant under $f \times 1_X$, but it cannot be written as a direct product. Therefore, $f$ is ergodic but does not have Birkhoff property.
	
	Further, this observation shows that $f$ has more than two joinings with identity map $1_X$. 
\end{example}
\begin{lemma}
	Let $\mu: I \to X$ be an ergodic invariant state for a deterministic $f: X \to X$. Then, $\mu(*)$ consists of a  periodic orbit. 
\end{lemma}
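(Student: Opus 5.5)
The plan is to combine the characterization of invariant states in $\mathrm{FinSetMult}$ (the preceding lemma) with the characterization of ergodicity (the preceding theorem), and to use the extra structure that $f$ is single-valued. Write $A=\mu(*)\subseteq X$. First I will note that $A\neq\emptyset$. A state must compose with $\deleteF_X$ to give the identity of the unit, and $\deleteF_X\circ\mu$ would be the empty relation if $A$ were empty.

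Next I will use determinism. A deterministic morphism in $\mathrm{FinSetMult}$ is a single-valued map, so $f$ is an honest function $X\to X$. By the preceding lemma, invariance of $\mu$ means $f(A)=\bigcup_{x\in A}f(x)=A$. Thus $f|_A:A\to A$ is surjective, and since $A$ is finite it is a bijection, i.e.\ a permutation of $A$. Hence $A$ is a disjoint union of finitely many $f$-cycles $C_1,\dots,C_k$ with $k\ge 1$, and each $C_i$ is a periodic orbit of $f$.

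It remains to show $k=1$, and I will do this through ergodicity. Consider the weakly connected components of the directed graph of $f$ on all of $X$. Since $f$ is a function on a finite set, the forward orbit of any point eventually enters a cycle. A weakly connected component of a functional graph contains exactly one cycle: two distinct cycles cannot be linked, because every vertex has exactly one out-edge, so any undirected path between them would force some vertex to have two out-edges. As in the proof of the preceding theorem, define $c:X\to\mathbb{N}$ by sending $x$ to the label of its weak component. This $c$ is deterministic, and $c\circ f=c$ because $x$ and $f(x)$ are joined by an edge. Ergodicity then makes $c\circ\mu$ deterministic, i.e.\ $c$ is constant on $A$. So all the cycles $C_i$ lie in one weak component, and therefore $k=1$. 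Hence $A$ is a single periodic orbit.

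The main obstacle is the step ``each weak component contains exactly one cycle''. The invariant $c$ must be constant along components of the whole graph on $X$, not just on $A$. One therefore has to rule out points outside $A$ linking two cycles of $A$, and this is exactly where single-valuedness of $f$ is used. I will prove it by the out-degree-one argument above.
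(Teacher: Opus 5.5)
Your proof is correct and follows the paper's argument: invariance together with single-valuedness gives $f(A)=A$, finiteness makes $f|_A$ a permutation of $A$, and ergodicity then leaves only one cycle. The only difference is in the last step, which the paper dismisses with ``by ergodicity''; you justify it with the weak-component observable and the fact that each weak component of a functional graph contains exactly one cycle, which is precisely where single-valuedness is needed. (One small fix: take the codomain of $c$ to be the finite set of components rather than $\mathbb{N}$, so that $c$ is actually a morphism of $\mathrm{FinSetMult}$.)
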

\begin{proof}
	Let $A := \mu(*)$. By invariance and determinism, every $x \in A $ has $y \in A$ such that $x = f(y)$. Thus, $f|_A: A \to A$ is surjective. Since $A$ is finite, $f|_A$ is injective. Therefore $A$ consists of periodic orbits. By ergodicity, there can be at most one orbit.
\end{proof}

\begin{theorem}
	In $\mathrm{FinSetMult}$, deterministic ergodic systems have Birkhoff property.
\end{theorem}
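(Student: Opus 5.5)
We work directly with the set-valued description of $\mathrm{FinSetMult}$. Let $f:X\to X$ be deterministic, i.e.\ a single-valued map, and let $\mu:I\to X$ be an ergodic invariant state with $A:=\mu(*)$. By the preceding lemma, $A$ is a single periodic orbit $\{x_0, f(x_0),\dots,f^{p-1}(x_0)\}$, and $f|_A$ is a cyclic permutation of $A$. We translate the Birkhoff property into sets. A state $h$ of $X\otimes Y$ is a subset $H\subseteq X\times Y$. The condition that its $X$-marginal is $\mu$ means $\pi_X(H)=A$. Since $f$ is single-valued, $(f\times 1_Y)\circ h$ is the image set $\{(f(x),y):(x,y)\in H\}$. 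The composite in the second diagram, which deletes the $X$-output of $h$ and tensors with $\mu$, is the set $A\times\pi_Y(H)$. So we must show: if $\pi_X(H)=A$ and $(f\times 1_Y)(H)=H$ (invariance for $n=1$ suffices, and it implies the case of all $n$), then $H=A\times \pi_Y(H)$.

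The key step is to slice $H$ into fibres. For each $y\in Y$, put $H_y:=\{x:(x,y)\in H\}\subseteq A$. Because $f\times 1_Y$ does not move the $Y$-coordinate, the invariance $(f\times1_Y)(H)=H$ splits fibrewise into $f(H_y)=H_y$ for every $y$. Now $H_y$ is a subset of the single cycle $A$ with $f(H_y)\subseteq H_y$. If $H_y\neq\emptyset$, pick $x\in H_y$. Then all iterates $f^k(x)$ lie in $H_y$, and these iterates exhaust $A$ because $A$ is one orbit. Hence $H_y=A$. Therefore $H_y=A$ for $y\in\pi_Y(H)$ and $H_y=\emptyset$ otherwise, which gives $H=A\times\pi_Y(H)$. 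This is exactly the equality of diagrams required by the Birkhoff property.

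The only place needing care, and the step I expect to be the main obstacle, is the first one. There one must correctly identify the string diagrams in the definition of the Birkhoff property with the corresponding set operations in $\mathrm{FinSetMult}$: composition as union of images, deletion as projection, and the tensor of states as the Cartesian product of subsets. One must also check that the marginal hypothesis forces $H_y\subseteq A$. Once this dictionary is fixed, the argument reduces to the elementary fact that the only nonempty forward-invariant subset of a single finite cycle is the cycle itself.
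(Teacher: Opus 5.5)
Your proof is correct and is essentially the paper's argument: the paper also invokes the preceding lemma that $\mu(*)$ is a single periodic orbit, and uses $(f\times 1_Y)$-invariance to get $A\times\{y\}\subseteq B$ for every $(x,y)\in B$, hence $B=A\times C$. Your fibrewise reduction to $f(H_y)=H_y$, together with your explicit translation of the string diagrams into set operations, spells out in more detail the step the paper states in one line.
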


\begin{proof}[Proof of Theorem]
	Let $f: X \to X$ be deterministic and $\mu: I \to X$ be ergodic invariant state. Let $\lambda: I \to X \times Y$ be an $f \times 1_Y$-invariant  state such that $X$ projection of $\lambda(*) =: B$ coincide with $\mu(*) = A$.
	
	For each $(x,y) \in B$, we have $A \times \{y\} \subset B$ by the invariance and the preceding lemma. Thus, $B$ is a direct product of the form $A\times C$.
\end{proof}

\paragraph{III. Multivalued maps} \red{Reqrite} : Multi-valued maps \cite{BarmakEtAl2026conley} 
The Conley index \cite{Conley1978} of S can be determined without explicit knowledge of the specific isolated invariant set through associated index pairs, which provide rough topological enclosures of S. In the case of classical continuous-time dynamical systems the Conley index can either be defined as a pointed topological space, or in a more computationally friendly version, as a homology module.

challenge in this context is the lack of homotopies along trajectories for discrete time. In fact, entirely new techniques, based on so-called index maps, were needed. The first such technique, using shape theory, was proposed by Robbin and Salamon \cite{RobbinSalamon1988}. A cohomological Conley index, more practical in computational terms, was proposed in \cite{mrozek1990leray}. A unifying, general approach was presented by Szymczak \cite{Szymczak1995conley}. Later, Franks and Richeson \cite{FranksRicheson2000shift} showed that Szymczak’s approach is equivalent to a construction using the more commonly known shift equivalence. As we already mentioned, combinatorial dynamics has to be multivalued to be interesting. Conley’s theory has successfully been extended to the case of multivalued discrete-time dynamics, see for example \cite{KaczynskiMrozek1995conley, BatkoMrozek2016weak, BatkoMrozek2017weak, batko2023morse, Stolot2006homotopy} and the references therein. However, all of these papers concern only discretization in time, not in space.
\section{Off-diagonal joinings}  \label{sec:off_diag_join}

Recall off-diagonal joinings from Definition \ref{def:off_diag_join}.

\begin{definition} [2-simple systems]
	\cite[Sec 6.1]{glasner2015join}, \cite{Rudolph1979join} An $\sps$-object will be called \emph{2-simple} if every ergodic self-joining of order $2$ is either the product join, or an off-diagonal join.
\end{definition}

\begin{definition} [Minimal self-joinings]
	\cite[pg 2]{Rudolph1979join} Consider any $\sps$ object $\paran{ \Omega, \mu, \Phi }$. Let $\kappa$ be a countable ordinal, $\ell : \kappa \to \Time \setminus \{0\}$ a map. Consider the state
	\[ \graph \paran{ \mu, \ell } := \graph\paran{ \otimes_{n\in\kappa} \Phi^{\ell(n)} } \circ \mu : 1 \to \Omega^{\kappa} \]
	Then the following $\sps$ object is a $\kappa$-fold self-join of $\paran{ \Omega, \mu, \Phi }$ : 
	\[ \paran{ \Omega^{\kappa}, \graph \paran{ \mu, \ell } , \otimes_{n\in\kappa} \Phi^{\ell(n)} } \]
	Then $\paran{ \Omega, \mu, \Phi }$ is said to have \emph{minimal self-joinings} of order $\ell$ if the only non-trivial $\kappa$-fold self joinings of $\Omega$ are such off-diagonal states. We say in absolute terms that $\paran{ \Omega, \mu, \Phi }$ has \emph{minimal self-joinings} if for any such $\kappa$ and $\ell$, $\paran{ \Omega, \mu, \Phi }$ has minimal self-joinings of order $\ell$.
\end{definition}

\begin{definition} [Compact group of permutations]
	\cite[pg 14]{Rudolph1979join} A group $H$ of permutations of a countable set $\kappa$ is compact if $H(\bar{\kappa})$ is finite for any finite subset $\bar{\kappa}$. This means $\kappa$ breaks into finite $H$ invariant subsets on each of which $H$ acts transitively.
\end{definition}

\begin{definition} [Compact permutation]
	\cite[pg 14]{Rudolph1979join} A permutation $\pi$ on a countable ordinal $\kappa$ is said to be \textit{compact} if the group it generates is compact. Equivalently, this means that $\pi$ has only finite cycles. 
\end{definition}

For a compact permutation $\pi$  on a countable ordinal $\kappa$, and a mapping $\ell : \kappa \to \Time$, define
\[ U(\pi, \ell) := S_{\pi} \circ \bigotimes_{n \in \kappa} \Phi^{\ell(n)} .\]
$ = S_\pi $. 

\begin{definition} [Product commutants]
	
\end{definition}

\begin{lemma}
	\cite[Thm 3.1]{Rudolph1979join} Suppose that an $\sps$ object $(\Omega, \mu, \Phi)$ has minimal self joinings. Take two countable ordinals $\kappa$ and $\kappa'$, with two compact permutations $\pi , \pi'$. Suppose that there are two mappings $\ell, \ell : \kappa \to \Time$ such that both $U(\pi, l)$ and $U(\pi', l')$ are ergodic. Finally suppose that there is a morphism $h : \Omega^{\kappa} \to \Omega^{\kappa'}$ such that $h U(\pi, \ell) h^{-1} = U(\pi', \ell')$. Then
	\begin{enumerate} [(i)]
		\item $h$ takes the form $h = S_\alpha \bigotimes_{n \in \kappa} \Phi^{\hat{l}(n)}$ for some $\alpha : \kappa \to \kappa'$ and $\hat{l} : \kappa \to \Time$.
		\item Moreover,
		\[ \alpha \pi \alpha^{-1} = \pi', \; \ell(t) = \ell'(\alpha(t)) + (\hat{l}(t) - \hat{l}(\pi(t))) \]
	\end{enumerate}
\end{lemma}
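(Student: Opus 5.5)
The plan is to turn the conjugacy $h$ into a self-joining of $(\Omega,\mu,\Phi)$ over the disjoint union $\kappa\sqcup\kappa'$, and then read off the structure of $h$ from the hypothesis of minimal self-joinings. I take the invariant states of $U(\pi,\ell)$ and $U(\pi',\ell')$ to be the product states $\otimes^{|\kappa|}\mu$ and $\otimes^{|\kappa'|}\mu$ of Theorem \ref{thm:shift}; this is how I read the unspecified states in the statement, and the argument depends on it.

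\textbf{Step 1 (graph joining).} Regard $h$ as an $\sps$-morphism
\[ \paran{\Omega^\kappa,\otimes^{|\kappa|}\mu,U(\pi,\ell)} \to \paran{\Omega^{\kappa'},\otimes^{|\kappa'|}\mu,U(\pi',\ell')} . \]
By Lemma \ref{lem:graph_join:1}, $\lambda:=\graph(h)\circ\otimes^{|\kappa|}\mu$ is then a joining of the two systems. The state $\lambda$ is invariant under $U(\pi,\ell)\otimes U(\pi',\ell')$, which is ergodic because $h$ intertwines the two factors. Undoing the compact permutations $S_\pi,S_{\pi'}$ coordinate by coordinate, I would like to view $\lambda$ as a $(\kappa\sqcup\kappa')$-fold self-joining of $(\Omega,\mu,\Phi)$.

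\textbf{Step 2 (apply minimal self-joinings).} This is the main obstacle. Minimal self-joinings is stated for $\Phi$-diagonal invariance, whereas $\lambda$ is only invariant under the twisted action $U(\pi,\ell)\otimes U(\pi',\ell')$. I would first use compactness of $\pi,\pi'$: all cycles are finite, so on each cycle of length $p$ the power $U^p$ acts coordinatewise as a power of $\Phi$. Hence $\lambda$ is invariant under a product of powers of $\Phi$. Then I would use an ergodic decomposition together with the standard fact that minimal self-joinings passes to powers, accepting this if needed.

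The intended conclusion is that the index set $\kappa\sqcup\kappa'$ splits into blocks. On each block the coordinates are off-diagonal, i.e.\ of the form $\graph(\mu,\ell)$, and distinct blocks are independent.

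\textbf{Step 3 (block combinatorics).} Three observations constrain the blocks.
\begin{enumerate}[(a)]
	\item A block cannot contain two indices from $\kappa$, since the $\kappa$-marginal of $\lambda$ is the product state.
	\item The same holds for $\kappa'$, since the $\kappa'$-marginal is $h\circ\otimes^{|\kappa|}\mu=\otimes^{|\kappa'|}\mu$.
	\item A block cannot consist of a single $\kappa'$-index. Such a coordinate would be independent of the $\kappa$-coordinates yet deterministic in them, by Assumption \ref{A:det_base} and Lemma \ref{lem:disinteg_invrsn}; it would therefore be constant, contradicting non-triviality of $\mu$.
\end{enumerate}
So every $\kappa'$-index is paired with a unique $\kappa$-index. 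This defines $\alpha$ and an offset $\hat l$ with $\pi_{\alpha(n)}\circ h=\Phi^{\hat l(n)}\circ\pi_n$ holding $\lambda$-almost surely. Taking the copied product over all $n$ gives $h=S_\alpha\bigotimes_n\Phi^{\hat l(n)}$ almost surely, which is (i).

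\textbf{Step 4 (the identities in (ii)).} Substitute this form into $hU(\pi,\ell)=U(\pi',\ell')h$ and compare coordinates. The permutation parts give $\alpha\pi\alpha^{-1}=\pi'$. The exponents of $\Phi$ in matching coordinates give
\[ \hat l(\pi(t))+\ell(t)=\ell'(\alpha(t))+\hat l(t). \]
Equality of powers of $\Phi$ forces equality of exponents, because $\Phi^s=\Phi^r$ almost surely with $s\ne r$ would give a periodic factor, which is excluded by minimal self-joinings. This yields (ii).
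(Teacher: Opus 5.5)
The paper gives no proof of this lemma; it is only cited from Rudolph. So your proposal has to stand on its own. Steps 1, 3 and 4 are reasonable, but Step 2 carries the whole content of the statement, and it has a genuine gap that the tool you propose cannot close.

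\textbf{The gap: mixed exponents.} Compactness only gives you the following. Take a finite union $F$ of cycles of $\pi$ and $\pi'$, and let $N$ be the least common multiple of their lengths. Then the $F$-marginal of $\lambda$ is invariant under $\bigotimes_{i\in F}\Phi^{m_i}$, where $m_i=(N/p_c)\sum_{n\in c}\ell(n)$ (or the same with $\ell'$) for the cycle $c\ni i$ of length $p_c$.
\begin{itemize}
\item These exponents differ from coordinate to coordinate in general. Already $\pi=\mathrm{id}$, $\ell=(1,2)$ gives $U(\pi,\ell)=\Phi\otimes\Phi^{2}$.
\item Taking further powers never equalises them, because it preserves the ratios $m_i:m_j$.
\item Minimal self-joinings of $\Phi$, or of any single power $\Phi^m$, describes only measures invariant under $\Phi^m\otimes\cdots\otimes\Phi^m$. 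Combined with an ergodic decomposition, it tells you nothing about how coordinates carrying different exponents are joined.
\end{itemize}

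\textbf{What is missing.} You need a separate lemma of the form: every ergodic joining of $(\Omega,\mu,\Phi^{m_1}),\dots,(\Omega,\mu,\Phi^{m_k})$ is a product of off-diagonal blocks, with each block contained in a set of indices sharing a common exponent.

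This is not routine. Consider the one-coordinate case $\kappa=\kappa'=\{0\}$, $\pi=\pi'=\mathrm{id}$, $\ell\equiv a$, $\ell'\equiv b$. Here Step 2 must show that an isomorphism $h$ with $h\Phi^a=\Phi^b h$ is a power of $\Phi$.
\begin{itemize}
\item For $|a|\neq|b|$ this can be done. The trivial centralizer of $\Phi^b$ gives $h\Phi h^{-1}=\Phi^k$ with $ka=b$, and the absence of roots of $\Phi$ forces $k=\pm1$.
\item If negative exponents are allowed, the case $b=-a$ is exactly the assertion $\Phi\not\cong\Phi^{-1}$. The off-diagonal description of $\Phi^{\otimes k}$-invariant self-joinings is silent on this: a conjugacy between $\Phi$ and $\Phi^{-1}$ would merely permute that family, sending the graph of $\Phi^j$ to that of $\Phi^{-j}$.
\end{itemize}
So Step 2 needs a genuinely new ingredient of this kind, made uniform over the cycle structure, or a restriction on the signs of $\ell,\ell'$. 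Only then do the block combinatorics of Step 3 and the exponent comparison of Step 4 apply.

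\textbf{Minor points.}
\begin{itemize}
\item In Step 1, what is ergodic is $\lambda$ under $U(\pi,\ell)\otimes U(\pi',\ell')$. The product map itself need not be ergodic.
\item A common power exists only on finite unions of cycles. The passage from these finite marginals, via their ergodic components (which $U\otimes U'$ permutes), to a block description of $\lambda$ on all of $\kappa\sqcup\kappa'$ also has to be argued.
\end{itemize}
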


\begin{lemma}
	\cite[Cor 3.2]{Rudolph1979join} If an $\sps$ object has minimal self joinings, then it has trivial product commutants.
\end{lemma}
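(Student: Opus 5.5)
We derive the statement from the preceding lemma (Rudolph's Thm 3.1). A \emph{product commutant} of $(\Omega,\mu,\Phi)$ is an automorphism $h$ of $\Omega^{\kappa}$, for a countable ordinal $\kappa$, that commutes with $\Phi^{\otimes\kappa}$ and preserves $\mu^{\otimes\kappa}$. \emph{Trivial} product commutants means every such $h$ has the form $S_\alpha \circ \bigotimes_{n\in\kappa}\Phi^{\hat\ell(n)}$: a coordinate permutation composed with coordinatewise powers of $\Phi$. So the plan is to recognise the commutation relation $h\,\Phi^{\otimes\kappa}\,h^{-1}=\Phi^{\otimes\kappa}$ as a special instance of the conjugacy hypothesis $hU(\pi,\ell)h^{-1}=U(\pi',\ell')$. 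We do this by taking $\kappa'=\kappa$, $\pi=\pi'=\Id$ (trivially compact, since all its cycles are singletons) and $\ell=\ell'\equiv 1$, so that $U(\Id,1)=\Phi^{\otimes\kappa}$.

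\textbf{Step 1: ergodicity of the product.} The lemma requires $U(\pi,\ell)$ to be ergodic, here $\Phi^{\otimes\kappa}$ with state $\mu^{\otimes\kappa}$. Minimal self-joinings forces $(\Omega,\mu,\Phi)$ to be weakly mixing: a non-trivial eigenfunction would produce a non-product self-joining (via the factor join $\join(\mu\mid h)$ over the rotation factor, Lemma \ref{lem:disinteg_join}) that is neither product nor off-diagonal. Weak mixing then gives ergodicity of all countable products. Categorically, we would argue via Theorem \ref{thm:9dj8zl}: an invariant deterministic observable on $\Omega^{\kappa}$ yields, by Lemma \ref{lem:subsig:1} and Theorem \ref{thm:join_project}(iii), a non-trivial common factor with an identity system. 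That in turn gives a non-product joining with an identity system, contradicting minimality.

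\textbf{Step 2: apply Rudolph's Thm 3.1.} With Step 1 in hand, the lemma gives $h=S_\alpha\bigotimes_{n}\Phi^{\hat\ell(n)}$ with $\alpha\,\Id\,\alpha^{-1}=\Id$ and $1=1+(\hat\ell(t)-\hat\ell(t))$. Both constraints are vacuous, so the only content is the form of $h$ itself. We also need $\alpha$ to be a bijection of $\kappa$; this follows because $h$ is invertible, and applying the lemma to $h^{-1}$ yields an inverse map $\alpha'$.

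\textbf{Step 3: the reverse inclusion.} Every map of the form $S_\alpha\bigotimes_n\Phi^{\hat\ell(n)}$ commutes with $\Phi^{\otimes\kappa}$, since powers of $\Phi$ commute with each other and permutations commute with a diagonal product. Hence the commutant is exactly this group, which is what ``trivial'' means.

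The main obstacle is Step 1, because weak mixing has not been defined in this Markov-categorical setting. The fallback is to restrict to $\calY=\StochCat$, where ergodicity of countable products of weakly mixing systems is classical, and to invoke Rudolph's original argument there.
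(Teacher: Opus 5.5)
Your reduction is the intended one. The paper gives no proof of its own; it only cites Rudolph's Cor.~3.2, placed directly after his Thm.~3.1. Specialising that lemma to $\kappa'=\kappa$, $\pi=\pi'=\Id$, $\ell=\ell'\equiv 1$ is how the corollary follows. Taking $\pi'=\pi$, $\ell'=\ell$ instead handles the commutant of any ergodic $U(\pi,\ell)$, in case that is what the paper's empty definition of product commutants was meant to cover. Step 3 is fine, and Step 2 is fine once Step 1 holds.

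\textbf{The gap is Step 1.} This is the only place where minimal self-joinings enters beyond the cited lemma, and neither of your arguments for it works.

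\emph{The categorical chain is circular.} A non-constant invariant deterministic observable on $\Omega^{\kappa}$ amounts to a non-product joining of $(\Omega^{\kappa},\mu^{\otimes\kappa},\Phi^{\otimes\kappa})$ with an identity system. That contradicts nothing: minimal self-joinings constrains self-joinings of $(\Omega,\mu,\Phi)$, not joinings of its powers with identity systems. Theorem~\ref{thm:9dj8zl} is also not actually proved in the paper.

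\emph{The eigenfunction argument fails as stated.} The factor join $\join(\mu\mid h)$ over a rotation factor need not be ergodic, whereas minimal self-joinings in Rudolph's sense only restricts ergodic self-joinings. Moreover, if the system is its own rotation factor, $\join(\mu\mid h)$ is the diagonal, i.e.\ the off-diagonal joining at $T=0$, so no contradiction appears.

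\emph{The fallback does not close the gap.} Restricting to $\StochCat$ only supplies the easy half (weak mixing implies ergodicity of countable products). It does not show that minimal self-joinings implies weak mixing.

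\textbf{The missing argument.} It also needs a hypothesis you never state.
\begin{enumerate}
\item Suppose $\mu\otimes\mu$ is not $\Phi\otimes\Phi$-ergodic. Because $\mu$ is ergodic, almost every ergodic component of $\mu\otimes\mu$ has both marginals equal to $\mu$.
\item Each such component is therefore an ergodic $2$-fold self-joining different from $\mu\otimes\mu$, hence equal to some $\graph(\Phi^{n})\circ\mu$.
\item There are only countably many of these, so $\mu\otimes\mu$ charges some graph $\{(x,\Phi^{n}x)\}$. By Fubini, $\mu$ has an atom.
\end{enumerate}
So you need $\mu$ non-atomic, and this is not cosmetic. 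Take a single periodic orbit of length $p\geq 3$ with uniform measure. It has only off-diagonal ergodic self-joinings of every order, yet $\Phi\otimes\Phi$ is not ergodic. It commutes with the bijection that fixes the diagonal orbit pointwise, shifts the orbit of $(0,1)$ by one step, and fixes the remaining orbits. That bijection is not of the form $S_\alpha\circ(\Phi^{a}\otimes\Phi^{b})$.

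Non-atomicity, together with aperiodicity, is also what makes your bijectivity argument for $\alpha$ in Step 2 go through. It ensures that $S_\beta\circ\bigotimes_n\Phi^{k(n)}=\Id$ almost surely forces $\beta=\Id$.
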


\section{Conclusions} \label{sec:conclus}

\paragraph{Rank-1 systems} Rank 1 measure preserving transformations \cite[e.g.]{Ferenczi1997rank, Nadkarni1998book, Nadkarni2011book}.

King's weak closure theorem for rank-1 systems \cite{king1990map} using coding arguments. Ryzhikov \cite{Ryzhikov1993join} translated this into the language of second-order self-joins
\cite{rokhlin1949endo} \cite{Weiss1998mult} \cite{Glasner1994prop} 

\paragraph{Minimal self-joinings} fill 

Rudolph constructed a system with 2-fold minimal self-joining \cite{Rudolph1979join} using a special construction of Ornstein \cite{Ornstein1972root}. The example so constructed is a mixing rank-one system. It was shown separately in \cite{King1988join_rank} that all mixing rank-one systems have 2-fold minimal self-joinings. 

The existence of a transformation with minimal self-joinings provides a handy theoretical tool for producing some remarkable examples of counter-intuitive behavior in ergodic theory. Some examples are
\begin{enumerate} [(i)]
	\item a transformation $\Phi$ which has no roots, but whoose second iterate $\Phi^2$ has roots of all order;
	\item two measure-preserving dynamical systems which are weakly isomorphic (each one is a factor of the other) but not isomorphic;
	\item a transformation $\Phi$ with a cubic root but no square root.
\end{enumerate}

\paragraph{Chacon transformations} \cite{Chacon1969mix} \cite{DelJuncoRaheSwanson1980}

\paragraph{Simple systems} \cite{Veech1982} \cite{Thouvenot1995join} \cite{delJuncoRudolph1987self} 

\appendix
\section{Markov categories} \label{sec:app:Markov}
\subsection{Basic tools}

In this section we recall some useful observations from the theory of Markov categories.

\paragraph{1} Deletion of a branch after parallel composition is the same as an individual morphism. More precisely, the any two morphisms $f,g$ as shown on the left satisfy the condition shown on the right :
\begin{equation} \label{eqn:kfad03}
	\forall
	\begin{tikzcd} A \arrow[d, "f"] \\ X \end{tikzcd} ,
	,
	\begin{tikzcd} A \arrow[d, "G"] \\ Y \end{tikzcd}
	\imply
	\begin{tikzcd}
		A \arrow[d, dashed, "f"] \arrow[rr, "\copyF_A^{(2)}"] && A \otimes A \arrow[d, "f\otimes g"] \\
		X && X\otimes Y \arrow[ll, "X\otimes \deleteF_Y"]
	\end{tikzcd}
\end{equation}

\paragraph{2} For any two morphisms $f,g : \Omega \to Y$ between the same pair of objects, equality of these morphisms imply almost sure equality w.r.t. any state $p:1\to \Omega$ on $\Omega$, which in turn implies that $f\circ p$ equals $g\circ p$ . In other words :
\begin{equation}
	\forall
	\begin{tikzcd} 1 \arrow[d, "p"] \\ \Omega \end{tikzcd} ,\, 
	\begin{tikzcd} \Omega \arrow[d, "f"] \\ Y \end{tikzcd} ,\, 
	\begin{tikzcd} \Omega \arrow[d, "g"] \\ Y \end{tikzcd} \;:\;
	\begin{tikzcd} \Omega \arrow[d, "f"] \\ Y \end{tikzcd} = 
	\begin{tikzcd} \Omega \arrow[d, "g"] \\ Y \end{tikzcd} \;\Rightarrow\; 
	\begin{tikzcd} 1 \arrow[d, "p"] \\ \Omega \arrow[d, "\graph(f)"] \\ Y \end{tikzcd} = 
	\begin{tikzcd} 1 \arrow[d, "p"] \\ \Omega \arrow[d, "\graph{g}"] \\ Y \end{tikzcd} \;\Rightarrow\;   
	\begin{tikzcd} 1 \arrow[d, "p"] \\ \Omega \arrow[d, "f"] \\ Y \end{tikzcd} = 
	\begin{tikzcd} 1 \arrow[d, "p"] \\ \Omega \arrow[d, "g"] \\ Y \end{tikzcd} 
\end{equation}

\paragraph{3.} Disintegration of joint states : Consider the diagram on the left presenting a joint state $\lambda$ for two objects $X_1, X_2$ :
\[\begin{tikzcd}
	& (X_1 \otimes X_2, \lambda_{1,2} ) \arrow[dl, "\pi_1"'] \arrow[dr, "\pi_2"] \\
	(X_1, \mu_1) & & (X_2, \mu_2)
\end{tikzcd} \imply 
\exists \begin{tikzcd} X_2 \arrow[d, "\left( \pi^{-1}_2 | \lambda \right)"] \\ X_1 \otimes X_2 \end{tikzcd} 
\]
The marginals of this joint state are respectively $\lambda_1, \lambda_2$. Such an arrangement means that we can construct the disintegration of $\lambda$ along the projection $\pi_2$, as indicated on the right. Applying the disintegration rules \eqref{eqn:def:disinteg:3} and \eqref{eqn:disinteg_invrsn} gives 
\[ \graph^T(\pi_2^{-1}|\lambda) \circ \mu_2 = \graph(\pi_2) \circ \lambda .\]
and
\[ \pi_2\circ ( \pi_2^{-1}| \lambda ) = \Id_{X_2}, \quad \mu_2-\mbox{almost surely} . \]
The first identity is an equality of two morphisms from $1$ to $X_1 \otimes X_2 \otimes X_2$. Thus their marginals over the first coordinate must be equal. So have
\[\begin{split}
	\mathbb{E}( X_1 | X_2, \lambda ) \circ \mu_2
	&= \pi^{(X_1 \otimes X_2 \otimes X_2)}_{1} \circ \graph^T(\pi_2^{-1}|\lambda) \circ \mu_2 \\
	&= \pi^{(X_1 \otimes X_2 \otimes X_2)}_{1} \circ \graph(\pi_2) \circ \lambda \\
	&= \pi^{(X_1 \otimes X_2)}_{1} \circ \pi^{(X_1 \otimes X_2 \otimes X_2)}_{1,2} \circ \graph(\pi_2) \circ \lambda \\
	&= \pi^{(X_1 \otimes X_2)}_{1} \circ \Id_{X_1 \otimes X_2} \circ \lambda = \pi^{(X_1 \otimes X_2)}_{1} \circ \lambda \\
	&= \mu_1
\end{split}\]
In conclusion, we have proved : 
\begin{equation} \label{eqn:fsoh0}
	\mathbb{E}( X_1 | X_2, \lambda ) \circ \mu_2 = \pi_1 \circ \paran{ \pi_2^{-1} | \lambda } \circ \mu_2 = \mu_1
\end{equation}
%

\subsection{Bayesian inversion} 

\begin{lemma} [Invariant and Bayesian inversion] \label{lem:inv_Bayes}
	\cite[Lem 5.2]{FritzGondaPerrone2021finetti} Consider two morphisms shown below on the left
	\begin{equation} \label{eqn:inv_Bayes}
		\forall \begin{tikzcd} 1 \arrow[d, "p"] \\ \Omega \arrow[d, "t"] \\ \Omega \end{tikzcd}
		\;:\;
		\begin{tikzcd} 1 \arrow[d, "p"'] \arrow[dr, "p"] \\ \Omega \arrow[r, "t"] & \Omega \end{tikzcd} 
		\quad:\quad
		\braces{\begin{array}{c} f \circ t^\dagger \\ = p -\mbox{a.s.} \\ f \end{array}}
		\imply
		\braces{\begin{array}{c} f \circ t \\ = p -\mbox{a.s.} \\ f \end{array}}
	\end{equation}
	such that the invariance condition shown in the center holds. Let $t^\dagger$ denote the Bayesian inverse of $t$. Then for every $p$-a.s. deterministic morphism $f:\Omega \to X$ the implication shown on the right above holds.
\end{lemma}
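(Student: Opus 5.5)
The plan is to translate the invariance $t\circ p=p$ into an identity between two joint states on $\Omega\otimes\Omega$. We then push that identity forward through $f$, and use $p$-a.s. determinism of $f$ to remove the extra randomness coming from $t^\dagger$.

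\emph{Step 1 (joint states from the Bayesian inverse).} By definition of the Bayesian inverse of $t$ with respect to $p$,
\[ (\Omega\otimes t)\circ\copyF_\Omega\circ p = \swap\circ(\Omega\otimes t^\dagger)\circ\copyF_\Omega\circ(t\circ p). \]
Invariance gives $t\circ p=p$, so the right-hand side becomes $\swap\circ(\Omega\otimes t^\dagger)\circ\copyF_\Omega\circ p$. In classical language, the pair $(x,tx)$ with $x\sim p$ has the same law as $(t^\dagger y,y)$ with $y\sim p$.

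\emph{Step 2 (apply $f$ on the second leg).} Post-composing with $\Omega\otimes f$ gives
\[ \graph(f\circ t)\circ p = (\Omega\otimes f)\circ\swap\circ(\Omega\otimes t^\dagger)\circ\copyF_\Omega\circ p. \]
The right-hand side is the law of $(t^\dagger y, f y)$. The target statement $f\circ t=f$ $p$-a.s. means $\graph(f\circ t)\circ p=\graph(f)\circ p$, i.e. the law of $(x,fx)$. It therefore suffices to show that $(t^\dagger y, fy)$ has the same law as $(t^\dagger y, f t^\dagger y)$. Indeed, applying Step 1 once more in the form $(t^\dagger y, y)\sim(x,tx)$, and then post-composing with $\Omega\otimes \deleteF$ after $f$ on the first leg, turns the latter law into that of $(x,fx)$. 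Concretely, I would work with the triple state
\[ (t^\dagger\otimes \Omega\otimes\Omega)\circ\copyF^{(3)}_\Omega\circ p \;:\; 1\to\Omega^{\otimes 3}, \]
the law of $(t^\dagger y,y,y)$. I would then apply $f$ to various legs, using the basic tools of Appendix \ref{sec:app:Markov}, in particular \eqref{eqn:kfad03}, to marginalise.

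\emph{Step 3 (the key step: replacing $fy$ by $f t^\dagger y$ jointly with $t^\dagger y$).} The hypothesis $f\circ t^\dagger = f$ $p$-a.s. only controls the pair $(y, f t^\dagger y)$. It does not directly control the pair $(t^\dagger y, fy)$, because $t^\dagger$ injects fresh randomness. This is the main obstacle.

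My approach is first to note that $f\circ t^\dagger$ is $p$-a.s. deterministic, since it is $p$-a.s. equal to the $p$-a.s. deterministic $f$. Next I would show that the joint law of $(f t^\dagger y, fy)$ equals $\copyF\circ f\circ p$, i.e. it is concentrated on the diagonal. This uses the a.s. equality together with determinism of $f$: copying $f y$ agrees with $(f\otimes f)\copyF$, and one copy can then be replaced by $f t^\dagger$. Finally I would upgrade ``diagonal joint law of $(g(z), h(z))$ with $g$ a.s. deterministic'' to ``$g=h$ a.s. jointly with $z$'', taking $z=t^\dagger y$, $g=f$ and $h$ the kernel $z\mapsto fy$ obtained via the disintegration supplied by Assumption \ref{A:disinteg}.

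This last upgrade is where I expect to need an extra axiom: positivity, which in Fritz's framework follows from causality, Assumption \ref{A:causal}. That axiom says that if $g\circ k$ is deterministic then $\copyF\circ g\circ k$ factors as $(g\otimes g\circ k)\circ\copyF$. This is exactly what lets the deterministic value $f(t^\dagger y)$ be copied onto the $t^\dagger y$ leg. I would try this route first, and fall back on the almost-sure characterisation \eqref{eqn:def:almost_sure} if a more direct diagram chase works. Once Step 3 is done, combining it with Steps 1 and 2 yields $\graph(f\circ t)\circ p=\graph(f)\circ p$, which is the claim.
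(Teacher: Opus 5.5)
Your Steps 1 through 3(b) are essentially the paper's argument. The paper uses the Bayesian-inverse identity together with $t\circ p=p$ to replace $\graph^T(t)\circ p$ by $\graph(t^\dagger)\circ p$. It then uses the hypothesis $f\circ t^\dagger=f$ $p$-a.s.\ to reach $(f\otimes f)\circ\graph^T(t)\circ p=(f\otimes f)\circ\copyF_\Omega\circ p$. With a.s.\ determinism of $f$, this reads $(f\circ t\otimes f)\circ\copyF_\Omega\circ p=\copyF_X\circ f\circ p$, which is your ``diagonal law of $(f t^\dagger y, fy)$'' transported back along Step 1. You are right that this is not yet $\graph(f\circ t)\circ p=\graph(f)\circ p$. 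The paper's displayed chain stops at that identity, so the upgrade you isolate in Step 3 is the one step left to do. In your proposal it is still only a plan, not an argument.

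As you sketch it, that step does not go through.
\begin{itemize}
\item Positivity, correctly stated, says: if $g\circ k$ is deterministic then $(\Id\otimes g)\circ\copyF\circ k=(k\otimes g\circ k)\circ\copyF$. Your version does not typecheck. More importantly, positivity concerns exactly deterministic composites, while here $f$ and $f\circ t^\dagger$ are only $p$-a.s.\ deterministic, so it cannot be invoked directly.
\item The disintegration detour is unnecessary. Step 1 already gives the law of $(t^\dagger y,y)$ as that of $(x,tx)$, so your kernel $h$ can be taken to be $f\circ t$ on $z\sim p$.
\end{itemize}
What you actually need is: $(f\otimes f\circ t)\circ\copyF_\Omega\circ p=\copyF_X\circ f\circ p$ implies $f\circ t=f$ $p$-a.s. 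This follows from causality (Assumption \ref{A:causal}) itself, as follows.
\begin{itemize}
\item Set $g:=(f\otimes f\circ t)\circ\copyF_\Omega:\Omega\to X\otimes X$, so that $g\circ p=\copyF_X\circ f\circ p$.
\item Copy maps are deterministic, so for $i=1,2$ both composites $(\Id_{X\otimes X}\otimes\pi_i)\circ\copyF_{X\otimes X}\circ g\circ p$ equal $\copyF^{(3)}_X\circ f\circ p$. Hence the two projections $\pi_1,\pi_2:X\otimes X\to X$ agree $g\circ p$-almost surely.
\item Causality says this a.s.\ equality persists when the input $\Omega$ of $g$ is also copied out.
\item Delete the middle $X\otimes X$ factor and use $\pi_1\circ g=f\circ t$ and $\pi_2\circ g=f$. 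This gives $\graph(f\circ t)\circ p=\graph(f)\circ p$.
\end{itemize}
This closes the argument without any disintegration, and also settles the final step the paper's computation leaves implicit.
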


\[\begin{split}
	(f \otimes f) \circ \graph^T(t) \circ p &= (f \otimes f) \circ \graph(t^\dagger) \circ p , \quad \mbox{by \eqref{eqn:def:disinteg:3}}, \\
	&= (f \otimes f) \circ (\Id_{\Omega} \otimes f^{\dagger}) \circ \copyF_{\Omega} \circ p \\
	&= \SqBrack{ (f \circ \Id_{X}) \otimes (f\circ f^{\dagger}) } \circ \copyF_{\Omega} \circ p \\
	&= \SqBrack{ (f \circ \Id_{X}) \otimes f } \circ \copyF_{\Omega} \circ p , \quad \mbox{by assumptio}, \\
	&= (f\otimes f) \circ \copyF_{\Omega} \circ p.
\end{split}\]

\begin{lemma} \label{lem:fx93}
	The disintegration of any deterministic isomorphism is the inverse of that isomorphism.
\end{lemma}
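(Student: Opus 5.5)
To prove Lemma \ref{lem:fx93}, I would take a deterministic isomorphism $f : X \to Y$ with inverse $f^{-1}$, together with a state $p : 1 \to X$. The claim is that the disintegration $(f^{-1}|p)$ from \eqref{eqn:def:disinteg:2} may be taken to be $f^{-1}$, and that any disintegration agrees with it $fp$-almost surely. The plan is to check that $f^{-1}$ itself satisfies the defining commutation \eqref{eqn:def:disinteg:2}, i.e.
\[ \SqBrack{ f^{-1} \otimes Y } \circ \copyF_Y \circ f \circ p \;=\; \graph(f) \circ p , \]
and then invoke the uniqueness part of Lemma \ref{lem:disinteg_invrsn}.

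The first step is to note that $f^{-1}$ is deterministic. Applying $f^{-1}\otimes f^{-1}$ to both sides of the determinism square \eqref{eqn:def:deterministic} for $f$, and precomposing with $f^{-1}$, gives $\copyF_X \circ f^{-1} = (f^{-1}\otimes f^{-1})\circ \copyF_Y$. Next, using determinism of $f$, rewrite
\[ \copyF_Y \circ f = (f\otimes f)\circ \copyF_X . \]
Then
\[ (f^{-1}\otimes Y)\circ \copyF_Y \circ f = (f^{-1}\otimes Y)\circ (f\otimes f)\circ \copyF_X = (X\otimes f)\circ \copyF_X = \graph(f). \]
This identity holds before precomposing with $p$, so it holds in particular after composing with $p$. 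Hence $f^{-1}$ is a disintegration of $f$ along $p$.

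For uniqueness I would cite Lemma \ref{lem:disinteg_invrsn}, which says the disintegration along a deterministic map is the unique morphism with $f\circ (f^{-1}|p) = \Id_Y$ $fp$-almost surely. Since $f \circ f^{-1} = \Id_Y$ holds on the nose, the second observation in the appendix (exact equality implies almost sure equality) shows that $f^{-1}$ meets this characterization. Any other disintegration $s$ therefore satisfies $s = f^{-1}$ $fp$-a.s.

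The main obstacle is interpretive rather than computational. Disintegrations are only defined up to $fp$-almost sure equality, so "is the inverse" must be read as "$f^{-1}$ is a disintegration, and every disintegration equals $f^{-1}$ almost surely." If Lemma \ref{lem:disinteg_invrsn} were unavailable, I would prove the a.s. uniqueness directly. Given a disintegration $s$, composing the equation $(s\otimes Y)\circ\copyF_Y\circ fp = \graph(f)\circ p$ with $f\otimes Y$ and using determinism of $f$ would give $\graph(f\circ s)\circ fp = \graph(\Id_Y)\circ fp$. Then applying $f^{-1}$ on the first factor would yield $\graph(s)\circ fp = \graph(f^{-1})\circ fp$, which is exactly the a.s. equality required.
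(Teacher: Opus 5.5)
The paper never proves Lemma~\ref{lem:fx93}. It states it and then invokes it in the computation following \eqref{eqn:oijd9}, in the form $\graph(\psi^{-1}|p)\circ p=\graph(\psi^{-1})\circ p$ for a $\psi$-invariant state $p$. That is an almost-sure statement, which is exactly how you read it, so your argument supplies a missing proof rather than competing with one. Its core is correct: $(f^{-1}\otimes Y)\circ\copyF_Y\circ f=(f^{-1}\otimes Y)\circ(f\otimes f)\circ\copyF_X=\graph(f)$ holds as an identity of morphisms. Hence $f^{-1}$ is a disintegration of $f$ along every state $p$ at once. Only determinism of $f$ is used here; your preliminary check that $f^{-1}$ is deterministic is never needed.

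The one step you should not keep is the appeal to Lemma~\ref{lem:disinteg_invrsn} for uniqueness. That lemma is unproved in the paper, and its uniqueness clause is false for non-invertible deterministic maps. Take $f=\deleteF_X:X\to 1$, which is deterministic: every state $s$ of $X$ satisfies $f\circ s=\Id_1$, yet the defining equation \eqref{eqn:def:disinteg:2} forces $(f^{-1}|p)=p$. The conclusion you draw from the lemma is true when $f$ is invertible, but you cannot obtain it by citing that lemma. Your fallback argument is valid and should be the primary one, and it can be shortened. For any $f$, two disintegrations $s,s'$ along $p$ satisfy $(s\otimes Y)\circ\copyF_Y\circ fp=\graph(f)\circ p=(s'\otimes Y)\circ\copyF_Y\circ fp$. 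Composing with the swap and using $\text{swap}\circ\copyF_Y=\copyF_Y$ turns this into $\graph(s)\circ fp=\graph(s')\circ fp$, i.e.\ $s=s'$ $fp$-almost surely. Taking $s'=f^{-1}$ finishes the proof without the detour through $f\otimes Y$ and $f^{-1}\otimes Y$, whose composite is the identity anyway. Your fallback actually produces transposed identities $\graph^T(\cdot)\circ fp$ rather than the $\graph(\cdot)\circ fp$ form you wrote; the same swap converts one into the other, so this is only notational.
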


\begin{equation} \label{eqn:oijd9}
	\forall \begin{tikzcd} 1 \arrow[d, "p"] \\ \Omega \end{tikzcd} ,\,
	\forall \begin{array}{c} \mbox{invetible}, \\ \mbox{deterministic} \end{array} 
	\begin{tikzcd} \Omega \arrow[d, "\psi"] \\ \Omega \end{tikzcd}
	\,:\,
	\begin{tikzcd} 1 \arrow[d, "p"] \arrow[dr, "p"] \\ \Omega \arrow[r, "\psi"] & \Omega \end{tikzcd}
	\imply
	\begin{tikzcd}
		&& \Omega \otimes \Omega \\
		\Omega \otimes \Omega \arrow[urr, "\cong"] \arrow[drr, "\cong"'] & 1 \arrow[l, dotted] \arrow[r, "p"] & \Omega \arrow[r, "\copyF_{\Omega}"] & \Omega \otimes \Omega \arrow[ul, "(f\circ \psi) \otimes \Omega"'] \arrow[dl, "f\otimes \psi^{-1}"] \\
		& & \Omega \otimes \Omega
	\end{tikzcd}
\end{equation}
To see why note that
\[\begin{split}
	\SqBrack{ (f\circ \psi) \otimes \Omega } \circ \copyF_{\Omega} \circ p &= (f\otimes \Omega) \circ \graph^T(\psi) \circ p \\
	&= (f\otimes \Omega) \circ \graph(\psi^{-1}|p) \circ \psi \circ p , \quad \mbox{by \eqref{eqn:def:disinteg:3}}, \\
	&= (f\otimes \Omega) \circ \graph(\psi^{-1}|p) \circ p , \quad \mbox{by assumption}  \\
	&= (f\otimes \Omega) \circ \graph(\psi^{-1}) \circ p, \quad \mbox{by Lemma \ref{lem:fx93}} \\
	&= (f\otimes \psi^{-1}) \circ p .
\end{split}\]
%

\subsection{Conditional independence} 

In classical probability theory, two measurements on a probability space are considered to be conditional independent if their joint probability distribution is the product of their individual marginal distributions. This notion extends easily to categorical probability.

\begin{definition} [Conditional independence]
	\cite[Def 2.10]{FritzRischel2020inf}
	Given a morphism $p : A \to X_1 \otimes \dots \otimes X_n$ , $p$ is said to display the conditional independence $X_1 \perp \dots \perp X_n \parallel A$ if the following equality holds
	\[ p = \otimes_{i=1}^{n} \paran{\pi_i \circ p} .\]
\end{definition}

Note that conditional independence does not depend on the order of the marginals. This is because if the conditional independence $X_1 \perp \dots \perp X_n \parallel A$ holds, then composing $p$ with any permutation $\sigma$ of the tensor factors $X_1, \dots, X_n$ gives a morphism $A \to X_{\sigma(1)} \otimes \dots \otimes X_{\sigma(n)}$ which displays the conditional independence $X_{\sigma(1)} \perp \dots \perp X_{\sigma(n)} \parallel A$. Therefore if $\bigotimes_{j \in F} X_j$ is a finite product without any particular order on the factors, then there is no ambiguity in writing $\perp_{j \in F} X_j \parallel A$.

\begin{lemma} [Abstract Hewitt–Savage zero–one law] \label{lem:HewSav}
	\cite[Thm 5.4]{FritzRischel2020inf}
	Suppose that $\calC$ is a causal Markov category (Definition 2.7). Let $X_J$ be a Kolmogorov power of an object $X \in \calC$ with respect to an infinite set $J$. Suppose that morphisms $p : A \to X_J$ and deterministic $s : X_J \to T$ satisfy the following:$p$ displays the conditional independence $\perp_{i \in J} X_i \parallel A$.For every finite permutation $\sigma : J \to J$, we have $\hat{\sigma}p = p$ and $s\hat{\sigma} = s$.Then the composite $sp : A \to T$ is deterministic.
\end{lemma}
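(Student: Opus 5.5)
We would argue as in the measure-theoretic proof: show that the deterministic "statistic" $T$ carries no information about any finite block of coordinates, conclude by the Kolmogorov-power universal property that $T$ is independent of the whole sequence, and then use that $T$ is a function of the sequence to deduce that $T$ is independent of itself, i.e.\ that $sp$ is deterministic. Concretely, set $q := (\Id_{X_J}\otimes s)\circ \copyF_{X_J}\circ p : A \to X_J\otimes T$. Since $s$ is deterministic, $(s\otimes s)\circ\copyF_{X_J} = \copyF_T\circ s$. Hence determinism of $sp$ is equivalent to the statement that $q$ displays $X_J \perp T \parallel A$: applying $s\otimes \Id_T$ to such a factorization gives $\copyF_T\circ sp = (sp)\otimes(sp)$ after copying $A$.

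The first step is to reduce $X_J\perp T\parallel A$ to finitely many coordinates. The marginals $q_F := (\pi_F\otimes \Id_T)\circ q : A\to X_F\otimes T$, over finite $F\subset J$, form a compatible family. By the defining property of the Kolmogorov power $X_J$ (the cofiltered-limit universal property, as in Definition \ref{def:inf_copy} and Theorem \ref{thm:shift}), two morphisms into $X_J\otimes T$ coincide once all their finite marginals coincide. So it suffices to prove $q_F = (\pi_F p)\otimes (sp)$, after copying $A$, for every finite $F$.

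The second step, which we expect to be the main obstacle, is to prove this finite independence. Fix a finite $F$ and choose a finite $F'\subset J\setminus F$ with $|F'|=|F|$. Let $\sigma$ be the finite permutation exchanging $F$ and $F'$. From $\hat\sigma p=p$ and $s\hat\sigma=s$ we get $(\hat\sigma\otimes\Id_T)\, q = q$. Hence the joint law of $(X_F, X_{F'}, T)$ is symmetric under swapping the two blocks, and by hypothesis $X_F\perp X_{F'}\parallel A$. This symmetry alone is not enough, so we would bring in causality. Using conditionals (Assumption \ref{A:disinteg}), write $T$ given $(X_F, X_{J\setminus F})$. We then want to show that the conditional of $X_F$ given $T$ agrees with the conditional of $X_{F'}$ given $T$ for \emph{every} such disjoint $F'$, and that, as $F'$ ranges through disjoint blocks, the dependence must wash out. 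The tool for the washing out is the Kolmogorov zero-one law \cite{FritzRischel2020inf}, applied to the tail of the blocks $F'_1,F'_2,\dots$: any information about $X_F$ that $T$ carries is simultaneously carried by each $X_{F'_n}$, so it is tail-measurable. By the zero-one law it is then deterministic, which forces $q_F$ to factor as a product. Causality is what lets us transport the almost-sure equalities through these conditionals; making this precise is where we expect most of the work to lie.

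Finally, with $X_J\perp T\parallel A$ in hand, we compose with $s\otimes\Id_T$ and use determinism of $s$ as in the first paragraph. This gives $\copyF_T\circ sp = (sp\otimes sp)\circ\copyF_A$, so $sp$ is deterministic.
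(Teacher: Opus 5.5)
The paper gives no argument for this lemma and only quotes \cite[Thm 5.4]{FritzRischel2020inf}, so your proposal has to stand on its own.

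\textbf{What is sound.} Your outer skeleton is correct. You reduce determinism of $sp$ to $X_J\perp T\parallel A$ for $q$, which works because $s$ is deterministic. You then reduce that to finite $F$ via the limit property of $X_J\otimes T$; note this uses that the Kolmogorov power's limit is preserved by $-\otimes T$, not merely the cofiltered limit of Definition \ref{def:inf_copy}.

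\textbf{Where it fails.} The finite step carries all the content, and it is not proved. The route you sketch would not close it, for two reasons.
\begin{enumerate}
\item You invoke conditionals (Assumption \ref{A:disinteg}), which the statement does not assume. A causal Markov category need not have conditionals.
\item More seriously, the claim that ``any information about $X_F$ that $T$ carries is simultaneously carried by each $X_{F'_n}$, so it is tail-measurable'' has no justification. Exchangeability only tells you that $(X_F,T)$ and $(X_{F'_n},T)$ have the same joint law, so the conditional law of $X_F$ given $T$ equals that of $X_{F'_n}$ given $T$. But that conditional law is a function of $T$, and $T$ is only exchangeable, not tail. Converting exchangeable information into tail information is exactly the content of Hewitt--Savage. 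Classically it needs a law of large numbers for the blocks, or an approximation of $T$ by cylinder functions, and nothing in your toolkit provides either. So the Kolmogorov zero--one law cannot be applied at that point.
\end{enumerate}

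\textbf{The missing idea.} Use the limit property a second time to extend the invariance of $q$ from finite permutations to injections.
\begin{itemize}
\item Since $\hat\sigma$ is deterministic, $\hat\sigma p=p$ and $s\hat\sigma=s$ give $(\hat\sigma\otimes\Id_T)\circ q=q$ for every finite permutation $\sigma$.
\item Let $\iota:J\to J$ be an injection with induced deterministic $\hat\iota:X_J\to X_J$. For each finite $G\subseteq J$ there is a finite permutation $\sigma$ agreeing with $\iota$ on $G$, so $\pi_G\circ\hat\iota=\pi_G\circ\hat\sigma$. Hence $(\hat\iota\otimes\Id_T)\circ q$ and $q$ have the same finite marginals, and the limit property gives $(\hat\iota\otimes\Id_T)\circ q=q$.
\item Take $\iota$ to be a bijection of $J$ onto $J\setminus F$, which exists since $J$ is infinite. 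Then $s':=s\circ\hat\iota$ is deterministic and factors through $\pi_{J\setminus F}$.
\item Composing the previous identity with $s\otimes\Id_T$ gives $(s'\otimes s)\circ\copyF_{X_J}\circ p=\copyF_T\circ s\circ p$. That is, $s'=s$ almost surely, but only relative to the pushforward.
\item Causality, applied with the two projections $T\otimes T\to T$, is exactly what pulls this back to $(\Id_{X_J}\otimes s')\circ\copyF_{X_J}\circ p=(\Id_{X_J}\otimes s)\circ\copyF_{X_J}\circ p$.
\item Then $(\pi_F\otimes s)\circ\copyF_{X_J}\circ p=(\pi_F\otimes s')\circ\copyF_{X_J}\circ p$ factors as a product because $p$ displays $X_F\perp X_{J\setminus F}\parallel A$, as in the Kolmogorov zero--one law. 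Your first and last steps then finish the proof.
\end{itemize}
This needs no conditionals and no limiting argument over blocks. The universal property of $X_J\otimes T$ plays the role of the classical approximation by cylinder functions.
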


\subsection{Proof of Lemma \ref{lem:f4jd93}} \label{sec:proof:f4jd93}

Lemma \ref{lem:f4jd93} is a direct consequence of Lemma \ref{lem:HewSav}. This is due to the fact that the Bernoulli measure is permutation invariant, and the projection morphisms are deterministic. \qed

\section{Lemmas from ergodic theory} \label{sec:app:erg}
\subsection{Proof of Lemma \ref{lem:graph_join:1}} \label{sec:proof:graph_join:1}

We need to show that for every $t\in \Time$, the transformed state $\paran{ \iota \Phi_1^t \otimes \iota \Phi_2^t } \circ \paran{  \graph(h) \circ \mu_1 }$ is again $\graph(h) \circ \mu_1$. This identity is derived as follows :
\[\begin{split}
	& \paran{ \iota \Phi_1^t \otimes \iota \Phi_2^t } \circ \paran{  \graph(h) \circ \mu_1 } = \paran{ \iota \Phi_1^t \otimes \iota \Phi_2^t } \circ \paran{ \iota_{\Omega_1} \otimes h } \circ \copyF_{\iota \Omega_1} \circ \mu_1 \\
	& \quad \quad = \SqBrack{ \paran{\iota \Phi_1^t \circ \iota_{\Omega_1}} \otimes \paran{\iota \Phi_2^t \circ h} } \circ \copyF_{\iota \Omega_1} \circ \mu_1 
	= \SqBrack{ \paran{\iota \Phi_1^t \circ \iota_{\Omega_1}} \otimes \paran{h\circ \iota \Phi_1^t} } \circ \copyF_{\iota \Omega_1} \circ \mu_1  \\
	& \quad \quad = \paran{ \iota \Phi_1^t \otimes h } \circ \paran{\iota_{\Omega_1} \otimes \Phi_1^t} \circ \copyF_{\iota \Omega_1} \circ \mu_1 = \paran{\iota_{\Omega_1} \otimes h } \circ \paran{ \iota \Phi_1^t \otimes \Phi_1^t} \circ \copyF_{\iota \Omega_1} \circ \mu_1 \\
	& \quad \quad = \paran{\iota_{\Omega_1} \otimes h } \circ \copyF_{\iota \Omega_1} \circ \iota \Phi_1^t \circ \mu_1 ,\; \mbox{by determinism of } \Phi_1 , \\
	& \quad \quad = \paran{\iota_{\Omega_1} \otimes h } \circ \copyF_{\iota \Omega_1} \circ \mu_1 ,\; \mbox{since } \mu_1 \mbox{ is invariant under } \Phi_1 , \\
	& \quad \quad = \graph(h) \circ \mu_1 .
\end{split}\]
This completes the proof of Lemma \ref{lem:graph_join:1}. \qed

\subsection{Proof of Theorem \ref{thm:ptMarkov_Markv}} \label{sec:proof:ptMarkov_Markv}

Since Lemma \ref{lem:pcm_cm} already establishes the monoidal properties of the pointed version, one only needs to verify the rules \eqref{eqn:def:MarkovCat:1}--\eqref{eqn:def:MarkovCat:3} of copy and deletion are  satisfied.

\[\begin{tikzcd}
	1 \arrow[r, "x"] \arrow[rrr, "{\copyF_{X,x}}"', dotted, bend right] & X \arrow[rr, "\copyF_X"] &  & X\otimes X
\end{tikzcd}\]
\[\begin{tikzcd}
	&  &  & X \\
	1 \arrow[r, "x"] \arrow[rrr, "{\copyF_{X,x}}"', dotted, bend right] \arrow[rrru, "x", bend left] & X \arrow[rr, "\copyF_X"] \arrow[rru, "\cong"] &  & X\otimes X \arrow[u, "\pi_{i}"']
\end{tikzcd}\]

This completes the proof of Theorem \ref{thm:ptMarkov_Markv}. \qed
\subsection{Proof of Theorem \ref{thm:StPrsrv}} \label{sec:proof:StPrsrv}

We first show that (i) $\Rightarrow$ (ii). So we take a functor $\Phi : \Time \to \Comma{1}{\iota}$. The category $\Time$ has a unique object $\star_{\Time}$, and its image under $\Phi$ is a pointed object $1 \xrightarrow{\mu} \iota\Omega$ for some $\calX$-object $\Omega$. Note that for each morphism $t\in \Time$, the image under $\Phi$ is a morphism $\Phi^t : \Omega \to \Omega$. Its image $\iota \Phi^t : \iota \Omega \to \iota \Omega$ under $\iota$ must preserve $\mu$. Compositionality of the functor $\Phi$ implies
\begin{equation}\label{eqn:thm:StPrsrv}
	\begin{tikzcd} [column sep = large]
		1 \arrow[d, "\mu"] \arrow[dr, "\mu"] \arrow[drr, bend left=10, "\mu"] \\
		\iota \Omega \arrow[r, "\Phi^{t}"] \arrow[rr, bend right=10, "\Phi^{t}"'] & \iota \Omega \arrow[r, "\Phi^{s}"] & \iota \Omega
	\end{tikzcd} , \quad \forall s,t\in \Time.
\end{equation}
The diagram indicates that $\Phi$ acts on the semi-group of endomorphisms of $\iota\Omega$ as well. With this interpretation, note that the state $\mu:1\to \iota\Omega$ acts as a natural transformation. This is precisely the pair described in (ii). The converse also follows from the same diagram. This completes the proof of Theorem \ref{thm:StPrsrv}. \qed
\subsection{Proof of Lemma \ref{lem:subsig:1}} \label{sec:proof:subsig:1}

This completes the proof of Lemma \ref{lem:subsig:1}. \qed
\subsection{Proof of Lemma \ref{lem:sps_mrphsm_invert}} \label{sec:proof:sps_mrphsm_invert}

We need to show that for every $t\in \Time$ :
\[ \graph\paran{ (h^{-1}|\mu_X) \circ \iota \Phi^t_Y } \circ \mu_Y = \graph\paran{ \iota \Phi_X^t \circ (h^{-1}|\mu_X) } \circ \mu_Y . \]
This can be derived as follows :
\[\begin{split}
	& \graph\paran{ (h^{-1}|\mu_X) \circ \iota \Phi^t_Y } \circ \mu_Y = \SqBrack{ \paran{\iota\Omega_Y} \otimes \paran{ (h^{-1}|\mu_X) \circ \iota \Phi^t_Y } } \circ \copyF_Y \circ \mu_Y
\end{split}\]
Using \eqref{eqn:jsp3z} \red{fill}

This completes the proof of Lemma \ref{lem:sps_mrphsm_invert}. \qed 

\section{Pointed categories} \label{sec:app:pointed}

\subsection{Proof of Lemma \ref{lem:pcm_cm}} \label{sec:proof:pcm_cm}
\[\begin{tikzcd}
	& X \arrow[rd] \arrow[rrrd, "X \otimes (y\circ !_X)", bend left] &  &  &  \\
	1 \arrow[ru, "x"] \arrow[rd, "y"'] &  & X \wedge_{x,y} Y \arrow[rr, "\exists !", dotted] \arrow[dd, "!"'] &  & X\otimes Y \arrow[dd] \\
	& Y \arrow[ru] \arrow[rrru, "(x\circ !_Y) \otimes Y)", bend right] &  &  &  \\
	&  & 1 \arrow[rr] &  & (X,x) \otimes (Y,y)
\end{tikzcd}\]
Let $[X, Y]$ denote the internal hom in $\calC$. We construct the pointed internal hom $[(X, x), (Y, y)]_*$ via an equalizer in $\calC$.We have two canonical maps from $[X, Y]$ to $Y$:Evaluation at the basepoint: $[X, Y] \xrightarrow{x^*} [1, Y] \cong Y$The constant map at $y$: $[X, Y] \xrightarrow{!} 1 \xrightarrow{y} Y$Define the underlying object $[(X, x), (Y, y)]_*$ as the equalizer of these two maps.To point this object, note that the constant map $1 \to [X, Y]$ (the adjunct of $1 \times X \xrightarrow{!} 1 \xrightarrow{y} Y$) equalizes the two maps above. By the universal property of the equalizer, this constant map factors uniquely through a map $1 \to [(X, x), (Y, y)]_*$, establishing its basepoint.

We must show a natural bijection of hom-sets in $[1, \calC]$:$$\hom_*(A \wedge X, Y) \cong \hom_*(A, [X, Y]_*)$$Let $f: A \wedge X \to Y$ be a pointed map. By the universal property of the smash product pushout, $f$ is uniquely determined by a map $\tilde{f}: A \times X \to Y$ in $\calC$ whose restriction to the wedge sum $A \vee Y$ factors through the terminal object to the basepoint $y: 1 \to Y$.This restriction imposes two conditions on $\tilde{f}$:$\tilde{f} \circ (id_A \times x) = y \circ !_A$ (It collapses $A \times \{x\}$ to $y$)$\tilde{f} \circ (a \times id_X) = y \circ !_X$ (It collapses $\{a\} \times X$ to $y$)Under the standard adjunction in $\calC$, $\tilde{f}: A \times X \to Y$ corresponds to a unique map $\hat{f}: A \to [X, Y]$. We translate the two conditions:Condition (1) means that for any generalized element of $A$, its image in $[X, Y]$ evaluates at $x$ to $y$. Categorically, $\hat{f}$ factors through the equalizer defining $[X, Y]_*$.Condition (2) means that the basepoint $a: 1 \to A$ is mapped by $\hat{f}$ to the constant map $1 \to [X, Y]$ at $y$. This means exactly that $\hat{f}: A \to [X, Y]_*$ preserves the basepoints.Thus, every pointed map out of the smash product corresponds to a unique pointed map into the internal hom, completing the proof that $[1, \calC]$ is a closed monoidal category.

This completes the proof of Lemma \ref{lem:pcm_cm}. \qed
\subsection{Proof of Lemma \ref{lem:jd9ps3}} \label{sec:proof:jd9ps3}

\paragraph{Claim (i)} Assume $\calC$ has limits of shape $J$. Let $F: J \to \Comma{1}{\calC}$ be a diagram. We want to construct the limit of $F$ in $\Comma{1}{\calC}$. Applying the forgetful functor gives a diagram $\Forget \circ F: J \to \calC$. Since $\calC$ has limits of type $J$, this diagram has a limit $L := \lim(U \circ F)$ in $\calC$. This limit comes equipped with projection morphisms $\pi_j: L \to X_j$ for each $j \in Ob(J)$. The objects in the original diagram $F$ are of the form $(X_j, x_j)$. Note that the collection of base-points $x_j: 1 \to X_j$ naturally forms a cone over the diagram $U \circ F$ with vertex $1$. By the universal property of the limit $L$ in $\calC$, this cone induces a unique morphism $u: 1 \to L$ such that for all $j \in Ob(J)$ : $ \pi_j \circ u = x_j$. This unique map $u$ serves as the base-point for $L$, making $(L, u)$ an object in $\Comma{1}{\calC}$. Furthermore, the equation $\pi_j \circ u = x_j$ explicitly proves that the projections $\pi_j$ are valid morphisms in $\Comma{1}{\calC}$. To verify that $(L, u)$ is indeed the limit in $\Comma{1}{\calC}$, we must check its universal property. Let $(M, v)$ be another object in $\Comma{1}{\calC}$ equipped with a cone over $F$, consisting of projections $p_j: (M, v) \to (X_j, x_j)$. By definition, this means $p_j \circ v = x_j$ for all $j$. The $p_j$ maps form a cone over $U \circ F$ with vertex $M$ in $\calC$. The universal property of $L$ guarantees a unique morphism $h: M \to L$ in $\calC$ such that $\pi_j \circ h = p_j$. We now show that $h$ is a valid morphism in $\Comma{1}{\calC}$; namely, that $h \circ v = u$. We can prove this by post-composing with the limit projections $\pi_j$ : 
\[\pi_j \circ (h \circ v) = (\pi_j \circ h) \circ v = p_j \circ v = x_j.\]
This shows that the composition $(h \circ v)$ is a morphism $1 \to L$ that mediates the cone $x_j$. However, $u$ was defined as the unique morphism satisfying $\pi_j \circ u = x_j$. Therefore, it must be exactly true that $ h \circ v = u $.

\paragraph{Claim (ii)} If $J$ is connected, for any two objects $j, k \in Ob(J)$, there is a zig-zag of morphisms connecting them. Because $F$ is a valid diagram in $\Comma{1}{\calC}$, every morphism $F(\alpha)$ preserves the base-point: $F(\alpha) \circ x_j = x_k$. The underlying colimit $C$ in $\calC$ with coprojections $\iota_j$ guarantees that:
\[\iota_k \circ F(\alpha) = \iota_j.\]
By substituting the base-point condition, we get : 
\[\iota_k \circ x_k = \iota_k \circ (F(\alpha) \circ x_j) = (\iota_k \circ F(\alpha)) \circ x_j = \iota_j \circ x_j\]
Because $J$ is connected, this equality cascades across the entire diagram. All the compositions $\iota_j \circ x_j$ are exactly the same morphism. This single, unique morphism serves as the canonical base-point $u: 1 \to C$.

\paragraph{Claim (iii)} The proof follows similarly to Claim (ii). \qed

\subsection{Proof of Theorem \ref{thm:PSC}} \label{sec:proof:PSC}

This completes the proof of Theorem \ref{thm:PSC}. \qed

\subsection{Proof of Lemma \ref{lem:disinteg_join}} \label{sec:proof:disinteg_join}

First note that \eqref{eqn:def:disinteg:3} yields
\begin{equation} \label{eqn:dfd6h}
	\graph^T(h_i^{-1}|\mu_i) \circ \mu_3 = \graph^T(h_i^{-1}|\mu_i) \circ h_i \circ \mu_i = \graph(h_i) \circ \mu_i, \quad i=1,2.
\end{equation}
\[\begin{split}
	& \SqBrack{ (h_1^{-1}|\mu_1) \otimes (h_2^{-1}|\mu_2) } \circ \copyF_{\iota \Omega_3} \circ \mu_3 = \SqBrack{ \iota \Omega_1 \otimes (h_2^{-1}|\mu_2) } \circ \SqBrack{ (h_1^{-1}|\mu_1) \otimes \iota \Omega_3 } \circ \copyF_{\iota \Omega_3} \circ \mu_3 \\
	& \quad \quad = \SqBrack{ \iota \Omega_1 \otimes (h_2^{-1}|\mu_2) } \circ \graph^T(h_1^{-1}|\mu_1) \circ \mu_3 = \SqBrack{ \iota \Omega_1 \otimes (h_2^{-1}|\mu_2) } \circ \graph(h_1) \circ \mu_1, \; \mbox{by \eqref{eqn:dfd6h}} \\
	& \quad \quad = \SqBrack{ \iota \Omega_1 \otimes (h_2^{-1}|\mu_2) } \circ \SqBrack{\iota \Omega_1 \otimes h_1} \circ \copyF_{\iota \Omega_1} \circ \mu_1
\end{split}\]

We need to show that
\[\SqBrack{\iota \Phi_1^t \otimes \iota \Phi_2^t} \circ \join(\mu_1, \mu_2 \mid h_1, h_2) = \join(\mu_1, \mu_2 \mid h_1, h_2) .\]

This can be derived as follows :
\[\begin{split}
	& \SqBrack{\iota \Phi_1^t \otimes \iota \Phi_2^t} \circ \join(\mu_1, \mu_2 \mid h_1, h_2) = \SqBrack{\iota \Phi_1^t \otimes \iota \Phi_2^t} \circ \SqBrack{ (h_1^{-1}|\mu_1) \otimes (h_2^{-1}|\mu_2) } \circ \copyF_{\Omega_3} \circ \mu_3
\end{split}\]

This completes the proof of Lemma \ref{lem:disinteg_join}. \qed

\subsection{Proof of Lemma \ref{lem:dij0s}} \label{sec:proof:dij0s}

This completes the proof of Lemma \ref{lem:dij0s}. \qed
\subsection{Proof of Lemma \ref{lem:vee_from_join}} \label{sec:proof:vee_from_join}

We need to show that 
\[ (i)\, \SqBrack{ \iota \Phi_3^t \otimes \iota \Phi_3^t } \circ \SqBrack{ (h_1 \otimes h_2) \circ \lambda} = (h_1 \otimes h_2) \circ \lambda; \quad (ii)\,  \pi_i \circ \SqBrack{ (h_1 \otimes h_2) \circ \lambda} = \mu_3, \, i=1,2. \]
The identity (i) is derived as follows
\[\begin{split}
	&\SqBrack{ \iota \Phi_3^t \otimes \iota \Phi_3^t } \circ \SqBrack{ (h_1 \otimes h_2) \circ \lambda} = \SqBrack{ \paran{\iota \Phi_3^t \circ h_1} \otimes \paran{\iota \Phi_3^t \circ h_2} } \circ \lambda = \SqBrack{ \paran{h_1 \circ \iota \Phi_1^t} \otimes \paran{h_1 \circ \iota \Phi_2^t} } \circ \lambda \\
	&\quad \quad = \paran{h_1 \otimes h_2} \circ \SqBrack{\iota \Phi_1^t \circ \iota \Phi_2^t} \circ \lambda = \paran{h_1 \otimes h_2} \circ \lambda.
\end{split}\]
To derive (ii) note that by \eqref{eqn:d4d5z}
\[ \pi_i \circ \SqBrack{ (h_1 \otimes h_2) \circ \lambda} = \pi_i \circ \copyF \paran{ h_1 \circ \lambda, h_2 \circ \lambda } = h_i \circ \lambda = \mu_i. \]

This leads to the diagram
\[\begin{tikzcd}
	& \Omega_1 \arrow[rr, "h_1"] && \Omega_3 \\
	1 \arrow[dashed, dr, "\mu_2"'] \arrow[dashed, ur, "\mu_1"] \arrow[r, "\lambda"] & \Omega_1 \otimes \Omega_2 \arrow[rr, "h_1 \otimes h_2"] \arrow[u, "\pi_1"] \arrow[d, "\pi_2"'] && \Omega_3 \otimes \Omega_3 \arrow[u, "\pi_1"] \arrow[d, "\pi_2"'] \\
	& \Omega_2 \arrow[rr, "h_2"'] && \Omega_3
\end{tikzcd}\]
This diagram can be expanded using the dynamics $\Phi_3^t \otimes \Phi_3^t$ to get
\[\begin{tikzcd}
	& && \Omega_1 \arrow[drr, "h_1", Holud] \\
	& \Omega_1 \arrow[urr, "\Phi_1^t"] \arrow[rr, "h_1"] && \Omega_3 \arrow[rr, "\Phi_3^t"] && \Omega_3\\
	1 \arrow[uurrr, bend left=40, Holud, "\mu_1"] \arrow[ddrrr, bend right=40, Akashi, "\mu_2"'] \arrow[dashed, dr, "\mu_2"'] \arrow[dashed, ur, "\mu_1"] \arrow[r, "\lambda"] & \Omega_1 \otimes \Omega_2 \arrow[rr, "h_1 \otimes h_2"] \arrow[u, "\pi_1"] \arrow[d, "\pi_2"'] && \Omega_3 \otimes \Omega_3 \arrow[u, "\pi_1"] \arrow[d, "\pi_2"'] \arrow[rr, "\Phi_3^t \otimes \Phi_3^t"] && \Omega_3 \otimes \Omega_3 \arrow[u, "\pi_1"] \arrow[d, "\pi_2"'] \\
	& \Omega_2 \arrow[drr, "\Phi_2^t"'] \arrow[rr, "h_2"'] && \Omega_3 \arrow[rr, "\Phi_3^t"'] && \Omega_3 \\
	& && \Omega_2 \arrow[urr, "h_1"', Akashi]
\end{tikzcd}\]
This completes the proof of Lemma \ref{lem:vee_from_join}. \qed
\subsection{Proof of Lemma \ref{lem:a5ukdf}} \label{sec:a5ukdf}

This completes the proof of Lemma \ref{lem:a5ukdf}. \qed

\subsection{Proof of Lemma \ref{lem:dyn_colim}} \label{sec:proof:dyn_colim}

Let $\eta : \psi \Rightarrow X$ be the colimiting cone. Fix any time $t\in\Time$. Because of the commutation assumed in the statement,
\[ F(j)^t : \psi(j) \to \psi(j) , \quad \forall t\in \Time.\]
\[\forall \begin{tikzcd}
	j \arrow[d, "h"'] \\ j'
\end{tikzcd} \in J
\;:\;
\begin{tikzcd}
	\psi(j) \arrow[d, "\psi(h)"'] \arrow[rr, "F(j)^t"] && \psi(j) \arrow[d, "\psi(h)"]\\
	\psi(j') \arrow[rr, "F(j')^t"'] && \psi(j')
\end{tikzcd}\]
\[\forall \begin{tikzcd}
	j \arrow[d, "h"'] \\ j'
\end{tikzcd} \in J
\;:\;
\begin{tikzcd}
	&& \psi(j) \arrow[ddll, "\eta_j"', Holud] \arrow[d, "\psi(h)"] \arrow[rr, "F(j)^t", Akashi] && \psi(j) \arrow[ddrr, "\eta_j", Akashi] \arrow[d, "\psi(h)"']\\
	&& \psi(j') \arrow[dll, "\eta_{j'}", Holud] \arrow[rr, "F(j')^t"', Akashi] && \psi(j') \arrow[drr, "\eta_{j'}"', Akashi] \\
	X \arrow[rrrrrr, dotted, Shobuj, "\exists ! \bar{\Phi}^t"] && && && X
\end{tikzcd}\]

This completes the proof of Lemma \ref{lem:dyn_colim}. \qed

\subsection{Proof of identity \eqref{eqn:join_compose:1}} \label{sec:proof:join_compose:1}

For every $i=1,3$ :
\[\begin{split}
	\pi_i \circ \SqBrack{\lambda_{1,2} \circ \lambda_{2,3} } &= \pi_i \circ \SqBrack{ \mathbb{E}(X_1|X_2, \lambda_{1,2}) \otimes \mathbb{E}(X_3|X_2, \lambda_{2,3}) } \circ \copyF_{X_2}^{(2)} \circ \mu_2 = \mathbb{E}(X_i|X_2, \lambda_{2,i}) \circ \mu_2 \\
	&= \pi_i^{(i,2)} \circ \paran{ \pi_2^{(i,2)-1} | \mu_2 } \circ \mu_2 , \quad \mbox{by \eqref{eqn:kfad03}} , \\
	&= \mu_i , \quad \mbox{by \eqref{eqn:fsoh0}}.
\end{split}\]
This completes the proof of \eqref{eqn:join_compose:1}. \qed
\section{Ergodic extensions} \label{sec:app:ext}
\subsection{Proof of Lemma \ref{lem:0dj9xc}} \label{sec:proof:0dj9xc}

This completes the proof of Lemma \ref{lem:0dj9xc}. \qed
\subsection{Proof of Theorem \ref{thm:shift}} \label{sec:proof:shift}

\[\begin{tikzcd}
	\otimes^{|J+t+s|} X & & \otimes^{|J+t|} X \arrow[ll, "+s"'] & & & & \otimes^{|J|} X \arrow[llll, "+t"'] \arrow[llllll, "+(t+s)", bend right] \\
	& {} \arrow[lu, "\pi_{J+t+s}"'] \arrow[ld, "\pi_{J'+t}"] & & \otimes^{|\Time|} X \arrow[ld, "\pi_{J'+t}"] \arrow[lu, "\pi_{J+t}"'] \arrow[ll, pos=0.2, "\shift^s"] & & \otimes^{|\Time|} X \arrow[ll, pos=0.7, "\shift^t"] \arrow[rd, "\pi_{J'}"'] \arrow[ru, "\pi_J"] \arrow[llll, pos=0.2, "\shift^{t+s}"', bend right=10]  \\
	\otimes^{|J'+t+s|} X \arrow[uu, pos=0.1, "\pi_{J'+t+s \to J+t+s}"] & & \otimes^{|J'+t|} X \arrow[uu, pos=0.1, "\pi_{J'+t \to J+t}"] \arrow[ll, "+s"] & & & & \otimes^{|J'|} X \arrow[uu, "\pi_{J'\to J}"'] \arrow[llll, "+t"] \arrow[llllll, "+(t+s)"', bend left] 
\end{tikzcd}\]
\[\begin{tikzcd}
	\otimes^{|J+t|} X & & & & \otimes^{|J|} X \\
	& \otimes^{|\Time|} X \arrow[ld, "\pi_{J'+t}"] \arrow[lu, "\pi_{J+t}"'] & & \otimes^{|\Time|} X \arrow[rd, "\pi_{J'}"'] \arrow[ru, "\pi_J"] \arrow[ll, "\shift^t"'] & \\
	\otimes^{|J'+t|} X \arrow[uu, "\pi_{J'+t\to J+t}"] & & 1 \arrow[rr, "\otimes^{|J'|} \mu"'] \arrow[ru, "\otimes^{|\Time|} \mu"] \arrow[rruu, "\otimes^{|J|} \mu", bend left=60] & & \otimes^{|J'|} X \arrow[uu, "\pi_{J'\to J}"']
\end{tikzcd}\]
\[\begin{tikzcd}
	\otimes^{|J+t|} X & & & & \otimes^{|J|} X \\
	& \otimes^{|\Time|} X \arrow[ld, "\pi_{J'+t}"] \arrow[lu, "\pi_{J+t}"'] & & \otimes^{|\Time|} X \arrow[rd, "\pi_{J'}"'] \arrow[ru, "\pi_J"] \arrow[ll, "\shift^t"'] & \\
	\otimes^{|J'+t|} X \arrow[uu, "\pi_{J'+t\to J+t}"] & & 1 \arrow[rr, "\otimes^{|J'|} \mu"'] \arrow[ru, "\otimes^{|\Time|} \mu"] \arrow[rruu, "\otimes^{|J|} \mu", bend left=60] \arrow[lu, "\otimes^{|\Time|} \mu"] \arrow[ll, "\otimes^{|J'+t|} \mu"] \arrow[lluu, "\otimes^{|J+t|} \mu"', bend right=49] & & \otimes^{|J'|} X \arrow[uu, "\pi_{J'\to J}"']
\end{tikzcd}\]
This completes the proof of Theorem \ref{thm:shift}. \qed

\subsection{Proof of Theorem \ref{thm:fp9s}} \label{sec:proof:fp9s}

Let $c:X \to Y$ be $f$-invariant and deterministic. Then, $c$ defines a morphism $c:(X,f, \mu) \to (Y, 1_Y, c \circ \mu)$. The assumption implies
\begin{center}
	\begin{tikzpicture}
		\node (e) at (5,-2) {=};
		\node[box=1/0/1/0] (n) at (1,-1) {1_X};
		\node[box=1/0/1/0] (h) at (3,-1) {c};
		\node[dot] (cpy) at (2,-2) {};
		\node[box=1/0/1/0] (mu) at (2,-3) {\mu};
		\node[box=1/0/1/0] (mu2) at (7,-2) {\mu};
		\node[box=1/0/1/0] (mu3) at (9,-2) {c \circ \mu};
		\node[dot] (i) at (2,-4) {};
		\node[dot] (i2) at (7,-4) {};
		\node[dot] (i3) at (9,-4) {};
		\wires{
			n = {south = cpy.north},
			h = {south = cpy.north},
			cpy = {south = mu.north},
			mu= {south = i.north},
			mu2 ={south = i2.north},
			mu3 ={south = i3.north},
		}{n.north, h.north, mu2.north, mu3.north}
	\end{tikzpicture}
\end{center} 
Therefore, we have
\begin{center}
	\begin{tikzpicture}
		\node (e) at (4,-2) {=};
		\node (e2) at (9,-2) {=};
		\node[box=1/0/1/0] (n) at (1,-1) {1_Y};
		\node[box=1/0/1/0] (h) at (3,-1) {1_Y};
		\node[box= 1/0/1/0] (f) at (2,-3) {c};
		\node[dot] (cpy) at (2,-2) {};
		\node[box=1/0/1/0] (mu) at (2,-4) {\mu};
		\node[dot] (i) at (2,-5) {};
		
		\node[box=1/0/1/0] (n2) at (6,-1) {c};
		\node[box=1/0/1/0] (h2) at (8,-1) {c};
		\node[dot] (cpy2) at (7,-2) {};
		\node[box=1/0/1/0] (mu2) at (7,-4) {\mu};
		\node[dot] (i2) at (7,-5) {};
		\node[box=1/0/1/0] (mu3) at (10,-2) {c \circ \mu};
		\node[box=1/0/1/0] (mu4) at (12,-2) {c \circ \mu};
		\node[dot] (i3) at (10,-4) {};
		\node[dot] (i4) at (12,-4) {};
		\wires{
			n = {south = cpy.north},
			h = {south = cpy.north},
			cpy = {south = f.north},
			f = {south =mu.north},
			mu= {south = i.north},
			n2 = {south = cpy2.north},
			h2 = {south = cpy2.north},
			cpy2 = {south = mu2.north},
			mu2= {south = i2.north},
			mu3 ={south = i3.north},
			mu4 ={south = i4.north}
		}{n.north, h.north,n2.north, h2.north, mu3.north, mu4.north}
	\end{tikzpicture}
\end{center}
Thus $c \circ \mu$ is deterministic.

(2) Let $\lambda$ be a joining. Then, by the invariance and Birkhoff property, we have
\begin{center}
	\begin{tikzpicture}
		\node (e) at (4,-2) {=};
		\node (e2) at (9,-2) {=};
		
		\node[box=1/0/1/0] (n) at (1,-1) {1_X};
		\node[box=1/0/1/0] (h) at (3,-1) {1_Y};
		\node[box=2/0/1/0] (mu) at (2,-4) {\lambda};
		\node[dot] (i) at (2,-5) {};
		
		\node[box=1/0/1/0] (n2) at (6,-1) {f^n};
		\node[box=1/0/1/0] (h2) at (8,-1) {1_Y};
		\node[box=2/0/1/0] (mu2) at (7,-4) {\lambda};
		\node[dot] (i2) at (7,-5) {};
		\node[box=1/0/1/0] (mu3) at (10,-2) {\mu};
		\node[box=1/0/1/0] (mu4) at (12,-2) {\nu};
		\node[dot] (i3) at (10,-4) {};
		\node[dot] (i4) at (12,-4) {};
		\wires{
			n = {south = mu.north.1},
			h = {south = mu.north.2},
			mu= {south = i.north},
			n2 = {south = mu2.north.1},
			h2 = {south = mu2.north.2},
			mu2= {south = i2.north},
			mu3 ={south = i3.north},
			mu4 ={south = i4.north}
		}{n.north, h.north,n2.north, h2.north, mu3.north, mu4.north}
	\end{tikzpicture}
\end{center}

This completes the proof of Theorem \ref{thm:fp9s}. \qed
\bibliographystyle{\Path unsrt_inline_url} \bibliography{\Path References,Ref,\Path CatProb,\Path ConleyRef}

\end{document}